\definecolor{blue_links}{RGB}{13,0,180} 
\def\centerarc[#1](#2)(#3:#4:#5)
\newtheorem{theorem}{Theorem}[section]
\newtheorem{lemma}[theorem]{Lemma}
\newtheorem{corollary}[theorem]{Corollary}
\newtheorem{definition}[theorem]{Definition}
\newtheorem{remark}[theorem]{Remark}
\newtheorem*{theorem*}{Theorem}
\newcommand{\R}{\mathbb{R}}
\newcommand{\C}{\mathbb{C}}
\newcommand{\CC}[1]{\C #1:#1}
\def\eps{\varepsilon}
\def\weaklystar{\stackrel{*}{\rightharpoonup}}
\def\XXint#1#2#3{{\setbox0=\hbox{$#1{#2#3}{\int}$}
\vcenter{\hbox{$#2#3$}}\kern-.5\wd0}}
\numberwithin{equation}{section}
\DeclareMathOperator{\energyNonlin}{{E_\eps}}
\DeclareMathOperator{\energyLin}{{E}}
\DeclareMathOperator{\energyNonlinTot}{{\mathcal{E}_\eps}}
\DeclareMathOperator{\energyLinTot}{{ \mathcal{E}}}
\newcommand{\diff}{\mathrm{D}}
\newcommand{\lm}{\mathcal{L}}
\newcommand{\hm}{\mathcal{H}}
\newcommand{\sporth}{\mathrm{SO}}
\newcommand{\intpcs}{\mathcal{P}^{\mathrm{int}}}
\newcommand{\lp}{\mathrm{L}}
\newcommand{\wkp}{\mathrm{W}}
\newcommand{\cont}{\mathrm{C}}
\newcommand{\bv}{\mathrm{BV}}
\newcommand{\gsbvtwotwo}{\mathrm{GSBV}_{ 2 }^{ 2 }}
\newcommand{\gsbdtwo}{\mathrm{GSBD}^{ 2 }}
\newcommand{\sbv}{\mathrm{SBV}}
\newcommand{\gsbd}{\mathrm{GSBD}}
\newcommand{\gsbv}{\mathrm{GSBV}}
\newcommand{\sbd}{\mathrm{SBD}}
\DeclarePairedDelimiterX{\inner}[2]{\langle}{\rangle}{#1 \, , \, #2}
\DeclarePairedDelimiterX{\seminorm}[1]{[}{]}{#1}
\begin{document} 

\title[Linearization of a quasistatic evolution in fracture]{ Linearization of quasistatic fracture  evolution \\ in brittle materials}
\author[M. Friedrich]{Manuel Friedrich} 
\address[Manuel Friedrich]{Department of Mathematics, Friedrich-Alexander Universit\"at Erlangen-N\"urnberg. Cauerstr.~11,
    D-91058 Erlangen, Germany}
\email{manuel.friedrich@fau.de}
\author[P. Steinke]{Pascal Steinke}
\address[Pascal Steinke]{Institute for Applied Mathematics, University of Bonn, Endenicher Allee 60, 53115 Bonn,
Germany}
\email{steinke@iam.uni-bonn.de}
\author[K. Stinson] {Kerrek Stinson} 
\address[Kerrek Stinson]{Department of Mathematics, University of Utah, Salt Lake City, UT 84112 USA}
\email{kerrek.stinson@utah.edu}

\subjclass[2020]{49J45, 70G75,   74B10, 74B20, 74G65, 74R10.}
\keywords{ Brittle materials, variational fracture, free discontinuity problem, quasistatic fracture evolution, linearization,   $\Gamma$-convergence.}

\begin{abstract}   
We prove a linearization result for  quasistatic fracture evolution in nonlinear elasticity. As the stiffness of the material tends to infinity, we show that  rescaled displacement fields  and their associated crack sets converge to a solution of quasistatic crack growth in linear elasticity without any a priori assumptions on the geometry of the crack set. This result corresponds to the evolutionary counterpart of the static linearization result  \cite{friedrich_griffith_energies_as_small_strain_limit_of_nonlinear_models_for_nomsimple_brittle_materials},  where  a Griffith model for nonsimple brittle materials has been considered featuring an elastic energy which also  depends suitably on the second gradient of the deformations. The proof relies on a careful study of unilateral global  minimality, as determined by the nonlinear evolutionary problem, and its linearization together with a variant of the jump transfer lemma in $\gsbd$ \cite{friedrich_solombrino_quasistatic_crack_growth_in_2d_linearized_elasticity}.
\end{abstract}

\maketitle


\section{Introduction}

 A crucial question for  nonlinear models in materials science consists in  establishing  the range of validity of suitably linearized approximations. Indeed, large strain models are often challenging to treat, both analytically and numerically, due to their nonconvex behavior whereas linearized models are significantly easier and, in many situations, still allow to predict accurately observed phenomena.  The last decades have witnessed remarkable progress in providing rigorous results relating models with  different strain regimes  in terms of $\Gamma$-convergence \cite{DalMasoBook}. After the seminal work on elastostatics  \cite{DalMasoEtAl_2002}, this question was explored towards various directions, among others, for   incompressible materials models \cite{JesenkoSchmidt21Geometric, MaininiPercivale21Linearization},  atomistic settings \cite{braides_solci_vitali_derivation_of_linear_elastic_energies_from_pair_interaction_atomistic_systems, schmidt-lin}, residually stressed materials \cite{paroni},  problems without Dirichlet boundary conditions \cite{MaorMora21Reference}, homogenization \cite{jesschmidt-hom, neukamm}, brittle fracture \cite{Friedrich:15-2, friedrich_griffith_energies_as_small_strain_limit_of_nonlinear_models_for_nomsimple_brittle_materials, almi_davoli_friedrich_non_interpenetration_conditions_in_the_passage_from_nonlinear_to_linearized_griffith_fracture}, plasticity \cite{Zeppieri},  or multiwell energies allowing for phase transformations \cite{alicandro.dalmaso.lazzaroni.palombaro,  DavFri20, AlmRegSol23a, schmidt_linear_gamma_limits_of_multiwell_energies_in_nonlinear_elasticity}. Beyond the static framework, we refer to evolutionary results on elastodynamics \cite{abels}, viscoelasticity \cite{badal, manuel-visco, Oosterhout}, or plasticity \cite{mielke-stefanelli}.

The purpose of this paper is to prove a linearization result for the quasistatic evolution of brittle fracture.
We consider a variational model for fracture introduced by {\sc Francfort and Marigo} \cite{francfortMarigo98} based on  Griffith's idea  that 
crack formation and  propagation is the result  of the competition between an   elastic bulk energy and a surface energy proportional to the size of the crack.   Our goal is to show that, in dimension two, crack growth {in} nonlinear elasticity can be  rigorously approximated by crack growth in linear elasticity. 

Moving within the frame of   variational models for fracture,  we start by introducing the underlying energies. {For a reference domain $\Omega\subseteq \R^2$,} we consider nonlinear energies  of the form
\begin{equation}\label{eqn:introNonlinE}
\energyNonlinTot[y,{\Gamma}] : = \dfrac{1}{\eps^2}\int_\Omega \left( W\left(\nabla y\right)  + \eps^{2\left(1-\beta\right) }|\nabla^2 y|^2\right) \dd{x} + \kappa  \mathcal{H}^1\left({\Gamma}\right),
\end{equation}
where $y\in W^{2,2}_{\rm loc}(\Omega\setminus {\Gamma};\R^2)$ 
is the deformation, $W$ is a frame indifferent stored energy density, $\beta\in \left(0,1\right)$, $ \Gamma  \subseteq \overline{\Omega}$ is the crack set (on which $y$ is allowed to be discontinuous), $\kappa$ denotes the fracture toughness, and $\mathcal{H}^1$ is the Hausdorff measure penalizing the length of the crack. The prefactor  $1/\eps^2$ represents a high stiffness of the material depending on a small parameter $\eps>0$. For technical reasons discussed below, we consider an energy in the frame of  \emph{nonsimple materials}, referring  to the fact that the elastic energy depends additionally on the
second gradient of the deformation. However, as the contribution of the Hessian vanishes in the limit $\eps \to 0$, the effective energy is  described by  a linear Griffith fracture energy without second gradients. Precisely, using $\Gamma$-convergence, in \cite{friedrich_griffith_energies_as_small_strain_limit_of_nonlinear_models_for_nomsimple_brittle_materials} the first author showed that the above energies converge as $\eps \to 0$ to their linearized counterpart
\begin{equation}\label{eqn:introGriffith}
\energyLinTot[u, \Gamma ] : = \int_\Omega \dfrac{ 1 }{ 2 }\CC{e\left(u\right)}\dd{x} +  \kappa  \mathcal{H}^1\left( \Gamma \right),
\end{equation}
where $u\in W^{1,2}_{\rm loc}(\Omega\setminus  \Gamma; \R^2)$ is the displacement approximated by $u \approx (y-{\rm id})/\eps$ (${\rm id}$ is the identity mapping)  and $\mathbb{C} = \diff^2 W\left({\rm Id}\right)$ is a fourth-order elasticity  {tensor.} Whereas the models \eqref{eqn:introNonlinE}--\eqref{eqn:introGriffith} correspond to the so-called \emph{strong formulation}, the  result in \cite{friedrich_griffith_energies_as_small_strain_limit_of_nonlinear_models_for_nomsimple_brittle_materials} is actually formulated in the weak setting of the functions spaces $\gsbv$ (for \eqref{eqn:introNonlinE}) and  $\gsbd$ (for \eqref{eqn:introGriffith}).

In this work, our goal is to enhance the understanding of the relation between  the models \eqref{eqn:introNonlinE}--\eqref{eqn:introGriffith} by showing that solutions of quasistatic crack growth associated with the nonlinear energies converge to ones related to the linear energy, see our main result in  Theorem \ref{thm:main}. This provides a rigorous justification for the fact that crack propagation in  stiff materials (described in terms of $1/\eps^2$) is well approximated by using a linear theory. We also point out that the energetic rescaling in the energy (\ref{eqn:introNonlinE}) may be found by looking at the nonlinear fracture energy ($\eps =1$) on progressively larger domains $\Omega/\eps$, referred to as Ba\v{z}ant's law  \cite{NegriToader_2015}.

Linearization for variational fracture evolutions has already received some attention in the literature. {\sc  Negri and Zanini} \cite{NegriZanini_2014} showed that the nonlinear quasistatic fracture evolution converges to the linear counterpart when the path of the crack is restricted to a line segment. Subsequent work by {\sc Negri and Toader} \cite{NegriToader_2015} showed that this convergence still  holds  under a weaker assumption on the cracks (effectively, the crack is made of finitely many non-intersecting regular arcs). Our conclusions significantly improve these results: we show convergence in the passage from nonlinear-to-linearized models  with \emph{no assumptions} on the geometry of the crack.

 The study  of  {quasistatic crack evolutions}  was initiated in \cite{toader-dalmaso} for a  $2d$-model   with  restrictive assumptions on the crack topology.  A breakthrough existence  result in $\sbv$ is  due to {\sc Francfort and Larsen} \cite{francfort_larsen_existence_and_convergence_for_quasi_static_evolution_in_brittle_fracture}   in the simplified setting of anti-planar displacements (meaning scalar displacements with control on the full gradient). Subsequently, this result was  generalized to nonlinear elasticity \cite{dal_maso_francfort_toader_quasistatic_crack_growth_in_nonlinear_elasticity},  including the setting  of non-interpenetration \cite{lazzaron}.  
In all these works, the quasistatic fracture  evolution   is described in terms of \emph{i)} an irreversible crack path, \emph{ii)} unilateral global minimality of the deformation/displacement, and \emph{iii)} an energy-balance equation. 
Essential to the verification of   \emph{ii)} is the so-called \emph{jump transfer lemma}, allowing the authors to carry over unilateral minimality for approximate free discontinuity problems to their limits, where `unilateral' refers to the irreversible nature of the process. 

Surprisingly, despite its  paramount relevance for realistic engineering applications, the theory of  {linear fracture models} has been considerably  {less developed}. This is due to technical difficulties inherent to the presence of symmetrized gradients. Indeed, due to   the unknown and potentially   rough discontinuity sets and the  {lack of Korn's inequality},  there is no control on the  skew symmetric part $(\nabla u)^{\top}- \nabla u$  of the strain, leading to an analytically more intricate formulation  in  the larger space of  {special   functions of bounded deformation} ($\sbd$) \cite{Ambrosio-Coscia-DalMaso:1997}. Only recently, departing from {\sc Dal Maso}'s seminal paper \cite{dalMasoGSBD} on the generalized space $\gsbd$, an enormous amount of effort has been invested to develop the theory. Technical advances regarding  functional inequalities \cite{cagnettiChambolleScardia22,chambolleContiFrancfort16,friedrich2018-piecewiseKorn},  compactness theorems \cite{AlmiTasso_2023,chambolle_crismale_compactness_and_lsc_in_gsbd}, and lower semicontinuity   \cite{friedrich-perugini, kreisbeck}  have allowed a variety of authors to prove existence of Griffith energy minimizers in the strong formulation \cite{CONTI2019455,chambolleContiIurlano18,chambolleCrismale19} with higher regularity \cite{babadjianIurlanoLemenant22,LabourieLemenant22,FriedrichLabourieStinson23}, or existence of minimizers in anisotropic and heterogeneous settings \cite{chambolleCrismaleEquilibrium,FriedrichLabourieStinson24}. 

Most important for our work is  {the existence proof  of} a solution to the quasistatic evolution for cracks in dimension two \cite{friedrich_solombrino_quasistatic_crack_growth_in_2d_linearized_elasticity}. {In} the linear case, it was essential to develop another {jump transfer} lemma  suited for settings where only control on the symmetric gradient is available. Using a variant of this  {jump transfer} lemma is at the core of our approach dealing with the passage from nonlinear to linearized theory.  

We proceed by describing the ingredients of the proof in more detail. To see that solutions of the nonlinear problem associated with the energy {$\energyNonlinTot$} converge to a solution of linear quasistatic crack growth with energy {$\energyLinTot$}, we prove \emph{i)} irreversibility, \emph{ii)} minimality, and \emph{iii)} the energy balance for the limit displacement {and crack}. First, to identify a limiting displacement we need to ensure that  deformation gradients are sufficiently close to the identity. Given that the domain can fracture into multiple pieces, we are forced to identify the limit up to modification, which essentially transforms the deformation back to the identity by a piecwise  rigid motion related to a  Caccioppoli partition of the reference domain. This technique has been employed already in a variety of works, both in a nonlinear \cite{Friedrich:15-2, friedrich_griffith_energies_as_small_strain_limit_of_nonlinear_models_for_nomsimple_brittle_materials} and linearized setting \cite{chambolleCrismale-Hetero, friedrich_solombrino_quasistatic_crack_growth_in_2d_linearized_elasticity, stinson_wittig_elliptic_approximation_for_phase_separation_in_a_fractured_material}.  The proof of \emph{i)} then follows by design.

The proof of \emph{ii)} is significantly more involved.
 Beyond the {jump transfer} lemma, carrying minimality from nonlinear to linear problems requires a delicate look at the linearization procedure.
 Formally, given $(y_\eps,  \Gamma_\eps  )$ minimizing $\energyNonlinTot$, we can define a limit displacement $ u = \lim_{\eps\to 0}(y_\eps-{\rm id})/\eps$ and crack $\Gamma  = \lim_{\eps\to 0} \Gamma_\eps  $. To show that minimality is inherited by the limit, we want to show that $\energyLinTot[u,\Gamma]\leq \energyLinTot[u+\phi,   \Gamma   \cup   \Gamma_\phi  ]$ for all test pairs $(\phi,  \Gamma_\phi  )$. {For this, we} use a Taylor expansion to linearize the nonlinear minimality condition $\energyNonlinTot [y_\eps,  \Gamma_\eps ] \leq \energyNonlinTot[y_\eps+\phi_\eps,   \Gamma_\eps \cup \Gamma_{\phi_\eps}   ]$ for test pairs $(\phi_\eps, \Gamma_{\phi_\eps}  )$ approximating a fixed $(\phi,  \Gamma_\phi  )$. The challenge is that, within this expansion, we must carefully control the error on the right-hand side to ensure it disappears in the limit $\eps\to 0$. 
 The key ingredients are a quantitative description of the almost-minimality of the rescaled displacement $(y_\eps-{\rm id})/\eps$, see Lemma \ref{lemma:linearization_of_minimization_problem}, the jump transfer lemma of  \cite{friedrich_solombrino_quasistatic_crack_growth_in_2d_linearized_elasticity}, and a density result for $\gsbdtwo$ functions with prescribed boundary values, see \Cref{thm:density_with_boundary_values}.  
 
 The proof of the energy balance  \emph{iii)} follows the by now classical idea which relies on  global minimality and a Riemann sum argument. Yet, in our setting the expression representing the work of the external loads is more intricate if there are regions determined by the abovementioned Caccioppoli partition where the deformation is not close to the identity but to a different rigid motion. In this case, we will show that this piece is (almost) relaxed to an equilibrium configuration and thus  no  longer contributes  to the change of the total energy. Rigorously, this is done by using our analysis from \emph{ii)} to derive an approximate variational inequality that can pass to the limit in the time-integrated energy-balance equation, see \Cref{cor:approx_euler_lagrange} and \Cref{cor:approx_euler_lagrange_consequence}.

As a technical point on our result, we introduce a time step parameter $\tau>0$ (later encoded  as  $\Delta_\eps$ in equation (\ref{eqn:deltaEps}))  and a `stiffness' parameter $\eps>0$. For $\eps$ fixed, we use $\tau$ to construct a time-discretized approximate solution to the nonlinear quasistatic crack growth. Then, we pass to the limit jointly in $\tau\to 0$ and $\eps\to 0$ to derive a solution of linear quasistatic crack growth. This means that at no point we actually use a true solution in the nonlinear setting. (Note that existence of such is not guaranteed by \cite{dal_maso_francfort_toader_quasistatic_crack_growth_in_nonlinear_elasticity} due to the presence of the second-gradient term in \eqref{eqn:introNonlinE}.) Importantly, however, in our analysis, there is no constraint on the relation between $\tau$ and $\eps$. Thus, one can  expect that we can pass to the limit {as} $\tau\to 0$ first and then $\eps\to 0$. Although we believe that this would in general be possible, the  rigorous execution of this procedure is beyond the scope of this paper.

Our result comes with three reasonable caveats: First, to isolate a rotation about which we may linearize, we require   the nonlinear energy to be nonsimple and penalize the Hessian of the deformation. Yet, we emphasize that this penalization is small and vanishes  in the limit $\eps\to 0$  (see \Cref{cor:convergence of energies}). A possible alternative would be to modify  the geometric  rigidity estimate by {\sc Friesecke et al.} \cite{friesecke_james_mueller_geometric_rigidity}, but this is intricate in this setting as the domain regularity is determined by the a priori irregular crack set. Existing applications using \cite{friesecke_james_mueller_geometric_rigidity} for fracture require fixed crack geometries \cite{NegriToader_2015}, a curvature penalization of the crack \cite{friedrich_kreutz_zemas_derivation_of_effective_theories_for_thin_nonlinearly_elastic_rods_with_voids}, or  an involved modification procedure of deformations and crack sets on multiple scales \cite{Friedrich:15-2}. In particular, the latter modification is incompatible with the irreversibility condition   \emph{i)}.

 Second, our result is restricted to the plane. This is essentially because the piecewise Korn inequality \cite{friedrich2018-piecewiseKorn} has only been proven in the plane. We use this inequality in two places: the jump transfer lemma and a density result with Dirichlet conditions. With more work, we believe that it is possible to extend  the density result to higher dimensions as the role of the  piecewise Korn inequality  therein is to approximate $\gsbdtwo$ by $\lp^2$-functions. On the other hand, extending the {jump transfer} lemma to higher dimensions will require fundamentally new ideas.
 
Third,  our model does not account for non-interpenetration and, in particular, for simplicity we require  $W$ to be locally Lipschitz, preventing the blow-up of $W(M)$ as ${\rm det} \, M\to 0$.  The necessary techniques to implement non-interpenetration in the framework of crack growth in finite elasticity are already available \cite{lazzaron} (based on \cite{mielke-francfort}) and, in the static case, also the linearization has recently been studied \cite{almi_davoli_friedrich_non_interpenetration_conditions_in_the_passage_from_nonlinear_to_linearized_griffith_fracture}. The extension to the evolutionary setting will require nontrivial adaptations  and is a subject  of future research.   

The paper is organized as follows. In \Cref{sec: main results}, we introduce the model and formulate the main result. \Cref{sec:mathPrelim} is subject to some preliminaries, namely compactness and density results in $\gsbd$ and the jump transfer lemma from \cite{friedrich_solombrino_quasistatic_crack_growth_in_2d_linearized_elasticity}. In \Cref{sec:compactness} we derive a priori estimates for the nonlinear evolutions and obtain compactness for the rescaled displacement fields. Then,  \Cref{sec:approximateRelationsMinimality}  and \Cref{sec:approximateRelationsBalance} are devoted to the stability of minimality and an approximate energy balance as $\eps\to 0$. \Cref{sec:completeProof} contains the proof of the main result. Eventually, in the Appendix \ref{sec:appendix} and \ref{sec:appendix_proofs} we collect proofs that have been omitted in the paper.

\section{Setting and main results}\label{sec: main results}

 In this section, we  first provide  some basic notation. Then,   we introduce the nonlinear quasistatic time discretized evolution for crack growth using a time discrete scheme.  Next,  we define a notion of solution for quasistatic crack growth in linear elasticity.  Eventually, we state our main convergence   result relating the nonlinear to the linear setting.   


\subsection{Basic notation}

We introduce some basic notation:  
\begin{itemize}

\item We  let  $ \Omega \subseteq  \Omega' \subseteq\mathbb{ R }^{ 2 } $ be bounded Lipschitz domains {such that  also  $\Omega'\setminus \overline{\Omega}$ is  a Lipschitz  set.} 
	
\item $B_{ r } ( x )   \subseteq  \R^d  $, $d \ge 2$, denotes the  ball {centered at $x  \in \R^d$  with radius $r>0$}. {Similarly, for a set $A$, we define $B_\delta (A) :  =\{x \in \R^d: |x-y|  <\delta \text{ for some }y\in A\}.$}

\item We write ${\rm id}$ for the identity map on $ \mathbb{ R }^{ 2 } $ and ${\rm Id}$ for its derivative, viewed as an element of $ \mathbb{ R }^{ 2\times 2  } $. 

 \item We denote by  $\mathbb{ R }^{ 2 \times 2}_{ \mathrm{sym} }$ and $\mathbb{ R }_{ \mathrm{skew } }^{ 2 \times 2 }$ symmetric and skew-symmetric matrices, respectively. We set $\sporth(2) = \lbrace F \in \R^{2 \times 2} \colon F^\top F = {\rm Id} \rbrace$.  

\item We  call  an affine map $ A x +b $ {with} $ A \in \mathbb{ R }_{ \mathrm{skew } }^{ 2 \times 2 } $ and $ b \in \mathbb{ R }^{ 2 } $   an \emph{infinitesimal rigid motion}.

\item  $\chi_{ A }$ denotes the characteristic function of a set $ A   \subseteq  \R^2 $.

\item We denote  the two-dimensional Lebesgue measure and the one-dimensional Hausdorff measure by $\mathcal{L}^2$ and $\mathcal{H}^1$, respectively.

\item We write $ \partial^{ \ast  } $ for the essential boundary of a set taken always with respect to the set $ \Omega' $, see \cite[Def.~3.60]{ambrosio_fusco_pallara_functions_of_bv_and_free_discontinuity_problems}.

\item We say that a partition $ (P_{ j })_{ j\in \mathbb{ N } } $  of  $ {\Omega'} \subseteq \mathbb{ R }^{ 2 } $ is a \emph{Caccioppoli partition} of $ {\Omega'} $ if
$ \sum_{ j } \hm^{ 1 } \left( \partial^{ \ast } P_{ j } \right) < \infty $.

\item We assume the reader to be familiar with the function spaces $ \bv $ (\cite[Def.~3.1]{ambrosio_fusco_pallara_functions_of_bv_and_free_discontinuity_problems}), $ \sbv $ (\cite[Sct.~4.1]{ambrosio_fusco_pallara_functions_of_bv_and_free_discontinuity_problems}), $ \gsbv $ (\cite[Def.~4.26]{ambrosio_fusco_pallara_functions_of_bv_and_free_discontinuity_problems}), $ \gsbd $ (\cite[Def.~4.2]{dalMasoGSBD}),  and  $ \gsbdtwo $ (\cite{iurlano_density}). We further define (\cite[Sct.~2.1]{friedrich_griffith_energies_as_small_strain_limit_of_nonlinear_models_for_nomsimple_brittle_materials}) 
 $$ {\gsbvtwotwo(\Omega;\R^2) : = \{y\in \gsbv^2(\Omega;\R^2): \nabla y\in \gsbv^2(\Omega;\R^{2\times 2})\} }.$$

\item We use the notation $ \inner*{\cdot}{\cdot} $ for the  Euclidean  inner product  on $ \mathbb{ R }^{ 2 } $, $ \mathbb{ R }^{ 2 \times 2 } $, and  $\mathbb{ R }^{ 2 \times 2 \times 2  \times 2 }$, respectively.  When   an elastic tensor $ \mathbb{C} \in \R^{2 \times 2 \times 2 \times 2} $  is involved, {we follow  the convention to  use} the notation $ \colon $ for the inner product between two matrices in $ \mathbb{ R }^{ 2 \times 2 } $.

\item If we do not specify the domain over which a norm is taken, we always implicitly mean $ \Omega' $, i.e., $ \Vert \cdot \Vert_{\lp^{ 2 }} \coloneqq  \Vert \cdot \Vert_{\lp^{ 2 } ( \Omega' )}$. {Likewise, we often do not specify the range of a function within the norm.}

\item For $ a, b \in \mathbb{ R }^{ 2 } $, we define $ a \odot b \coloneqq ( a \otimes b + b \otimes a ) / 2  $ as the symmetric  outer product   of $ a $ and $ b $.

\item For a vector valued function $ f $, we write $ f^{ i } $ for the $ i$-th component of $ f $.

\item We use the symbol $\lesssim$ to say that the inequality $ \leq $ holds up to a constant which may depend on the potential, the boundary data, and the domain.   We follow the convention that generic constants  $C$ may change from line to line.

\end{itemize}

\subsection{Nonlinear quasistatic fracture evolution}
\label{sct:setup_and_relaxation}
We begin by constructing a time discretized evolution for a nonlinear model based on the energy (\ref{eqn:introNonlinE}),  see e.g.\  \cite{dal_maso_francfort_toader_quasistatic_crack_growth_in_nonlinear_elasticity} or \cite{francfort_larsen_existence_and_convergence_for_quasi_static_evolution_in_brittle_fracture}. Let $(\eps_i)_i$ be an arbitrary sequence with $\eps_i\to 0$ as $i\to \infty$. For convenience, with an abuse of notation, we {will} simply use $\eps$ in place of $\eps_i$ and  write  $\eps \to 0$. Taking our time interval to be $[0,1]$ without loss of generality, we partition the time interval using the collection of points
\begin{equation*}
	I_{ \varepsilon }
	:= ( t_{ i }^{ \varepsilon })_{i=0}^{n(\eps)} = 
	\left\{
	0=t_{ 0 }^{ \varepsilon } < t_{ 1 }^{ \varepsilon } < \cdots < t_{ n(\varepsilon ) }^{ \varepsilon } = 1 \right\}.
\end{equation*}
We impose that the partitions created by $I_\eps$ are nested, i.e., $I_{\eps} \subseteq I_{\eps'}$ when $\eps' <\eps$, and that they refine, in the sense that
\begin{equation}\label{eqn:deltaEps}
	\Delta_{ \varepsilon }
	\coloneqq
	\max_{ 1 \leq i \leq n(\varepsilon ) }
	t_{ i }^{ \varepsilon } - t_{ i - 1 }^{ \varepsilon }
\end{equation}
vanishes as $ \varepsilon $ tends to zero. Note that the union of the partitions $ I_{ \infty } \coloneqq \bigcup_{ \varepsilon } I_{ \varepsilon } $ is a countable dense subset of $ [ 0 , 1 ] $. We also introduce the shorthand notations
\begin{equation*}
	I_{ \varepsilon }^{ t }\coloneqq \left\{ \tau \in I_{ \varepsilon } \colon \ \tau \leq t \right\}
	\text{ and }
	I_{ \infty }^{ t } \coloneqq \left\{ \tau \in I_{ \infty } \colon \ \tau \leq t \right\}.
\end{equation*} 
 We let $ \Omega  \subseteq\mathbb{ R }^{ 2 } $ be a bounded Lipschitz domain. As is typically done for fracture problems, we impose displacement boundary conditions by considering a larger set $\Omega '$ containing $\Omega$: we fix  a boundary datum 
\begin{equation}\label{eqn:bdryData}
	h \in \wkp^{ 1 , 1}\big( (0, 1 ) {;} \wkp^{ 2 , 2 } ( \Omega' ; \mathbb{ R }^{ 2 }) \big)
	\cap
	\lp^{ \infty } \big( (0, 1 ){;} \wkp^{ 2 , \infty } (\Omega'; \mathbb{ R }^{ 2 } ) \big),
\end{equation}
and define 
\begin{equation}
	\label{def:sneps}
	\mathcal{S }_{ n }^{ \varepsilon }
	\coloneqq
	\big\{
	y \in \gsbvtwotwo(\Omega'; \mathbb{ R }^{ 2 } ) \,\colon\,
	y = \mathrm{id} + \varepsilon h_{ n }^{ \varepsilon }  \text{ on } \Omega'\setminus \overline{\Omega }
	\big\},
\end{equation}
where  $ h_{ n }^{ \varepsilon } \coloneqq h(t_{ n }^{ \varepsilon } ) $.   We assume that also $\Omega'$ and  $\Omega'\setminus \overline{\Omega}$ are Lipschitz sets. 

Following  \cite[Sct.~2.1]{friedrich_griffith_energies_as_small_strain_limit_of_nonlinear_models_for_nomsimple_brittle_materials}, we define the {energy} 
\begin{equation}\label{eqn:existyMini}
	\begin{dcases*}
		\dfrac{1}{\varepsilon^{ 2 } }
		\int_{ \Omega' } W ( \nabla y )\dd{ x }
		+
		\dfrac{ 1 }{ \varepsilon^{ 2 \beta } }
		\int_{ \Omega' }
		\abs{ \nabla^{ 2 } y }^{ 2 }
		\dd{ x }
		+
		\kappa  \hm^{ 1} ( J_{ y } ),
		& if $ J_{ \nabla y } \subseteq J_{ y } $,
		\\
		\infty,
		& else
	\end{dcases*}
\end{equation}
for {$y\in \gsbvtwotwo (\Omega' ;  \mathbb{ R }^{ 2 })$ and} some $ \beta \in (2/3,1) $, $\kappa >0$. This corresponds to the weak formulation of the energy introduced in equation (\ref{eqn:introNonlinE}). Here,   $ W \colon \mathbb{ R }^{ 2 \times 2 } \to [0, \infty) $ satisfies  
\begin{enumerate}[labelindent=0pt,labelwidth=\widthof{\ref{eq:W_zero_on_sod}},label=(W\arabic*),ref=(W\arabic*),itemindent=0em,leftmargin=!]
	\item 
	\label{eq:W_growth_assumptions}$ W $ is locally Lipschitz continuous with $$ \abs{ W ( A ) - W ( B ) } \leq C ( 1 + \max\{ \abs{A} , \abs{ B} \} ) {|A-B|}$$ for some $ C > 0 $, and {there is $r>0$ such that $W$ is $ \cont^{3 } $ in the neighborhood $B_{2r}(\sporth(2))$ of $ \sporth ( 2 ) $.}
	\item  \label{eq:frami} Frame indifference: $ W (RF)=W(F ) $ for all $ F \in \mathbb{ R }^{ 2 \times 2 } , R \in \sporth(2) $.
	\item \label{eq:W_zero_on_sod}$ W ( F ) \geq c \mathrm{dist}^{ 2 } ( F , \sporth( 2 ) ) $ for some $c>0$ for all $ F \in \mathbb{ R }^{ 2 \times 2 } $, $ W(F) = 0 $ if and only if $ F \in \sporth( 2 ) $. 
\end{enumerate}
Note that the local Lipschitz continuity in \ref{eq:W_growth_assumptions} is a consequence of the common assumption $ \abs{ \diff W ( A ) } \lesssim ( 1 + \abs{ A } ) $.  We emphasize that \ref{eq:W_growth_assumptions} excludes $W(A) \to + \infty $ as ${\rm det} \, A\to 0$.  

Considering a continuous  function $y$ on $\R^2$ that is  $C^2$ in $\{x_2> 0\}$ and $\{x_2< 0\}$ and satisfies $J_y = \emptyset$ and $J_{\nabla y} = \lbrace x_2 = 0 \rbrace \cap \Omega$,   one can approximate this by ${y_\eta := y + \eta \, {\rm id}}  \chi_{\{x_2>0\}}  $ for small ${\eta>0}$ to see that, as $\eta \to 0$, the condition $ J_{ \nabla y_\eta } \subseteq J_{ y_\eta } $ is not stable under convergence in measure. Thus, for existence of minimizers of the energy (\ref{eqn:existyMini}), we must pass to its relaxation
\begin{equation}\label{relaxed energy}
	\energyNonlin ( y )
	\coloneqq
	\dfrac{1}{\varepsilon^{ 2 } }
	\int_{ \Omega' } W ( \nabla y )\dd{ x }
	+
	\dfrac{ 1 }{ \varepsilon^{ 2 \beta } }
	\int_{ \Omega' }
	\abs{ \nabla^{ 2 } y }^{ 2 }
	\dd{ x }
	+
	\kappa  \hm^{ 1} ( J_{ y } \cup J_{ \nabla y } ),
\end{equation}
see \cite[Prop.~2.1]{friedrich_griffith_energies_as_small_strain_limit_of_nonlinear_models_for_nomsimple_brittle_materials}.  Starting  from  initial conditions $ y_{ 0 }^{ \varepsilon } \in \mathcal{S }_{ 0 }^{ \varepsilon } $ with bounded energy, i.e.,
\begin{equation}\label{eqn:wellPrepInitial}
 \sup_{ \varepsilon > 0 } \energyNonlin ( y_{ 0 }^{ \varepsilon } ) < \infty,
 \end{equation} 
assuming that   $y_{k }^{ \varepsilon } \in \mathcal{S }_{ k }^{ \varepsilon } $  for $k=0,\ldots,n-1$ have already been found, we iteratively define $ y_{ n }^{ \varepsilon } $ as a minimizer of the energy 
\begin{equation}
	\label{eq:discrete_min_problem}
	\dfrac{1}{\varepsilon^{ 2 } }
	\int_{ \Omega' } W ( \nabla y )\dd{ x }
	+
	\dfrac{ 1 }{ \varepsilon^{ 2 \beta } }
	\int_{ \Omega' }
	\abs{ \nabla^{ 2 } y }^{ 2 }
	\dd{ x }
	+
	\kappa \hm^{ 1} \left( \left( J_{ y } \cup J_{ \nabla y } \right) \setminus \bigcup_{ k = 0 }^{ n-1 } \left( J_{ y_{ k }^{ \varepsilon } } \cup J_{ \nabla y_{ k }^{ \varepsilon } } \right) \right)
\end{equation}
over $y \in \mathcal{S }_{ n }^{ \varepsilon }$. We emphasize that the integrals in the above minimization could be over $\Omega$ or $\Omega'$ due to the boundary conditions, but the jump sets are always taken relative to $\Omega'.$
The existence of a minimizer for the problem \eqref{eq:discrete_min_problem} follows from an adaptation of \cite[Thm.~2.2]{friedrich_griffith_energies_as_small_strain_limit_of_nonlinear_models_for_nomsimple_brittle_materials}, which gives existence of minimizers on an open set. Precisely, the only difference is that we  remove  the energy of the crack from the previous time steps. One can account for this difference by approximating $ \bigcup_{ k=0 }^{ n - 1 }  (J_{ y^\eps_{ k } } \cup J_{ \nabla y^\eps_{ k } }) $ from inside with compact sets $K$ and applying the compactness  result  \cite[Thm.~3.2]{friedrich_griffith_energies_as_small_strain_limit_of_nonlinear_models_for_nomsimple_brittle_materials} on $\Omega'\setminus K$ and then diagonalizing as $K\nearrow \bigcup_{ k=0 }^{ n - 1 } (J_{ y^\eps_{ k } } \cup J_{ \nabla y^\eps_{ k } }) $. The technique used here is similar to the proof of \cite[Thm.~2.8]{dal_maso_francfort_toader_quasistatic_crack_growth_in_nonlinear_elasticity}.


Lastly, we define the \emph{discrete quasistatic  fracture  evolution} $ y_{ \varepsilon } \colon [0,1] \times  \Omega'  \to \R^2 $ with parameter $ \varepsilon > 0 $ as the piecewise constant interpolation of the   minimizers, namely   
\begin{equation}\label{eqn:minMovDeform}
 y_{ \varepsilon } ( t ) =   y_{ n }^{ \varepsilon }   \text{ for } t \in [t_{ n }^{ \varepsilon }, t_{ n + 1 }^{ \varepsilon } ) \quad \text{  for each $0 \le n \le n(\eps)-1$, }  \quad y_{ \varepsilon } (1 ) = y_{ n(\eps) }^{ \varepsilon }.
 \end{equation}
Note that, if $ h_{ \varepsilon } $ denotes the piecewise constant interpolation of $ h $ with respect to $ I_{ \varepsilon } $, which means
\begin{equation*}
	h_{ \varepsilon } ( t ) = h ( t_{ n }^{ \varepsilon } ) \text{ for } t \in [t_{ n }^{ \varepsilon } , t_{ n + 1 }^{ \varepsilon } ),
\end{equation*}
then $ y_{ \varepsilon } $ satisfies the boundary condition $
	y_{ \varepsilon } ( t ) = h_{ \varepsilon } ( t ).$ 
Lastly, we introduce the shorthand notation
\begin{equation}\label{eqn:GammaEps}
	\Gamma_{ \varepsilon } ( t ) 
	\coloneqq 
	\bigcup_{ \substack{\tau \in I_{ \varepsilon }^t  } }
	J_{ y_{ \varepsilon } ( \tau ) } \cup J_{ \nabla y_{ \varepsilon } ( \tau ) }
\end{equation}
{for the total  crack   at time $t.$}
Note then that, as a consequence of the minimization problem (\ref{eq:discrete_min_problem}), for all $t \in [0,1]$, $ y_{ \varepsilon }  (t)  $ is a minimizer of the functional
\begin{equation}
	\label{eq:min_problem_wrt_own_jumpset}
	\dfrac{ 1 }{ \varepsilon^{ 2 } }
	\int_{ \Omega' }
	W ( \nabla z )
	\dd{ x }
	+
	\dfrac{ 1 }{ \varepsilon^{ 2 \beta } }
	\int_{ \Omega' }
	\abs{ \nabla^{ 2 } z }^{ 2 }
	\dd{ x }
	+
	\kappa \hm^{ 1 } \big(
	\left(
	J_{ z } \cup J_{ \nabla z }
	\right)
	\setminus
	\Gamma_\eps  ( t ) 
	\big)
\end{equation}
among all $ z \in \gsbvtwotwo  ( \Omega' ; \mathbb{ R }^{ 2 }  ) $ with $ z = h_{ \varepsilon } ( t ) $ on $ \Omega' \setminus \overline{ \Omega } $.

\subsection{Linear quasistatic fracture evolution}

We introduce the notation
\begin{equation}\label{eq: QQQ}
	\mathbb{ C } \coloneqq \diff^{ 2 } W ( \mathrm{Id} )
	\quad \text{ and } \quad
	Q ( A ) \coloneqq  \mathbb{ C } A  \colon A,
\end{equation}
and  note that $\mathbb{C}$ is  a symmetric positive-definite fourth-order tensor by  \ref{eq:frami} and \ref{eq:W_zero_on_sod}. We define  the linearized energy  as
\begin{equation}\label{linen}
	\energyLin  ( u )
	\coloneqq
	\int_{ \Omega' }
	\dfrac{ 1 }{2}
	Q ( e ( u ) ) 
	\dd{ x }
	+
	\kappa
	\hm^{ 1 } ( J_{ u } ).
\end{equation}
As shown in  \cite{friedrich_griffith_energies_as_small_strain_limit_of_nonlinear_models_for_nomsimple_brittle_materials},  $\energyLin$ is the $\Gamma$-limit of $\energyNonlin$ as $\eps \to 0$ for a suitable notion of convergence of the deformations.  Based on this energy, we now introduce the notion of a  quasistatic fracture evolution  in the setting of linear elasticity,  see also \cite[Thm.~3.1]{friedrich_solombrino_quasistatic_crack_growth_in_2d_linearized_elasticity}. As before, we take the time interval to be $[0,1]$.

\begin{definition}\label{def:linearQuasistatic}
Let $h$ be as in \eqref{eqn:bdryData}  and let $u_0 \in  \gsbd^2(\Omega') $ with $u_0 = h(0)$ on $\Omega' \setminus \overline{\Omega}$.  We say that   displacements  $t \mapsto u(t)\in  \gsbdtwo(\Omega')$ and  cracks  $t \mapsto\Gamma(t)\subseteq \Omega'$ are a  (linear) quasistatic fracture evolution with the boundary condition $h$ and initial condition $ u_{ 0 } $ if the following conditions are satisfied:
\begin{enumerate}[labelindent=0pt,labelwidth=\widthof{\ref{item:energy_balance_def}},label=(\roman*),ref=(\roman*),itemindent=0em,leftmargin=!]
\item \label{item:initial_condition_quasistatic_fracture_evolution}\emph{(Initial condition).} The displacement $u(0)= u_{ 0 }$ minimizes \eqref{linen} among all $v \in \gsbdtwo(\Omega')$ with $v = h(0)$ on $\Omega' \setminus \overline{\Omega}$, and $\Gamma(0) = J_{u(0)}$.  
\item \label{item:displacement_and_boundary_conditions}\emph{(Displacement and boundary conditions).} 
The displacement satisfies $e(u) \in \lp^\infty({(0,1)};\lp^2(\Omega';  \mathbb{R}^{2\times 2}_{\rm sym})) $, and at each time  it holds that  $J_{u(t)}\subseteq \Gamma(t)$ up to an $\mathcal{H}^1$-null set and $u(t) = h(t)$ in $\Omega'\setminus \overline{\Omega}$.

\item \label{item:irreversibilty }\emph{(Irreversibility).}  It holds that $\Gamma(t)\subseteq \Gamma(t')$ if $t \leq t'$.

\item \label{item:minimality}\emph{(Minimality).} The displacement is a global minimizer of the elastic energy given the existing crack, in the sense that
\begin{equation}\nonumber
\int_{\Omega'}\dfrac{1}{2}\CC{e(u(t))}\dd{x} \leq \int_{\Omega'} \dfrac{1}{2}\CC{e(v)}\dd{x} +   \kappa  \mathcal{H}^1(J_v\setminus \Gamma(t))
\end{equation}
for all $v\in \gsbdtwo(\Omega')$ with $v = h(t)$ in $\Omega'\setminus \overline{\Omega}.$
\item \label{item:energy_balance_def}\emph{(Energy balance).} The change in energy is equal to the work done on the system as captured by 
\begin{align}
\notag
\energyLinTot(t)   & = \energyLinTot(0)  + \int_0^t\int_{\Omega'}\C e(u(s)):e( \partial_t h (s))\dd{x}\dd{s},
\shortintertext{where by }
\label{totalenergy}
	\energyLinTot ( t )
	& \coloneqq
	\int_{ \Omega' }
	\dfrac{ 1 }{ 2 }
	Q \big( e ( u ( t ) ) \big) \dd{ x }
	+
	\kappa
	\hm^{ 1 } \left( \Gamma(t) 
	\right)
\end{align}
we denote the  total energy at time $t \in [0,1]$.
\end{enumerate}
\end{definition}

We remark that existence of linear quasistatic fracture evolutions for the Griffith energy  has been shown in \cite{friedrich_solombrino_quasistatic_crack_growth_in_2d_linearized_elasticity} by means of time-discrete approximations (analogous to \eqref{eq:discrete_min_problem}).  

\subsection{Main result}
To describe the convergence of the deformations $y_\eps$ to a displacement $u$,  we need to introduce a proper type of convergence.  Typically,  in linearization results,    one considers the asymptotic behavior of the rescaled displacements $ \bar{u}_\eps  \coloneqq (y_\eps -{\rm id})/\eps.$ However, as the material may fracture, it is not possible to linearize around the single rigid motion given by ${\rm id}$    as pieces may be broken {off}. On these pieces, compactness cannot be expected and we essentially have no information except for the fact that the  elastic energy should vanish asymptotically due to {the} minimality of $y_\eps$ in the sense of \eqref{eq:discrete_min_problem}.

Let us now describe the convergence towards a limit $u(t) \in \gsbdtwo(\Omega')$ {for} $t \in [0,1]$. {For a crack $\Gamma(t) \subseteq  \overline{\Omega} \cap \Omega' $ with $\mathcal{H}^1(\Gamma(t))<\infty$, by} $B(t) \subseteq \Omega$ we denote the largest set of finite perimeter (with respect to set inclusion) which satisfies $  \partial^* B(t) \subseteq \Gamma(t)  $ up to an $\mathcal{H}^1$-negligible set (see \Cref{lemma:characterization_of_bad_set} for existence and uniqueness, and recall that $\partial^*$ is with respect to $\Omega'$). This set represents the `broken off pieces,' and by $G(t) := \Omega' \setminus B(t)$ instead we denote the `good set', {which}   in particular {has} $\Omega' \setminus \overline{\Omega} \subseteq G(t)$. Intuitively, if a point $ x \in \Omega $ is connected to $ \Omega' \setminus \overline{ \Omega } $ in $ \Omega' \setminus \Gamma ( t ) $, then $ x \in G ( t ) $, see  \Cref{lemma:characterization_of_bad_set} for details. 

\tikzset{every picture/.style={line width=0.75pt}} 

\begin{figure}
	\label{fig:bad_set}

\tikzset{every picture/.style={line width=0.75pt}} 

\begin{tikzpicture}[x=0.75pt,y=0.75pt,yscale=-1,xscale=1]
	
	\draw  [fill={rgb, 255:red, 74; green, 144; blue, 226 }  ,fill opacity=0.6 ] (147,49.75) .. controls (148.5,95.25) and (147.44,98.11) .. (148,139.75) .. controls (171.89,130.56) and (161.44,135) .. (184,126.25) .. controls (185.5,159.25) and (187.5,157.75) .. (190,199.25) .. controls (131.5,204.75) and (86.56,201.89) .. (76.11,191.89) .. controls (64.11,184.56) and (56.9,111.82) .. (74.78,79.44) .. controls (92.65,47.07) and (142,52.25) .. (147,49.75) -- cycle ;
	\draw  [fill={rgb, 255:red, 65; green, 117; blue, 5 }  ,fill opacity=0.6 ][line width=0.75]  (269.22,41.44) .. controls (294.56,61.67) and (267.44,187.44) .. (190,199.25) .. controls (187.67,167.22) and (187.22,166.33) .. (184,126.25) .. controls (164.56,133.89) and (169.22,131.67) .. (148,139.75) .. controls (147.83,119.26) and (147.81,101.74) .. (147.71,86.85) .. controls (147.66,79.41) and (147.59,72.62) .. (147.48,66.45) .. controls (147.43,63.37) and (147.08,52.53) .. (147,49.75) .. controls (180.56,48.11) and (240.56,28.33) .. (269.22,41.44) -- cycle ;
	\draw    (195.89,42.33) .. controls (240.78,49.89) and (205.89,169.44) .. (243.22,166.78) ;
	\draw [color={rgb, 255:red, 208; green, 2; blue, 27 }  ,draw opacity=1 ][line width=1.5]    (147,49.75) -- (148,139.75) ;
	\draw [color={rgb, 255:red, 208; green, 2; blue, 27 }  ,draw opacity=1 ][line width=1.5]    (148,139.75) -- (184,126.25) ;
	\draw [color={rgb, 255:red, 208; green, 2; blue, 27 }  ,draw opacity=1 ][line width=1.5]    (184,126.25) -- (190,199.25) ;
	\draw  [fill={rgb, 255:red, 65; green, 117; blue, 5 }  ,fill opacity=0.6 ] (395,50.17) .. controls (415,40.17) and (440.2,56.2) .. (505.8,58.2) .. controls (500,79.83) and (498.67,85.5) .. (489.4,121.8) .. controls (490.65,123.68) and (539,60.6) .. (539.33,62.17) .. controls (572.18,50.21) and (652.14,48.43) .. (637.67,86.5) .. controls (622.33,121.83) and (637,136.83) .. (614,183.5) .. controls (592.33,217.17) and (583.59,225.49) .. (570.67,224.17) .. controls (544.82,221.52) and (496.67,230.5) .. (486.67,215.5) .. controls (476.67,200.5) and (448.67,199.83) .. (404.33,185.5) .. controls (379.53,186) and (366.33,161.5) .. (351.33,147.83) .. controls (333,129.83) and (332,97.5) .. (351,74.83) .. controls (365,54.83) and (378.33,57.17) .. (395,50.17) -- cycle ;
	\draw    (403,47.83) .. controls (390,112.5) and (451.53,137.33) .. (404.33,185.5) ;
	\draw    (590.33,53.83) .. controls (588,111.83) and (493.67,194.83) .. (557.33,223.5) ;
	\draw    (505.8,58.2) -- (539.33,62.17) ;
	\draw [color={rgb, 255:red, 208; green, 2; blue, 27 }  ,draw opacity=1 ][line width=1.5]    (489.4,121.8) -- (466.67,102.5) ;
	\draw [color={rgb, 255:red, 208; green, 2; blue, 27 }  ,draw opacity=1 ][line width=1.5]    (466.67,102.5) -- (472.33,204.83) ;
	\draw  [draw opacity=0][fill={rgb, 255:red, 74; green, 144; blue, 226 }  ,fill opacity=0.6 ] (539.33,62.17) -- (490.11,120.78) -- (505.8,58.2) -- cycle ;
	\draw [color={rgb, 255:red, 208; green, 2; blue, 27 }  ,draw opacity=1 ][line width=1.5]    (489.4,121.8) -- (539.33,62.17) ;
	\draw [color={rgb, 255:red, 208; green, 2; blue, 27 }  ,draw opacity=1 ][line width=1.5]    (505.8,58.2) -- (489.4,121.8) ;
	
	\draw (120.2,56.8) node [anchor=north west][inner sep=0.75pt]  [font=\footnotesize,color={rgb, 255:red, 208; green, 2; blue, 27 }  ,opacity=1 ]  {$\Gamma(t) $};
	\draw (92.4,206.6) node [anchor=north west][inner sep=0.75pt]  [font=\footnotesize,color={rgb, 255:red, 74; green, 144; blue, 226 }  ,opacity=1 ]  {$B( t)$};
	\draw (174.8,93) node [anchor=north west][inner sep=0.75pt]  [font=\footnotesize]  {$\Omega $};
	\draw (227.6,60.4) node [anchor=north west][inner sep=0.75pt]  [font=\footnotesize]  {$\Omega '\setminus \overline{\Omega }$};
	\draw (222,206) node [anchor=north west][inner sep=0.75pt]  [font=\footnotesize,color={rgb, 255:red, 65; green, 117; blue, 5 }  ,opacity=1 ]  {$G( t)$};
	\draw (438,65.83) node [anchor=north west][inner sep=0.75pt]  [font=\footnotesize]  {$\Omega $};
	\draw (473.71,160.4) node [anchor=north west][inner sep=0.75pt]  [font=\footnotesize,color={rgb, 255:red, 208; green, 2; blue, 27 }  ,opacity=1 ]  {$\Gamma(t) $};
	\draw (360.57,116.4) node [anchor=north west][inner sep=0.75pt]  [font=\footnotesize]  {$\Omega '\setminus \overline{\Omega }$};
	\draw (560.86,158.83) node [anchor=north west][inner sep=0.75pt]  [font=\footnotesize]  {$\Omega '\setminus \overline{\Omega }$};
	\draw (429.14,205.11) node [anchor=north west][inner sep=0.75pt]  [font=\footnotesize,color={rgb, 255:red, 65; green, 117; blue, 5 }  ,opacity=1 ]  {$G( t)$};
	\draw (511.14,36.54) node [anchor=north west][inner sep=0.75pt]  [font=\footnotesize,color={rgb, 255:red, 74; green, 144; blue, 226 }  ,opacity=1 ]  {$B( t)$};

\end{tikzpicture}

\caption{Two examples of the `bad set' $ B( t) $ seen in blue.}
\end{figure}
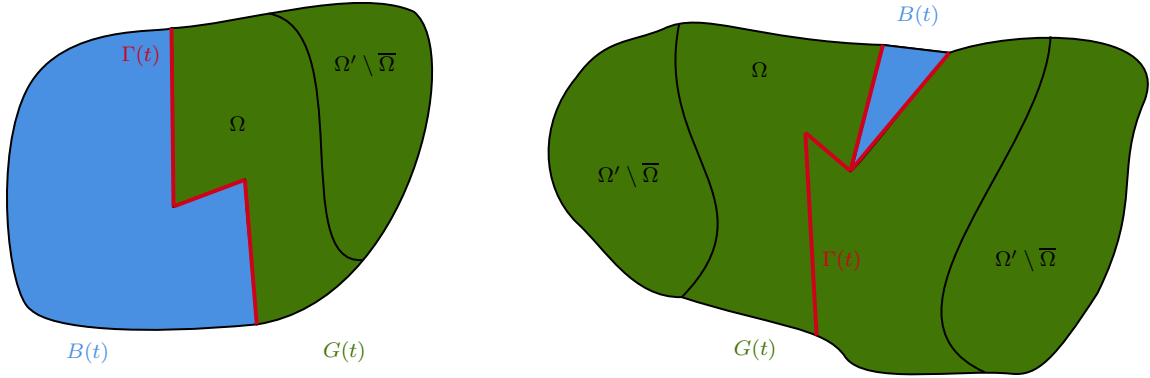

    Now,   we say that  $y_\eps$ converges to $u$, and write $y_\eps \rightsquigarrow u $, if  for {every} $t \in [0,1]$
\begin{align}\label{baruconv0}
\bar{u}_\eps(t)\coloneqq \dfrac{y_\eps(t) -{\rm id}}{\eps}\to u(t) \text{ in measure on $G(t)$},  \quad \quad    e(\bar{u}_\eps(t)) \to e(u(t)) \text{ in measure on } G ( t ) 
\end{align}
and 
\begin{align}\label{baruconv00}
\dfrac{1}{\varepsilon^{ 2 } }
	\int_{ B(t) } {\rm dist}^2\big( \nabla y_\eps(t), \sporth(2)\big)\dd{ x } \to 0. 
	\end{align}
We are now ready to  state our main result.

\begin{theorem}[Linearization of  quasistatic  fracture  evolution]\label{thm:main}
{Let  $\Omega\subseteq \Omega'\subseteq \R^2$ be  bounded   Lipschitz domains  such that  $\Omega' \setminus \overline{\Omega}$ is a Lipschitz  set.}   Let $u_0 \in \gsbd^2(\Omega')$ with $u_0 = h(0)$ on $\Omega' \setminus \overline{\Omega}$ such that $u_{ 0 }$ minimizes \eqref{linen} among all $v \in \gsbdtwo(\Omega')$ with $v = h(0)$ on $\Omega' \setminus \overline{\Omega}$. Let $y_\eps$ be approximate solutions to nonlinear quasistatic fracture as defined in \eqref{eqn:minMovDeform} with well prepared initial data $ (y_{ 0 }^{ \varepsilon})_{ \varepsilon } $  in the sense that   $(y^\eps_0 - {\rm id})/\eps \to u_0  $ in measure on $\Omega'$  and $ \limsup_{ \varepsilon \to 0 } \energyNonlin ( y_{ 0 }^{ \varepsilon } ) \leq  \energyLin   ( u_{ 0 } )  $.

 Then, there  exists a linear quasistatic fracture  evolution  as in Definition \ref{def:linearQuasistatic},  denoted by $(u(t),\Gamma(t))_{t \in [0,1]}$, {such that}
$$
\Gamma(t) = 
	\bigcup_{ \tau \in I_{ \infty }^{ t } }
	J_{ u ( \tau )} \quad \text{for all $t \in [0,1]$}$$
  {and,} up to a subsequence in $\eps$ (not relabeled), $y_\eps \rightsquigarrow u$. 

 Moreover, denoting the total energy  along  the evolution $y_\eps$ by
\begin{align}\label{totalepsenergy}
\energyNonlinTot(t) := 	\dfrac{1}{\varepsilon^{ 2 } }
	\int_{ \Omega' } W ( \nabla y_\eps(t) )\dd{ x }
	+
	\dfrac{ 1 }{ \varepsilon^{ 2 \beta } }
	\int_{ \Omega' }
	\abs{ \nabla^{ 2 } y_\eps(t) }^{ 2 }
	\dd{ x }
	+
	\kappa \hm^{ 1}  \left(  \Gamma_{ \varepsilon } ( t ) \right), 
	\end{align}
	we find
\begin{align}\label{separate0}
\energyNonlinTot(t) \to \energyLinTot(t) \quad \text{for all $t\in[0,1]$}.  
\end{align}
	
\end{theorem}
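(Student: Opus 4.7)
The plan is to adapt the Francfort--Larsen scheme for quasistatic crack growth, carrying out the linearization $\eps \to 0$ and the time-discrete-to-continuum passages simultaneously. First I would use the a priori estimates developed in \Cref{sec:compactness} to obtain, for each $t \in I_\infty$, a Caccioppoli partition of $\Omega'$ with associated infinitesimal rigid motions such that, after subtracting these, the rescaled displacements $\bar u_\eps(t)$ are compact in $\gsbdtwo(\Omega')$ with a limit $u(t)$. The ``bad set'' $B(t)$ collects components where the modification is nontrivial (so $y_\eps(t)$ is close to a rigid motion different from $\mathrm{id}$), and $G(t) = \Omega' \setminus B(t)$ contains the boundary layer. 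Since $\mathcal{H}^1(\Gamma_\eps(t))$ is uniformly bounded and monotone in $t$, a Helly-type selection yields a subsequence and limits $u(\tau) \in \gsbdtwo(\Omega')$ for every $\tau$ in the countable dense set $I_\infty$. I would then define $\Gamma(\tau) := \bigcup_{s \in I_\infty^\tau} J_{u(s)}$ and extend $u$ to all of $[0,1]$ using continuity of $h$ together with the minimality property at times in $I_\infty$. The convergences \eqref{baruconv0}--\eqref{baruconv00} follow from the compactness together with the fact that the integrated distance of $\nabla y_\eps(t)$ to $\sporth(2)$ on $B(t)$ vanishes in the limit, by construction of $B(t)$.

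\textbf{Minimality (main obstacle).} The central step is to show that each $u(\tau)$ for $\tau \in I_\infty$ satisfies the global minimality condition \ref{item:minimality} against an arbitrary linear competitor $v \in \gsbdtwo(\Omega')$ with $v = h(\tau)$ on $\Omega' \setminus \overline\Omega$. I would first use \Cref{thm:density_with_boundary_values} to reduce to regular competitors with controlled jumps respecting the Dirichlet boundary data. Next, I would apply the jump transfer lemma of \cite{friedrich_solombrino_quasistatic_crack_growth_in_2d_linearized_elasticity} to transfer the test jump set $J_v$ onto the discrete cracks $\Gamma_\eps(\tau)$ without increasing surface energy in the limit. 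Then \Cref{lemma:linearization_of_minimization_problem} would be invoked to construct nonlinear competitors $z_\eps$ of the form $z_\eps \approx y_\eps(\tau) + \eps \phi_\eps$ with $\phi_\eps \to v - u(\tau)$, for which the Taylor expansion of $W$ around $\mathrm{Id}$ gives quantitative control of the remainder. The nonlinear minimality \eqref{eq:min_problem_wrt_own_jumpset} applied with $z_\eps$ as competitor, together with $\Gamma$-lim inf on the left-hand side and $\Gamma$-lim sup (by design) on the right, yields the linear minimality inequality for $u(\tau)$.

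\textbf{Energy balance and conclusion.} For the inequality ``$\leq$'' in \ref{item:energy_balance_def}, I would use the standard Riemann-sum argument: at each time step, take $y_{n-1}^\eps$ transported by the boundary data increment as competitor for the $n$-th minimization, expand $W$ to second order, sum over $n$, and pass to the limit. The reverse inequality requires more care because of the bad set: parts of the domain that have broken off should not contribute to the work of external loads. To handle this, I would use the approximate Euler--Lagrange identity of \Cref{cor:approx_euler_lagrange} (derived from the minimality step above) and its time-integrated consequence \Cref{cor:approx_euler_lagrange_consequence}, both of which are localized to the good set, to obtain the matching lower bound. Combining the two inequalities yields the energy balance for the limit pair $(u,\Gamma)$, and \eqref{separate0} then follows together with \Cref{cor:convergence of energies} (ensuring the Hessian penalty vanishes) and $\Gamma$-lim inf of the elastic and surface energies. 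The principal technical obstacle throughout is the delicate interplay between the Caccioppoli-partition modifications required for compactness and the sharp quantitative Taylor expansion required to linearize both the minimality condition and the energy balance uniformly in the time step.
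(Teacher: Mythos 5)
Your outline follows essentially the same route as the paper: a priori energy estimates, compactness after subtraction of piecewise rigid motions on a Caccioppoli partition, the density result and jump transfer lemma, the quantitative linearization (Lemma \ref{lemma:linearization_of_minimization_problem}) with the approximate Euler--Lagrange inequality, and a two-sided Riemann-sum energy balance. The overall architecture is right, but there are two places where your argument has a real gap rather than just omitted detail.

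First, the assertion that the convergences \eqref{baruconv0}--\eqref{baruconv00} ``follow from the compactness together with the fact that the integrated distance of $\nabla y_\eps(t)$ to $\sporth(2)$ on $B(t)$ vanishes in the limit, by construction of $B(t)$'' does not hold. The bad set $B(t)$ is not defined from the $\eps$-dependent modifications; it is defined purely from the limit crack, as the maximal finite-perimeter set with $\partial^* B(t)\subseteq \Gamma(t)$ (\Cref{lemma:characterization_of_bad_set}). Consequently nothing in the compactness step by itself tells you the elastic energy on $B(t)$ vanishes. The paper establishes this through a nontrivial chain in Step 2 of the proof: introduce auxiliary deformations $y^*_\eps$ that replace $y_\eps$ by $\mathrm{id}$ on the pieces of the Caccioppoli partition not touching the Dirichlet boundary, extract a GSBD$^2$ limit $u^*_\eta$ for these, show (via jump transfer, exactly as for $\hat u$) that $u^*_\eta$ solves the same constrained minimization as $u(t)$ so that $e(u^*_\eta)=e(u(t))$, invoke the piecewise rigidity theorem of Chambolle--Giacomini--Ponsiglione to conclude $u^*=u$ on $G(t)$, and only then deduce $e(u(t))=0$ on $B(t)$ (by testing minimality with $\chi_{G(t)}u(t)$) and localize the lower semicontinuity argument to $G(t)$ to kill the bulk energy on $B(t)$. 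Without these steps the limit of $\bar u_\eps$ on $G(t)$ is not uniquely identified and \eqref{baruconv00} is unsubstantiated.

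Second, you do not address the distinction between the two limit objects $u$ (diagonal along $I_\infty$, extended by monotone limits $t_p\uparrow t$) and $\hat u$ (direct $t$-dependent compactness at arbitrary $t$), nor the identification $e(u(t))=e(\hat u(t))$ for a.e.\ $t$ (\Cref{lemma:cont_extension}). This is not cosmetic: minimality at $t\notin I_\infty$ is first proved for $\hat u$ via the jump transfer lemma, while the energy balance needs a measurable-in-time strain; the identity $e(u)=e(\hat u)$ (obtained from strict convexity of $Q$ on $\R^{2\times 2}_{\rm sym}$ and comparing the two minimization problems after Helly selection for the crack lengths $l_\eps$) is what yields $e(u)\in \lp^\infty((0,1);\lp^2(\Omega';\R^{2\times 2}_{\rm sym}))$, makes the Riemann-sum passage rigorous, and upgrades \eqref{separate0} from $I_\infty$ to all times. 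A proof that never introduces the second limit $\hat u$ or its identification with $u$ will stall at the passage from $I_\infty$ to $[0,1]$.
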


 \begin{remark}\label{rem: well}
 	We   make  the following remarks regarding the main result.
\begin{enumerate}[labelindent=0pt,labelwidth=\widthof{\ref{item:improved_convergence}},label=(\roman*),ref=(\roman*),itemindent=0em,leftmargin=!]
	\item The existence of well-prepared initial conditions is a consequence of the $ \Gamma $-convergence of $ \energyNonlin $ to  $\energyLin$,  see \cite[Thm.~2.7]{friedrich_griffith_energies_as_small_strain_limit_of_nonlinear_models_for_nomsimple_brittle_materials}.
\item \label{item:improved_convergence} More precisely, convergence (\ref{separate0}) can be refined to separate convergence of elastic and crack energies, see \eqref{separate} for details. 
\item If we accept that we modify the sequence $ (y_{ \varepsilon })_\eps $ to {account for the broken off pieces}, then we can actually show a much stronger convergence than $ y_{ \varepsilon } \rightsquigarrow u $. This modification procedure is however rather involved and is done in equations (\ref{def:y_rot}), (\ref{eqn:tildeuEps}), (\ref{def:u_eps}), and (\ref{eq:veps_def}). The resulting improved convergence is then seen in Item \ref{item:ueps_converges_in_measure_to_u} and \Cref{rmk:L2StrongEverywhere}.

\end{enumerate}

\end{remark}

The proof of this result will be our focus for the rest of the paper and it is split into manageable pieces.
In Section \ref{sec:mathPrelim}, we introduce a variety of tools that will be essential to our analysis.
 In Section \ref{sec:compactness}, we address compactness of the sequence $(y_\eps)_\eps$ and  its  rescaled  displacements  $(u_\eps)_\eps$ to find a candidate displacement $u$  {that might be}  a linear quasistatic fracture evolution in the sense  of Definition~\ref{def:linearQuasistatic}. In Section \ref{sec:approximateRelationsMinimality}, we prove minimality for {the} limit displacement $u$ at times $ t \in I_{ \infty } $.  Likewise, in Section~\ref{sec:approximateRelationsBalance}, we show that this displacement satisfies two approximate energy-balance relations.
We complete the proof of Theorem \ref{thm:main} in Section \ref{sec:completeProof}. This is done by extending minimality for the limit displacement to all times and showing that the approximate energy-balance relations give rise to the desired energy-balance equation.

\section{ Preliminaries:  Compactness, density, and jump transfer}\label{sec:mathPrelim}

 In this section, we  record  some auxiliary results needed  to prove our main theorem.   

First, we state a compactness result, which is \cite[Thm.~6.1]{friedrich_solombrino_quasistatic_crack_growth_in_2d_linearized_elasticity} but with extra information extrapolated from the proof.
\begin{theorem}
	\label{thm:gsbd2_compactness}
	Let $ \Omega \subseteq  \Omega' \subseteq\mathbb{ R }^{ 2 } $ be  bounded  Lipschitz domains  and $(h_n)_n, h \subseteq \wkp^{1,2}(\Omega';\R^2)$  such that $h_n\to h$ in $\wkp^{1,2}(\Omega';\R^2)$. Let $ (u_{ n } )_{ n \in \mathbb{ N } }   \subseteq  \gsbdtwo ( \Omega ') $ be a sequence with 
	\begin{equation*}
		\norm{ e( u_{ n } ) }_{ \lp^{ 2 } (  \Omega'  ) }
		+
		\hm^{ 1 } ( J_{ u_{ n } } )
		\leq 
		C < 
		\infty
	\end{equation*}
	and $ u_{ n } = h_n $ on $ \Omega' \setminus \overline{\Omega } $. Then, there is a subsequence  (not relabeled),  some $ u \in \gsbdtwo ( \Omega' ) $ with $ u = h $ on $ \Omega' \setminus \overline{\Omega} $, {a sequence of sets $\mathcal{S}_n : = ( S_{ j }^{ n } )_{ j \ge 0 } $ contained in $\Omega$ such that $\mathcal{S}_n \cup(\Omega' \setminus \bigcup_{j\ge 0} S_{j}^n)$ is a Caccioppoli partition of $\Omega'$,} and a sequence of infinitesimal rigid motions $ ( a_{j }^{ n } )_{ j \ge 1 } $ such that
	\begin{enumerate}[labelindent=0pt,labelwidth=\widthof{\ref{item:S0n_vanishes}},label=(\roman*),ref=(\roman*),itemindent=0em,leftmargin=!]
		\item \label{item:rescaled_un }
		$ u_{ n } - \chi_{ S_{ 0 }^{ n } } ( u_{ n } - h_n ) -\sum_{ j=1 }^{ \infty } a_{ j }^{ n } \chi_{ S_{ j }^{ n } } \to u $ pointwise almost everywhere in $\Omega'$,
		\item 
		\label{item:weak_convergence_un}$ e ( u_{ n } ) \rightharpoonup e(u) $ weakly in $ \lp^{ 2 } (  \Omega'  ; \mathbb{ R }^{ 2 \times 2 }_{\rm sym} ) $,
		\item  \label{item:lscsurface} $ \hm^{ 1 } \left( J_{ u } \cap U \right) \leq \liminf_{ n \to \infty } \hm^{ 1} ( J_{ u_{ n } } \cap U ) $ for all open Lipschitz subsets $ U \subseteq \Omega' $,
		\item \label{item:caccioppoli_partitions_small_jump}$ \hm^{ 1 } \left( \bigcup_{ j=0 }^{ \infty } \partial^{ \ast } S_{ j }^{ n }  \setminus J_{ u_{ n } }\right) \to 0 $, and
		\item \label{item:S0n_vanishes}$ \lm^{ 2 } ( S_{ 0 }^{ n } ) \to 0 $.
	\end{enumerate}
\end{theorem}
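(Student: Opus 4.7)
The proof follows closely that of \cite[Thm.~6.1]{friedrich_solombrino_quasistatic_crack_growth_in_2d_linearized_elasticity}; the additional information (the explicit Caccioppoli partition $\mathcal{S}_n$ with properties \ref{item:caccioppoli_partitions_small_jump}--\ref{item:S0n_vanishes}) is read off from its construction. First, I would apply the two-dimensional piecewise Korn--Poincar\'e inequality of \cite{friedrich2018-piecewiseKorn} to each $u_n$ on $\Omega$. This produces a Caccioppoli partition $(P_j^n)_{j\ge 1}$ of $\Omega$ and infinitesimal rigid motions $(b_j^n)_{j\ge 1}$ such that $\bigcup_{j\ge 1}\partial^{\ast}P_j^n \subseteq J_{u_n}$ (up to an $\hm^1$-null set),
$$\sum_{j\ge 1}\hm^1(\partial^{\ast}P_j^n) \lesssim \hm^1(J_{u_n}) + \|e(u_n)\|_{\lp^2(\Omega)},$$
and $\|u_n - \sum_{j\ge 1}b_j^n \chi_{P_j^n}\|_{\lp^2(\Omega)} \lesssim \|e(u_n)\|_{\lp^2(\Omega)}$.

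Next, I would choose thresholds $\delta_n \to 0$ and $R_n \to \infty$ and collect into a single component $S_0^n$ those pieces $P_j^n$ either of small Lebesgue measure ($\lm^2(P_j^n)\le \delta_n$) or on which the rigid motion is large ($\|b_j^n\|_{\lp^\infty(P_j^n)} \ge R_n$). The remaining components, relabelled as $S_j^n$ for $j\ge 1$ with associated $a_j^n := b_j^n$, then carry equibounded rigid motions. Using the relative isoperimetric inequality together with the perimeter bound above, and noting that only finitely many pieces can have measure bounded away from zero, the thresholds can be chosen so that $\lm^2(S_0^n)\to 0$, which gives \ref{item:S0n_vanishes}. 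This ensures that the modified sequence
$$\tilde u_n := u_n - \chi_{S_0^n}(u_n - h_n) - \sum_{j\ge 1} a_j^n \chi_{S_j^n}$$
is bounded in $\lp^2(\Omega')$, has $\hm^1(J_{\tilde u_n})$ uniformly bounded, and satisfies $\tilde u_n = h_n$ on $\Omega'\setminus\overline{\Omega}$.

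The $\gsbd^2$-compactness result of \cite{chambolle_crismale_compactness_and_lsc_in_gsbd} then yields a subsequence and a limit $u\in \gsbdtwo(\Omega')$ with $u=h$ on $\Omega'\setminus\overline{\Omega}$ such that $\tilde u_n\to u$ pointwise a.e., giving \ref{item:rescaled_un}. The weak convergence $e(u_n)\weakly e(u)$ in \ref{item:weak_convergence_un} follows because $e(u_n) = e(\tilde u_n)$ on the interior of each $S_j^n$, the contribution of $S_0^n$ vanishes in $\lp^2$ by \ref{item:S0n_vanishes} and the energy bound, and $h_n\to h$ in $\wkp^{1,2}$. The localized jump-set lower semicontinuity \ref{item:lscsurface} is the Ambrosio-type inequality in $\gsbd$, see \cite{chambolle_crismale_compactness_and_lsc_in_gsbd, friedrich-perugini}. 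Finally, property \ref{item:caccioppoli_partitions_small_jump} follows by construction: the inclusion $\bigcup_{j\ge 1}\partial^{\ast} P_j^n\subseteq J_{u_n}$ is inherited by the pieces kept as $S_j^n$ for $j\ge 1$, while the part of $\partial^{\ast}S_0^n$ not contained in $J_{u_n}$ is bounded by the total perimeter of pieces aggregated into $S_0^n$, which can be made arbitrarily small by a further refinement of the threshold $\delta_n$ combined with the uniform bound on $\sum_{j\ge 1}\hm^1(\partial^{\ast}P_j^n)$.

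The main technical point is the careful balancing in the definition of $S_0^n$: the thresholds $\delta_n$ and $R_n$ must be chosen jointly so that \emph{(a)} $\lm^2(S_0^n)\to 0$, \emph{(b)} the surviving rigid motions $a_j^n$ remain equibounded on their supports (so that $\tilde u_n$ is actually controlled in $\lp^2$), and \emph{(c)} the extra boundary created when forming $S_0^n$ contributes a vanishing $\hm^1$-mass outside $J_{u_n}$. This diagonalization, already carried out within the proof of \cite[Thm.~6.1]{friedrich_solombrino_quasistatic_crack_growth_in_2d_linearized_elasticity}, provides simultaneously all the stated conclusions.
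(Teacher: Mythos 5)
There is a genuine gap in your construction of $S_0^n$. You aggregate into $S_0^n$ the partition components on which the rigid motion $b_j^n$ is large, so that the retained $a_j^n$ are equibounded; but such components can carry Lebesgue measure bounded away from zero. This is precisely the detachment scenario: take $u_n = n\,e_1$ on a fixed half of $\Omega$, $u_n = 0$ on the rest, with a jump across the interface, so $\|e(u_n)\|_{\lp^2} + \hm^1(J_{u_n})$ is bounded while $b_j^n = n\,e_1 \to \infty$ on a piece of fixed measure $1/2$. For any choice of $R_n\to\infty$, that piece eventually lands in $S_0^n$, and then $\lm^2(S_0^n)\not\to 0$, contradicting \ref{item:S0n_vanishes}. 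No diagonalization of $\delta_n$ and $R_n$ repairs this. The theorem allows the $a_j^n$ to be unbounded: the pointwise a.e.\ convergence in \ref{item:rescaled_un } is obtained not from equiboundedness of the surviving rigid motions but from the $\lp^2$-bound on $u_n - \sum_j b_j^n \chi_{P_j^n}$ that the piecewise Korn--Poincar\'e inequality already provides; the blow-up is absorbed by the subtraction $-\sum_j a_j^n\chi_{S_j^n}$, not exiled into $S_0^n$. The set $S_0^n$ (the component $E_k^l$ of \cite[Eq.~(66)]{friedrich_solombrino_quasistatic_crack_growth_in_2d_linearized_elasticity}) is a small-volume error region, e.g.\ the union of tiny components together with the excess left over from the $\theta_n$-accuracy of the Korn partition.

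A secondary issue: you apply piecewise Korn on $\Omega$ and then assign every resulting component to some $S_j^n$, $j\ge 0$. When these sets are viewed in $\Omega'$, their essential boundaries include the interface $\partial\Omega\cap\Omega'$, most of which lies outside $J_{u_n}$ in the situations of interest; this would make $\hm^1\big(\bigcup_j\partial^*S_j^n\setminus J_{u_n}\big)$ fail to vanish, contradicting \ref{item:caccioppoli_partitions_small_jump}. The components that attach through $\partial\Omega$ to $\Omega'\setminus\overline{\Omega}$ without a jump must be merged into the retained complement $\Omega'\setminus\bigcup_{j\ge 0}S_j^n$, not kept as separate $S_j^n$; equivalently, one should work with a Korn-type partition of all of $\Omega'$ and distinguish the components meeting $\Omega'\setminus\overline{\Omega}$.
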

\begin{proof}
	This is an immediate consequence of the compactness result \cite[Thm.~6.1]{friedrich_solombrino_quasistatic_crack_growth_in_2d_linearized_elasticity} combined with the proper bookkeeping. More precisely, the form of the functions in \ref{item:rescaled_un } can be found in  \cite[Eq.~(65)]{friedrich_solombrino_quasistatic_crack_growth_in_2d_linearized_elasticity}, where the component $S_0^n$ there is denoted by $E^l_k$, see  \cite[Eq.~(66)]{friedrich_solombrino_quasistatic_crack_growth_in_2d_linearized_elasticity}. The rest of the properties then  follows  {immediately} from the proof.
\end{proof}

We denote by $\mathcal{W}(\Omega';\R^2)$ the collection of functions  $u \in \gsbdtwo(\Omega')$ such that  $J_u$ is closed and included in a
finite union of closed connected pieces of $C^1$-curves, and $u \in C^\infty(  \Omega'  \setminus J_u;\R^2) \cap {\wkp}^{2,\infty}(\Omega' \setminus J_u;\R^2)$.  Note that functions $u \in  \mathcal{W}(\Omega';\R^2)$ lie in $\gsbvtwotwo ( \Omega' ; \mathbb{ R }^{ 2 } ) $ and automatically satisfy  $\hm^{ 1 } ( J_{ \nabla u } \setminus J_{u }  ) = 0 $.  We present the following density  result  whose proof will be given in \Cref{sec:appendix_proofs}. 

\begin{theorem}
	\label{thm:density_with_boundary_values}
	Given $ h \in \wkp^{ 2 , \infty } ( \Omega'  ; \mathbb{R}^2  ) $ and $ v \in \gsbdtwo ( \Omega' ) $ with $ v = h $ on $ \Omega' \setminus \overline{\Omega } $, there exists $ (w_{ \delta})_{ \delta } \subseteq \mathcal{W} ( \Omega';\R^2 ) $ such that $ w_{ \delta } = h $ on $ \Omega' \setminus \overline{\Omega } $ and,  as $\delta \to 0$,  
	\begin{enumerate}[labelindent=0pt,labelwidth=\widthof{\ref{item:density_jump_set_diff_vanishes}},label=(D\arabic*),ref=(D\arabic*),itemindent=0em,leftmargin=!]
		\item \label{item:density_measure_conv}
		$ w_{ \delta} \to v $ in measure on $\Omega'$, 
		\item \label{item:density_l2_convergence_sym_grad}
		$ e( w_{ \delta } ) \to e( v )  $ in $\lp^2(\Omega'; \mathbb{ R }^{ 2 \times 2}_{ \mathrm{sym} } )$ and  
		\item \label{item:density_jump_set_diff_vanishes} $ \hm^{ 1} \big( J_{ w_{ \delta } } \triangle J_{ v } \big) \to 0 $. 
	\end{enumerate}
	\end{theorem}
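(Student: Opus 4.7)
The plan is to start from an existing density theorem for $\gsbdtwo$ that does not enforce boundary values, then correct the boundary behavior on $\Omega'\setminus\overline{\Omega}$ by gluing with $h$ through a Lipschitz cutoff, using the two-dimensional piecewise Korn inequality of \cite{friedrich2018-piecewiseKorn} to handle the error produced by the gluing.

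First, I would invoke the density result for $\gsbdtwo$ of Chambolle-Crismale \cite{chambolle_crismale_compactness_and_lsc_in_gsbd} to produce $\tilde w_\delta \in \mathcal{W}(\Omega';\R^2)$ with $\tilde w_\delta \to v$ in measure, $e(\tilde w_\delta) \to e(v)$ in $L^2(\Omega')$, and $\mathcal{H}^1(J_{\tilde w_\delta} \triangle J_v) \to 0$, but without any boundary constraint. Since $v = h$ on the open set $\Omega'\setminus\overline{\Omega}$ and $v$ has no jumps there, property \ref{item:density_jump_set_diff_vanishes} yields $\mathcal{H}^1(J_{\tilde w_\delta} \cap (\Omega'\setminus\overline{\Omega})) \to 0$ and $\tilde w_\delta \to h$ in measure on $\Omega'\setminus\overline{\Omega}$.

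Next, for each $\eta > 0$ I would fix a smooth cutoff $\phi_\eta$ with $\phi_\eta \equiv 0$ on $\Omega'\setminus\overline{\Omega}$, $\phi_\eta \equiv 1$ on $\{x \in \Omega \colon \dist(x,\partial\Omega) > \eta\}$, and $|\nabla \phi_\eta| \lesssim \eta^{-1}$, and set $w_{\delta,\eta} := \phi_\eta \tilde w_\delta + (1-\phi_\eta) h$. By construction $w_{\delta,\eta} \in \mathcal{W}(\Omega';\R^2)$, $w_{\delta,\eta} = h$ on $\Omega'\setminus\overline{\Omega}$, and $J_{w_{\delta,\eta}} \subseteq J_{\tilde w_\delta} \cap \{\phi_\eta > 0\}$, so \ref{item:density_jump_set_diff_vanishes} transfers. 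The product rule gives
\[
e(w_{\delta,\eta}) - e(v) = \phi_\eta(e(\tilde w_\delta) - e(v)) + (1-\phi_\eta)(e(h) - e(v)) + \nabla\phi_\eta \odot (\tilde w_\delta - h),
\]
whose first two terms vanish in $L^2$ as $\delta \to 0$ (using $e(v) = e(h)$ on the support of $1 - \phi_\eta$), while the last is bounded in $L^2$ by $C\,\eta^{-1}\|\tilde w_\delta - h\|_{L^2(T_\eta)}$, with $T_\eta := \{0 < \phi_\eta < 1\}$.

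The main obstacle is showing that this last quantity vanishes faster than $\eta$. The 2D piecewise Korn inequality \cite{friedrich2018-piecewiseKorn}, applied to $\tilde w_\delta - h$ (which has $L^2$-small symmetric gradient and small jump measure), produces a Caccioppoli partition $(P_j^\delta)_j$ of $\Omega'$ and infinitesimal rigid motions $(a_j^\delta)_j$ satisfying
\[
\Bigl\| \tilde w_\delta - h - \sum_j a_j^\delta \chi_{P_j^\delta}\Bigr\|_{L^2(\Omega')} \lesssim \|e(\tilde w_\delta) - e(h)\|_{L^2(\Omega')} \to 0.
\]
Since $\tilde w_\delta - h \to 0$ in measure on $\Omega'\setminus\overline{\Omega}$, every piece $P_j^\delta$ with non-negligible intersection with $\Omega'\setminus\overline{\Omega}$ forces $a_j^\delta \to 0$; the union of the remaining pieces has $\mathcal{L}^2$-measure vanishing in $\delta$ and can be absorbed by modifying $\tilde w_\delta$ to equal $h$ there, at the cost of only vanishing errors in $L^2$ and in jump measure. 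This yields $\|\tilde w_\delta - h\|_{L^2(T_\eta)} \to 0$ for each fixed $\eta$. A diagonal choice $\eta = \eta(\delta) \to 0$ slowly enough that $\eta(\delta)^{-1}\|\tilde w_\delta - h\|_{L^2(T_{\eta(\delta)})} \to 0$ then yields the desired sequence $w_\delta := w_{\delta,\eta(\delta)}$.
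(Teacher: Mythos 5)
Your overall architecture (approximate without boundary constraints, then glue with $h$ via a cutoff) is a natural plan, and your opening step of invoking a $\gsbdtwo$-density theorem without boundary conditions is fine. But the argument breaks at exactly the point you flag as ``the main obstacle'': the gluing error $\eta^{-1}\lVert \tilde w_\delta - h\rVert_{\lp^2(T_\eta)}$ does \emph{not} vanish under any diagonal choice $\eta = \eta(\delta)$. For fixed $\eta$, the strip $T_\eta$ sits inside $\Omega$ at a positive distance, and $\tilde w_\delta \to v$ there — so $\lVert \tilde w_\delta - h\rVert_{\lp^2(T_\eta)} \to \lVert v - h\rVert_{\lp^2(T_\eta)}$, which is a fixed positive quantity in general; $v$ need not agree with $h$ anywhere inside $\Omega$. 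Even in the favorable case $v, h$ bounded, $\lVert v-h\rVert_{\lp^2(T_\eta)} \sim \eta^{1/2}$ and dividing by $\eta$ gives $\eta^{-1/2} \to \infty$. Your appeal to piecewise Korn cannot repair this: you assert that $\lVert e(\tilde w_\delta) - e(h)\rVert_{\lp^2(\Omega')} \to 0$, but this converges to $\lVert e(v) - e(h)\rVert_{\lp^2(\Omega')}$, which is nonzero, so the rigid-motion approximation of $\tilde w_\delta - h$ is bounded, not small. Likewise, the claim that the pieces not meeting $\Omega'\setminus\overline{\Omega}$ have vanishing measure is unjustified — piecewise Korn controls the total perimeter of the partition, not the area of the interior pieces.

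The missing idea is the translation trick. In the paper's argument, the gluing happens on a strip $\Psi_\delta$ of width $\sim\delta$ hugging $\partial\Omega$ from inside, and — crucially — the function being matched against $h$ on that strip is not $v$ but the \emph{shifted} $v_\delta(y) := v(y + \delta \nu)$, where $\nu$ is (locally) the outward normal. Shifting a point in $\Psi_\delta$ outward by $\delta$ lands it in $\Omega'\setminus\overline{\Omega}$, where $v = h$. Consequently $v_\delta = h(\cdot + \delta \nu)$ on $\Psi_\delta$, so $\lVert v_\delta - h\rVert_{\lp^2(\Psi_\delta)} \lesssim \delta\,\lVert \nabla h\rVert_\infty\,\mathcal{L}^2(\Psi_\delta)^{1/2}$, and after division by the cutoff gradient $\sim \delta^{-1}$ the error is $\mathcal{L}^2(\Psi_\delta)^{1/2} \to 0$. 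That cancellation is what makes the gluing admissible, and your fixed-scale cutoff has no analogue of it. (The role of piecewise Korn in the paper is also different from yours: it is used once at the start to reduce from $v \in \gsbdtwo$ to $v \in \gsbdtwo \cap \lp^2$, after which the translation argument is run locally on a Besicovitch cover of small squares along $\partial\Omega$ where $J_v$ has small density.) Without the shift, your construction produces $e(w_{\delta,\eta(\delta)})$ that does not converge to $e(v)$ in $\lp^2$, and \ref{item:density_l2_convergence_sym_grad} fails.
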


By keeping track of the bound on the second derivatives, we can also deduce the following refined \emph{jump transfer lemma}.

\begin{theorem}[Refined Jump Transfer in $ \gsbdtwo $]
	\label{thm:jump_transfer_refined}
	Let  {$ \Omega   \subseteq   \Omega' $ }be bounded Lipschitz domains in $ \mathbb{ R }^{ 2 } $ such that $ \Omega' \setminus \overline{ \Omega } $ has Lipschitz boundary. Let $ \ell \in \mathbb{ N } $ and let $  (h_{ n }^{ l } )_{ n }  \subseteq  \wkp^{ 2, \infty } ( \Omega' ; \mathbb{ R }^{ 2 } ) $ be bounded sequences for $ l = 1 , \ldots, \ell$. Let $ ( u_{ n }^{ l })_{n } $ be sequences in $ \gsbdtwo ( \Omega' ) $ and $ u^{ l } \in \gsbdtwo ( \Omega' ) $ such that
	\begin{enumerate}[labelindent=0pt,labelwidth=\widthof{\ref{item:condition_measure_convergence}},label=(\arabic*),ref=(\arabic*),itemindent=0em,leftmargin=!]
		\item $ \norm{ e ( u_{ n }^{l } ) }_{ \lp^{ 2 } }
		+ \hm^{ 1 } \left( J_{ u_{ n }^{ l } } \right) \leq M $ for all $ n \in \mathbb{ N } $,
		\item \label{item:condition_measure_convergence} $ u_{ n }^{ l } \to u^{ l } $ in measure in $ \Omega' $, $ u_{ n }^{ l } = h_{ n }^{ l } $ on $ \Omega' \setminus \overline{ \Omega } $,
	\end{enumerate}
	for all $ l = 1 , \ldots , \ell $. Then there exists a  subsequence in $ n $   (not   relabeled) with the following property: For each $ \phi \in \mathcal{ W } ( \Omega' ; \mathbb{ R }^{ 2 } ) $, there is a sequence $  (\phi_{ n }  )_{n }  \subseteq  \gsbvtwotwo ( \Omega' ; \mathbb{ R }^{ 2 } ) $ with $ \phi_{ n } = \phi $ on $ \Omega' \setminus \overline{ \Omega } $ such that, as $ n \to \infty $,
	\begin{enumerate}[labelindent=0pt,labelwidth=\widthof{\ref{item:jump_transfer_l_inf_estimates}},label=(\roman*),ref=(\roman*),itemindent=0em,leftmargin=!]
		\item $ \phi_{ n } \to \phi $ in measure in  $\Omega'$, 
		\item $ e( \phi_{ n } ) \to e ( \phi ) $ in $ \lp^{ 2 }  ( \Omega' ; \mathbb{ R }^{ 2 \times 2}_{ \mathrm{sym} }   ) $,
		\item \label{eqn:JTjumpGrad}$ \hm^{ 1 } \left( 
		\left(
		\left( J_{ \phi_{ n } } \cup J_{ \nabla \phi_{ n } } \right) \setminus
		\bigcup_{ l = 1 }^{ \ell } 
		J_{ u_{ n }^{ l } }
		\right)
		\setminus
		\left(
		J_{ \phi } \setminus
		\bigcup_{ l = 1 }^{ \ell }
		J_{ u^{ l } } 
		\right)
		\right)
		\to 0 $,
		\item 
		\label{item:jump_transfer_l_inf_estimates}
		$ \norm{ \nabla \phi_{ n } }_{ \lp^{ \infty } } \leq C \norm{ \nabla \phi }_{ \lp^{ \infty } } $ and 
		$ \norm{ \nabla^{ 2 } \phi_{ n } }_{ \lp^{ \infty  } }
		\leq C  \norm{ \nabla^{ 2 } \phi }_{ \lp^{ \infty } } $.
	\end{enumerate}
\end{theorem}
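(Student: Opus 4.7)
The strategy is to revisit the construction of the jump transfer lemma in \cite{friedrich_solombrino_quasistatic_crack_growth_in_2d_linearized_elasticity} with additional bookkeeping that exploits the higher regularity afforded by $\phi \in \mathcal{W}(\Omega';\mathbb{R}^2)$. That original result already delivers a sequence $(\phi_n)_n$ satisfying (i), (ii), a version of (iii) without the $J_{\nabla \phi_n}$ term, and the boundary condition $\phi_n = \phi$ on $\Omega' \setminus \overline{\Omega}$. The refinement here therefore amounts to two distinct upgrades: (a) accommodating the extra gradient-jump contribution $J_{\nabla \phi_n}$ in \ref{eqn:JTjumpGrad}, and (b) the $L^\infty$ estimates \ref{item:jump_transfer_l_inf_estimates} on the first and second derivatives of $\phi_n$.

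For (a), I would recall that in the standard construction $\phi_n$ is obtained from $\phi$ only by local modifications inside thin neighborhoods of the approximating jumps $\bigcup_l J_{u_n^l}$ together with $J_\phi$, typically via smooth cutoff-glued translations of $\phi$. Since $\phi \in \mathcal{W}(\Omega';\mathbb{R}^2)$ is $C^\infty$ away from $J_\phi$ and $J_\phi$ is a finite union of $C^1$ arcs, the only new gradient jumps of $\phi_n$ lie in these thin neighborhoods; combining a measure-theoretic accounting of these sets with property \ref{item:lscsurface} of \Cref{thm:gsbd2_compactness} applied to the sequences $u_n^l$ then gives the refined formulation \ref{eqn:JTjumpGrad}.

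The main obstacle is (b). The local transfer step has the schematic form
\begin{equation*}
\phi_n \;=\; \phi \;+\; \chi_n\,(\tau_n \phi - \phi),
\end{equation*}
where $\chi_n$ is a smooth cutoff at a scale $d_n$ comparable to the distance between $J_\phi$ and its approximation inside $\bigcup_l J_{u_n^l}$, and $\tau_n$ denotes a translation by $O(d_n)$. The estimates $|\tau_n\phi - \phi| \lesssim d_n \|\nabla \phi\|_{L^\infty}$ and $|\nabla(\tau_n\phi - \phi)| \lesssim d_n \|\nabla^2 \phi\|_{L^\infty}$, combined with $\|\nabla^k \chi_n\|_{L^\infty} \lesssim d_n^{-k}$ for $k = 1,2$, ensure that every term in the product-rule expansion of $\nabla \phi_n$ and $\nabla^2 \phi_n$ is dominated by a universal constant times $\|\nabla \phi\|_{L^\infty}$ or $\|\nabla^2 \phi\|_{L^\infty}$, respectively — crucially, the powers of $d_n^{-1}$ introduced by derivatives of $\chi_n$ are compensated exactly by the powers of $d_n$ from the difference $\tau_n \phi - \phi$. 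Since only finitely many such transfer steps are needed (one per $C^1$ arc in $J_\phi$), the global $L^\infty$ bounds follow, and a standard diagonal argument over the scale parameters produces the desired sequence $(\phi_n)_n$.
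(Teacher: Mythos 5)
Your proposal has a genuine gap in the argument for the $L^\infty$ bounds, and it also misidentifies the underlying construction in the reference \cite{friedrich_solombrino_quasistatic_crack_growth_in_2d_linearized_elasticity}. The Francfort--Solombrino jump transfer is not built from cutoff-glued translations but from a reflection argument on a finite (but $n$-dependent) number of squares, and the paper's entire refinement consists of replacing that reflection with the one in \Cref{lemma:reflection}, namely $\hat\phi(x_1,x_2) = 3\phi(x_1,-x_2) - 2\phi(x_1,-2x_2)$ on $R^-$. The point of this particular affine combination is that the coefficients $(3,-2)$ (with scaling factors $1$ and $2$) are chosen so that both the trace and the normal derivative of $\hat\phi$ match those of $\phi$ across $\{x_2=0\}$, so no cutoff function is ever differentiated. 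That is exactly what gives the universal $L^\infty$ bounds on $\nabla\hat\phi$ and $\nabla^2\hat\phi$ and simultaneously the property $J_{\nabla\hat\phi}\subseteq J_{\hat\phi}$, which feeds into item \ref{eqn:JTjumpGrad}.

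Your cutoff-translation ansatz $\phi_n = \phi + \chi_n(\tau_n\phi - \phi)$ does not give item \ref{item:jump_transfer_l_inf_estimates}. Expanding the second gradient, the term $\nabla^2\chi_n\,(\tau_n\phi-\phi)$ is bounded only by $d_n^{-2}\cdot d_n\,\norm{\nabla\phi}_{\lp^\infty} = d_n^{-1}\norm{\nabla\phi}_{\lp^\infty}$, which diverges as $d_n\to 0$ and, in any case, is not controlled by $\norm{\nabla^2\phi}_{\lp^\infty}$. Your claim that "the powers of $d_n^{-1}$ introduced by derivatives of $\chi_n$ are compensated exactly" is true at the level of one derivative but fails at two: the difference $\tau_n\phi-\phi$ yields only one power of $d_n$, while $\nabla^2\chi_n$ eats two. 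To make a cutoff argument work you would need a first-order Taylor correction inside the cutoff (so that the bracket is $O(d_n^2\norm{\nabla^2\phi}_{\lp^\infty})$), but then one must re-examine what happens to the jump set and the boundary matching, and that is not done in the proposal. The reflection construction sidesteps all of this, and the accompanying item \ref{item:reflection_jump} is also what delivers the gradient-jump accounting in \ref{eqn:JTjumpGrad} directly, rather than by the heuristic thin-neighborhood counting you sketch.
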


For the proof, we can closely follow \cite[Thm.~5.1]{friedrich_solombrino_quasistatic_crack_growth_in_2d_linearized_elasticity}, where the statement {has been proven} without item \ref{item:jump_transfer_l_inf_estimates} and without the jump of the gradient {in \ref{eqn:JTjumpGrad}}. To {obtain these additions}, we adapt the extension lemma in \cite[Lem.~5.2]{friedrich_solombrino_quasistatic_crack_growth_in_2d_linearized_elasticity} in the following way. 

\begin{lemma}
	\label{lemma:reflection}
	{For $r_1,r_2>0$, let $ R^+$, $ R^{ - } ,$ and $R$ be given by a common rotation and translation of 
	\begin{equation} \label{eqn:Reccies}
	(0, r_{ 1 } ) \times ( 0, r_{ 2 } ) , \quad( 0, r_{ 1 } ) \times ( - r_{ 2 } /2 , 0 ), \quad \text{and} \quad ( 0, r_{ 1 } )\times (-r_2/2,r_2),
	\end{equation} respectively.  Let $ \phi \in \gsbvtwotwo ( R^+;  \R^2  ) $ with $ \hm^{ 1 } \left( J_{ \nabla \phi } \setminus J_{ \phi } \right)=0$. Then there is an extension of $\phi$ onto $R^-$ given by $ \hat \phi \in \gsbvtwotwo ( { R }; \mathbb{ R }^{ 2 } ) $ satisfying the} properties
	\begin{enumerate}[labelindent=0pt,labelwidth=\widthof{\ref{item:reflection_jump}},label=(\roman*),ref=(\roman*),itemindent=0em,leftmargin=!]
		\item $ \hm^{ 1 } ( J_{ \hat{ \phi } } ) \leq C \hm^{ 1 } ( J_{ \phi } ) $,
		\item  $ \Vert \nabla \hat{ \phi }  \Vert_{ \lp^{ \infty }  ({R})   }
		\leq C  \Vert\nabla \phi   \Vert_{ \lp^{ \infty }    ({R^+})  } $,  
		\item 		$  \Vert \nabla^{ 2 } \hat{ \phi }   \Vert_{ \lp^{ \infty }  ({R})   }
		\leq C  \Vert  \nabla^2  \phi   \Vert_{ \lp^{ \infty }    ({R^+})  } $, and 
		\item \label{item:reflection_jump}$ \hm^{ 1 } \left( J_{ \nabla \hat{ \phi } } \setminus J_{ \hat{ \phi } } \right)= 0  $
	\end{enumerate} 
	for some universal constant $ C \ge 1$ independent of $ R $ and $ \phi $.
\end{lemma}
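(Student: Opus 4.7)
Up to applying a rigid motion, I reduce to the case $R^+=(0,r_1)\times(0,r_2)$, $R^-=(0,r_1)\times(-r_2/2,0)$, and $R=(0,r_1)\times(-r_2/2,r_2)$, so the reflection is performed across the segment $\{x_2=0\}$. The plan is to use a \emph{Hestenes-type higher-order reflection} which preserves continuity of both the function and its first derivative. For $(x_1,x_2)\in R^-$, set
\begin{equation*}
\hat\phi(x_1,x_2):=3\,\phi(x_1,-x_2)\;-\;2\,\phi(x_1,-2x_2),
\end{equation*}
which is well defined since $-x_2\in(0,r_2/2)$ and $-2x_2\in(0,r_2)$. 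At smooth points of $\phi$, the identities $3-2=1$ and $(-3)\cdot(-1)+(-2)\cdot(-2)=-1$ (for the outward derivative) show that $\hat\phi$ and $\partial_{x_2}\hat\phi$ match across $\{x_2=0\}$, so the glued function is $C^1$ at the interface where $\phi$ is smooth near $\{x_2=0\}$.

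Estimates (ii) and (iii) are immediate from the chain rule applied to the affine reflection maps $\sigma_1(x_1,x_2)=(x_1,-x_2)$ and $\sigma_2(x_1,x_2)=(x_1,-2x_2)$: since their Jacobians are bounded by $2$, differentiating once or twice produces only a universal multiplicative factor coming from the Hestenes coefficients $\{3,-2\}$ and powers of $2$. Both estimates are independent of $r_1,r_2$, giving a universal constant $C$.

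For the jump-set bounds, any jump of $\hat\phi$ inside $R^-$ must arise from a jump of $\phi$ pulled back through either $\sigma_1$ or $\sigma_2$, hence
\begin{equation*}
J_{\hat\phi}\cap R^- \;\subseteq\; \sigma_1^{-1}(J_\phi)\cup\sigma_2^{-1}(J_\phi)
\end{equation*}
up to an $\hm^1$-null set. Since $\sigma_1,\sigma_2$ are bi-Lipschitz with constants independent of $R$, the length is controlled and (i) follows. The $C^1$ matching on $\{x_2=0\}$ away from $J_\phi$ guarantees that no spurious jumps are introduced along the interface. Applying the chain rule to $\nabla\hat\phi$ and using the hypothesis $\hm^1(J_{\nabla\phi}\setminus J_\phi)=0$ yields
\begin{equation*}
J_{\nabla\hat\phi}\cap R^- \;\subseteq\; \sigma_1^{-1}(J_{\nabla\phi})\cup\sigma_2^{-1}(J_{\nabla\phi}) \;\subseteq\; \sigma_1^{-1}(J_{\phi})\cup\sigma_2^{-1}(J_{\phi}),
\end{equation*}
up to $\hm^1$-null sets, which is contained in $J_{\hat\phi}$. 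Again the $C^1$ matching rules out new jumps of $\nabla\hat\phi$ on $\{x_2=0\}$, which is how (iv) emerges.

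The main obstacle I anticipate is the careful verification that the above reflection preserves the $\gsbvtwotwo$ structure, namely that both $\hat\phi$ and $\nabla\hat\phi$ lie in $\gsbv^2(R;\cdot)$ with the jump sets and approximate gradients behaving as described under the bi-Lipschitz maps $\sigma_1,\sigma_2$. This is naturally handled by a slicing argument in the $x_2$-direction on almost every horizontal line, together with the invariance of $\gsbv$ (and its approximate differentiability) under affine changes of variable, but the bookkeeping at the interface $\{x_2=0\}$ must be executed with care so that the Hestenes linear combination does not accidentally force a cancellation of traces outside of $J_\phi$.
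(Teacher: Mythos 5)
Your proposal is correct and uses exactly the same Hestenes-type reflection $\hat\phi(x_1,x_2)=3\phi(x_1,-x_2)-2\phi(x_1,-2x_2)$ as the paper, with the same verification of $C^1$ matching of traces across $\{x_2=0\}$ and the same (brief) chain-rule and bi-Lipschitz reasoning for the length and $\lp^\infty$ bounds. The paper's proof is in fact terser than yours, leaving the deduction of items (i)--(iv) from the interface matching largely implicit.
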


\begin{proof}[Proof of \Cref{lemma:reflection}]
	{We may assume $R^+$, $R^-$, and $R$ are as in equation (\ref{eqn:Reccies}), respectively. For $ x \in R^{- } $ we define $\hat \phi$ by  }
	\begin{equation*}
		\hat{ \phi } ( x) = 3 \phi ( x_{ 1 }, -x_{ 2 } ) -2 \phi ( x_{ 1 }, -2 x_{ 2 } ). 
	\end{equation*}
	{To see that} no jump is created by $ \hat{ \phi } $ and $ \nabla \hat{ \phi } $ along $ \left\{ x_{ 2 } = 0 \right\} $, we show that the traces of $ \hat{ \phi } $ and $ \nabla \hat{ \phi } $ from inside $R^-$ agree with the traces of $ { \phi } $ and $ \nabla { \phi } $ from inside $R^+$. It is immediate that   $\phi(x_1,0) = \hat \phi(x_1,0)$ for $x_1 \in (0,r_1)$.  Additionally, the absolutely continuous derivatives are given by
	\begin{align*}
		\partial_{ x_{ 1 } } \hat{ \phi } ( x )
		& =
		3 \partial_{ x_{ 1 } } \phi ( x_{ 1 } , - x_{2 } )
		-
		2 \partial_{ x_{ 1 } } \phi ( x_{ 1 } , - 2x_{ 2 } ),
		\\
		\partial_{ x_{ 2 } } \hat{ \phi } ( x ) 
		& =
		-3 \partial_{ x_{ 2 } } \phi ( x_{ 1 } , -x_{ 2 } )
		+4 \partial_{ x_{ 2 } } \phi ( x_{ 1 } , - 2 x_{ 2 } ),
	\end{align*}
	so we have continuity {in the variable $x_2$} at  $\lbrace x_2 = 0\rbrace$, proving  $\nabla \phi(x_1,0) = \nabla \hat \phi(x_1,0)$ for $x_1 \in (0,r_1)$.  
	{Given that $\hat \phi$ is defined by reflection and creates no additional jump on $\{x_2 = 0\}$ the conclusion of the lemma follows.}
\end{proof}
\begin{proof}[Proof of \Cref{thm:jump_transfer_refined}]
	Inspecting the proof of the jump transfer lemma in $ \gsbdtwo $  (\cite[Thm.~5.1]{friedrich_solombrino_quasistatic_crack_growth_in_2d_linearized_elasticity}),  we see that $ \phi_{ n } $ is defined using a reflection argument \cite[Lem.~5.2]{friedrich_solombrino_quasistatic_crack_growth_in_2d_linearized_elasticity} on a finite, but increasing number of squares. {Using instead the reflection from \Cref{lemma:reflection},} we see that the additional properties are {satisfied by the construction.}
\end{proof}

\section{Compactness of the nonlinear evolution}\label{sec:compactness}

We first obtain a priori estimates on the sequence $(y_\eps)_\eps$, essentially coming from  an  approximate energy-balance {relation}. With these bounds, we apply techniques developed in \cite{friedrich_griffith_energies_as_small_strain_limit_of_nonlinear_models_for_nomsimple_brittle_materials} to identify a limiting displacement at countable times $ t \in I_{ \infty } $.

\subsection{A priori estimates}
As in  \cite{dal_maso_francfort_toader_quasistatic_crack_growth_in_nonlinear_elasticity, francfort_larsen_existence_and_convergence_for_quasi_static_evolution_in_brittle_fracture},   we establish some a priori estimates on the total energy $\energyNonlinTot$  defined in \eqref{totalepsenergy}.

\begin{lemma}[{A priori estimate}]\label{lem:aprioriEst}
Letting $y_\eps$ {and $\Gamma_\eps$ be defined as in \eqref{eqn:minMovDeform} and \eqref{eqn:GammaEps}, respectively,} the following estimates are satisfied:
\begin{equation}
	\label{eq:a_priori_estimate}
	\sup_{ \varepsilon > 0 , t \in [0,1] }
	\energyNonlinTot (t)
	< \infty,
\end{equation}
{and,  for all $t_n^\eps \in I_\eps$,  
\begin{equation}
	 \energyNonlinTot (t_{ n }^{ \varepsilon })
	-
	\sigma_{ \varepsilon }
	\leq{}
	\energyNonlin (y_{ 0 }^{ \varepsilon } )
	+ 
	\dfrac{ 1 }{ \varepsilon }
	\int_{ 0 }^{ t_{ n }^{ \varepsilon } }
	\int_{{ \{ \nabla y_{ \varepsilon } \in B_{ r } ( \sporth( 2 ) )\} }}
	\langle \diff W ( \nabla y_{ \varepsilon } ) , \nabla  \partial_t h \rangle
	\dd{ x }
	\dd{ t }
	\label{eq:a_priori_inequality_error_reduced}
\end{equation}
for $\sigma_\eps\to 0$  as $\eps \to 0 $  (independently of the chosen time $t_{ n }^{ \varepsilon }$),  	where $r$ is as in \ref{eq:W_growth_assumptions}.  }
\end{lemma}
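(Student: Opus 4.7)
The plan is to derive both claims from a discrete energy balance obtained by exploiting the minimality of $y_n^\eps$ at each time step against an explicit competitor, followed by telescoping and a Gronwall bootstrap. The natural competitor is $z_n := y_{n-1}^\eps + \eps(h_n^\eps - h_{n-1}^\eps) \in \mathcal{S}_n^\eps$, which satisfies $J_{z_n} \cup J_{\nabla z_n} = J_{y_{n-1}^\eps} \cup J_{\nabla y_{n-1}^\eps}$, so the new-crack penalty in \eqref{eq:discrete_min_problem} vanishes and the minimality inequality $\energyNonlin(y_n^\eps) \le \energyNonlin(z_n)$ reduces to controlling the elastic and Hessian differences. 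The Hessian difference contributes an $\eps^{1-\beta}$-cross term, handled by Cauchy--Schwarz using the bound $\|\nabla^2 y_{n-1}^\eps\|_{\lp^2} \le \eps^\beta\sqrt{\energyNonlin(y_{n-1}^\eps)}$ together with the $\wkp^{1,1}((0,1);\wkp^{2,2})$-regularity of $h$, plus a pure $\eps^{2-2\beta}$-term; both are summable to $\sigma_\eps$ since $\beta \in (2/3,1)$.

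For the elastic difference, split the integration into $I_{n-1} := \{\nabla y_{n-1}^\eps \in B_r(\sporth(2))\}$ and $O_{n-1} := \Omega' \setminus I_{n-1}$. On $I_{n-1}$, a second-order Taylor expansion of $W$---valid by the $C^3$-regularity in \ref{eq:W_growth_assumptions} and the vanishing of $W$ and $\diff W$ on $\sporth(2)$ implied by \ref{eq:frami} and \ref{eq:W_zero_on_sod}---yields exactly the principal term $\eps^{-1}\langle \diff W(\nabla y_{n-1}^\eps), \nabla(h_n^\eps - h_{n-1}^\eps)\rangle$ plus a quadratic remainder $O(|\nabla(h_n^\eps - h_{n-1}^\eps)|^2)$, whose sum over $n$ vanishes since $\sum_n \|\nabla(h_n^\eps - h_{n-1}^\eps)\|_{\lp^2}^2 \le \max_n \|\nabla(h_n^\eps - h_{n-1}^\eps)\|_{\lp^2} \cdot \int_0^1 \|\nabla \partial_t h\|_{\lp^2}\,\dd t \to 0$ by absolute continuity of the Lebesgue integral. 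On $O_{n-1}$, I would combine the Lipschitz bound \ref{eq:W_growth_assumptions} with $|\nabla y_{n-1}^\eps| \le C\,\mathrm{dist}(\nabla y_{n-1}^\eps, \sporth(2))$ (valid since $\mathrm{dist} \ge r$ there) and the small-measure estimate $\lm^2(O_{n-1}) \le \eps^2 \energyNonlin(y_{n-1}^\eps)/(cr^2)$ coming from \ref{eq:W_zero_on_sod}. A first Cauchy--Schwarz then gives $\frac{1}{\eps^2}\int_{O_{n-1}} |W(\nabla z_n) - W(\nabla y_{n-1}^\eps)| \le C \sqrt{\energyNonlin(y_{n-1}^\eps)}\,\|\nabla(h_n^\eps - h_{n-1}^\eps)\|_{\lp^2}$, which---together with the analogous estimate for the principal term using $|\diff W(A)| \le C\,\mathrm{dist}(A, \sporth(2))$ near $\sporth(2)$---is Gronwall-amenable and, after a discrete Gronwall argument, furnishes the uniform bound \eqref{eq:a_priori_estimate}.

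Once \eqref{eq:a_priori_estimate} is available, the outside contribution must be sharpened to absorb it into $\sigma_\eps$, which is where I would invoke the planar Sobolev embedding $\wkp^{2,2} \hookrightarrow \wkp^{1,4}$. Setting $b_n := \int_{t_{n-1}^\eps}^{t_n^\eps} \|\partial_t h\|_{\wkp^{2,2}}\,\dd t$ one obtains $\|\nabla(h_n^\eps - h_{n-1}^\eps)\|_{\lp^4} \le C b_n$, and Hölder's inequality against the small measure of $O_{n-1}$ yields
\begin{equation*}
\|\nabla(h_n^\eps - h_{n-1}^\eps)\|_{\lp^2(O_{n-1})} \le \|\nabla(h_n^\eps - h_{n-1}^\eps)\|_{\lp^4}\,\lm^2(O_{n-1})^{1/4} \le C b_n \sqrt{\eps}\,\energyNonlin(y_{n-1}^\eps)^{1/4}.
\end{equation*}
This extra $\sqrt{\eps}$-factor, combined with the now uniformly bounded $\energyNonlin(y_{n-1}^\eps)$ and the bound $\sum_n b_n \le \|h\|_{\wkp^{1,1}((0,1);\wkp^{2,2})}$, reduces the outside sum to $O(\sqrt{\eps})$, which is absorbable into $\sigma_\eps$. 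Telescoping the per-step inequality and identifying the remaining principal Riemann sum with the time integral on the right of \eqref{eq:a_priori_inequality_error_reduced}---via $h_n^\eps - h_{n-1}^\eps = \int_{t_{n-1}^\eps}^{t_n^\eps} \partial_t h\,\dd t$ and the piecewise constancy of $y_\eps$ on $[t_{n-1}^\eps, t_n^\eps)$---then yields the claimed inequality. The main technical subtlety is coordinating the bootstrap: the refined outside estimate requires the uniform bound, which itself must first be derived from the cruder version of the very same per-step inequality.
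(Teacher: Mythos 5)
Your proposal is correct in its broad strokes and captures the essential machinery (iterative comparison against $y_{n-1}^\eps + \eps(h_n^\eps - h_{n-1}^\eps)$, inside/outside splitting relative to $B_r(\sporth(2))$, Taylor expansion inside, local Lipschitz estimate outside, telescoping), but it organizes the argument quite differently from the paper and, as you yourself flag, creates a bootstrap complication that the paper sidesteps entirely.

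The paper's first move is to compare $y_n^\eps$ against the \emph{boundary lift} $\mathrm{id} + \eps h_n^\eps$ in \eqref{eq:discrete_min_problem}. This competitor has empty jump set, so the new-crack penalty vanishes identically and a one-line Taylor estimate of $W(\mathrm{Id} + \eps\nabla h_n^\eps)$ gives the uniform bound
\begin{equation*}
\dfrac{1}{\eps^2}\int_{\Omega'} W(\nabla y_\eps(t))\dd{x} + \dfrac{1}{\eps^{2\beta}}\int_{\Omega'}|\nabla^2 y_\eps(t)|^2\dd{x} \le C
\end{equation*}
uniformly in $\eps$ and $t$, \emph{before any iteration is performed}. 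With that in hand, $\mathcal{L}^2(\Omega'\setminus \mathcal{N}_{n-1}^\eps) \lesssim \eps^2$ uniformly in $n$ (via \ref{eq:W_zero_on_sod}), and the outside contribution to the iterated inequality is handled by equi-integrability of $\nabla\partial_t h$ in $\lp^2$ and dominated convergence in time — no Sobolev embedding or quantitative $\sqrt{\eps}$-rate is required. The iterative competitor is then used only for the jump-set bound and the final inequality \eqref{eq:a_priori_inequality_error_reduced}. Your Gronwall bootstrap (crude per-step $\sqrt{a_{n-1}}b_n$-type bound to get boundedness, then refine the outside term with $\wkp^{2,2}\hookrightarrow\wkp^{1,4}$ and $\lm^2(O_{n-1})\lesssim\eps^2$) is internally consistent and should go through, but it is noticeably more laborious and introduces the coordination issue you flag as the main subtlety; the paper's detour through the trivial competitor dissolves it. Also note the minor imprecision that the minimality in \eqref{eq:discrete_min_problem} is not $\energyNonlin(y_n^\eps)\le\energyNonlin(z_n)$ — the left side only charges the \emph{new} crack $\left(J_{y_n}\cup J_{\nabla y_n}\right)\setminus\Gamma_{n-1}^\eps$, which is exactly what makes the telescoping produce the full $\hm^1(\Gamma_\eps(t))$-term.
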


\begin{proof}
Let $ t \in I_{ \infty } $ and $ \varepsilon $ be sufficiently small such that $ t \in I_{ \varepsilon } $. 
The function $ \mathrm{id} + \varepsilon h_{ n }^{ \varepsilon }  $ is an admissible competitor for the minimization problem (\ref{eq:discrete_min_problem})  without any jumps and thus
\begin{align}
	& \nonumber \dfrac{1}{ \varepsilon^{ 2 } }
	\int_{\Omega'}
	W ( \nabla y_{ n }^{ \varepsilon } )
	\dd{ x }
	+
	\dfrac{1 }{ \varepsilon^{ 2 \beta } }
	\int_{\Omega' }
	\abs{ \nabla^{ 2 } y_{ n }^{ \varepsilon } }^{ 2 }
	\dd{ x }
	+
	\kappa
	\hm^{ 1}  \left(
	\left( 
	J_{ y_{ n }^{ \varepsilon } } \cup J_{ \nabla y_{ n }^{ \varepsilon } } 
	\right)
	\setminus
	\bigcup_{ k=0 }^{ n - 1 }
	\left(
	J_{ y_{ k }^{ \varepsilon } } \cup J_{ \nabla y_{ k }^{ \varepsilon } }
	\right)
	\right)
	\\
	\label{eqn:compareBulkBC} \leq{} &
	\dfrac{ 1 }{ \varepsilon^{ 2 } }
	\int_{\Omega' } W ( \mathrm{Id}+ \varepsilon \nabla h_{ n }^{ \varepsilon }  )
	\dd{ x }
	+
	\dfrac{ 1 }{ \varepsilon^{ 2 \beta } }
	\int_{\Omega' }
	\abs{ \varepsilon \nabla^{ 2 } h_{ n }^{ \varepsilon } }^{ 2 }
	\dd{ x }.
\end{align}
By assumption \eqref{eqn:bdryData} and \ref{eq:W_zero_on_sod}, we have $ h \in \lp^{ \infty }  ( (0,1); \wkp^{ 2, \infty }  ( \Omega'; \mathbb{ R }^{ 2 }  )  ) $ and $ W ( \mathrm{Id} ) = 0 $, $  \diff  W ( \mathrm{Id} ) = 0 $. Therefore,   by Taylor's formula, using that $ W $ is $ \cont^{ 3 } $ near the identity, we get the inequality 
\begin{equation*}
	W \left( \mathrm{Id} + \varepsilon \nabla h _{ n }^{ \varepsilon } \right)
	\lesssim
	\varepsilon^{ 2 }
	\abs{ \nabla h_{ n }^{ \varepsilon }  }^{ 2 }.
\end{equation*}
Thus, we may estimate the right-hand side of \eqref{eqn:compareBulkBC}, up to a constant, by
\begin{equation*}
	\int_{\Omega' }
	\abs{ \nabla h_{ n }^{ \varepsilon } }^{ 2 }
	\dd{ x }
	+
	\varepsilon^{ 2 - 2 \beta }
	\int_{\Omega'}
	\abs{ \nabla^{ 2 } h_{ n }^{ \varepsilon } }^{ 2 }
	\dd{ x },
\end{equation*}
which stays uniformly bounded. Consequently, we have established that, uniformly for  $t \in [0,1]$, 
\begin{equation}
	\label{eq:a_priori_estimate_one}
	\dfrac{ 1 }{ \varepsilon^{ 2 } }
	\int_{\Omega'}
	W ( \nabla y_{ \varepsilon }  (t)  )
	\dd{ x }
	+
	\dfrac{ 1 }{ \varepsilon^{ 2 \beta } }
	\int_{\Omega'} 
	\abs{ \nabla^{ 2 } y_{ \varepsilon }   (t)  }^{ 2 }
	\dd{ x }
	\leq C < \infty.
\end{equation}
To obtain a uniform bound on the measure of the cumulative crack, we compare the deformation at time $t_n^\eps$ to the deformation at time $t_{n-1}^\eps$. Precisely, {noting} that 
$y_{ n - 1 }^{ \varepsilon }+ \varepsilon ( h_{ n }^{ \varepsilon }  - h_{ n - 1 }^{ \varepsilon } )$ 
is an admissible competitor for the minimization problem (\ref{eq:discrete_min_problem}), {we} get
\begin{align}
	\notag
	& \dfrac{1}{ \varepsilon^{ 2 } }
	\int_{\Omega'}
	W ( \nabla y_{ n }^{ \varepsilon } )
	\dd{ x }
	+
	\dfrac{1 }{ \varepsilon^{ 2 \beta } }
	\int_{\Omega'}
	\abs{ \nabla^{ 2 } y_{ n }^{ \varepsilon } }^{ 2 }
	\dd{ x }
	+
	\kappa
	\hm^{ 1 } \left(
	\left( 
	J_{ y_{ n }^{ \varepsilon } } \cup J_{ \nabla y_{ n }^{ \varepsilon } } 
	\right)
	\setminus
	\bigcup_{ k=0 }^{ n - 1 }
	\left(
	J_{ y_{ k }^{ \varepsilon } } \cup J_{ \nabla y_{ k }^{ \varepsilon } }
	\right)
	\right)
	\\
	\label{eq:a_priori_estimate_jumpset_first_step}
	\leq {} &
	\dfrac{ 1 }{ \varepsilon^{ 2 } }
	\int_{\Omega'}
	W ( \nabla y_{ n - 1 }^{ \varepsilon } + \varepsilon \nabla ( h_{ n }^{ \varepsilon }  - h_{ n - 1 }^{ \varepsilon } ) )
	\dd{ x }
	+
	\dfrac{ 1 }{ \varepsilon^{ 2 \beta } }
	\int_{\Omega'}
	\abs{ \nabla^{ 2 } y_{ n - 1 }^{ \varepsilon } + \varepsilon \nabla^{ 2 } ( h_{ n }^{ \varepsilon }  - h_{ n - 1 }^{ \varepsilon } ) }^{ 2 }
	\dd{ x }.
\end{align}
Take $r > 0$ as in \ref{eq:W_growth_assumptions} so that $ W $ is $ \cont^{ 3 } $ in $B_{ 2r } ( \sporth ( 2 ) )$  and define the preimage
\begin{equation*} 
	\mathcal{N}_{ n-1 }^{ \varepsilon } \coloneqq \left\{ \nabla y_{ n - 1 }^{ \varepsilon } \in B_{ r } ( \sporth ( 2 ) ) \right\} .
\end{equation*} 
We want to apply Taylor's formula to the {first summand on the right-hand side of \eqref{eq:a_priori_estimate_jumpset_first_step}} and use the local Lipschitz continuity of $W$ to argue that the integral asymptotically vanishes as $\eps \to 0$  {outside of}  $  \mathcal{ N }^\eps_{n-1}  $, after summing up all times.

First we take care of the integral over $ \Omega' \setminus \mathcal{ N }_{ n - 1 }^{ \varepsilon } $.
By the local Lipschitz estimate for $ W $ in \ref{eq:W_growth_assumptions}, the uniform {control} on the boundary data \eqref{eqn:bdryData}, the energy estimate (\ref{eq:a_priori_estimate_one}),  \ref{eq:W_zero_on_sod}, and H\"older's inequality  we bound
\begin{align}
	\notag
	&
	\dfrac{ 1 }{ \varepsilon^{ 2 } }
	\int_{ \Omega' \setminus\mathcal{ N }_{ n - 1 }^{ \varepsilon } }
	W \left( \nabla y_{ n - 1 }^{ \varepsilon } + \varepsilon \nabla \left( h_{n }^{ \varepsilon } - h_{ n - 1 }^{ \varepsilon } \right) \right)
	\dd{ x }
	\\
	\notag
	\leq{} &
	\dfrac{ 1 }{ \varepsilon^{ 2 } }
	\int_{ \Omega' \setminus \mathcal{ N }_{n - 1 }^{ \varepsilon } }
	W \left( \nabla y_{n - 1 }^{ \varepsilon } \right)
	+
	C
	\left( 1 + \abs{ \nabla y_{ n - 1 }^{ \varepsilon } } + \varepsilon \norm{ \nabla \left( h_{n }^{ \varepsilon } - h_{ n - 1 }^{ \varepsilon } \right) }_{ \lp^{ \infty } } 
	\right)
	\varepsilon \abs{ \nabla \left( h_{n }^{ \varepsilon } - h_{ n - 1 }^{ \varepsilon } \right) }
	\dd{ x }
	\\
	\notag
	\leq{} & 
	\dfrac{ 1 }{ \varepsilon^{ 2 } }
	\int_{ \Omega' \setminus \mathcal{ N }_{n - 1 }^{ \varepsilon } }
	W \left( \nabla y_{n - 1 }^{ \varepsilon } \right)
	\dd{ x }
	+ 
	C
	\int_{ t_{n - 1 }^{ \varepsilon } }^{ t_{ n }^{ \varepsilon } }
	\int_{ \Omega' \setminus \mathcal{ N }_{ n - 1 }^{ \varepsilon } }
	\dfrac{ 1 }{ \varepsilon }
	\left( 1 + \abs{ \nabla y_{ n - 1 }^{ \varepsilon } } \right)
	\abs{ \nabla \partial_{ t } h }
	\dd{ x }
	\dd{ t }
	\\
	\notag
	\leq{} &
	\dfrac{ 1 }{ \varepsilon^{ 2 } } 
	\int_{ \Omega' \setminus \mathcal{ N }_{n - 1 }^{ \varepsilon } }
	W \left( \nabla y_{n - 1 }^{ \varepsilon } \right)
	\dd{ x }
	+
	C
	\int_{ t_{ n - 1 }^{ \varepsilon } }^{ t_{ n }^{ \varepsilon } }
	\left( 
	\dfrac{ 1 }{ \varepsilon^{ 2 } }
	\int_{ \Omega' \setminus \mathcal{ N }_{ n - 1 }^{ \varepsilon } }
	\left( 1 + \abs{ \nabla y_{n - 1 }^{ \varepsilon } } \right)^{ 2 }
	\dd{ x }
	\right)^{ 1/2 }
	\norm{ \nabla \partial_{ t } h }_{ \lp^{2 } \left( \Omega' \setminus \mathcal{ N }_{n - 1 }^{ \varepsilon } \right) }
	\dd{ t }
	\\
	\notag
	\leq{} &
	\dfrac{ 1 }{ \varepsilon^{ 2 } } 
	\int_{ \Omega' \setminus \mathcal{ N }_{n - 1 }^{ \varepsilon } }
	W \left( \nabla y_{n - 1 }^{ \varepsilon } \right)
	\dd{ x }
	+
	C
	\int_{ t_{ n - 1 }^{ \varepsilon } }^{ t_{ n }^{ \varepsilon } }
	\left( 
			\dfrac{ 1 }{ \varepsilon^{ 2 } }
	\int_{\Omega' } W \left( \nabla y_{ n -1 }^{ \varepsilon } \right) \dd{ x } 
		\right)^{ 1 / 2 }
	\norm{ \nabla \partial_{ t } h }_{ \lp^{ 2 } \left( \Omega' \setminus \mathcal{ N }_{ n - 1 }^{ \varepsilon } \right) }
	\dd{ t }
	\\
	\label{eq:a_priori_estimate_away_from_nbhd}
	\leq{} &
	\dfrac{ 1 }{ \varepsilon^{ 2 } } 
	\int_{ \Omega' \setminus \mathcal{ N }_{n - 1 }^{ \varepsilon } }
	W \left( \nabla y_{n - 1 }^{ \varepsilon } \right)
	\dd{ x }
	+
	C \int_{ t_{ n - 1 }^{ \varepsilon } }^{ t_{ n }^{ \varepsilon } }
	\norm{ \nabla \partial_{ t } h }_{ \lp^{ 2 } \left( \Omega' \setminus \mathcal{ N }_{ n - 1 }^{ \varepsilon } \right) }
	\dd{ t }.
\end{align}
In the  neighborhood  $ \mathcal{ N }_{ n - 1 }^{ \varepsilon } $, we apply {Taylor's} formula, the differentiability in time of $ h $, and Minkowski's integral inequality to obtain 
\begin{align}
	\notag
	&
	\dfrac{ 1 }{ \varepsilon^{ 2 } }
	\int_{ \mathcal{ N }_{ n - 1 }^{ \varepsilon } }
	W \left( \nabla y_{ n - 1 }^{ \varepsilon } + \varepsilon \nabla ( h_{ n }^{ \varepsilon } - h_{ n - 1 }^{ \varepsilon } ) \right)
	\dd{ x }
	\\
	\notag
	\leq{} &
	\dfrac{ 1 }{ \varepsilon^{ 2 } }
	\int_{ \mathcal{ N }_{ n - 1 }^{ \varepsilon } }
	W \left( \nabla y_{ n - 1 }^{ \varepsilon } \right)
	\dd{ x }
	+
	\dfrac{ 1 }{ \varepsilon }
	\int_{ t_{ n - 1 }^{ \varepsilon } }^{ t_{ n }^{ \varepsilon } }
	\int_{ \mathcal{ N }_{ n - 1 }^{ \varepsilon } }
	{\langle \diff W \left( \nabla y_{ n - 1 }^{ \varepsilon } \right),
	\nabla \partial_{ t } h  \rangle}
	\dd{ x }
	\dd{ t }
	+
	C \int_{ \mathcal{ N }_{ n - 1 }^{ \varepsilon } }
	\abs{ \nabla \left( h_{ n }^{ \varepsilon } - h_{ n -1 }^{ \varepsilon } \right) }^{ 2 }
	\dd{ x }
	\\
	\label{eq:a_priori_estimate_nbhd}
	\leq{} &
	\dfrac{ 1 }{ \varepsilon^{ 2 } }
	\int_{ \mathcal{ N }_{ n - 1 }^{ \varepsilon } }
	W \left( \nabla y_{ n - 1 }^{ \varepsilon } \right)
	\dd{ x }
	+
	\dfrac{ 1 }{ \varepsilon }
	\int_{ t_{ n - 1 }^{ \varepsilon } }^{ t_{ n }^{ \varepsilon } }
	\int_{ \mathcal{ N }_{ n - 1 }^{ \varepsilon } }
	{\langle \diff W \left( \nabla y_{ n - 1 }^{ \varepsilon } \right),
	\nabla \partial_{ t } h \rangle}
	\dd{ x }
	\dd{ t }
	+
	C
	\left(
	\int_{ t_{ n - 1 }^{ \varepsilon } }^{ t_{ n }^{ \varepsilon } }
	\norm{ \nabla \partial_{ t } h }_{ \lp^{ 2  } }
	\dd{ t }
	\right)^{ 2 }.
\end{align}
 By expanding the square in the integral involving the second derivatives, we therefore deduce by using the estimates (\ref{eq:a_priori_estimate_away_from_nbhd}) and (\ref{eq:a_priori_estimate_nbhd}), and again the differentiability in time of $ h $, that the right-hand side of  inequality  (\ref{eq:a_priori_estimate_jumpset_first_step}) is controlled by
\begin{align}\label{eq:non_it_line_applied_time_1}
	& \dfrac{1}{\varepsilon^{2 }}
	\int_{\Omega'}
	W \left( \nabla y_{ n - 1 }^{ \varepsilon } \right)
	\dd{ x }
	+
	\dfrac{ 1 }{ \varepsilon^{ 2 \beta } }
	\int_{\Omega'}
	\abs{ \nabla^{ 2 } y_{ n - 1 }^{ \varepsilon } }^{ 2 }
	\dd{ x }
	 +{}\dfrac{ 1 }{ \varepsilon}
	 \int_{ t_{ n - 1 }^{ \varepsilon } }^{ t_{ n }^{ \varepsilon } }
	\int_{ \mathcal{ N }_{ n - 1 }^{ \varepsilon } }
	{\langle \diff W \left( \nabla y_{ \varepsilon } \right), \nabla \partial_t  h\rangle }
	\dd{ x }
	\dd{ t }  \notag
	\\
	\notag
	&+
	C \int_{ t_{ n - 1 }^{ \varepsilon } }^{ t_{ n }^{ \varepsilon } }
	\norm{ \nabla \partial_{ t } h }_{ \lp^{ 2 } \left( \Omega' \setminus \mathcal{ N }_{ n - 1 }^{ \varepsilon } \right) }
	\dd{ t }
	+
	\dfrac{ 2 }{ \varepsilon^{ 2 \beta - 1 } }
	\int_{ t_{ n - 1 }^{ \varepsilon } }^{ t_{ n }^{ \varepsilon } }
	\int_{\Omega' }
	\inner*{ \nabla^{ 2 } y_{ \varepsilon } }{ \nabla^{ 2 } \partial_t h  } 
	\dd{ x }
	\dd{ t }	
	 +{}
	C \left(
	\int_{ t_{ n - 1 }^{ \varepsilon } }^{ t_{ n }^{ \varepsilon } }
	\norm{ \nabla \partial_t h }_{ \lp^{ 2 } }
	\dd{ t }
	\right)^{2 }
	\\
	&+
	\dfrac{ 1 }{ \varepsilon^{ 2 \beta - 2 } }
	\left(
	\int_{ t_{ n - 1 }^{ \varepsilon } }^{ t_{ n }^{ \varepsilon } }
	\norm{ \nabla^{ 2 } \partial_t h  }_{ \lp^{ 2 } }
	\dd{ t }
	\right)^{ 2 }
\end{align}
where we also employed the notation in \eqref{eqn:minMovDeform}.
Noting that the first two terms in (\ref{eq:non_it_line_applied_time_1}) are the same as the first two terms on the left-hand side of \eqref{eq:a_priori_estimate_jumpset_first_step} with $n-1$ in place of $n$, we will iterate the above reasoning to derive the a priori estimate.
Starting from the energy 
\begin{align}
	\notag
	& \dfrac{1}{ \varepsilon^{ 2 } }
	\int_{\Omega'}
	W ( \nabla y_{ n }^{ \varepsilon } )
	\dd{ x }
	+
	\dfrac{1 }{ \varepsilon^{ 2 \beta } }
	\int_{\Omega'}
	\abs{ \nabla^{ 2 } y_{ n }^{ \varepsilon } }^{ 2 }
	\dd{ x }
	+
	\kappa
	\hm^{ 1} \left(
	\bigcup_{ k=0 }^{ n  }
	\left(
	J_{ y_{ k }^{ \varepsilon } } \cup J_{ \nabla y_{ k }^{ \varepsilon } }
	\right)
	\right),
	\end{align}
we iteratively apply inequality (\ref{eq:a_priori_estimate_jumpset_first_step}), together with the fact that at every time step the right-hand side of  inequality  (\ref{eq:a_priori_estimate_jumpset_first_step}) is controlled by  the term  (\ref{eq:non_it_line_applied_time_1}), to deduce
\begin{align}
	\notag
	& \dfrac{1}{ \varepsilon^{ 2 } }
	\int_{\Omega'}
	W ( \nabla y_{ n }^{ \varepsilon } )
	\dd{ x }
	+
	\dfrac{1 }{ \varepsilon^{ 2 \beta } }
	\int_{\Omega'}
	\abs{ \nabla^{ 2 } y_{ n }^{ \varepsilon } }^{ 2 }
	\dd{ x }
	+
	\kappa \hm^{ 1} \left(
	\bigcup_{ k=0 }^{ n }
	\left( J_{ y_{ k }^{ \varepsilon } } \cup J_{ \nabla y_{ k }^{ \varepsilon } } \right)
	\right)
	\\
	\notag
	\leq{} &
	\energyNonlin (y_{ 0 }^{ \varepsilon } )
	+ 
	\dfrac{ 1 }{ \varepsilon }
	\int_{ 0 }^{ t_{ n }^{ \varepsilon } }
	\int_{ \lbrace \nabla y_{ \varepsilon } \in B_{ r } ( \sporth( 2 ) ) \rbrace }
	{\langle \diff W ( \nabla y_{ \varepsilon } ), \nabla  \partial_t h \rangle }
	\dd{ x }
	\dd{ t }
	\\
	\nonumber
	& + C
	\int_{ 0 }^{ 1 }
	\left(\norm{ \nabla \partial_{ t } h }_{ \lp^{ 2 } \left( \{ \nabla y_{ \varepsilon } \notin B_{ r } ( \sporth( 2 ) ) \}
	\right)}
	+
	\omega ( \Delta_{ \varepsilon } )
	\left(
	\norm{ \nabla \partial_t h  }_{ \lp^{ 2 } }
	+
	\dfrac{ 1 }{ \varepsilon^{ 2 \beta - 2 } }
	\norm{ \nabla^{ 2 } \partial_t h }_{ \lp^{ 2 } }
	\right)\right)
	\dd{ t },  
	\\
	\label{eq:a_priori_estimate_iterated}
	& +
	\int_{ 0 }^{ 1 }\int_{ \Omega' }
	\dfrac{ 2 }{ \varepsilon^{ 2 \beta - 1 } }
	\abs{ \inner*{ \nabla^{ 2 } y_{ \varepsilon } }{ \nabla^{ 2 } \partial_t h } }
	\dd{ x }
	\dd{ t },
\end{align}
where $	\energyNonlin $ is defined in \eqref{relaxed energy} and $ \omega ( \Delta_{ \varepsilon }) $ vanishes as $ \varepsilon $ tends to zero {since integrable functions are absolutely continuous.} 
By the energy estimate (\ref{eq:a_priori_estimate_one}) {and \ref{eq:W_zero_on_sod}}, the measure of $ \Omega' \setminus  \mathcal{N }_{ n-1 }^{ \varepsilon } $ vanishes uniformly in $ n $ since
\begin{equation}
	\label{eq:measure_of_non_nbhd_vanishes}
	 \mathcal{L}^2 	\big( \Omega' \setminus \mathcal{ N }_{ n - 1 }^{ \varepsilon } \big)
	\leq \dfrac{ 1 }{ r^{ 2 } }
	\int_{ \Omega' \setminus \mathcal{ N }_{n - 1 }^{ \varepsilon } }
	\mathrm{dist}^{ 2 } \left( \nabla y_{ n - 1 }^{ \varepsilon } , \sporth( 2 ) \right)
	\dd{ x }
	\lesssim 
	\int_{\Omega'} W ( \nabla y_{ n - 1 }^{ \varepsilon } )
	\dd{ x }
	\lesssim 
	\varepsilon^{ 2 } . 
\end{equation}
Applying {Hölder's inequality} to the last summand, using the uniform bound on the data \eqref{eqn:bdryData} and energy (\ref{eq:a_priori_estimate_one}), $\beta <1$, and that the measure of $ \{ \nabla y_{ \varepsilon } \notin B_{ r } ( \sporth( 2 ) ) \} $ vanishes uniformly {in time} by inequality (\ref{eq:measure_of_non_nbhd_vanishes}), the last {two lines} of the estimate (\ref{eq:a_priori_estimate_iterated}) vanish as $ \varepsilon $ tends to zero, thus yielding
a sequence $ \sigma_{ \varepsilon} \to 0 $ {independent of $ t_n^\eps $ such that \eqref{eq:a_priori_inequality_error_reduced} holds.}

 To show  the uniform bound  \eqref{eq:a_priori_estimate},  we are left with arguing that {the last term of  inequality  \eqref{eq:a_priori_inequality_error_reduced}} stays uniformly bounded.
Denote by $ \pi $ the (possibly not unique) projection onto $ \sporth( 2 ) $. Since $ \diff W $ is Lipschitz in a neighborhood of $ \sporth( 2 ) $ and $ \diff  W ( \pi \nabla y_{ \varepsilon } ) = 0 $, we deduce that, if $ \mathrm{dist}( \nabla y_{ \varepsilon } , \sporth( 2 ) ) < r $,  then 
\begin{align*}
	\abs{ \langle  \diff W ( \nabla y_{ \varepsilon } ),  \nabla \partial_t h   \rangle  }
	& =
	\abs{  \langle  \diff W ( \nabla y_{ \varepsilon } ) - \diff W ( \pi \nabla y_{ \varepsilon } ) , \nabla \partial_t h   \rangle }
	 \lesssim
	\abs{ \nabla y_{ \varepsilon } - \pi \nabla y_{ \varepsilon } }
	\abs{\nabla \partial_t h }
	\\
	& =
	\mathrm{dist} \left( \nabla y_{ \varepsilon } , \sporth( 2 ) \right)
	\abs{ \nabla \partial_t h  }
	  \lesssim
	\sqrt{ W ( \nabla y_{ \varepsilon } ) } \abs{ \nabla \partial_t h },
\end{align*}
where the last inequality is due to the lower growth bound \ref{eq:W_zero_on_sod}.
Applying the previous inequality and Hölder's inequality to the last term on the right-hand side of  inequality  \eqref{eq:a_priori_inequality_error_reduced}, we obtain  
\begin{equation*}
{\dfrac{ 1  }{ \varepsilon }}
	\int_{ 0 }^{ t_{ n }^{ \varepsilon } }
	\int_{ { \{ \nabla y_{ \varepsilon } \in B_{ r } ( \sporth( 2 ) )\} }}
		\inner*{
	\diff W ( \nabla y_{ \varepsilon } ) 
	} {\nabla \partial_t h}
	\dd{ x }
		\dd{t}
	\lesssim
		\int_{ 0 }^{ 1 }
	\left(
	\int_{\Omega'}  \dfrac{ 1 }{ \varepsilon^2 } 
	W ( \nabla y_{ \varepsilon } )
	\dd{ x } \right)^{ 1/2 }
	\norm{ \nabla \partial_t h }_{ \lp^{ 2 } }
	\dd{ t }.
\end{equation*}
By the control on  the  elastic energy (\ref{eq:a_priori_estimate_one}) we get that this term stays uniformly bounded. Inserting this information back into  inequality  \eqref{eq:a_priori_inequality_error_reduced} and using that the initial energies are bounded, see \eqref{eqn:wellPrepInitial}, we thus have  shown the desired a priori estimate \eqref{eq:a_priori_estimate}.  
\end{proof}

\begin{remark}\label{rmk:improvedEpsEnergyBalance}
 We note that the reasoning used to derive  inequality  (\ref{eq:a_priori_inequality_error_reduced}) actually shows that, for $t_n^\eps, t_m^\eps \in I_\eps$ with $t_m^\eps < t_n^\eps$, we have
\begin{align}
	\notag
	& \energyNonlinTot(t_n^\eps)
	-
	\sigma_{ \varepsilon }
	\leq{} 
	\energyNonlinTot (t_m^\eps)
	+ 
	\dfrac{ 1 }{ \varepsilon }
	\int_{ t_m^\eps }^{ t_{ n }^{ \varepsilon } }
	\int_{ { \{ \nabla y_{ \varepsilon } \in B_{ r } ( \sporth( 2 ) )\} } }
	\langle \diff W ( \nabla y_{ \varepsilon } ) ,\nabla  \partial_t h \rangle
	\dd{ x }
	\dd{ t }
\end{align}
for $\sigma_\eps\to 0$  as $\eps \to 0$  (independently of $t_m^\eps$ and $t_n^\eps$). 
\end{remark}

\subsection{From deformations to rescaled displacements} \label{subsec:deformToDisplace}

In this subsection, we define rescaled displacements that are sufficiently close to a rotation everywhere in ${\Omega'}.$ 
 Following   \cite{friedrich_griffith_energies_as_small_strain_limit_of_nonlinear_models_for_nomsimple_brittle_materials},  this requires the introduction of a partition of ${\Omega'}$, where on each piece $y_\eps$ is sufficiently close to a rigid motion. Beyond accounting for rigidity, we will need to carefully zero-out the displacement in a vanishingly small region in $\Omega$. This will later ensure that minimality is stable when passing from deformations to displacements as $\eps\to 0$, see \Cref{sec:approximateRelationsMinimality}.

 Choosing $ \gamma \in (2/3,\beta)$, for any time  $ t \in [0,1] $, we apply the reasoning in \cite[Thm.~2.3]{friedrich_griffith_energies_as_small_strain_limit_of_nonlinear_models_for_nomsimple_brittle_materials} {to} the sequence $ (y_{ \varepsilon } ( t ) )_\eps$ to obtain the modified sequence 
\begin{equation} \label{def:y_rot}
	y_{ \varepsilon}^{ \mathrm{rot} } ( t )
	:=
	\sum_{ j = 1 }^{ \infty }
	R_{ j }^{ \varepsilon } (t) y_{ \varepsilon }(t) \chi_{ P_{ j }^{ \varepsilon } (t) }
	\in \gsbvtwotwo ( \Omega' ; \mathbb{ R }^{ 2 }), 
\end{equation}
for a Caccioppoli partition $ \mathcal{P}_\eps (t) : = ( P_{ j }^{ \varepsilon } (t) )_{ j }  $ of $\Omega'$ and $( R_{ j }^{ \varepsilon } (t) )_j \subseteq \sporth ( 2)$, such that  $ R_{ j }^{ \varepsilon } (t) = \mathrm{Id} $ on all elements $P_{ j }^{ \varepsilon } (t) $ intersecting the boundary in the sense of $ \mathcal{L}^2 ( P_{ j }^{ \varepsilon } (t) \cap ( \Omega' \setminus \overline{\Omega} ) ) > 0 $. We point out that $R_j^\eps$ and the Caccioppoli partition $\mathcal{P}_\eps$ are piecewise constant in time on the intervals determined by $I_\eps$ (as in \eqref{eqn:minMovDeform}). (We frequently omit $(t)$ if no confusion arises.) {As many of the properties of this modified sequence are determined solely by the uniform energy bound in \eqref{eq:a_priori_estimate}, it holds uniformly  for all $ t \in [0,1] $ that}
\begin{enumerate}[labelindent=0pt,labelwidth=\widthof{\ref{item:rotation_property_closeness}},label=(R\arabic*),ref=(R\arabic*),itemindent=1em,leftmargin=!]
	\item 
	\label{item:rotation_properties_boundary}$ y_{ \varepsilon}^{ \mathrm{rot} }
	=
	\mathrm{id} + \varepsilon h_{ \varepsilon } $ on $ \Omega' \setminus \overline{\Omega } $, 
	\item 
	\label{item:rotation_property_jump}
	$ \hm^{ 1}
	\big( 
	\left(
	J_{ y_{ \varepsilon }^{ \mathrm{rot} }}\cup
	J_{ \nabla y_{ \varepsilon}^{ \mathrm{rot} } }
	\right)
	\setminus
	\left(
	J_{ y_{ \varepsilon } } \cup 
	J_{ \nabla y_{ \varepsilon } }
	\right)
	\big)
	\leq 
	\hm^{ 1} \Big(
	\Big( \Omega' \cap  \bigcup_{ j = 1 }^{ \infty }  \partial^{ \ast } P_{ j }^{ \varepsilon } \Big)
	\setminus
	J_{ \nabla y_{ \varepsilon } }
	\Big)
	\lesssim \varepsilon^{ \beta - \gamma } $,
	\item 
	\label{item:rotation_property_sym_grad}$ \norm{ e( y_{ \varepsilon }^{ \mathrm{rot} } ) - \mathrm{Id} }_{ \lp^{ 2 } }
	\lesssim \varepsilon $,
	\item 
	\label{item:rotation_property_grad}
	$ \norm{ \nabla y_{ \varepsilon}^{ \mathrm{rot} } - \mathrm{Id}}_{ \lp^{ 2 } }
	\lesssim \varepsilon^{ \gamma }$, and
	\item
	\label{item:rotation_property_closeness}
	{$ \abs{ \nabla y_{ \varepsilon }^{ \mathrm{rot} } - \mathrm{Id} } \leq \max \left\{ C\varepsilon^{ \gamma }, 2\mathrm{dist} \left( \nabla y_{ \varepsilon } , \sporth( 2 ) \right)\right\} $ pointwise in  $\Omega'$  for fixed $C>0$}.
\end{enumerate}
The last item is a consequence of \cite[Eq.~(4.11)]{friedrich_griffith_energies_as_small_strain_limit_of_nonlinear_models_for_nomsimple_brittle_materials}.
As a result of these properties, the rescaling 
\begin{equation}\label{eqn:tildeuEps}
	u^{ \mathrm{aux}}_{ \varepsilon } ( t) \coloneqq \dfrac{ 1 }{ \varepsilon } \left( y_{ \varepsilon }^{ \mathrm{rot}  }(t) - \mathrm{id} \right)
	\in
	\gsbvtwotwo \left( \Omega'; \mathbb{ R }^{ 2 } \right)
\end{equation}
has boundary value $ h_{ \varepsilon }$ on $ \Omega' \setminus \overline{\Omega } $. However, this will only be an auxiliary object since for the sequence we are actually interested in we will have to multiply the function by a suitable cutoff, see the comments below and \Cref{rmk:properties_of_chieps}. As a consequence of items \ref{item:rotation_property_jump}--\ref{item:rotation_property_sym_grad}  and  {the a priori estimate} \eqref{eq:a_priori_estimate},  the function $ u_{ \varepsilon }^{ \mathrm{aux} } $ satisfies
\begin{align}
	\label{eqn:uTildeAPriori}
	&\sup_{ \varepsilon > 0 } \sup_{t \in [0,1]} \Big(
	\norm{ e(  u^{ \mathrm{aux}}_{ \varepsilon }(t))    }_{ \lp^{ 2 } }
	+
	\hm^{ 1}  ( J_{ u^{ \mathrm{aux}}_{ \varepsilon}(t)} )\Big)
	< 
	\infty
 	\shortintertext{and}
	\label{eq:jumpset_does_not_increase_through_rotation}
	&
	\hm^{1} \left(
	\big( 
	J_{  u^{ \mathrm{aux}}_{ \varepsilon } } \cup J_{ \nabla u^{ \mathrm{aux} }_{ \varepsilon } } 
	\right)
	\setminus
	\left(
	J_{ y_{ \varepsilon } } \cup J_{ \nabla y_{ \varepsilon } }
	\right)
	\big)
	\lesssim
	\varepsilon^{ \beta - \gamma }.
\end{align}
For the linearization results later, we will need to apply a Taylor expansion of $ W $ at the identity with respect to the direction $ \varepsilon \nabla  u^{ \mathrm{aux}}_{ \varepsilon } $. 
In order to make sure that the error does not explode, we introduce a cutoff function  that cuts out the vanishingly small region where $\nabla u^{ \mathrm{aux}}_{ \varepsilon }$ is too big.  Whereas this procedure is by now standard in the derivation of linearized models, see for example \cite{DalMasoEtAl_2002}, \cite{friesecke_james_mueller_geometric_rigidity}, \cite{schmidt_linear_gamma_limits_of_multiwell_energies_in_nonlinear_elasticity}, \cite{braides_solci_vitali_derivation_of_linear_elastic_energies_from_pair_interaction_atomistic_systems}, and \cite{friedrich_griffith_energies_as_small_strain_limit_of_nonlinear_models_for_nomsimple_brittle_materials}, in our setting it is important that  the removed region is not only small in volume but \emph{also controlled in surface} which requires refined arguments. The cutoff function is precisely introduced in the lemma below and is based on ideas used by the
first author in \cite[Thm.~2.7]{friedrich_griffith_energies_as_small_strain_limit_of_nonlinear_models_for_nomsimple_brittle_materials}. We use the notation $u(x,t)$ for $u(t)$ evaluated at $x$.

\begin{lemma} \label{rmk:properties_of_chieps}
There exists a collection of piecewise constant functions $\eta_\eps\colon [0,1]\to (0,\infty)$, indexed by $\eps>0,$   associated characteristic functions $\chi_{ \omega_\eps(t)}$ with 
	\begin{equation*}
		\omega_{ \varepsilon }(t)  \coloneqq
		\left\{
		x  \in \Omega' \, \colon \,
		|\nabla  u^{ \mathrm{aux}}_{ \varepsilon } ( x,t ) |_{ \infty } < \eta_{ \varepsilon }(t) \right\},
	\end{equation*}
	where $  | A |_{ \infty } := \max_{ i j } \abs{ A_{ i j } } $ is the infinity norm of a given matrix, with the following properties {holding} uniformly for $ t \in [0,1] $: \begin{enumerate}[labelindent=0pt,labelwidth=\widthof{\ref{item:properties_of_etaeps_fastness}},label=($\omega$\arabic*),ref=($\omega$\arabic*),itemindent=1em,leftmargin=!]
	\item 
	\label{item:properties_of_etaeps_volume}$ \chi_{ \omega_\eps(t)} \to 1 $ in measure (or $\lp^1(\Omega')$),
	\item
	\label{item:properties_of_etaeps_perimeter} $  \hm^{ 1} ( \partial^{ \ast } \omega_{ \varepsilon }(t)\setminus J_{ \nabla u^{ \mathrm{aux} }_{ \varepsilon }(t) }  ) \to 0 $,
	\item 
	\label{item:properties_of_etaeps_slowness}
	$ \varepsilon \eta_{ \varepsilon }^{ 3 }(t) \to 0 $, and
	\item 
	\label{item:properties_of_etaeps_fastness}
	$ \varepsilon^{ 1 - \gamma } \eta_{ \varepsilon }(t) \to \infty $.
\end{enumerate}
\end{lemma}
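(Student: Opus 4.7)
The plan is to choose $\eta_\eps(t)$ from the interval $[M_\eps, 2M_\eps]$ with $M_\eps := \eps^{-\alpha}$ for a suitable fixed exponent $\alpha$. Since $\gamma > 2/3$, one has $1-\gamma < 1/3$, so we may fix $\alpha \in (1-\gamma, 1/3)$ once and for all. Any $\eta_\eps(t) \in [M_\eps, 2M_\eps]$ then automatically satisfies $\eps\eta_\eps^3(t) \lesssim \eps^{1-3\alpha} \to 0$ and $\eps^{1-\gamma}\eta_\eps(t) \gtrsim \eps^{1-\gamma-\alpha} \to \infty$, securing \ref{item:properties_of_etaeps_slowness} and \ref{item:properties_of_etaeps_fastness} independently of the remaining details. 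Since $y_\eps(t)$, and hence $u^{\mathrm{aux}}_\eps(t)$, is piecewise constant on the intervals determined by $I_\eps$, it suffices to perform the selection at each of the finitely many discretization times separately, producing a piecewise constant $\eta_\eps$.

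For \ref{item:properties_of_etaeps_volume}, I exploit the pointwise estimate from \ref{item:rotation_property_closeness}: dividing by $\eps$ yields
\begin{equation*}
|\nabla u^{\mathrm{aux}}_\eps(t)| \le \max\{C\eps^{\gamma-1},\, 2\eps^{-1} \dist(\nabla y_\eps(t), \sporth(2))\}.
\end{equation*}
For $\eps$ small enough that $M_\eps > C\eps^{\gamma-1}$ (possible since $\alpha > 1-\gamma$), and using equivalence of the matrix norms $|\cdot|$ and $|\cdot|_\infty$, the set $\Omega' \setminus \omega_\eps(t)$ is contained in $\{\dist(\nabla y_\eps(t), \sporth(2)) \gtrsim \eps\eta_\eps(t)\}$. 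Chebyshev's inequality combined with \ref{eq:W_zero_on_sod} and the uniform bound $\int W(\nabla y_\eps(t)) \lesssim \eps^2$ from \eqref{eq:a_priori_estimate} then gives $\mathcal{L}^2(\Omega' \setminus \omega_\eps(t)) \lesssim \eta_\eps(t)^{-2} \to 0$, uniformly in $t$.

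For \ref{item:properties_of_etaeps_perimeter}, I apply the generalized coarea formula in $\gsbv$ to the scalar function $f_\eps(t) := |\nabla u^{\mathrm{aux}}_\eps(t)|_\infty$, which is a Lipschitz composition of the components of $\nabla u^{\mathrm{aux}}_\eps(t)$. Hence $f_\eps(t) \in \gsbv(\Omega')$ with $J_{f_\eps(t)} \subseteq J_{\nabla u^{\mathrm{aux}}_\eps(t)}$ and $|\nabla f_\eps(t)| \le |\nabla^2 u^{\mathrm{aux}}_\eps(t)|$ almost everywhere. The coarea identity yields
\begin{equation*}
\int_{M_\eps}^{2M_\eps} \mathcal{H}^1\bigl(\partial^{\ast}\{f_\eps(t) < s\} \setminus J_{\nabla u^{\mathrm{aux}}_\eps(t)}\bigr)\,ds \le \int_{\Omega'} |\nabla^2 u^{\mathrm{aux}}_\eps(t)|\, dx \lesssim \eps^{\beta-1},
\end{equation*}
where the final bound follows from Cauchy--Schwarz and the energy control $\int |\nabla^2 y_\eps(t)|^2 \lesssim \eps^{2\beta}$ (the absolutely continuous part of $\nabla^2 y_\eps^{\mathrm{rot}}$ coincides with a piecewise rotation of $\nabla^2 y_\eps$). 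A mean value argument over the interval $[M_\eps, 2M_\eps]$, whose length equals $M_\eps$, then selects $\eta_\eps(t)$ in this interval with
\begin{equation*}
\mathcal{H}^1(\partial^{\ast}\omega_\eps(t) \setminus J_{\nabla u^{\mathrm{aux}}_\eps(t)}) \lesssim \eps^{\beta - 1}/M_\eps = \eps^{\alpha + \beta - 1} \to 0,
\end{equation*}
where convergence requires $\alpha > 1 - \beta$, automatic from $\alpha > 1-\gamma$ and $\gamma < \beta$.

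The main delicacy is reconciling the competing requirements on $\eta_\eps$: property \ref{item:properties_of_etaeps_volume} pushes $\eta_\eps$ upward so that the bad volume identified by \ref{item:rotation_property_closeness} becomes negligible, while \ref{item:properties_of_etaeps_slowness} constrains $\eta_\eps \ll \eps^{-1/3}$ so that subsequent Taylor-expansion errors remain under control. The window $[\eps^{-\alpha}, 2\eps^{-\alpha}]$ with $\alpha \in (1-\gamma, 1/3)$ sits strictly between the obstructions $\eps^{\gamma-1}$ (required for \ref{item:properties_of_etaeps_fastness} and for the coarea bound to vanish) and $\eps^{-1/3}$ (required by \ref{item:properties_of_etaeps_slowness}); the non-emptiness of this window is precisely what the hypothesis $\gamma \in (2/3, \beta)$ provides.
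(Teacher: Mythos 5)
Your proof is correct and follows essentially the same strategy as the paper's: a mean-value selection of $\eta_\eps$ from a window whose endpoints are chosen so that the Taylor and rigidity requirements (\ref{item:properties_of_etaeps_slowness}), (\ref{item:properties_of_etaeps_fastness}) hold automatically, Chebyshev/Markov for the volume bound, and the $\gsbv$ coarea formula for the perimeter bound. The differences are organizational rather than substantive: you parametrize the window cleanly as $[\eps^{-\alpha},2\eps^{-\alpha}]$ with a fixed $\alpha\in(1-\gamma,1/3)$, whereas the paper uses the explicit exponents $\theta_\eps^\pm=\eps^{(9\gamma-10)/12},\eps^{(\gamma-2)/4}$; for (\ref{item:properties_of_etaeps_volume}) you use the pointwise estimate \ref{item:rotation_property_closeness} and (W3), while the paper uses Markov with the $\lp^2$-bound \ref{item:rotation_property_grad} (both work); and for (\ref{item:properties_of_etaeps_perimeter}) you apply the coarea formula once to the scalar $f_\eps=\abs{\nabla u^{\mathrm{aux}}_\eps}_\infty$, whereas the paper applies it to each component $\partial_i(u^{\mathrm{aux}}_\eps)^j$ and sums the perimeters. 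Your variant is slightly shorter but tacitly uses the $\gsbv$ chain rule to conclude $f_\eps\in\gsbv(\Omega')$ with $J_{f_\eps}\subseteq J_{\nabla u^{\mathrm{aux}}_\eps}$ and $\abs{\nabla f_\eps}\le\abs{\nabla^2 u^{\mathrm{aux}}_\eps}$ a.e.\ (true, via stability of $\gsbv$ under $\max$ and absolute value), which the paper's component-wise version avoids invoking.
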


\begin{proof}
	Let the parameters $ 2/3 < \gamma < \beta < 1 $ be given and define the sequences 
	\begin{equation*}
		\theta_{ \varepsilon }^{ - }\coloneqq \varepsilon^{ (9 \gamma -10)/12}
		\quad\text{and}\quad
		\theta_{ \varepsilon }^{ + } \coloneqq \varepsilon^{ ( \gamma -2)/4}.
	\end{equation*}
	We note that $-1/3 < (\gamma -2)/4  < (9 \gamma -10)/12  <   \gamma - 1$. In particular, $ \theta_{ \varepsilon }^{ - } <  \theta_{ \varepsilon }^{ + }$ for $\eps < 1$. 
	If $ \eta_{ \varepsilon } \in [ \theta_{ \varepsilon }^{ - }, \theta_{ \varepsilon }^{ + }] $ for all $ \varepsilon > 0 $ and $t\in [0,1]$, then properties \ref{item:properties_of_etaeps_slowness} and \ref{item:properties_of_etaeps_fastness} will hold, along with
	\begin{equation}
	\label{eq:choice_of_eta_three}
	\varepsilon^{ \beta - 1 }/ \left( \theta_{ \varepsilon}^{ + } - \theta_{ \varepsilon }^{ - }
		\right) \to 0.
	\end{equation}
	Indeed, for \ref{item:properties_of_etaeps_slowness}, we estimate
	\begin{equation*}
		\varepsilon \eta_{ \varepsilon }^{ 3 }
		\leq 
		\varepsilon^{ 1 + (3 \gamma - 6)/4}
		=
		\varepsilon^{ (3 \gamma - 2)/4},
	\end{equation*}
	which vanishes because $ \gamma > 2/3 $. To see  convergence  \ref{item:properties_of_etaeps_fastness}, we compute
	\begin{equation*}
		\varepsilon^{1 - \gamma } \eta_{ \varepsilon }
		\geq
		\varepsilon^{ 1 - \gamma + (9 \gamma - 10)/12}
		=
		\varepsilon^{ (2-3\gamma)/12}
	\end{equation*}
	which diverges since $ \gamma > 2/3 $. Lastly, for  convergence  \eqref{eq:choice_of_eta_three} we estimate
	\begin{equation*}
		\dfrac{ \varepsilon^{ \beta - 1 } }{ \varepsilon^{ ( \gamma - 2)/4 } - \varepsilon^{ (9 \gamma - 10 )/12} }
		=
		\dfrac{\varepsilon^{ \beta - 1 -(\gamma - 2)/4 } }{ 1 - \varepsilon^{ (9 \gamma - 10)/ 12 -(\gamma - 2 )/ 4 } }
		=
		\dfrac{ \varepsilon^{ ( -2 +4 \beta - \gamma )/ 4 } }{ 1 - \varepsilon^{ (6 \gamma -4)/12 } },
	\end{equation*}
	which vanishes because $ 4 \beta - \gamma > 3 \gamma > 2 $ and $ 6 \gamma > 4 $.
	
From \ref{item:properties_of_etaeps_fastness} together with  inequality  \ref{item:rotation_property_grad}, for any $\eta_\eps$ in this range, we directly deduce via Markov's inequality that $ \lm^{ 2 } ( \Omega' \setminus \omega_{ \varepsilon} ) \to 0 $ uniformly in time, which is \ref{item:properties_of_etaeps_volume}.
	
To prove  convergence  \ref{item:properties_of_etaeps_perimeter}, it suffices to estimate the perimeter for fixed $\eps>0$ and $t\in I_\eps$ by an appropriate sequence $\rho_\eps \to 0$. Using the coarea formula for $ \gsbv $-functions from \cite[Thm.~4.34]{ambrosio_fusco_pallara_functions_of_bv_and_free_discontinuity_problems} (suppressing dependence on $t$), and H\"older's inequality, we deduce that for all $ i, j \in \{ 1, 2 \} $ we have 
	\begin{align*}
		\notag
	 	\dfrac{ 1 }{ \theta_{ \varepsilon }^{ + } - \theta_{ \varepsilon }^{ - } }  	\int_{ \theta_{ \varepsilon }^{ - } }^{ \theta_{ \varepsilon }^{ + } }
		\hm^{ 1 } \left(
		\partial^{ \ast }  
		\{
		\partial_{ i } {(u^{ \mathrm{aux} }_{ \varepsilon })^j} > s \}
				\setminus
		J_{ \nabla u^{ \mathrm{aux}}_{ \varepsilon } }
		\right)
		\dd { s }
		& \leq
		\dfrac{ 1 }{ \theta_{ \varepsilon }^{ + } - \theta_{ \varepsilon }^{ - } }
		\int_{ - \infty }^{ \infty }
		\hm^{ 1 } \left(
		\partial^{ \ast }  
		 \{
		\partial_{ i } {(u^{ \mathrm{aux} }_{ \varepsilon })^j} > s  \}
				\setminus
		J_{ \nabla u^{ \mathrm{aux}}_{ \varepsilon } }
		\right)
		\dd { s }
		\\
		\notag
		& \leq
		\dfrac{ 1 }{ \theta_{ \varepsilon }^{ + } - \theta_{ \varepsilon }^{ - } }
		\norm{ \nabla^{ 2 } u^{ \mathrm{aux} }_{ \varepsilon } }_{ \lp^{ 1 }(\Omega') }
		   = 
		\dfrac{ 1 }{ \theta_{ \varepsilon }^{ + } - \theta_{ \varepsilon }^{ - } }
		\dfrac{ 1 }{ \varepsilon }
		\norm{ \nabla^{ 2 } y_{ \varepsilon } }_{ \lp^{ 2 }(\Omega') }
		\\
				& \lesssim 
		\dfrac{ \varepsilon^{ \beta-1 } }{ \theta_{ \varepsilon }^{ + } - \theta_{ \varepsilon }^{ - } },
	\end{align*}
 	where the last inequality uses that the energy of $ y_{ \varepsilon } $ stays uniformly bounded   due  to Lemma \ref{lem:aprioriEst}. Applying the same reasoning in $[-\theta_\eps^+,-\theta_\eps^-]$, this implies that we can choose $ \eta_{ \varepsilon } \in [ \theta_{ \varepsilon }^{- }, \theta_{ \varepsilon }^{ + } ] $ such that
	\begin{align*}
	 \hm^{ 1 } \left( \partial^{ \ast  } \omega_{ \varepsilon }  \setminus J_{ \nabla u^{ \mathrm{aux} }_{ \varepsilon } }
		\right)
		  & \leq{}
		\sum_{ i j }
		\hm^{ 1 } \big(
		\partial^{ \ast } 
		 \{
		\partial_{i } {(u^{ \mathrm{aux} }_{ \varepsilon })^j} \geq \eta_{ \varepsilon }
		 \}
			\setminus
		J_{ \nabla u^{ \mathrm{aux} }_{ \varepsilon } }
		\big)
		+
		\hm^{ 1 } \big(
		\partial^{ \ast }  
	 \{
		\partial_{i } {(u^{ \mathrm{aux} }_{ \varepsilon })^j} \leq -\eta_{ \varepsilon }
		 \}
		 		\setminus
		J_{ \nabla u^{ \mathrm{aux} }_{ \varepsilon } }
		\big) \notag \\ &
		\leq C\dfrac{ \varepsilon^{ \beta-1 } }{ \theta_{ \varepsilon }^{ + } - \theta_{ \varepsilon }^{ - } } =:\rho_\eps,
	\end{align*}
	where we used that $\partial^{ \ast  }( \Omega' \setminus \omega_{ \varepsilon }) = \partial^{ \ast  } \omega_{ \varepsilon }$, $ \hm^{ 1 } (\partial^{ \ast } ( A \cup B ) ) \leq \hm^{ 1 } ( \partial^{ \ast } A ) + \hm^{ 1 } ( \partial^{ \ast } B ) $, and 
	\begin{equation*}
		\Omega' \setminus \omega_{ \varepsilon }
		=
		\bigcup_{ i j } \left\{
		\partial_{ i } {(u^{ \mathrm{aux} }_{ \varepsilon })^j}\geq \eta_{ \varepsilon } \right\}
		\cup
		\left\{
		\partial_{ i } {(u^{ \mathrm{aux} }_{ \varepsilon })^j} \leq -\eta_{ \varepsilon } \right\}.
	\end{equation*}
	Note that $\rho_\eps$ vanishes uniformly in time by    \eqref{eq:choice_of_eta_three}, which finishes the proof.
\end{proof}

Finally, recalling the definition of $ u^{ \mathrm{aux} }_\eps$ in \eqref{eqn:tildeuEps}, we define the modified displacements  by
\begin{equation}\label{def:u_eps} 
u_{ \varepsilon } (t)  \coloneqq \chi_{ \omega_\eps(t)}  u^{ \mathrm{aux} }_{ \varepsilon } (t),
\end{equation} 
where $ \omega_\eps(t) $ is as in Lemma \ref{rmk:properties_of_chieps}. 
Note that, since $ \eta_{ \varepsilon } \to \infty $ and $ h \in \lp^{\infty }  ( (0, 1 ); \wkp^{ 2 , \infty }  ( \Omega' ;  \mathbb{ R }^{ 2 }  )   ) $, we have that $ \Omega' \setminus \overline{\Omega} \subseteq \omega_{ \varepsilon } $ for $ \varepsilon $ sufficiently small independently of time. Thus, $ u_{\varepsilon } $ still satisfies the boundary conditions $ u_{ \varepsilon } = h_{ \varepsilon } $ on $ \Omega' \setminus \overline{ \Omega } $. Moreover, as a consequence of  convergence  \ref{item:properties_of_etaeps_perimeter} and estimate (\ref{eq:jumpset_does_not_increase_through_rotation}), we obtain  
\begin{equation}
	\label{eq:modifications_barely_increase_jump}
	\hm^{ 1} \big(
	\big(
	J_{ u_{ \varepsilon } }
	\cup
	J_{ \nabla u_{ \varepsilon } }
	\big) 
	\setminus
	\big(
	J_{ y_{ \varepsilon }}
	\cup
	J_{ \nabla y_{ \varepsilon } }
	\big)
	\big)
	\to 0 \quad \text{ uniformly in time  as $\eps\to 0$}.
\end{equation}

\subsection{Compactness of the displacements}

We use a diagonal argument  to identify the target displacement $u(t)$ for $ t \in I_{ \infty } $ as a suitable limit of $u_\eps$ defined in \eqref{def:u_eps}, see also \eqref{def:y_rot} and \eqref{eqn:tildeuEps}. Motivated by  \Cref{thm:gsbd2_compactness}\ref{item:rescaled_un }, the function
\begin{equation}
	\label{eq:veps_def}
 	v_{ \eps }(t) \coloneqq
 	u_{ \eps }(t)
 	- \chi_{ S_{ 0 }^{ \eps }(t) } (u_{ \eps } (t) -  h_\eps(t)  )
 	- \sum\nolimits_{ j } a_{ j }^{ \eps }(t) \chi_{ S_{ j }^{ \eps }(t) },
\end{equation}  
for {a} suitable {collection $\mathcal{S}_\eps (t) : = (S_j^\eps(t))_{j\ge 0 }$ of disjoint sets of finite perimeter} and infinitesimal rigid motions $(a_j^\eps(t))_{ j\ge 1}$, is of special importance to us since by  subtraction of a piecewise infinitesimal rigid motion pointwise a.e.\ convergence can be guaranteed. We also note that $v_\eps(t) = h_\eps(t)$ on $\Omega' \setminus \overline{\Omega}$ by construction. Using that the previous modifications barely increase the jump set, see inequality (\ref{eq:modifications_barely_increase_jump}),  and the compactness property   \Cref{thm:gsbd2_compactness}\ref{item:caccioppoli_partitions_small_jump}, we see that, for each $t \in [0,1]$,
\begin{equation}
	\label{eq:jump_of_veps_controlled_by_yeps}
	\hm^{ 1 } \big(
	J_{ v_{ \varepsilon } ( t ) } 
	\setminus
	\big( J_{ y_{ \varepsilon } ( t ) } \cup J_{ \nabla y_{ \varepsilon } ( t ) }
	\big)
	\big)
	{ 	\to 0  } \quad \text{ as $\eps\to 0$}.
\end{equation}

\begin{remark}[Limit displacement I]\label{def:displaceAtGoodTimes}
 We define an auxiliary limit displacement. 

\textbf{Limit for $t\in I_\infty$.}\label{def:displacement I}
We apply the compactness \Cref{thm:gsbd2_compactness} to the sequence $ ( u_{ \varepsilon } ( t ) )_{ \varepsilon } $ which yields for a subsequence (not relabeled, $t$-dependent)  a sequence $(v_\eps(t))_\eps$ as in equation \eqref{eq:veps_def}   and a function $ u ( t ) \in \gsbdtwo ( \Omega' ) $ with $ u ( t ) = h ( t ) $ on $ \Omega' \setminus \overline{\Omega } $ such that 
\begin{enumerate}[labelindent=0pt,labelwidth=\widthof{\ref{item:ueps_lsc_jumpset_wrt_u}},label=(C\arabic*),ref=(C\arabic*),itemindent=1em,leftmargin=!]
	\item \label{item:ueps_converges_in_measure_to_u}
	$v_{ \varepsilon}{(t)} \to  u ( t ) $ pointwise almost everywhere in $\Omega'$,
	\item \label{item:weak_converg_symmetric_grads}$ e(u_{ \varepsilon } ( t ) ) \rightharpoonup e( u ( t ) ) $ weakly in $ \lp^{ 2 } ( \Omega' ; \mathbb{ R }^{ 2 \times 2 }_{ \mathrm{sym} }) $,
	\item \label{item:ueps_lsc_jumpset_wrt_u}$ \hm^{ 1 } ( J_{  u ( t ) } \cap U ) \leq \liminf_{ \varepsilon \to 0 } \hm^{ 1 } ( J_{ u_{ \varepsilon } ( t ) } \cap U ) $ for all open Lipschitz subsets $ U \subseteq \Omega' $.
\end{enumerate}
Note that by additionally using the compactness properties \ref{item:caccioppoli_partitions_small_jump} and \ref{item:S0n_vanishes} from \Cref{thm:gsbd2_compactness}, we obtain that $ v_{ \varepsilon }  (t)  $ in place of $ u_{ \varepsilon }  (t)  $ also enjoys the properties \ref{item:weak_converg_symmetric_grads} and \ref{item:ueps_lsc_jumpset_wrt_u}.

\textbf{Limit for $t \in [0,1]\setminus I_\infty$.} We point out that we can apply the same reasoning to find a limit displacement $\hat{u}(t)$ at any time $t\in [0,1]$ such that $u_\eps(t)$  converges to  $\hat{u}(t)$ in the sense above. We denote this displacement by $\hat{u}$ to emphasize that a single subsequence in $\eps$ cannot be used for all $t$, given that $[0,1]$ is uncountable, and there is no reason that $\hat{u}$ should be measurable in time. However, we define $\hat u (t) :=u(t)$ for $t\in I_{\infty}.$
\end{remark}

 Eventually,   the function $ u $ is defined as follows.

\begin{definition}[Limit displacement II and the evolving crack]\label{def:displacement II}

We can finally define the limiting displacement $u$  satisfying $u(t) = h(t)$ on $\Omega' \setminus \overline{\Omega}$ for all $t \in [0,1]$. 

 \textbf{Limit for $t\in I_\infty$.}  	Let $u(t)$ be as in \Cref{def:displacement I} for $t\in I_\infty$. 	 By   a diagonalization argument, we can find a single subsequence in $\eps$ that satisfies \ref{item:ueps_converges_in_measure_to_u}--\ref{item:ueps_lsc_jumpset_wrt_u}  for all times $ t \in I_{ \infty } $. 
	
	\textbf{Limit for $t \in [0, 1 ] \setminus I_{ \infty }$.}
	Take $ (t_{ p } )_{p \in \mathbb{ N } } \subseteq I_{\infty } $ such that $ t_{ p } \uparrow t $. The $ \lp^{ 2 } $-norm of the symmetric gradients and the size of the jump sets of the sequence $ ( u ( t_{ p } ) )_{ p }$ are uniformly bounded by the (not shown yet) lower semicontinuity \eqref{eq:lsc_of_energies} and the a priori estimates in Lemma \ref{lem:aprioriEst}. Thus, by the compactness Theorem \ref{thm:gsbd2_compactness}, we can find a  subsequence (not relabeled) and a function $ u( t ) \in \gsbdtwo ( \Omega' ) $ such that $ u( t_{ p } ) $ converges to $ u( t ) $ in the sense of \Cref{thm:gsbd2_compactness}.  Regularity of the boundary condition $h$ in \eqref{eqn:bdryData} ensures that $u(t) = h(t)$ on $\Omega'\setminus \overline{\Omega}$.

 \textbf{Crack set.}  	Moreover, we introduce the irreversible crack at time $ t \in [0,1 ] $ as
	\begin{equation}
		\label{eq:irreversible_crack}
		\Gamma ( t ) 
		\coloneqq 
		\bigcup_{ \tau \in I_{ \infty }^{ t } }
		J_{ u ( \tau ) }.
	\end{equation}
\end{definition}

We introduce two notions for the limit displacement as each is amenable to different tools allowing us to recover different properties in the limit. We emphasize that the displacements $u$ and {$
\hat u$ constructed above} may not be unique or even measurable (in
space-time) {and, a priori, $u \neq \hat u$.} However, in addition to minimality, \Cref{lemma:cont_extension} shows that the linear strain $e(u) $
is in fact measurable and is uniquely identified by the approximate subsequence $u_{ \varepsilon }$. For this, we prove that the other way of obtaining the limit, namely the function $ \hat{u} $ from  \Cref{def:displacement I},  gives rise to the same symmetric gradient. In fact, since $e(u) = e(\hat u)$, we will be able to show that the limit $u(t) = \hat{u}(t)$ is uniquely determined on the good set $G(t)$ still attached to the boundary $\Omega'\setminus \overline{\Omega}$, see convergence \eqref{baruconv0} above.
 
\section{Stability of minimality as $\eps\to 0$} \label{sec:approximateRelationsMinimality}

In this section, we prove that $u$ (and $\hat u$) found in  Remark \ref{def:displacement I} and Definition \ref{def:displacement II}  are minimizers of the Griffith energy, excluding any previously created crack. 

\subsection{Lower semicontinuity of the energies}

 Recall {$\energyLin$ defined in}  equation  (\ref{linen}). In this subsection, our  first  goal is  to show that for all $ t \in I_{ \infty } $ it holds that
\begin{align}
	\label{eq:lsc_of_energies}
	\energyLin ( u ( t ) )
	& \leq 
	\liminf_{ \varepsilon \to 0 }
	\dfrac{ 1 }{ \varepsilon^{ 2 } }
	\int_{ \Omega' }
	W ( \nabla y_{ \varepsilon } ( t ) )
	\dd{ x }
	+
	\kappa \hm^{ 1 } \left(J_{ y_{\varepsilon }( t ) } \cup J_{ \nabla y_{ \varepsilon } ( t ) } \right),
\end{align}
where $u(t)$ is as in  Definition \ref{def:displacement II}.

For the proof of the lower semicontinuity (\ref{eq:lsc_of_energies}), we use the same strategy as in the proof of  the $ \Gamma $-liminf   inequality  \cite[Thm.2.7]{friedrich_griffith_energies_as_small_strain_limit_of_nonlinear_models_for_nomsimple_brittle_materials} (differences arising as we do not `zero-out' regions that travel off to infinity). 
First, we note that on $ \omega_{ \varepsilon } $ (see Lemma \ref{rmk:properties_of_chieps}), we have that $ \varepsilon \abs{ \nabla u^{ \mathrm{aux}}_{ \varepsilon } } \leq \varepsilon \eta_{ \varepsilon } \to 0 $, see  convergence  \ref{item:properties_of_etaeps_slowness}.  
 As $ \diff W ( \mathrm{Id} ) = 0 $ and as  the tangent space of  $ \sporth ( 2 ) $  at the identity is  the space of  skew-symmetric matrices, we  have that $ Q ( A ) = Q ( (A + A^{ \top } )/ 2 ) $  (recall \eqref{eq: QQQ}).  
 Remember the definition of $u_\eps$ in \eqref{def:u_eps}, see also \eqref{def:y_rot}--\eqref{eqn:tildeuEps}. 
 By applying a Taylor expansion in a ball of radius $\eps\eta_\eps$, we thus have on $ \omega_{ \varepsilon } $ via the rotational invariance of $ W $ that
\begin{equation*}
	W (\nabla  y_{ \varepsilon } )
	=
	W (\nabla  y_{ \varepsilon }^{ \mathrm{rot} } )
	=
	W ( \mathrm{Id} + \varepsilon \nabla u_{ \varepsilon } )
	\geq
	\eps^2 \dfrac{ 1 }{ 2 } Q ( e ( u_{ \varepsilon } ) ) 
	-
	C
	\varepsilon^{ 3 } \eta_{ \varepsilon }^{ 3 }.
\end{equation*}
 By   definition  \eqref{def:u_eps}, the weak convergence of $e(u_\eps)$ to $e(u)$  (see   \Cref{def:displacement II} for $t \in I_\infty$),  and the convexity of $ Q $, it follows that (suppressing the dependence on $ t \in I_{ \infty } $)
\begin{align}
	\notag
	\int_{ \Omega' }
	\dfrac{ 1 }{ 2 }
	Q( e ( u ) )
	\dd{ x }
	& \leq
	\liminf_{ \varepsilon \to 0 }
	\int_{ \Omega' }
	\dfrac{ 1 }{ 2 }
	Q ( e ( u_{ \varepsilon } ) )
	\dd{ x }
	 =
	\liminf_{ \varepsilon \to 0 }
	\int_{ \omega_{ \varepsilon } }
	\dfrac{ 1 }{ 2 }
	Q ( e ( u^{ \mathrm{aux}}_{ \varepsilon } ) )
	\dd{ x }
	\\
	\label{eq:lsc_for_quadratic_term}
	& \leq
	\liminf_{ \varepsilon \to 0 }\left[
	\dfrac{ 1 }{ \varepsilon^{ 2 } }
	\int_{ \Omega' }
	W ( \nabla y_{ \varepsilon } )
	\dd{ x }
	+ C \eps \eta_{ \varepsilon }^{ 3 } \right].
\end{align}
By  convergence  \ref{item:properties_of_etaeps_fastness} the last summand vanishes. Combining the fact that the modifications barely increase the jump set by inequality (\ref{eq:modifications_barely_increase_jump}) with the lower semicontinuity  estimate in  \ref{item:ueps_lsc_jumpset_wrt_u}  we get 
\begin{equation}
	\label{eq:lsc_for_jump_set}
	\hm^{ 1 } \left( J_{ u ( t ) } \cap U
	\right)
	\leq
	\liminf_{ \varepsilon \to 0 }
	\hm^{ 1 } \left( 
	\left( J_{ y_{ \varepsilon } ( t ) } \cup J_{ \nabla y_{ \varepsilon } ( t ) } \right)
	\cap
	U\right)
\end{equation}
for all open Lipschitz sets $ U \subseteq \Omega' $.
Now, the estimates (\ref{eq:lsc_for_quadratic_term}) and (\ref{eq:lsc_for_jump_set}) yield the desired lower semicontinuity (\ref{eq:lsc_of_energies}).

As in \cite{francfort_larsen_existence_and_convergence_for_quasi_static_evolution_in_brittle_fracture}, we  also  argue that the total crack energy is lower semicontinuous, but we use a different argument here. Precisely, recalling the definition of $\Gamma(t)$ in \eqref{eq:irreversible_crack}, for all $ t \in I_{ \infty } $, it holds that
\begin{equation}
	\label{eq:lsc_jump_sets_union_times}
	\hm^{ 1 } \left( \Gamma ( t ) \right)
	\leq
	\liminf_{ \varepsilon \to 0 }
	\hm^{ 1 } \left(
	\Gamma_{ \varepsilon } ( t )
	\right)
	\leq C.
\end{equation}
The last inequality has already been established by  the  a priori estimate (\ref{eq:a_priori_estimate}). Thus, we are left with showing the first inequality.
First, we note that it suffices to show that, given any number of points $ t_{ 1 } , \ldots, t_{ n } \in  I^t_{ \infty }  $, we have that
\begin{equation*}
	\hm^{ 1 } \left(
	\bigcup_{ k = 1 }^{ n }
	J_{ u ( t_{ k } ) }
	\right)
	\leq 
	\liminf_{ \varepsilon \to 0 }
	\hm^{ 1 } \left( \Gamma_{ \varepsilon } (t)
	\right)
\end{equation*}
since we can then argue by continuity from below sending $n \to \infty$. Let such points be given. Then, given any $  \epsilon  > 0 $, by a standard measure theory argument (\Cref{lemma:separating_with_disjoint_open_sets}) we find  open, pairwise disjoint Lipschitz sets $ U_{ 1 } , \ldots , U_{ n } $ such that 
\begin{align}
	\notag\hm^{ 1 } 
	\left(
	\bigcup_{ k = 1 }^{ n }
	J_{ u ( t_{ k } ) }
	\right)
	-   \epsilon 
	& \leq
	\sum_{ k = 1 }^{ n }
	\hm^{ 1 } \left( J_{ u( t_{ k } ) } \cap U_{ k } \right)
	\leq
	\sum_{ k = 1 }^{ n }
	\liminf_{ \varepsilon \to 0 }
	\hm^{ 1 } \left( \left( J_{ y_{ \varepsilon } ( t_{ k } ) } \cup J_{ \nabla y_{ \varepsilon } ( t_{ k } ) } \right) \cap U_{ k } \right)
	\\
	& \label{for the end}\leq
	\liminf_{ \varepsilon \to 0 }
	\hm^{ 1 } \left(
	\bigcup_{ k = 1 }^{ n }
	J_{y_{ \varepsilon } ( t_{k } )} \cup J_{ \nabla y_{ \varepsilon } ( t_{ k } ) } 
	\right)
	 \leq
	\liminf_{ \varepsilon \to 0 }
	\hm^{ 1 } \left(
	\Gamma_{ \varepsilon } ( t )
	\right).
\end{align}
Here, the second inequality follows from inequality (\ref{eq:lsc_for_jump_set}).

\subsection{Almost minimality}

The goal of this subsection is to prove that the functions $ u_{ \varepsilon } $ are `almost' minimizers of the linearized functional, in the sense of the following lemma. An essential piece of this result is the quantification of  the meaning of  `almost' for a given test function, which is strong enough to pass to the limit in both the minimality property and an associated  variational inequality,    see  Lemma~\ref{lemma:u_minimzer_of_total_energy} and  Lemma   \ref{cor:approx_euler_lagrange_consequence} below.

\begin{lemma}
	\label{lemma:linearization_of_minimization_problem}
	For any $ t \in I_{ \infty } $ and any $ \phi \in \gsbvtwotwo ( \Omega' ;  \mathbb{ R }^{ 2 }  ) $ with  $ \phi = 0 $ on $ \Omega' \setminus \overline{\Omega } $  and $|\nabla \phi| \in \lp^3(\Omega')$,  there exists a sequence $ \rho_{ \varepsilon} = \rho_{ \varepsilon} (\phi)  \to 0 $ such that we have 
	\begin{align*}
		&\int_{ \Omega' }
		\dfrac{ 1 }{2 }
		Q \big( e ( u_{ \varepsilon } ( t ) ) \big)
		\dd{ x }
		- \rho_{ \varepsilon }
		\le
		\int_{ \Omega' }
		\dfrac{ 1 }{ 2 }
		Q \big( e( u_{ \varepsilon }( t ) +\phi ) \big) 
		\dd{ x }
		+
		\kappa
		\hm^{ 1 } \left(  \left( J_{ \phi } \cup J_{ \nabla \phi } \right) \setminus 
		\Gamma_{ \varepsilon } ( t ) 
		\right).
	\end{align*}
	Moreover, there exists $C > 0$ and $\delta_{ \varepsilon} \to 0  $ independently of $ t $ and $ \phi $ such that
	\begin{equation}
		\label{eq:estimate_rhoeps}
		\rho_{ \varepsilon } \leq C \delta_{ \varepsilon} 
		\left( 
		1 + 
		\int_{ \Omega' }
		\abs{ \nabla \phi }^{ 2 }
		\dd{ x }
		+
		\int_{ \Omega' }
		\abs{ \nabla^{ 2 } \phi }^{ 2 }
		\dd{ x }
		\right)
		+
		C
		\int_{ \Omega' }
		\varepsilon \abs{ \nabla \phi }^{ 3 }
		\dd{ x }. 
	\end{equation}
\end{lemma}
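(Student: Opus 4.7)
The plan is to derive the inequality from the nonlinear minimality of $y_\varepsilon(t)$ against a carefully engineered competitor, then linearize via Taylor expansion around the identity. On each piece $P_j^\varepsilon$ of the Caccioppoli partition from \eqref{def:y_rot}, I would set $\tilde\phi_\varepsilon := (R_j^\varepsilon)^\top \phi$ and choose the competitor
\begin{equation*}
z_\varepsilon := y_\varepsilon + \varepsilon\chi_{\omega_\varepsilon(t)}\tilde\phi_\varepsilon.
\end{equation*}
Since $\phi = 0$ on $\Omega'\setminus\overline{\Omega}$ and $\omega_\varepsilon \supseteq \Omega'\setminus\overline{\Omega}$ for small $\varepsilon$, $z_\varepsilon$ satisfies the correct boundary condition and is admissible in \eqref{eq:min_problem_wrt_own_jumpset}. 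The cutoff by $\chi_{\omega_\varepsilon}$ is crucial: on $\omega_\varepsilon^c$ we then have $z_\varepsilon = y_\varepsilon$ and hence no bulk energy difference, while the extra jumps it introduces on $\partial^*\omega_\varepsilon$ cost at most $\kappa\,\mathcal{H}^1(\partial^*\omega_\varepsilon \setminus \Gamma_\varepsilon(t))$, which is $o(1)$ by combining \ref{item:properties_of_etaeps_perimeter} with \ref{item:rotation_property_jump} and the inclusion $J_{\nabla u_\varepsilon^{\rm aux}} = J_{\nabla y_\varepsilon^{\rm rot}}$.

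The second step is to exploit frame indifference. On $P_j^\varepsilon$ one computes $W(\nabla z_\varepsilon) = W(R_j^\varepsilon \nabla z_\varepsilon) = W(\nabla y_\varepsilon^{\rm rot} + \varepsilon\nabla\phi) = W({\rm Id} + \varepsilon\nabla u_\varepsilon^{\rm aux} + \varepsilon\nabla\phi)$, while $W(\nabla y_\varepsilon) = W({\rm Id} + \varepsilon\nabla u_\varepsilon^{\rm aux})$. On $\omega_\varepsilon$, $|\varepsilon\nabla u_\varepsilon^{\rm aux}|\leq\varepsilon\eta_\varepsilon \to 0$ by \ref{item:properties_of_etaeps_slowness}, so on the further subregion $\{\varepsilon|\nabla\phi|<r\}$ both arguments lie safely in $B_{2r}(\sporth(2))$ where $W$ is $C^3$. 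A third-order Taylor expansion together with $W({\rm Id})=0$, $DW({\rm Id})=0$, and the frame-indifference identity $Q(A) = Q(A^{\rm sym})$ yields
\begin{equation*}
\tfrac{1}{\varepsilon^2}\bigl[W({\rm Id} + \varepsilon\nabla u_\varepsilon^{\rm aux} + \varepsilon\nabla\phi) - W({\rm Id}+\varepsilon\nabla u_\varepsilon^{\rm aux})\bigr] = \tfrac12 Q(e(u_\varepsilon)+e(\phi)) - \tfrac12 Q(e(u_\varepsilon)) + O\bigl(\varepsilon\eta_\varepsilon^3+\varepsilon|\nabla\phi|^3\bigr),
\end{equation*}
where I have used that $u_\varepsilon = u_\varepsilon^{\rm aux}$ on $\omega_\varepsilon$.

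Next, I would handle the complementary region $\omega_\varepsilon\cap\{\varepsilon|\nabla\phi|\geq r\}$ by the local Lipschitz bound \ref{eq:W_growth_assumptions} combined with the $L^2$ bound on $\nabla y_\varepsilon$ coming from \ref{eq:W_zero_on_sod} and the a priori estimate \eqref{eq:a_priori_estimate}; Markov's inequality and the trick $|\nabla\phi|\leq \varepsilon^2|\nabla\phi|^3/r^2$ turn every contribution into terms of size $C\varepsilon\|\nabla\phi\|_{L^3}^3$. The corresponding missing piece of $\frac12 Q(e(u_\varepsilon)+e(\phi))-\frac12 Q(e(u_\varepsilon))$ there is bounded by $C\sqrt{\varepsilon}\|e(\phi)\|_{L^2}$ times $\|e(u_\varepsilon)\|_{L^2}$ using $\CC e(u_\varepsilon):e(\phi)$, and absorbed into $\rho_\varepsilon$ via Young's inequality. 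The Hessian difference expands into a cross-term of size $\varepsilon^{1-\beta}\|\nabla^2\phi\|_{L^2}$ (via $\|\nabla^2 y_\varepsilon\|_{L^2}\lesssim\varepsilon^\beta$) and a quadratic term $\varepsilon^{2-2\beta}\|\nabla^2\phi\|_{L^2}^2$; both fit in $\rho_\varepsilon$ because $\beta<1$. Assembling all contributions with the jump-set estimate $\mathcal{H}^1((J_{z_\varepsilon}\cup J_{\nabla z_\varepsilon})\setminus\Gamma_\varepsilon(t))\leq \mathcal{H}^1((J_\phi\cup J_{\nabla\phi})\setminus\Gamma_\varepsilon(t)) + o(1)$ delivers the claimed inequality.

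The main obstacles I anticipate are: first, ensuring that the piecewise rotation of the test function through $(R_j^\varepsilon)^\top$ does not produce uncontrolled jumps, which is where property \ref{item:rotation_property_jump} (giving $\mathcal{H}^1(\bigcup\partial^*P_j^\varepsilon\setminus J_{\nabla y_\varepsilon})\lesssim\varepsilon^{\beta-\gamma}$) is indispensable; second, controlling the cutoff by $\chi_{\omega_\varepsilon}$ so that the introduced jumps $\partial^*\omega_\varepsilon$ are essentially already inside $\Gamma_\varepsilon(t)$, which is exactly what the specific choice of $\eta_\varepsilon$ in \Cref{rmk:properties_of_chieps} secures; third, verifying that the error is not only infinitesimal for fixed $\phi$ but satisfies the \emph{uniform} quantitative bound \eqref{eq:estimate_rhoeps} in which $\delta_\varepsilon$ depends only on $\varepsilon$. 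This uniformity is what will allow us in the applications (Lemmas \ref{lemma:u_minimzer_of_total_energy} and \ref{cor:approx_euler_lagrange_consequence}) to plug in test functions $\phi=\phi_\varepsilon$ that themselves depend on $\varepsilon$ through the density and jump-transfer constructions.
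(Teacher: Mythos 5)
Your proposal is correct and reaches the stated estimate, but the competitor you choose is genuinely different from the one the paper tests with. The paper uses $z_\varepsilon = \mathrm{id} + \varepsilon(u_\varepsilon+\phi)$, where the piecewise rotations and the cutoff are already baked into $u_\varepsilon = \chi_{\omega_\varepsilon}u_\varepsilon^{\rm aux}$; in particular on $\omega_\varepsilon^c$ the competitor is $\mathrm{id}+\varepsilon\phi$ and one Taylor-expands $W(\mathrm{Id}+\varepsilon\nabla\phi)$ there as well. You instead perturb the raw $y_\varepsilon$ by $\varepsilon\chi_{\omega_\varepsilon}\tilde\phi_\varepsilon$ with $\tilde\phi_\varepsilon := \sum_j (R_j^\varepsilon)^\top\phi\,\chi_{P_j^\varepsilon}$, so that $z_\varepsilon = y_\varepsilon$ on $\omega_\varepsilon^c$ and the bulk integrals there cancel exactly, while on $\omega_\varepsilon\cap P_j^\varepsilon$ frame indifference reduces the integrand to the same $W(\mathrm{Id}+\varepsilon\nabla u_\varepsilon^{\rm aux}+\varepsilon\nabla\phi)$ that the paper obtains. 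You pay for the exact cancellation by introducing extra jumps of $\tilde\phi_\varepsilon$ on $\bigcup_j\partial^*P_j^\varepsilon$, but you correctly note that \ref{item:rotation_property_jump} controls these, and the jumps on $\partial^*\omega_\varepsilon$ are absorbed via \ref{item:properties_of_etaeps_perimeter} (together with $J_{\nabla u_\varepsilon^{\rm aux}} = J_{\nabla y_\varepsilon^{\rm rot}}\subseteq J_{\nabla y_\varepsilon}\cup\bigcup_j\partial^*P_j^\varepsilon$). The Hessian and far-field ($\varepsilon|\nabla\phi|\geq r$) estimates you sketch match the paper's in substance. Both routes deliver the uniform bound \eqref{eq:estimate_rhoeps}; the paper's bookkeeping is a little tighter, yours achieves a cleaner cancellation off $\omega_\varepsilon$.

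One remark in your sketch should be dropped: the passage about a ``missing piece of $\tfrac12 Q(e(u_\varepsilon)+e(\phi))-\tfrac12 Q(e(u_\varepsilon))$'' on $\{\varepsilon|\nabla\phi|\geq r\}$, bounded by $C\sqrt{\varepsilon}\,\|e(\phi)\|_{\lp^2}\|e(u_\varepsilon)\|_{\lp^2}$ and absorbed by Young. This is both unnecessary and, as written, would not fit \eqref{eq:estimate_rhoeps}: Young applied to $\sqrt{\varepsilon}\,\|\nabla\phi\|_{\lp^3}^{3/2}$ produces $\lambda_\varepsilon + \lambda_\varepsilon^{-1}\varepsilon\,\|\nabla\phi\|_{\lp^3}^{3}$, and one cannot have $\lambda_\varepsilon\to 0$ and $\lambda_\varepsilon^{-1}\lesssim 1$ simultaneously. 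In fact no such correction arises: after the lower Taylor bound on $\omega_\varepsilon$, the left-hand side is already $\int_{\Omega'}\tfrac12 Q(e(u_\varepsilon))\,\dd{x} - C\varepsilon\eta_\varepsilon^3$ because $e(u_\varepsilon)$ vanishes off $\omega_\varepsilon$; and after the upper Taylor bound on $\omega_\varepsilon\cap\{\varepsilon|\nabla\phi|<r\}$ plus the Lipschitz bound on $\omega_\varepsilon\cap\{\varepsilon|\nabla\phi|\geq r\}$, the right-hand side contains $\int_{\omega_\varepsilon\cap\{\varepsilon|\nabla\phi|<r\}}\tfrac12 Q(e(u_\varepsilon)+e(\phi))\,\dd{x}$, which you simply bound above by $\int_{\Omega'}\tfrac12 Q(e(u_\varepsilon)+e(\phi))\,\dd{x}$ using positivity of $Q$. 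There is nothing to subtract, hence no missing piece.
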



\begin{proof}
	The idea of the proof is to use the minimality property \eqref{eq:discrete_min_problem} for $y_\eps$ and  to  apply the linearization procedure, but  to  keep track of the terms we throw away.
	For notational simplicity, we suppress the dependence on {$ t $ for $y_\eps$ and $u_\eps$.}

	Take $ \varepsilon $ sufficiently small such that $ t \in I_{ \varepsilon } $ and let $ z \in \gsbvtwotwo ( \Omega'; \mathbb{ R }^{ 2 } ) $ with $ z=\mathrm{id} +\varepsilon h_{ \varepsilon}  $ on $ \Omega' \setminus \overline{\Omega} $.  Since $ y_{ \varepsilon } $ is a minimizer of the functional (\ref{eq:min_problem_wrt_own_jumpset}), we have  that
	\begin{equation}
		\dfrac{ 1 }{ \varepsilon^{ 2 } }
		\int
		W ( \nabla y_{ \varepsilon })
		\dd{ x }
		+
		\dfrac{ 1 }{ \varepsilon^{ 2 \beta } }
		\int
		\abs{ \nabla^{ 2 } y_{ \varepsilon } }^{ 2 }
		\dd{ x }
		\label{eq:yeps_minimizer_of_own_jump_set}
		\leq
		\dfrac{ 1 }{ \varepsilon^{ 2 } }
		\int
		W( \nabla z )
		\dd{ x }
		+
		\dfrac{ 1 }{ \varepsilon^{ 2 \beta } }
		\int
		\abs{ \nabla^{ 2 } z }^{ 2 }
		\dd{ x }
		+
		\kappa
		\hm^{ 1 } \big(
		\left(
		J_{ z } \cup J_{ \nabla z }\right)
		\setminus 
		\Gamma_{ \varepsilon } ( t )
		\big).
	\end{equation} 
	We first estimate the left-hand side of this inequality. As in inequality (\ref{eq:lsc_for_quadratic_term}), we obtain via a Taylor expansion that
	\begin{align*}
		\dfrac{ 1 }{ \varepsilon^{ 2 } }
		\int_{ \Omega' } 
		W ( \nabla y_{ \varepsilon } )
		\dd{ x }
		& \geq
		\dfrac{ 1 }{ \varepsilon^{ 2 } }
		\int_{ \Omega' }
		\chi_{ \omega_{ \varepsilon} }
		W ( \mathrm{Id} + \varepsilon u_{ \varepsilon } ) 
		\dd{ x }
		  \geq
		\int_{ \Omega'}
		\dfrac{ 1 }{ 2 }
		Q ( e(u_{ \varepsilon } ) )
		\dd{ x }
		-  C\eps\eta_\eps^3. 
	\end{align*}
 Therefore,  by choosing $\delta_\eps \ge \eps\eta_\eps^3$ in  inequality \eqref{eq:estimate_rhoeps}   we conclude that
	\begin{equation}
		\label{eq:lsc_for_minim_problem}
		\dfrac{ 1 }{\varepsilon^{ 2 } }
		\int_{ \Omega' }
		W ( \nabla y_{ \varepsilon } )
		\dd{ x }
		+
		\dfrac{ 1 }{ \varepsilon^{ 2 \beta } }
		\int_{ \Omega' }
		\abs{ \nabla^{ 2 } y_{ \varepsilon } }^{ 2 }\dd{ x }
		+ \rho_{ \varepsilon}
		\geq
		\int_{ \Omega' }
		\dfrac{ 1 }{ 2 }
		Q ( e ( u_{ \varepsilon } ) )
		\dd{ x }
		+
		\dfrac{ 1 }{ \varepsilon^{ 2 \beta } }
		\int_{ \Omega' }
		\abs{ \nabla^{ 2 } y_{ \varepsilon } }^{ 2 }\dd{ x } 
		.
	\end{equation}
	Now, let us take care of the right-hand side of inequality \eqref{eq:yeps_minimizer_of_own_jump_set}. 
	For $\phi$ as in the  statement,  define the sequence $ z_{ \varepsilon } \coloneqq \mathrm{id}+\varepsilon (u_{ \varepsilon } +\phi ) $. The function $ z_{ \varepsilon } $ is an admissible competitor in  (\ref{eq:yeps_minimizer_of_own_jump_set}).  Below we will show that there exists a sequence $ \rho_{ \varepsilon } \to 0 $ satisfying inequality (\ref{eq:estimate_rhoeps}) such that
	\begin{align}\label{finallytoshow}
		&\dfrac{ 1 }{ \varepsilon^{ 2 } }
		\int_{ \Omega' }
		W( \nabla z_{ \varepsilon } )
		\dd{ x }
		+
		\dfrac{ 1 }{ \varepsilon^{ 2 \beta } }
		\int_{ \Omega' }
		\abs{ \nabla^{ 2 } z_{ \varepsilon } }^{ 2 }
		\dd{ x }
		+
		\kappa
		\hm^{ 1 } \big(
		\left(
		J_{ z_{ \varepsilon } } \cup J_{ \nabla z_{ \varepsilon } }\right)
		\setminus 
		\Gamma_{ \varepsilon } ( t )
		\big)\notag
		\\
		\leq{} &
		\dfrac{ 1 }{ 2 }
		\int_{ \Omega' } 
		Q ( e( u_{ \varepsilon } + \phi ) ) \dd{ x }
		+
		\dfrac{ 1 }{ \varepsilon^{ 2 \beta } }
		\int_{ \Omega' } 
		\abs{ \nabla^{ 2 } y_{ \varepsilon } }^{ 2 }
		\dd{ x }
		+
		\kappa \hm^{ 1 } \big(  \left( J_{ \phi } \cup J_{ \nabla \phi } \right) \setminus 
		\Gamma_{ \varepsilon } ( t )
		\big)
		+
	 \rho_{ \varepsilon }.
	\end{align}
Once this is shown,  combining this  with inequality  (\ref{eq:lsc_for_minim_problem}) and the minimization property (\ref{eq:yeps_minimizer_of_own_jump_set}), we obtain the desired result  (for $2\rho_\eps$ in place of $\rho_\eps$).

 Let us now come to the proof of  inequality \eqref{finallytoshow}.  		Similarly to the a priori estimate  in  \Cref{lem:aprioriEst}, we want to  perform  a Taylor expansion on $ W ( \nabla z_{ \varepsilon } ) $ in a  neighborhood   of the identity and use the growth conditions of $ W $ away from the identity.
Let $ r  \in (0,1)  $  be  such that $ W $ is $ \cont^{ 3 } $ on $ B_{ 2 r } ( \mathrm{Id} ) $. 
Moreover, recall from the choice of the cutoff (see \ref{item:properties_of_etaeps_slowness}) that $ \varepsilon \abs{ \nabla u_{ \varepsilon } } \to 0 $ uniformly. In particular, this means that for $ \varepsilon $ sufficiently small, $ \varepsilon \abs{ \nabla \phi } < r $ already implies $ \varepsilon \abs{ \nabla ( u_{ \varepsilon } + \phi ) } < 2 r $.
		We start by estimating  the integral on    $ \{ \varepsilon \abs{ \nabla \phi } \leq r\} $ using  a Taylor expansion. This  yields
		\begin{align*}
		\notag 		\dfrac{ 1 }{ \varepsilon^{ 2 } }
		\int_{ \lbrace \varepsilon \abs{ \nabla \phi } \leq r \rbrace }
		W( \nabla z_{ \varepsilon } )
		\dd{ x }
		& \leq
		\int_{ \Omega' }
		\dfrac{1}{2}
		Q ( e( u_{ \varepsilon } + \phi ) )
		\dd{ x }
		+
		C \varepsilon
		\int_{ \Omega' }
		\abs{ \nabla ( u_{ \varepsilon } + \phi ) }^{ 3 }
		\dd{ x }
		\\
				&\leq
		\int_{ \Omega' } 
		\dfrac{1}{2}
		Q ( e( u_{ \varepsilon } + \phi ) )
		\dd{ x }
		+
		C \varepsilon \eta_{ \varepsilon }^{ 3 }
		+
		C \varepsilon 
		\int_{ \Omega' }
		\abs{ \nabla \phi }^{ 3 }
		\dd{ x}.
		\end{align*}
	The last   two terms can be   absorbed into $ \rho_{ \varepsilon } $ {while satisfying}   (\ref{eq:estimate_rhoeps}), due to  the choice of the cutoff parameter $ \eta_{ \varepsilon } $ in \ref{item:properties_of_etaeps_slowness}, if we choose $ \delta_{ \varepsilon } \geq \varepsilon \eta_{ \varepsilon }^{ 3 } $.

Next, we estimate  	
		away from the identity. By the local Lipschitz continuity of $ W $, see \ref{eq:W_growth_assumptions},  Young's inequality,  and  {the} fact that  $   \abs{ \nabla u_{ \varepsilon } }  \lesssim  \eta_\eps$,
		 which implies $ \varepsilon \abs{ \nabla \left( u_{ \varepsilon } + \phi \right) } \gtrsim 1 $ on $ \{ \varepsilon \abs{ \nabla \phi } \geq r \} $ for $ \varepsilon $ sufficiently small, we have 
		\begin{align}
			\notag
			\dfrac{ 1 }{ \varepsilon^{ 2 } }
			\int_{\lbrace \varepsilon \abs{ \nabla \phi } \geq r \rbrace }
			W ( \nabla z_{ \varepsilon } )
			\dd{ x }
			& =
			\dfrac{ 1 }{ \varepsilon^{ 2 } }
			\int_{ \lbrace  \varepsilon \abs{ \nabla \phi } \geq r \rbrace }
			W( \mathrm{Id} + \varepsilon \nabla ( u_{ \varepsilon } + \phi ) )
			\dd{ x }
			\\
			\notag
			& \lesssim
			\dfrac{ 1 }{ \varepsilon^{ 2 } }
			\int_{  \lbrace \varepsilon \abs{ \nabla \phi } \geq r  \rbrace }
			\left( 1 + \varepsilon \abs{ \nabla ( u_{ \varepsilon } + \phi ) } \right)
			\varepsilon \abs{ \nabla ( u_{ \varepsilon } + \phi ) }
			\dd{ x }
			\\
			\notag
			&
			\lesssim
			\int_{ \lbrace   \varepsilon \abs{ \nabla \phi } \geq r \rbrace  }
			\abs{ \nabla u_{ \varepsilon } }^{ 2 }
			\dd{ x }
			+
			\int_{ \lbrace  \varepsilon \abs{ \nabla \phi } \geq r  \rbrace }
			\abs{ \nabla \phi }^{ 2 }
			\dd{ x }
						\\
						\label{eq:approx_minimality_away_from_id}
			&  \lesssim
			\dfrac{\eta_{ \varepsilon }^{ 2 }\eps^2}{r^{ 2 } }
			\int_{ \Omega' }
			\abs{ \nabla \phi }^{2 }
			\dd{ x }
			+
			\dfrac{\eps}{r}\int_{  \Omega'  }
			\abs{ \nabla \phi }^{ 3 }
			\dd{ x }.
		\end{align}
		By  convergence  \ref{item:properties_of_etaeps_slowness}, the term  $ \eta_{ \varepsilon }^{ 2 } \varepsilon^{ 2 } $ vanishes as $ \varepsilon $ tends to zero. Thus, by choosing $\delta_\eps \ge \eta_\eta^2 \eps^2$,  the term (\ref{eq:approx_minimality_away_from_id}) can be absorbed into $ \rho_{ \varepsilon } $, see  inequality  (\ref{eq:estimate_rhoeps}).

	Now, let us take care of the second derivative term $ \nabla^{ 2 }z_{ \varepsilon } $. 
	We introduce a parameter $ \varepsilon^{ 1- 2 \beta } \ll \theta_{ \varepsilon} \ll \varepsilon^{ -1  } $. Then,   by expanding the square and by applying Young's inequality to the resulting product   we have  that
	\begin{align}
			 \dfrac{ 1 }{ \varepsilon^{2 \beta } }
		\int_{ \Omega' }
		\abs{ \nabla^{ 2 } z_{ \varepsilon } }^{ 2 }
		\dd{ x }
		&=
		\dfrac{ \varepsilon^{ 2 } }{ \varepsilon^{ 2 \beta } }
		\int_{ \Omega' }
		\abs{ \nabla^{ 2 } ( u_{ \varepsilon } + \phi ) }^{ 2 }
		\dd{ x } \\
		&\leq 
		 \dfrac{ 1 }{ \varepsilon^{ 2 \beta } }  
		\int_{ \Omega' }\abs{ \nabla^{ 2 } y_{ \varepsilon } }^{ 2 }
		\dd{ x }
{+}   \dfrac{ \theta_{ \varepsilon} \varepsilon }{  \varepsilon^{ 2 \beta } }   
		\int_{ \Omega' }\abs{ \nabla^{ 2 } y_{ \varepsilon } }^{ 2 }
		\dd{ x }		
		+
		\left( \dfrac{ \varepsilon^{ 2 } }{ \varepsilon^{ 2 \beta } } + 
		\dfrac{ \varepsilon }{ \theta_{ \varepsilon} \varepsilon^{ 2 \beta } } 
		\right)
		\int_{ \Omega' }
		\abs{ \nabla^{ 2} \phi }^{2 }
		\dd{ x }, \notag 	\label{eq:upper_semicontinuity_for_second_derivative}
	\end{align}
	 where we also used that $\eps |\nabla^2 u_\eps| \le |\nabla^2 y_\eps|$ by  the modifications   \eqref{def:y_rot}, \eqref{eqn:tildeuEps}, and  \eqref{def:u_eps}.  	By choice of $ \theta_{ \varepsilon } $ and using $ \beta < 1 $, the last summand vanishes in the limit. Moreover, since $ \varepsilon^{ - 2 \beta } \int_{ \Omega' } \abs{\nabla^{ 2 } y_{ \varepsilon } }^{ 2 } \dd{ x } $ stays uniformly bounded  by \Cref{lem:aprioriEst}  and $ \theta_{ \varepsilon} \varepsilon \to 0 $,  the second  summand  vanishes in the limit. Both can be absorbed into  $ \rho_{ \varepsilon }$   as in inequality (\ref{eq:estimate_rhoeps}) if we choose 
	\begin{equation*}
		\delta_{ \varepsilon }
		\geq
		{ \theta_{ \varepsilon} \varepsilon }
		{\left(\sup_{t\in [0,1]}\dfrac{ 1 }{  \varepsilon^{ 2 \beta } }\int_{ \Omega' }\abs{ \nabla^{ 2 } y_{ \varepsilon } (t) }^{ 2 }
		\dd{ x }\right)}
		+
		\left( \dfrac{ \varepsilon^{ 2 } }{ \varepsilon^{ 2 \beta } } + 
		\dfrac{ \varepsilon }{  \theta_{ \varepsilon} \varepsilon^{ 2 \beta } }
		\right).
	\end{equation*} 	
	Lastly, we take care of the jump set by estimating
	\begin{align}
 \hm^{ 1 } \big(
		\left(
		J_{ z_{ \varepsilon } } \cup J_{ \nabla z_{ \varepsilon } }\right)
		\setminus
		\Gamma_{ \varepsilon } ( t) 
		\big)	
		\leq{} &
	 		\hm^{ 1 } \big( \left( J_{ \phi } \cup J_{ \nabla \phi } \right) \setminus 
		\Gamma_{ \varepsilon } ( t )
		\big) + \hm^{ 1 } \big( 
		\left( J_{ u_{ \varepsilon } } \cup J_{ \nabla u_{ \varepsilon } } \right) 
		\setminus
		\Gamma_{ \varepsilon } ( t ) 
		\big).    	\label{eq:upper_semicontinuity_for_jumpset}
	\end{align}
 	The last  term can again be absorbed into $\rho_\eps$ since  modifications are barely increasing the jump set, see inequality (\ref{eq:modifications_barely_increase_jump}) {and definition \eqref{eqn:GammaEps}}.
	We collect inequalities (\ref{eq:approx_minimality_away_from_id})--(\ref{eq:upper_semicontinuity_for_jumpset}) to deduce     inequality  \eqref{finallytoshow}. This concludes the proof. 
\end{proof}

\begin{remark}
	\label{rmk:approximate_minimality_alter}
 Let $t \in I_\infty$. 	In the case that the competitor is of the form $ \psi \in \gsbvtwotwo ( \Omega' ; \mathbb{ R }^{ 2 } ) $ with $ \psi =   h_{ \eps }(t)   $ on $ \Omega' \setminus \overline{ \Omega } $  and $|\nabla \psi| \in \lp^3(\Omega')$,  we immediately deduce from the proof that we instead get the inequality
	\begin{equation*}
		\int_{ \Omega' }
		\dfrac{ 1 }{ 2 }
		Q \big( e ( u_{ \varepsilon } ( t ) ) \big) 
		\dd{ x }
		- 
		\rho_{ \varepsilon }
		\leq
		\int_{ \Omega' }
		\dfrac{ 1 }{ 2 }
		Q ( e ( \psi ) ) 
		\dd{ x }
		+
		\kappa \hm^{ 1 } \left( \left( J_{ \psi } \cup J_{ \nabla \psi } \right) 
		\setminus
		\Gamma_{ \varepsilon } ( t )
		\right),
	\end{equation*}
	where the sequence $ \rho_{ \varepsilon} $ can be bounded for some $  C > 0  $ and $  \delta_\eps \to 0 $ independently of $ t $ and $ \psi $ by
	\begin{equation}
		\label{eq:estimate_rhoeps_alt}
		 \rho_{ \varepsilon }
		\leq
		C  \delta_{ \varepsilon } 
		+ C \varepsilon^{ 2 \left( 1 - \beta \right) } \int_{ \Omega' } \abs{ \nabla^{ 2 } \psi }^{ 2 } \dd{ x } 
		 +
		C \int_{ \Omega' }
		\varepsilon \abs{ \nabla \psi }^{ 3 }
		\dd{ x } .
	\end{equation}
\end{remark}
 
Since we have an approximate minimization property, we also have an approximate variational inequality, which we will need later for the energy balance. 
\begin{corollary}
	\label{cor:approx_euler_lagrange}
	There exists a constant $ C > 0 $ such that for every $ t \in [0,1] $ and $ \phi \in \gsbvtwotwo ( \Omega' ; \R^2 ) $ with $ \phi = 0 $ on $ \Omega' \setminus \overline{\Omega} $,  $ \hm^{ 1 } ( J_{ \nabla \phi } \setminus J_{ \phi })= 0 $,   and $|\nabla \phi| \in \lp^3(\Omega')$,   we have that 
	\begin{equation*}
		\abs{
			\int_{ \Omega' }
		 \mathbb{ C } e ( u_{ \varepsilon }  (t)  ) \colon { e ( \phi ) }
			\dd{ x }
		}^{ 2}
	\leq
		C   \max\left\{ 1 , 
		\int_{ \Omega' }
		\dfrac{ 1 }{ 2 }
		Q ( e ( \phi ) )
		\dd{ x }  \right\} 
		\left(
		\kappa
		\hm^{ 1 } \left(J_{ \phi } 
		\setminus
		\Gamma_{ \varepsilon } ( t )
		\right)
		+
		\rho_{ \varepsilon }  (\phi) 
		\right) ,
	\end{equation*}
	 
where $({\rho_\eps(\phi)})_\eps$ denotes the sequence depending on $\phi$ given in (\ref{eq:estimate_rhoeps}) satisfying  $ {\rho_\eps(\phi)} \to 0 $. 
\end{corollary}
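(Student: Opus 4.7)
The strategy is a standard linearization-of-minimality trick: test the almost-minimality inequality from Lemma \ref{lemma:linearization_of_minimization_problem} with $\pm\lambda\phi$ for $\lambda \in (0,1]$ and optimize the parameter. First, I observe that for any $\lambda \neq 0$, $J_{\lambda\phi} = J_\phi$ and $J_{\nabla(\lambda\phi)} = J_{\nabla\phi}$, so the assumption $\mathcal{H}^1(J_{\nabla\phi}\setminus J_\phi) = 0$ yields
\begin{equation*}
\mathcal{H}^1\big((J_{\pm\lambda\phi} \cup J_{\nabla(\pm\lambda\phi)}) \setminus \Gamma_\eps(t)\big) = \mathcal{H}^1(J_\phi \setminus \Gamma_\eps(t)).
\end{equation*}
Applying Lemma \ref{lemma:linearization_of_minimization_problem} with test $\pm\lambda\phi$ and expanding the quadratic form $Q(e(u_\eps \pm \lambda\phi)) = Q(e(u_\eps)) \pm 2\lambda\,\mathbb{C} e(u_\eps)\colon e(\phi) + \lambda^2 Q(e(\phi))$ gives, after cancellation and rearrangement,
\begin{equation*}
\pm\lambda\int_{\Omega'}\mathbb{C} e(u_\eps(t))\colon e(\phi)\,dx \leq \tfrac{\lambda^2}{2}\int_{\Omega'} Q(e(\phi))\,dx + \kappa\mathcal{H}^1(J_\phi\setminus\Gamma_\eps(t)) + \rho_\eps(\pm\lambda\phi).
\end{equation*}

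Second, inspecting the explicit bound (\ref{eq:estimate_rhoeps}) I see that for $\lambda \in (0,1]$, since $\lambda^2, \lambda^3 \leq 1$, one has $\rho_\eps(\pm\lambda\phi) \leq \rho_\eps(\phi)$, with the right-hand side being the sequence obtained from the lemma at the test function $\phi$ itself. Writing $A := \int_{\Omega'}\mathbb{C} e(u_\eps(t))\colon e(\phi)\,dx$, $B := \int_{\Omega'}\tfrac{1}{2}Q(e(\phi))\,dx$, and $J := \kappa\mathcal{H}^1(J_\phi\setminus\Gamma_\eps(t))$, taking the maximum of the two sign choices delivers the key estimate
\begin{equation*}
|A| \leq \lambda B + \frac{J + \rho_\eps(\phi)}{\lambda} \quad \text{for every } \lambda \in (0,1].
\end{equation*}

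Third, I would optimize in $\lambda$. If $J + \rho_\eps(\phi) \leq B$, then $\lambda := \sqrt{(J+\rho_\eps(\phi))/B} \in (0,1]$ is admissible and yields $|A|^2 \leq 4B(J+\rho_\eps(\phi)) \leq 4\max\{1,B\}(J+\rho_\eps(\phi))$. In the opposite regime $J + \rho_\eps(\phi) > B$, the naive choice $\lambda = 1$ only produces $|A|^2 \lesssim (J+\rho_\eps(\phi))^2$, which does not give a $\phi$-independent constant. The main obstacle is this second regime, and I would resolve it by invoking a priori energy control: inequality (\ref{eqn:uTildeAPriori}) together with the identity $e(u_\eps) = \chi_{\omega_\eps} e(u^{\mathrm{aux}}_\eps)$ (from \eqref{def:u_eps}) gives $\sup_\eps\sup_{t} \|e(u_\eps(t))\|_{L^2(\Omega')} \leq C'$, and then Cauchy--Schwarz together with the positive-definiteness of $\mathbb{C}$ (so that $B \gtrsim \|e(\phi)\|_{L^2}^2$) implies
\begin{equation*}
|A|^2 \leq C'' \|e(\phi)\|_{L^2}^2 \leq C''' B \leq C'''\,(J+\rho_\eps(\phi)) \leq C'''\max\{1,B\}\,(J+\rho_\eps(\phi)),
\end{equation*}
where in the last two inequalities I used $B < J + \rho_\eps(\phi)$ and $\max\{1,B\}\geq 1$. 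Combining the two regimes yields the desired bound with a uniform constant $C = \max\{4, C'''\}$.
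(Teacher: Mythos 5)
Your proposal is correct and follows essentially the same route as the paper's own proof: both arguments split into the regime where $\int \tfrac12 Q(e(\phi))\,dx$ is dominated by $\kappa\mathcal{H}^1(J_\phi\setminus\Gamma_\eps(t))+\rho_\eps(\phi)$ (where Cauchy--Schwarz and the uniform bound \eqref{eqn:uTildeAPriori} suffice) and the complementary regime, where one tests Lemma~\ref{lemma:linearization_of_minimization_problem} with $\pm\theta\phi$, exploits $\rho_\eps(\theta\phi)\le\rho_\eps(\phi)$ for $\theta\in(0,1)$, and optimizes $\theta = \sqrt{(\kappa\mathcal{H}^1(J_\phi\setminus\Gamma_\eps(t))+\rho_\eps(\phi))/\int\tfrac12 Q(e(\phi))\,dx}$. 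The only cosmetic difference is the direction of the case split; the substance is identical.
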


\begin{proof}
For the sake of brevity, we write $ H_\eps  \coloneqq \kappa
\hm^{ 1 } \left(J_{ \phi } 
\setminus
 \Gamma_\eps  ( t )
\right) $.
If 
\begin{equation}\label{eqn:ruleOut}
		\int_{\Omega'}\dfrac{1}{2} Q(e(\phi)) \dd{x} \leq 
		H_\eps
		+
		\rho_{ \varepsilon } ( \phi )
		,
		\end{equation} 
	the desired estimate follows from H\"older's inequality and the uniform energy bound (\ref{eqn:uTildeAPriori}),  where the constant  depends on $\mathbb{C}$  (see \eqref{eq: QQQ}).  

	Otherwise, for every $ 1 > \theta > 0 $, the function $ \theta \phi $ is an admissible competitor for \Cref{lemma:linearization_of_minimization_problem}. Thus,
	\begin{align*}
		0 & \leq
		\dfrac{ 1 }{ \theta }
		\left(
		\int_{ \Omega' }
		\dfrac{ 1 }{ 2 }
		Q ( e ( u_{ \varepsilon } (t)   + \theta \phi ) )
		\dd{ x }
		-
		\int_{ \Omega' }
		\dfrac{ 1 }{ 2 }
		Q ( e ( u_{ \varepsilon }  (t)  ) )
		\dd{ x }
		+
		H_\eps
		+ \rho_{ \varepsilon } ( \theta \phi )
		\right)
		\\
		& =
		\int_{ \Omega' }
		  \mathbb{ C } e ( u_{ \varepsilon }  (t)   ) \colon { e ( \phi ) }
		\dd{ x }
		+
		\theta
		\int_{ \Omega' }
		\dfrac{ 1 }{ 2 }
		Q ( e ( \phi ) )
		\dd{ x }
		+
		\dfrac{ 1 }{ \theta }
		\left(
		H_\eps
		+
		\rho_{ \varepsilon } ( \theta \phi )
		\right).
	\end{align*}
	We choose
	\begin{equation*}
		\theta_0 
		=
		\left(H_\eps
		+
		\rho_{ \varepsilon } ( \phi )
		\right)^{ 1/2 }
		\left(
		\int_{ \Omega' }
		\dfrac{ 1 }{ 2 }
		Q ( e ( \phi ) )
		\dd{ x }
		\right)^{ -1/2 }
	\end{equation*}
	and note that $ \theta_{ 0 } <1 $  since  we ruled out inequality (\ref{eqn:ruleOut}).
	Plugging this into the above inequality yields
	\begin{equation*}
		- \int_{ \Omega' }
		\mathbb{ C } e ( u_{ \varepsilon }  (t)  ) \colon e ( \phi )
		\dd{ x }
		\leq
		\left( H_\eps + \rho_{ \varepsilon} ( \phi ) \right)^{ 1/ 2 }
		\left( \int_{ \Omega' } \dfrac{ 1 }{ 2 } Q ( e ( \phi ) ) \dd{ x } \right)^{ 1/2 }
		+ 
		\dfrac{ H_\eps + \rho_{ \varepsilon } ( \theta_{ 0 } \phi ) }{ \left( H_\eps + \rho_{ \varepsilon } ( \phi ) \right)^{1/2}}
		\left( \int_{ \Omega' } \dfrac{ 1 }{ 2 } Q ( e ( \phi ) ) 
		\right)^{ 1/2}.
	\end{equation*}
	By observing inequality (\ref{eq:estimate_rhoeps}), we note that, since $ \theta_{ 0 } < 1 $, we have $ \rho_{ \varepsilon } ( \theta_{ 0 } \phi ) \leq \rho_{ \varepsilon } ( \phi )  $, and thus we obtain 
	\begin{equation*}
		- \int_{ \Omega' }
		\mathbb{ C } e ( u_{ \varepsilon }  (t)  ) \colon e ( \phi )
		\dd{ x }
		\leq
		2
		\left( H_\eps + \rho_{ \varepsilon} ( \phi ) \right)^{ 1/ 2 }
		\left( \int_{ \Omega' } \dfrac{ 1 }{ 2 } Q ( e ( \phi ) ) \dd{ x } \right)^{ 1/2 }.
	\end{equation*}
	Repeating the argument with $ - \phi $ then yields the desired inequality.
\end{proof}

\subsection{Transfer of minimality}

We now want to pass to the limit $ \varepsilon \to 0 $ in \Cref{lemma:linearization_of_minimization_problem} to argue that  the auxiliary limit  $\hat{u} (t) $ given by \Cref{def:displaceAtGoodTimes}    is a minimizer of the Griffith energy accounting for the existing crack, in analogy to \cite[Lem.~3.3]{francfort_larsen_existence_and_convergence_for_quasi_static_evolution_in_brittle_fracture}. The proof is similar, but since we are not in the antiplanar case, we require the jump transfer  lemma  for $ \gsbd $ given in \Cref{thm:jump_transfer_refined} (a refinement of \cite[Thm.~5.1]{friedrich_solombrino_quasistatic_crack_growth_in_2d_linearized_elasticity}).

\begin{lemma}
	\label{lemma:u_minimzer_of_total_energy}
	\label{rmk:u_hat_min_of_total_energy}
	For each $ t \in [0,1] $, the displacement $ \hat{u}( t ) $ from  Remark  \ref{def:displacement I} minimizes
	\begin{equation}
		\label{eq:u_min_of_total_energy}
		\int_{ \Omega' }
		\dfrac{ 1 }{ 2 }
		Q ( e ( v ) )
		\dd{ x }
		+ 
		\kappa 
		\hm^{ 1 } \left(
		J_{ v }
		\setminus
		\left( 
		\Gamma ( t )
		\cup 
		J_{ \hat{u} ( t ) }
		\right)
		\right)
	\end{equation}
	among all $ v \in \gsbdtwo ( \Omega' ) $ with $ v = h( t ) $ on $ \Omega' \setminus \overline{\Omega } $. 
\end{lemma}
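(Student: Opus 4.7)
My plan is to push minimality from the $\eps$-approximate problem to the limit via three nested approximations: density approximation of the competitor in $\mathcal{W}(\Omega';\R^2)$, jump transfer onto the sequences $(u_\eps(\tau))_\eps$, and the approximate minimality encoded in Remark \ref{rmk:approximate_minimality_alter}. A preliminary step: for any $t \in [0,1]$, passing to a further subsequence in $\eps$ (depending on $t$) preserves all the $I_\infty$-diagonal convergences of Definition \ref{def:displacement II}, so I may assume $u_\eps(t) \to \hat u(t)$ along the same subsequence for which $u_\eps(\tau) \to u(\tau)$ for every $\tau \in I_\infty^t$. This handles $t \in I_\infty$ and $t \notin I_\infty$ uniformly.

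Given a competitor $v \in \gsbdtwo(\Omega')$ with $v = h(t)$ on $\Omega' \setminus \overline{\Omega}$, I would first apply Theorem \ref{thm:density_with_boundary_values} to approximate it by $w_\delta \in \mathcal{W}(\Omega';\R^2)$ matching $h(t)$ on $\Omega' \setminus \overline\Omega$, with $e(w_\delta) \to e(v)$ in $L^2$ and $\mathcal{H}^1(J_{w_\delta} \triangle J_v) \to 0$. I then enumerate $I_\infty^t = \{\tau_l\}_{l \ge 1}$ and, for each fixed $N$, apply the refined jump transfer Theorem \ref{thm:jump_transfer_refined} to $w_\delta$ using the reference sequences $(u_\eps(\tau_1))_\eps, \ldots, (u_\eps(\tau_N))_\eps, (u_\eps(t))_\eps$ converging respectively to $u(\tau_1), \ldots, u(\tau_N), \hat u(t)$. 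This produces $\phi_\eps \in \gsbvtwotwo(\Omega';\R^2)$, equal to $w_\delta$ on $\Omega' \setminus \overline\Omega$, with $e(\phi_\eps) \to e(w_\delta)$ in $L^2$, uniform $L^\infty$ bounds on $\nabla \phi_\eps$ and $\nabla^2 \phi_\eps$, and the transfer estimate \eqref{eqn:JTjumpGrad}. The shift $\psi_\eps := \phi_\eps + (h_\eps(t) - h(t))$ has boundary value $h_\eps(t)$ and identical jumps, so Remark \ref{rmk:approximate_minimality_alter} applies. The error $\rho_\eps$ is forced to vanish by \eqref{eq:estimate_rhoeps_alt}: the $L^\infty$ bounds make $\eps^{2(1-\beta)} \int |\nabla^2 \psi_\eps|^2 \lesssim \eps^{2(1-\beta)}$ and $\eps \int |\nabla \psi_\eps|^3 \lesssim \eps$.

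Passing $\eps \to 0$: on the left, the weak convergence $e(u_\eps(t)) \rightharpoonup e(\hat u(t))$ and convexity of $Q$ give $\liminf_\eps \int \tfrac12 Q(e(u_\eps(t)))\,dx \ge \int \tfrac12 Q(e(\hat u(t)))\,dx$; on the right, strong $L^2$ convergence yields $\int \tfrac12 Q(e(\psi_\eps))\,dx \to \int \tfrac12 Q(e(w_\delta))\,dx$. For the surface term, the crucial input is \eqref{eq:modifications_barely_increase_jump}, which places each $J_{u_\eps(\tau_l)}$ and $J_{u_\eps(t)}$ inside $\Gamma_\eps(t)$ up to vanishing $\mathcal{H}^1$-error, so that together with the transfer estimate
\[
\limsup_{\eps \to 0} \mathcal{H}^1\big((J_{\psi_\eps} \cup J_{\nabla \psi_\eps}) \setminus \Gamma_\eps(t)\big) \le \mathcal{H}^1\Big(J_{w_\delta} \setminus \big(\textstyle\bigcup_{l \le N} J_{u(\tau_l)} \cup J_{\hat u(t)}\big)\Big).
\]
Sending $N \to \infty$, the finiteness $\mathcal{H}^1(\Gamma(t)) < \infty$ from \eqref{eq:lsc_jump_sets_union_times} together with $\Gamma(t) = \bigcup_{\tau \in I_\infty^t} J_{u(\tau)}$ exhausts the right-hand side by $\Gamma(t) \cup J_{\hat u(t)}$; sending $\delta \to 0$ and using Theorem \ref{thm:density_with_boundary_values} replaces $w_\delta$ by $v$, yielding the claimed minimality inequality.

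The main technical subtlety lies in the surface term: jump transfer admits only finitely many reference sequences, while $\Gamma_\eps(t)$ is essentially an uncountable union. Working with the finite truncation $\bigcup_{l \le N} J_{u_\eps(\tau_l)}$ and exploiting \eqref{eq:modifications_barely_increase_jump} to absorb it into $\Gamma_\eps(t)$ is the key move — precisely where the careful modification \eqref{def:u_eps} from $y_\eps$ to $u_\eps$ pays off, since the argument needs $\mathcal{H}^1$-closeness of $J_{u_\eps(\tau)} \cup J_{\nabla u_\eps(\tau)}$ to $J_{y_\eps(\tau)} \cup J_{\nabla y_\eps(\tau)}$, not merely containment. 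A secondary point is controlling $\rho_\eps$ for the competitor built via jump transfer, which is handled by the uniform $L^\infty$ bounds in \Cref{thm:jump_transfer_refined} together with \eqref{eq:estimate_rhoeps_alt}.
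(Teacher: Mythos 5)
Your overall strategy—density approximation in $\mathcal{W}(\Omega';\R^2)$, refined jump transfer, approximate minimality via \Cref{rmk:approximate_minimality_alter}, then pass to the limit—is the same route the paper takes. Your variant of feeding $\psi_\eps = \phi_\eps + (h_\eps(t)-h(t))$ directly into \Cref{rmk:approximate_minimality_alter} (rather than $u_\eps(t)+\phi_\eps$ into \Cref{lemma:linearization_of_minimization_problem}) is a legitimate minor streamlining: it lets you handle the bulk term on the right by strong convergence alone and the left by weak lower semicontinuity of $Q$, avoiding the explicit expansion of the cross term $\int \mathbb{C}e(u_\eps):e(\phi_\eps)$ that the paper performs. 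Your $N\to\infty$ exhaustion of $\Gamma(t)\cup J_{\hat u(t)}$ by continuity from above is also fine and does the same job as the paper's $\eta$-approximation in \eqref{eq:approx_jump_set_finite_times}.

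However, there is a genuine gap: you take $(u_\eps(\tau_l))_\eps$ and $(u_\eps(t))_\eps$ as the reference sequences for \Cref{thm:jump_transfer_refined} and claim they converge in measure to $u(\tau_l)$ and $\hat u(t)$. This is false. The sequences $u_\eps(\tau)$ do \emph{not} converge in measure; on pieces of the Caccioppoli partition that are not attached to the Dirichlet boundary, $u_\eps(\tau)$ differs from a convergent object by a possibly unbounded infinitesimal rigid motion $a_j^\eps$. The objects that converge in measure are the corrected sequences $v_\eps(\tau) = u_\eps(\tau) - \chi_{S_0^\eps}(u_\eps(\tau)-h_\eps) - \sum_j a_j^\eps\chi_{S_j^\eps}$ from \eqref{eq:veps_def}—this is item \ref{item:ueps_converges_in_measure_to_u} in \Cref{def:displaceAtGoodTimes}, which is about $v_\eps$, not $u_\eps$. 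Without this correction the hypothesis \ref{item:condition_measure_convergence} of \Cref{thm:jump_transfer_refined} is not met and the jump transfer cannot be applied. Moreover, you point to \eqref{eq:modifications_barely_increase_jump} (which compares $J_{u_\eps}\cup J_{\nabla u_\eps}$ with $J_{y_\eps}\cup J_{\nabla y_\eps}$) as the input that absorbs the transferred jump sets into $\Gamma_\eps(t)$; once the reference sequences are corrected to $v_\eps$, the relevant estimate is instead \eqref{eq:jump_of_veps_controlled_by_yeps}. Your explicit reference to ``the careful modification \eqref{def:u_eps} from $y_\eps$ to $u_\eps$'' as the key suggests you have not registered that a further, piecewise-rigid-motion subtraction is needed; that subtraction is one of the central technical devices of the paper. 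With $u_\eps$ replaced by $v_\eps$ and \eqref{eq:modifications_barely_increase_jump} by \eqref{eq:jump_of_veps_controlled_by_yeps} in the surface estimate, the rest of your argument goes through.
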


We point out that, for $t\in I_\infty$, $J_{ \hat{u} ( t )}$ is redundant in \eqref{eq:u_min_of_total_energy} as $\hat{u}(t) = u(t).$ Naturally, this  lemma  will be critical for verifying the minimality property \ref{item:minimality} of \Cref{def:linearQuasistatic}, but further, the analysis at times $t\not \in I_\infty$ will help to correctly identify the displacement gradient $e(u)\in \lp^\infty ({(0,1)};\lp^2(\Omega; \R^{2 \times 2}_{\rm sym}) ).$ 

 \begin{proof} 
{Let $t\in [0,1].$} By the density result in \Cref{thm:density_with_boundary_values}  (applied for $v-\hat{u}(t)$),  it suffices to prove the minimiality of $\hat u(t)$ with respect to competitors $ \hat u(t)+ \phi $ for $ \phi \in \mathcal{ W } ( \Omega' ; \mathbb{ R }^{ 2 } ) $ with $ \phi = 0 $ on $ \Omega' \setminus \overline{\Omega } $.

	 {We begin by approximating the crack $\Gamma(t)$ with finitely many jump sets. Precisely, letting $\eta>0$ and recalling} that the total crack has finite length, see (\ref{eq:lsc_jump_sets_union_times}), we can find $ t_{ 1 }, \ldots, t_{ p } \in  I^t_{ \infty }  $ such that
	\begin{equation}
		\label{eq:approx_jump_set_finite_times}
		\hm^{ 1 } \left(
		\bigcup_{ k = 1 }^{ p } J_{ u( t_{ k } ) } 
		\cup J_{ \hat{u} ( t ) }
		\right)
		\geq
		\hm^{ 1 } \left(
		\Gamma ( t )
		\cup
		J_{ \hat{u} ( t ) }
		\right)
		- \eta.
	\end{equation}
	Choose $t_\eps \in I_\eps$ as the largest element of $I_\eps$ which is smaller than $t$. As $u_\eps$ and $y_\eps$ are piecewise constant in time,  we find $u_\eps(t) = u_\eps(t_\eps)$ and $y_\eps(t) = y_\eps(t_\eps)$. Then,  if $\varepsilon$ is sufficiently small so that $ (t_{ k })_{ k=1 }^p  \subseteq I_{ \varepsilon} $,  by  \Cref{lemma:linearization_of_minimization_problem} (applied at time $t_\eps \in I_\infty$)  we have for all $ \phi \in \gsbvtwotwo ( \Omega' ;  \mathbb{ R }^{ 2 }  ) $ with  $ \phi = 0 $ on $ \Omega' \setminus \overline{\Omega } $  and $|\nabla \phi| \in \lp^3(\Omega')$ that
	\begin{align}
		\label{eq:min_problem_total_mass_finitely_many_times}
		&\int_{ \Omega' }
		\dfrac{ 1 }{ 2 } Q \big( e(u_{ \varepsilon } ( t ) ) \big)
		\dd{ x }
		-
		\rho_{ \varepsilon }
		\\
		\notag
		\leq{} &
		\int_{ \Omega' }
		\dfrac{ 1 }{ 2 }
		Q \big( e( u_{ \varepsilon } ( t ) + \phi ) \big)
		\dd{ x }
		+ 
		\kappa \hm^{1 } \left(
		\left( 
		J_{ \phi } \cup J_{ \nabla \phi }
		\right)
		\setminus \left( 
		\bigcup_{ k = 1 }^{ p }
		\left(
		J_{ y_{ \varepsilon } ( t_{ k } ) } \cup J_{ \nabla y_{ \varepsilon } ( t_{ k } ) }
		\right)
		\cup
		\left(J_{ y_{  \varepsilon }(t) }
		\cup J_{ \nabla y_{  \varepsilon  }(t ) }
		\right)
		\right)
		\right),
	\end{align}
	 where $\rho_\eps  =\rho_\eps(\phi)$ satisfies (\ref{eq:estimate_rhoeps}).
	
	{Now, fix $ \phi \in  \mathcal{ W } ( \Omega' ; \mathbb{ R }^{ 2 } ) $ with $ \phi = 0 $} on $ \Omega' \setminus \overline{ \Omega } $. Since for every $ 1 \leq k \leq p $ the sequence $ v_{ \varepsilon } ( t_{ k } ) $ (defined in equation (\ref{eq:veps_def})) converges in measure to $ u( t_{ k } ) $,
	has uniformly bounded jump-set measure, and has uniformly $ \lp^{2 } $-bounded symmetric gradients, and the same holds true for $ v_{ \varepsilon } ( t ) $ with respect to $ \hat{u} ( t ) $, we can apply the $ \gsbd $ jump {transfer  lemma (see \Cref{thm:jump_transfer_refined})  to $\phi$ to find a} sequence $ ( \phi_{ \varepsilon }  )_{ \varepsilon }   \subseteq \gsbvtwotwo  ( \Omega' ; \mathbb{ R }^{ 2 }  )$ such that $ \phi_{ \varepsilon } = 0 $ on $ \Omega' \setminus \overline{\Omega } $  and 
	\begin{enumerate}[labelindent=0pt,labelwidth=\widthof{\ref{eq:jump_transfer_item_four}},label=(\roman*),ref=(\roman*),itemindent=1em,leftmargin=!]
		\item \label{eq:jump_transfer_item_one} $ \phi_{ \varepsilon } \to \phi $ in measure,
		\item \label{eq:jump_transfer_item_two} $ e( \phi_{ \varepsilon} ) \to e ( \phi ) $ strongly in $ \lp^{ 2 } ( \Omega' ; \mathbb{ R }^{ 2 \times 2}_{ \mathrm{sym} }  ) $,
		\item \label{eq:jump_transfer_item_three}
		$ \limsup_{ \varepsilon \to 0 }
		\hm^{ 1 } \left( 
		\left(
		\left( J_{ \phi_{ \varepsilon } } \cup J_{ \nabla \phi_{ \varepsilon } } \right) \setminus
		\left( 
		\bigcup_{ k= 1 }^{ p } 
		J_{ v_{ \varepsilon } ( t_{ k } ) }
		\cup 
		J_{ v_{ \varepsilon } ( t ) }
		\right)
		\right)
		\right)
		\leq
		\hm^{ 1 } \left( 
		\left(
		J_{ \phi } \setminus
		\left( 
		\bigcup_{ k = 1 }^{ p }
		J_{ u ( t_{ k } ) }
		\cup 
		J_{ \hat{u} ( t ) } 
		\right)
		\right)
		\right)
		$,
		\item \label{eq:jump_transfer_item_four}
		$ \norm{ \nabla \phi_{ \varepsilon } }_{ \lp^{ \infty } }
		\leq C  \norm{ \nabla \phi }_{ \lp^{ \infty } } $ and 
		$ \norm{ \nabla^{ 2 } \phi_{ \varepsilon } }_{ \lp^{ \infty } }
		\leq C  \norm{ \nabla^{ 2 } \phi }_{ \lp^{ \infty } } $.
	\end{enumerate}
	Since $ \phi_{ \varepsilon } $ is an admissible competitor within inequality (\ref{eq:min_problem_total_mass_finitely_many_times}), we find
	\begin{align}
		& \label{eqn:jumptransCompare} \int_{ \Omega' }
		\dfrac{ 1 }{2}
		Q \big( e ( u_{ \varepsilon } ( t ) ) \big)
		\dd{ x }
		- \rho_{ \varepsilon }
		\\
		\leq{} &
		\int_{ \Omega' }
		\dfrac{ 1 }{ 2 }
		Q \big( e ( u_{ \varepsilon } ( t ) + \phi_{ \varepsilon } ) \big)
		\dd{ x }
	    +
		\kappa \hm^{ 1 } \left(
		\left( 
		J_{  \phi_{ \varepsilon } } 
		\cup 
		J_{ \nabla \phi_{ \varepsilon} }
		\right)
		\setminus
		\left(
		\bigcup_{ k = 1 }^{ p }
		\left( J_{ y_{ \varepsilon }  ( t_{ k } )} \cup J_{ \nabla y_{ \varepsilon } ( t_{ k } ) }
		\right)
		\cup 
		\left( J_{ y_{ \varepsilon } ( t) } 
		\cup
		J_{ \nabla y_{ \varepsilon } ( t ) }
		\right)
		\right)
		\right),\nonumber
	\end{align}
	where the error $ \rho_{ \varepsilon } = \rho_\eps(\phi_\eps) $ is estimated by inequality (\ref{eq:estimate_rhoeps}) with a vanishing sequence $ \delta_{ \varepsilon} \to 0 $ and  a  constant $C> 0 $   as
	\begin{equation*}
		\rho_{ \varepsilon } \leq C \delta_{ \varepsilon} 
		\left( 
		1 + 
		\int_{ \Omega' }
		\abs{ \nabla {\phi_\eps} }^{ 2 }
		\dd{ x }
		+
		\int_{ \Omega' }
		\abs{ \nabla^{ 2 } {\phi_\eps} }^{ 2 }
		\dd{ x }
		\right)
		+
		C
		\int_{ \Omega' }
		\varepsilon \abs{ \nabla {\phi_\eps} }^{ 3 }
		\dd{ x }.
	\end{equation*}
	By the uniform bound on  $ \norm{   \nabla {\phi_\eps}  }_{\lp^{ \infty } } $ and $\norm{   \nabla^{ 2 } {\phi_\eps}  }_{\lp^{ \infty } } $ given in item \ref{eq:jump_transfer_item_four}, we see that $ \rho_{ \varepsilon} $ vanishes as $ \varepsilon $ tends to zero. 
	By subtracting $ \int_{\Omega'} \tfrac12 Q ( e ( u_{ \varepsilon } ( t ) ) )  \dd{x}$ on both sides of  inequality  \eqref{eqn:jumptransCompare}, we get
	\begin{align}
		\notag
		-\rho_{ \varepsilon }
		\leq{} &
		\int_{ \Omega' }
		 \mathbb{ C }  e ( u_{ \varepsilon } ( t ) ) \colon {e ( \phi_{ \varepsilon } ) }
		\dd{ x }
		+
		\int_{\Omega' }
		\dfrac{ 1 }{ 2 }
		Q ( e ( \phi_{ \varepsilon } ) ) \dd{ x }
		 +
		\kappa \hm^{ 1 } \left(
		\left( 
		J_{ \phi_{ \varepsilon } } 
		\cup
		J_{ \nabla \phi_{ \varepsilon} }
		\right)
		\setminus
		\left(
		\bigcup_{ k = 1 }^{ p }
		J_{ v_{ \varepsilon } ( t_{ k } ) }
		\cup 
		J_{ v_{ \varepsilon } ( t ) }
		\right)
		\right)
		\\
		\label{eq:rho_eps_estimate}
		& +
		\sum_{ k = 1 }^{ p }
		\kappa \hm^{ 1 } \big(
		J_{ v_{ \varepsilon } ( t_{ k } ) }
		\setminus
		\left( J_{ y_{ \varepsilon } ( t_{ k } ) } 
		\cup
		J_{ \nabla y_{ \varepsilon } (t_{ k } ) }
		\right)
		\big)
		+
		\kappa\hm^{ 1 } \big(
		J_{ v_{ \varepsilon } ( t ) }
		\setminus
		\left(
		J_{ y_{ \varepsilon } ( t ) }\cup J_{ \nabla y_{ \varepsilon } ( t ) } 
		\right)
		\big).
	\end{align}
	The last line converges to zero as $\varepsilon \to 0 $ using that the modifications barely increase the jump set as determined by equation (\ref{eq:jump_of_veps_controlled_by_yeps}).
	Now passing $\eps\to 0$ in inequality (\ref{eq:rho_eps_estimate}), we can pair the weak convergence of $ e ( u_{ \varepsilon } ) $ with the strong convergence of $ e ( \phi_{ \varepsilon } ) $  (see item \ref{eq:jump_transfer_item_two})  and use  the jump set estimate \ref{eq:jump_transfer_item_three} to obtain
	\begin{equation*}
		0 \leq{}
		\int_{ \Omega' }
		 \mathbb{ C }  e ( \hat{u} ( t ) )   \colon  {  e ( \phi ) } 
		\dd{ x }
		+
		\int_{ \Omega' }
		\dfrac{ 1 }{ 2 } Q ( e ( \phi ) )
		\dd{ x }
		+
		\kappa \hm^{ 1 } \left(
		J_{ \phi }
		\setminus
		\left(
		\bigcup_{ k = 1 }^{ p }
		J_{ u ( t_{ k } )} \cup J_{ \hat{u} ( t ) }
		\right)
		\right).
	\end{equation*}
	We add $ \int_{\Omega'} \tfrac12 Q ( e ( \hat{u} ( t) ) ) \dd{x}$ on both sides of the inequality and note that $J_{\phi}\setminus J_{\hat u(t)} = J_{\hat u(t) + \phi}\setminus J_{\hat u(t)}$, which yields the desired inequality \eqref{eq:u_min_of_total_energy} for $v = \hat u(t)+\phi$ up to an arbitrary $\eta>0$ coming from the approximation \eqref{eq:approx_jump_set_finite_times}. Sending $\eta\to 0$ concludes the minimality property.
\end{proof}

\subsection{Elastic   energy convergence}  
As a consequence of \Cref{lemma:u_minimzer_of_total_energy}, we have established that $ \hat u (t)  $ is a minimizer with respect to its own jump set in the linearized functional, {i.e.,
one may  replace $\Gamma(t)\cup J_{\hat u(t)}$ by $J_{\hat u(t)}$ in the minimization problem  \eqref{eq:u_min_of_total_energy}.} In a similar fashion,  the functions $ y_{ \varepsilon } $ are minimizers with respect to their own jump set of the nonlinear functional,  {cf.} \eqref{eq:min_problem_wrt_own_jumpset}. This minimality property now induces convergence of  the elastic energies. Whereas for Griffith energies this simply follows from \cite{friedrich_griffith_energies_as_small_strain_limit_of_nonlinear_models_for_nomsimple_brittle_materials} along with the fundamental theorem of $ \Gamma $-convergence, in the present evolutionary setting with irreversibility condition, this is a consequence of the   jump transfer  lemma (see  {\Cref{thm:jump_transfer_refined}}). 

\begin{corollary}[Elastic   energy convergence]
	\label{cor:convergence of energies}
	For all $ t \in [0,1] $, we have the convergence of  elastic  energies
	\begin{equation}
		\label{eq:energy_convergence}
		\lim_{ \varepsilon \to 0 }
		\dfrac{ 1 }{ \varepsilon^{ 2 } }
		\int_{ \Omega' }
		W  ( \nabla y_{ \varepsilon } ( t ) ) 
		\dd{ x }
		+
		\dfrac{ 1 }{ \varepsilon^{ 2 \beta } }
		\int_{ \Omega' }
		\abs{ \nabla^{ 2 } y_{ \varepsilon } ( t ) }^{ 2 }
		\dd{ x }
		=
		\int_{ \Omega' }
		\dfrac{ 1 }{ 2 }
		Q \big( e ( \hat{u} ( t ) ) \big) 
		\dd{ x },
	\end{equation}
	 again along a $t$-dependent subsequence for $t \notin I_\infty$.  	In particular, we have
	\begin{equation}
		\label{eq:second_derivative_vanishes}
		\lim_{ \varepsilon \to 0 }
		\dfrac{ 1 }{ \varepsilon^{ 2 \beta } }
		 \int_{ \Omega' }
		 \abs{ \nabla^{ 2 } y_{ \varepsilon } ( t ) }^{ 2 }
		 \dd{ x }
		 = 0.
	\end{equation}
\end{corollary}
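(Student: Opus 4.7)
I would prove \eqref{eq:energy_convergence} by combining a $\liminf$ inequality, coming essentially for free from the work already done, with a $\limsup$ inequality obtained by constructing a competitor against $y_\eps(t)$ in the minimality property \eqref{eq:min_problem_wrt_own_jumpset}. The $\liminf$ part uses the Taylor-expansion bound \eqref{eq:lsc_for_quadratic_term}, which was derived along a subsequence converging to $u(t)$ for $t \in I_\infty$ but applies verbatim along the $t$-dependent subsequence defining $\hat u(t)$ (Remark \ref{def:displacement I}). Together with nonnegativity of the Hessian term this yields
\[
\int_{\Omega'} \tfrac12 Q(e(\hat u(t))) \dd x \le \liminf_{\eps\to 0}\left[\tfrac{1}{\eps^2}\int_{\Omega'} W(\nabla y_\eps(t))\dd x + \tfrac{1}{\eps^{2\beta}}\int_{\Omega'}|\nabla^2 y_\eps(t)|^2 \dd x\right].
\]

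\textbf{Upper bound.} For the matching $\limsup$ I would mirror the strategy of Lemma \ref{lemma:u_minimzer_of_total_energy}. Fix $\eta>0$, approximate $\hat u(t)$ by $w_\delta \in \mathcal W(\Omega';\R^2)$ with $w_\delta = h(t)$ on $\Omega'\setminus\overline\Omega$ via Theorem \ref{thm:density_with_boundary_values}, and select $t_1,\dots,t_p \in I_\infty^t$ as in \eqref{eq:approx_jump_set_finite_times} so that $\mathcal H^1(\bigcup_k J_{u(t_k)} \cup J_{\hat u(t)}) \ge \mathcal H^1(\Gamma(t)\cup J_{\hat u(t)}) - \eta$. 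I apply the refined jump transfer (Theorem \ref{thm:jump_transfer_refined}) with the $p+1$ input sequences $(v_\eps(t_k))_\eps$ and $(v_\eps(t))_\eps$ from \eqref{eq:veps_def}, which converge in measure to $u(t_k)$ and $\hat u(t)$ and satisfy the required uniform $\lp^2$- and $\mathcal H^1$-bounds. This yields $\phi_\eps^\delta \in \gsbvtwotwo(\Omega';\R^2)$ with $\phi_\eps^\delta = w_\delta$ on $\Omega'\setminus\overline\Omega$, $e(\phi_\eps^\delta)\to e(w_\delta)$ in $\lp^2$, uniform $\wkp^{2,\infty}$-bounds inherited from $w_\delta$, and the transferred jump estimate \ref{eqn:JTjumpGrad}. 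Testing $y_\eps(t)$ in \eqref{eq:min_problem_wrt_own_jumpset} against the competitor $z_\eps := \mathrm{id} + \eps(\phi_\eps^\delta + h_\eps(t) - h(t))$, which coincides with $\mathrm{id} + \eps h_\eps(t)$ on $\Omega'\setminus\overline\Omega$, and Taylor-expanding $W$ about the identity in the regime $\eps\|\nabla z_\eps\|_\infty\to 0$, I obtain
\[
\tfrac{1}{\eps^2}\int_{\Omega'} W(\nabla z_\eps)\dd x \to \int_{\Omega'} \tfrac12 Q(e(w_\delta))\dd x,
\]
using $\|e(h_\eps(t) - h(t))\|_{\lp^2} \to 0$ by absolute continuity of the integral (since $\partial_t h \in \lp^1(\wkp^{2,2})$ and $\Delta_\eps\to 0$). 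The Hessian term is bounded by $C\eps^{2-2\beta}\|\nabla^2 w_\delta\|_\infty^2 \to 0$ as $\beta<1$, and the surface excess $\mathcal H^1((J_{z_\eps}\cup J_{\nabla z_\eps})\setminus\Gamma_\eps(t))$ is absorbed, via \eqref{eq:jump_of_veps_controlled_by_yeps} and the transfer estimate, by $\mathcal H^1(J_{w_\delta}\setminus(\bigcup_k J_{u(t_k)}\cup J_{\hat u(t)}))$, which vanishes as $\delta\to 0$ by Theorem \ref{thm:density_with_boundary_values}\ref{item:density_jump_set_diff_vanishes}. Sending $\eps\to 0$, then $\delta\to 0$, then $\eta\to 0$ in \eqref{eq:min_problem_wrt_own_jumpset} gives the $\limsup$ bound and hence \eqref{eq:energy_convergence}. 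The vanishing \eqref{eq:second_derivative_vanishes} then follows because the elastic part alone already satisfies the same $\liminf$ bound by \eqref{eq:lsc_for_quadratic_term}, forcing the nonnegative Hessian term to tend to zero.

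\textbf{Main obstacle.} The delicate point is the interplay between the jump transfer and the evolutionary irreversibility: the jumps of $y_\eps$ (and, via \eqref{eq:jump_of_veps_controlled_by_yeps}, those of $v_\eps$) are absorbed by $\Gamma_\eps(t)$ at \emph{every} time $\tau\le t$, but the jump transfer sees only the finitely many time slices $t_1,\dots,t_p,t$ simultaneously. Ensuring that $(J_{\phi_\eps^\delta}\cup J_{\nabla\phi_\eps^\delta})\setminus\Gamma_\eps(t)$ stays controlled by $\eta$ in the double limit $\eps\to 0$, $\delta\to 0$ requires carefully combining the transfer estimate with the $\eta$-covering of $\Gamma(t)\cup J_{\hat u(t)}$ and the $\mathcal H^1(J_{w_\delta}\triangle J_{\hat u(t)})\to 0$ approximation, exactly the mechanism already exploited in Lemma \ref{lemma:u_minimzer_of_total_energy}.
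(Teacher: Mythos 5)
Your proof is correct, and the overall skeleton — lower bound via \eqref{eq:lsc_for_quadratic_term}, upper bound via a recovery competitor built from \Cref{thm:density_with_boundary_values} together with \Cref{thm:jump_transfer_refined} and tested in \eqref{eq:min_problem_wrt_own_jumpset}, then a diagonal argument — is exactly the paper's. However, you import unnecessary machinery: the finite approximation $t_1,\dots,t_p$ of $\Gamma(t)$ from the proof of \Cref{lemma:u_minimzer_of_total_energy} plays no role here. In the paper's proof the jump transfer is applied with the \emph{single} input sequence $(v_\eps(t))_\eps$ (so $\ell = 1$ in \Cref{thm:jump_transfer_refined}), and the transferred jump estimate reads $\limsup_\eps\hm^1((J_{w_\eps^n}\cup J_{\nabla w_\eps^n})\setminus J_{v_\eps(t)})\leq \hm^1(J_{w_n}\setminus J_{\hat u(t)})$. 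This suffices because $J_{v_\eps(t)}\subseteq \Gamma_\eps(t)$ up to an $o(1)$ error by \eqref{eq:jump_of_veps_controlled_by_yeps}, so the full crack set $\Gamma_\eps(t)$ never needs to be resolved into time slices: only the jump set at the current time $t$ matters for absorbing the surface excess, and the right-hand side then already vanishes as $n\to\infty$ by \ref{item:density_jump_set_diff_vanishes} alone. In particular, the ``main obstacle'' you flag — reconciling the finitely many transferred time slices with the irreversible accumulation of cracks — does not arise; the $\eta$-covering you invoke is a red herring in this corollary (it is genuinely needed in \Cref{lemma:u_minimzer_of_total_energy}, where the competitor's jump set is compared against $\Gamma(t)$ rather than absorbed into it). Stripping out the $t_1,\dots,t_p$ layer gives the paper's cleaner argument with one fewer parameter to diagonalize over.
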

\begin{proof}
 First, from  lower semicontinuity, see  inequality  (\ref{eq:lsc_for_quadratic_term}), we obtain the estimate
	\begin{align}\label{eq: second removed}
	\liminf_{ \varepsilon \to 0 }
		\dfrac{ 1 }{ \varepsilon^{ 2 } }
		\int_{ \Omega' }
		W \left( \nabla y_{ \varepsilon } ( t ) \right) 
		\dd{ x }
		+
		\dfrac{ 1 }{ \varepsilon^{ 2 \beta } }
		\int_{ \Omega' }
		\abs{ \nabla^{ 2 } y_{ \varepsilon } ( t ) }^{ 2 }
		\dd{ x } & \ge 	\liminf_{ \varepsilon \to 0 }
		\dfrac{ 1 }{ \varepsilon^{ 2 } }
		\int_{ \Omega' }
		W \left( \nabla y_{ \varepsilon } ( t ) \right) 
		\dd{ x }
		 		\notag \\ & \geq
		\int_{ \Omega' }
		\dfrac{ 1 }{ 2 }
		Q \big( e ( \hat{u} ( t ) ) \big) 
		\dd{ x }.
	\end{align}
	 (Note that the proof was only formulated for  $ t \in I_{ \infty } $, but still applies to   $ t \notin I_{\infty } $.)
	Since $ y_{ \varepsilon } ( t ) $ is a minimizer of the functional (\ref{eq:min_problem_wrt_own_jumpset}),  for the reverse limit superior inequality it suffices to find a recovery sequence $ z_{ \varepsilon } \in \gsbvtwotwo ( \Omega' ; \mathbb{ R }^{ 2 } ) $ with $ z_{ \varepsilon } = \mathrm{id} + \varepsilon h_{ \varepsilon } ( t ) $ on $ \Omega' \setminus \overline{ \Omega } $ such that  
	\begin{equation}\label{newandshort}
		\limsup_{ \varepsilon \to 0 }
		\dfrac{ 1 }{ \varepsilon^{ 2 } }
		\int_{ \Omega' }
		W \left( \nabla z_{ \varepsilon } \right) 
		\dd{ x }
		+ 
		\dfrac{ 1 }{ \varepsilon^{ 2 \beta } }
		\int_{ \Omega' }
		\abs{ \nabla^{2 } z_{ \varepsilon } }^{2 }
		\dd{ x }
		+
		\kappa \hm^{ 1 } \left( 
		\left( J_{ z_{ \varepsilon } } \cup J_{ \nabla z_{ \varepsilon } } \right)
		\setminus
		\Gamma_{ \varepsilon } ( t ) 
		\right)
		\leq
		\int_{ \Omega' }
		\dfrac{ 1 }{ 2 }
		Q \left( e \left( \hat{u} ( t ) \right) \right)
		\dd{ x }.
	\end{equation}
	To find this sequence, we approximate $ \hat{u} ( t ) $ and apply the jump transfer  lemma ({\Cref{thm:jump_transfer_refined}}) to the approximations. Precisely, by \Cref{thm:density_with_boundary_values} we find a sequence $  ( w_{n })_{ n }  \subseteq  \mathcal{ W }  ( \Omega' ; \mathbb{ R }^{ 2 }  ) $ with $ w_{ n } = h ( t ) $ on $ \Omega' \setminus \overline{ \Omega } $ such that, as $ n \to \infty $, we have
	\begin{enumerate}[labelindent=0pt,labelwidth=\widthof{\ref{item:jump_set_approx}},label=(\arabic*),ref=(\arabic*),itemindent=0em,leftmargin=!]
	\item $ w_{ n } \to \hat{u} ( t ) $ in measure, 
	\item \label{item:l2_approx} $ e ( w_{ n } ) \to e ( \hat{u} ( t ) ) $ in $ \lp^{ 2 }  ( \Omega' ; \mathbb{ R }^{ 2 \times 2 }_{\rm sym}) $ and
	\item \label{item:jump_set_approx} $ \hm^{ 1 } \left( J_{ w_{ n } } \triangle J_{ \hat{u} ( t ) } \right) \to 0 $.
	\end{enumerate}
 Observe that   the sequence $ v_{ \varepsilon } ( t ) $ defined in equation (\ref{eq:veps_def}) converges in measure to $ \hat{u}( t  ) $,
	has uniformly bounded jump-set measure, and has uniformly $ \lp^{2 } $-bounded symmetric gradients.  For all $ n \in \mathbb{ N } $, we apply the jump transfer  lemma (\Cref{thm:jump_transfer_refined}) to the function $ \phi = w_{ n } $ which yields a sequence $ ( w_{ \varepsilon }^{ n } )_{ \varepsilon}   \subseteq  \gsbvtwotwo  ( \Omega' ; \mathbb{ R }^{ 2 }  )$ with $ w_{ \varepsilon }^{ n } = h ( t ) $ on $ \Omega' \setminus \overline{ \Omega } $ such that
	\begin{enumerate}[labelindent=0pt,labelwidth=\widthof{\ref{item:boundedness_wepsn}},label=(\arabic*'),ref=(\arabic*'),itemindent=0em,leftmargin=!]
		\item $ e \left( w_{ \varepsilon}^{ n } \right) \to e\left( w_{ n } \right) $ in $ \lp^{ 2 }  ( \Omega' ; \mathbb{ R }^{ 2 \times 2 }_{\rm sym}) $,
		\item 
		\label{item:jump_set_comparison}$  \limsup_{\eps \to 0}  \hm^{ 1 }
		\left( 
		\left( 
		J_{ w_{ \varepsilon }^{ n } } \cup J_{ \nabla w_{ \varepsilon }^{ n } } 
		\right)
		\setminus
		 J_{ v_{ \varepsilon } ( t ) } 
		\right)
		\le 
		\hm^{ 1 } \left(  
		J_{ w_{ n } } \setminus J_{ \hat{u} ( t ) }
				\right) $,
		\item \label{item:boundedness_wepsn}$ \norm{ \nabla w_{ \varepsilon }^{ n } }_{ \lp^{ \infty } }
		\lesssim \norm{ \nabla w_{ n } }_{ \lp^{ \infty } } $ and 
		$ \norm{ \nabla^{ 2 } w_{ \varepsilon }^{ n } }_{ \lp^{ \infty } }
		\lesssim \norm{ \nabla^{ 2 } w_{ n } }_{ \lp^{ \infty } } $.
	\end{enumerate}
	We define $ z_{ \varepsilon }^{ n } \coloneqq \mathrm{id} + \varepsilon  ( w_{ \varepsilon }^{ n } + h_{ \varepsilon } ( t ) - h ( t )  ) $. Then $ z_{ \varepsilon }^{ n } = \mathrm{id} + \varepsilon h_{ \varepsilon } ( t ) $ on $ \Omega' \setminus \overline{\Omega } $. Moreover, using the boundedness of $ w_{ \varepsilon }^{ n } $ and $ h $ given by \ref{item:boundedness_wepsn} {and (\ref{eqn:bdryData}), respectively,}   the comparison of the jump sets \ref{item:jump_set_comparison}  along with
	  \eqref{eq:jump_of_veps_controlled_by_yeps},   $\beta < 1$, and $h_\eps(t) \to h(t)$ in $ \wkp^{ 1,2 } (\Omega'; \mathbb{ R }^{ 2 } )$,     we obtain through a Taylor expansion
	\begin{align}
		\notag
		&\limsup_{ \varepsilon \to 0 }
		\dfrac{ 1 }{ \varepsilon^{ 2 } }
		\int_{ \Omega' }
		W \left( \nabla z_{ \varepsilon }^{ n } \right)
		\dd{ x }
 		+
		\dfrac{ 1 }{ \varepsilon^{ 2 \beta } }
		\int_{ \Omega' }
		\abs{ \nabla^{ 2 } z_{ \varepsilon }^{ n } }^{ 2 }
		\dd{ x }
		+
		\kappa
		\hm^{ 1 } \left(
		\left(
		J_{ z_{ \varepsilon }^{ n } } \cup J_{ \nabla z_{ \varepsilon }^{ n } }
		\right)
		\setminus
		\Gamma_{ \varepsilon } ( t ) 
		\right)
		\\ 
		\label{eq:limsup_in_eps} 
		\leq{} &
		\int_{ \Omega' }
		\dfrac{ 1 }{ 2 }
		Q \left( e  ( w_{ n }  ) \right)
		\dd{ x }
		+
		 \kappa 
		\hm^{ 1 } \left( J_{ w_{ n } } \setminus J_{ \hat{u}  (t)  } \right).
	\end{align}
By the approximation of $ \hat{u}  (t) $ through $ w_{ n } $ (see items \ref{item:l2_approx} and \ref{item:jump_set_approx}), the limit superior in $ n $ of the right-hand side of inequality (\ref{eq:limsup_in_eps}) can be estimated by
	\begin{equation}
		\label{eq:limsup_in_n}
		\limsup_{ n \to \infty }
		\int_{ \Omega' }
		\dfrac{ 1 }{ 2 }
		Q \left( e \left( w_{ n } \right) \right)
		\dd{ x }
		+
		\kappa \hm^{ 1 } \big( J_{ w_{ n } } \setminus J_{ \hat{u}  (t)  } \big)
		\leq
		\int_{ \Omega' }
		\dfrac{ 1 }{ 2 }
		Q \big( e ( \hat{u} ( t ) )\big) \dd{ x}.
	\end{equation}
	 By a diagonal argument and by combining inequalities (\ref{eq:limsup_in_eps}) and {(\ref{eq:limsup_in_n})}  we get  the existence of a sequence $ n( \varepsilon ) \to \infty $ such that $z_{ \varepsilon }^{ n( \varepsilon )}$ satisfies \eqref{newandshort}. 
 This  finishes the proof of the energy convergence (\ref{eq:energy_convergence}).

 To show the convergence (\ref{eq:second_derivative_vanishes}), we simply note that  by   the elastic energy convergence   \eqref{eq:energy_convergence} and  inequality \eqref{eq: second removed}  the contribution of the second-derivative integral must  vanish.
\end{proof}

\begin{corollary}
	\label{cor:strong_l2_convergence_of_uepsaux}
	For each $ t \in [0,1] $, we have that  $ e  ( u_{ \varepsilon } ( t )  )$ and $ e  ( u_{ \varepsilon }^{ \mathrm{aux} } ( t )  )$ converge  to $e (\hat{u} ( t )  ) $ in $ \lp^{ 2 }  ( \Omega' ; \mathbb{ R }^{ 2 \times 2 }_{\rm sym} ) $  (up to a $t$-dependent subsequence in $\eps$ for $t\not\in I_\infty$).
\end{corollary}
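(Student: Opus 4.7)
The plan is to first establish strong $L^2$-convergence of $e(u_\eps(t))$, and then upgrade to $e(u_\eps^{\mathrm{aux}}(t))$ by controlling the difference on the bad set $\Omega' \setminus \omega_\eps$. Throughout, for $t \in [0,1] \setminus I_\infty$ we work along the $t$-dependent subsequence used in \Cref{cor:convergence of energies}; for $t \in I_\infty$ no further selection is needed.

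For $e(u_\eps)$, I would first prove norm convergence $\int_{\Omega'} Q(e(u_\eps))\,dx \to \int_{\Omega'} Q(e(\hat u))\,dx$. Since $u_\eps = \chi_{\omega_\eps} u_\eps^{\mathrm{aux}}$ forces $e(u_\eps) = \chi_{\omega_\eps} e(u_\eps^{\mathrm{aux}})$ a.e., the same Taylor expansion that yielded \eqref{eq:lsc_for_quadratic_term} gives
\begin{equation*}
\int_{\Omega'} Q(e(u_\eps))\,dx = \int_{\omega_\eps} Q(e(u_\eps^{\mathrm{aux}}))\,dx \le \frac{2}{\eps^2}\int_{\Omega'} W(\nabla y_\eps)\,dx + C\eps\eta_\eps^3.
\end{equation*}
Because $\eps\eta_\eps^3 \to 0$ by \ref{item:properties_of_etaeps_slowness} and the full nonlinear energy converges by \Cref{cor:convergence of energies}, the $\limsup$ of the left-hand side is at most $\int_{\Omega'} Q(e(\hat u))\,dx$. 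Combined with weak lower semicontinuity of $v \mapsto \int Q(e(v))\,dx$ applied to the weak convergence $e(u_\eps) \rightharpoonup e(\hat u)$ from \ref{item:weak_converg_symmetric_grads}, the sandwich yields convergence of norms, and since $Q$ is positive-definite quadratic, strong convergence $e(u_\eps) \to e(\hat u)$ in $L^2(\Omega';\R^{2 \times 2}_{\mathrm{sym}})$ follows by the usual Hilbert-space argument.

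For the auxiliary sequence, I would write $e(u_\eps^{\mathrm{aux}}) - e(u_\eps) = \chi_{\Omega'\setminus\omega_\eps}\,e(u_\eps^{\mathrm{aux}})$ a.e.\ and aim to show $\int_{\Omega'\setminus\omega_\eps} |e(u_\eps^{\mathrm{aux}})|^2\,dx \to 0$. The key observation is that the lower bound $\eta_\eps \geq \eps^{(9\gamma-10)/12}$ from \Cref{rmk:properties_of_chieps}, combined with $(9\gamma-10)/12 \le \gamma-1$ for $\gamma \ge 2/3$, guarantees $\eta_\eps > C\eps^{\gamma-1}$ for all $\eps$ small, where $C$ is the constant in \ref{item:rotation_property_closeness}. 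On $\Omega'\setminus\omega_\eps$, where $|\nabla u_\eps^{\mathrm{aux}}| \ge \eta_\eps$, the first alternative in \ref{item:rotation_property_closeness} is therefore excluded, and one gets the pointwise bound $|\nabla u_\eps^{\mathrm{aux}}| \le \tfrac{2}{\eps}\mathrm{dist}(\nabla y_\eps, \sporth(2))$. Invoking the coercivity \ref{eq:W_zero_on_sod} gives
\begin{equation*}
\int_{\Omega'\setminus\omega_\eps} |e(u_\eps^{\mathrm{aux}})|^2\,dx \le \frac{4}{c\,\eps^2}\int_{\Omega'\setminus\omega_\eps} W(\nabla y_\eps)\,dx,
\end{equation*}
and the right-hand side vanishes because subtracting the $\omega_\eps$-part (bounded below by $\int_{\omega_\eps}\tfrac12 Q(e(u_\eps^{\mathrm{aux}}))\,dx - C\eps\eta_\eps^3$ via Taylor, whose $\liminf$ equals $\int \tfrac12 Q(e(\hat u))\,dx$ by Step~1) from the full energy limit (also $\int \tfrac12 Q(e(\hat u))\,dx$ by \Cref{cor:convergence of energies}) leaves zero.

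The main obstacle is precisely the dichotomy in \ref{item:rotation_property_closeness}: away from $\omega_\eps$ one has a priori no control of $|\nabla u_\eps^{\mathrm{aux}}|$ by $\mathrm{dist}(\nabla y_\eps,\sporth(2))/\eps$, since the alternative term $\eps^{\gamma-1}$ sits at the same scale as $\eta_\eps$. Ensuring that the cutoff threshold is quantitatively \emph{above} $C\eps^{\gamma-1}$ is what makes the explicit exponent range in \Cref{rmk:properties_of_chieps} essential. Everything else is a routine weak-to-strong upgrade in a Hilbert space, leveraging the energy convergence established in \Cref{cor:convergence of energies}.
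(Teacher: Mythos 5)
Your proof is correct, and it runs in the opposite direction from the paper's. The paper first establishes $e(u_\eps^{\rm aux}(t)) \to e(\hat u(t))$ strongly in $\lp^2$ by appealing to the argument of \cite[Thm.~3.11]{almi_davoli_friedrich_non_interpenetration_conditions_in_the_passage_from_nonlinear_to_linearized_griffith_fracture} together with \Cref{cor:convergence of energies}, and then obtains strong convergence of $e(u_\eps(t))$ for free: once $e(u_\eps^{\rm aux}(t))$ is known to converge strongly it is $\lp^2$-equi-integrable, so multiplying by $\chi_{\omega_\eps}$, whose complement has vanishing measure by \ref{item:properties_of_etaeps_volume}, does not change the limit. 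You instead prove strong convergence of $e(u_\eps(t))$ first, via the sandwich of the Taylor lower bound on $\omega_\eps$ against the energy convergence of \Cref{cor:convergence of energies} and weak lower semicontinuity, followed by the standard uniformly-convex-norm upgrade in the Hilbert space $(\lp^2(\Omega';\R^{2\times 2}_{\rm sym}), Q)$; you then upgrade to $e(u_\eps^{\rm aux}(t))$ by showing $\int_{\Omega'\setminus\omega_\eps}|e(u_\eps^{\rm aux})|^2\to 0$ directly. The genuinely new idea in that second step is the dichotomy: on $\Omega'\setminus\omega_\eps$ one has $|\nabla u_\eps^{\rm aux}|\ge|\nabla u_\eps^{\rm aux}|_\infty\ge\eta_\eps$, and since $\eta_\eps/\eps^{\gamma-1}\to\infty$ by \ref{item:properties_of_etaeps_fastness} the first alternative in \ref{item:rotation_property_closeness} is excluded for small $\eps$, whence $|\nabla u_\eps^{\rm aux}|\le 2\,\mathrm{dist}(\nabla y_\eps,\sporth(2))/\eps$ pointwise, and \ref{eq:W_zero_on_sod} turns this into the bound $|e(u_\eps^{\rm aux})|^2\lesssim W(\nabla y_\eps)/\eps^2$ off $\omega_\eps$; the residual elastic energy there dies because subtracting the $\omega_\eps$-contribution (which, by Step~1, already exhausts the limit $\int\tfrac12 Q(e(\hat u))$) from the converging total energy leaves zero. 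Your route is self-contained and makes explicit why the scaling window for $\eta_\eps$ in \Cref{rmk:properties_of_chieps} is chosen as it is; the paper's route buys brevity by outsourcing the main step to the external reference and then uses only equi-integrability plus \ref{item:properties_of_etaeps_volume} for the cutoff.
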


\begin{proof} Using the convergence of elastic energies in \Cref{cor:convergence of energies}, one can argue precisely as in the proof of \cite[Thm.~3.11]{almi_davoli_friedrich_non_interpenetration_conditions_in_the_passage_from_nonlinear_to_linearized_griffith_fracture} to conclude  that $ e  ( u_{ \varepsilon }^{ \mathrm{aux} } ( t )  ) \to e (\hat{u} ( t )  ) $ in $ \lp^{ 2 }  ( \Omega' ; \mathbb{ R }^{ 2 \times 2 }_{\rm sym}) $.  As $\chi_{\Omega'\setminus \omega_\eps}\to 0$ by  convergence \ref{item:properties_of_etaeps_volume}, it also follows that $e(u_\eps(t))\to e(\hat u (t))$ in $\lp^2(\Omega;\R^{2\times 2}_{\rm sym})$. 
\end{proof}

\section{Approximate energy balance} \label{sec:approximateRelationsBalance}

We begin the preliminary steps of the proof of the energy-balance equation of  \ref{item:energy_balance_def} in \Cref{def:linearQuasistatic}.  To this end, recall the definition of the total energy in \eqref{totalenergy}. To derive the energy-balance equation, we make rigorous the idea that once a piece of material is disconnected from the  Dirichlet  boundary, it will relax to an undeformed equilibrium configuration. Precisely, from the approximate variational inequality  in  \Cref{cor:approx_euler_lagrange} we will deduce that the partition elements $ P_{ j}^{ \varepsilon } $ which do not intersect the  Dirichlet  boundary do not contribute to the energy-balance equation in the limit, in the sense of the following  lemma.

\begin{lemma}
	\label{cor:approx_euler_lagrange_consequence}
	{For $t\in [0,1]$,} let $(P_j^\eps {(t)} )_j$ be the Caccioppoli partition from  equation  (\ref{def:y_rot}) and let $(\hat R_{j}^\eps {( t)} )_j  \subseteq \sporth( 2) $ be any {associated collection} of matrices that is   piecewise constant in time  with respect to the same partition of $[0,1]$.  For all $ t \in [0, 1 ] $, we have
	\begin{equation}
		\label{eq:pieces_inside_dont_contribute}
		\lim_{ \varepsilon \to 0 }
		\abs{
			\int_{ 0 }^{ t }
			\sum_{ P_j^\eps(s) \in \intpcs_\eps(s)  }
			\int_{ P_{ j }^{ \varepsilon } ( s ) }
			  \mathbb{ C } e ( u_{ \varepsilon } (s) )  \colon { e ( \hat R_{ j }^{ \varepsilon }  (s)   \partial_t h (s) ) }
			\dd{ x }
			\dd{ s } 
		}
		=
		0,
	\end{equation}
	where $\intpcs_\eps(s) := \lbrace P_j^\eps(s) \colon  \mathcal{L}^2( P_{ j }^{ \varepsilon } ( s ) \cap ( \Omega' \setminus \overline{\Omega} )) = 0\rbrace $ are the interior partition pieces for all $s\in [0,1]$.  
\end{lemma}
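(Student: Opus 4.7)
The strategy is to recognize the sum in \eqref{eq:pieces_inside_dont_contribute} as testing the approximate variational inequality from \Cref{cor:approx_euler_lagrange} against a piecewise-rotated version of $\partial_t h$, restricted to the interior pieces of the Caccioppoli partition. At each time $s \in [0,1]$, I define
$$\phi_s := \sum_{P_j^\eps(s) \in \intpcs_\eps(s)} \hat R_j^\eps(s)\,\partial_t h(s)\,\chi_{P_j^\eps(s)}.$$
Since $(P_j^\eps(s))_j$ is a Caccioppoli partition of $\Omega'$ and $\partial_t h(s) \in W^{2,2}(\Omega';\R^2)$, one has $\phi_s \in \gsbvtwotwo(\Omega';\R^2)$; by the definition of $\intpcs_\eps(s)$, also $\phi_s \equiv 0$ on $\Omega' \setminus \overline{\Omega}$. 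Because $\phi_s = \hat R_j^\eps(s)\partial_t h(s)$ inside each interior piece and vanishes on the remaining pieces,
$$\int_{\Omega'} \mathbb{C}\,e(u_\eps(s))\colon e(\phi_s)\,dx = \sum_{P_j^\eps(s) \in \intpcs_\eps(s)} \int_{P_j^\eps(s)} \mathbb{C}\,e(u_\eps(s))\colon e(\hat R_j^\eps(s)\partial_t h(s))\,dx,$$
so integrating in $s$ recovers exactly the expression inside the absolute value in \eqref{eq:pieces_inside_dont_contribute}.

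I apply \Cref{cor:approx_euler_lagrange} pointwise in $s$. The factor $\max\{1, \tfrac{1}{2}\int Q(e(\phi_s))\,dx\}^{1/2}$ is controlled by $C(1 + \|\partial_t h(s)\|_{W^{1,2}})$. For the jump contribution, one has $J_{\phi_s} \subseteq \bigcup_j \partial^* P_j^\eps(s)$ and hence, by property \ref{item:rotation_property_jump} and the inclusion $J_{\nabla y_\eps(s)} \subseteq \Gamma_\eps(s)$,
$$\mathcal{H}^1(J_{\phi_s}\setminus \Gamma_\eps(s)) \lesssim \eps^{\beta-\gamma} \to 0.$$
For the error $\rho_\eps(\phi_s)$, the pointwise estimates $|\nabla \phi_s| \leq |\nabla \partial_t h(s)|$ and $|\nabla^2 \phi_s| \leq |\nabla^2 \partial_t h(s)|$ on each piece (rotations being isometries) combined with \eqref{eq:estimate_rhoeps} give
$$\rho_\eps(\phi_s) \leq C\delta_\eps\bigl(1 + \|\partial_t h(s)\|_{W^{2,2}}^2\bigr) + C\eps\,\|\nabla \partial_t h(s)\|_{\lp^3}^3.$$

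To conclude, I first handle the case of smooth-in-time-and-space $\partial_t h$, say $\partial_t h \in \lp^\infty((0,1);\wkp^{2,\infty})$: then all the above spatial norms are uniformly bounded in $s$, and each of the scales $\eps^{(\beta-\gamma)/2}$, $\delta_\eps^{1/2}$, $\eps^{1/2}$ yields vanishing contributions after integrating over $[0,t]$. To extend this to the natural hypothesis \eqref{eqn:bdryData}, I approximate $\partial_t h$ by $\partial_t h^\delta \in C^\infty([0,1]\times\overline{\Omega'};\R^2)$ with $\|\partial_t h - \partial_t h^\delta\|_{\lp^1((0,1);\wkp^{1,2})}\to 0$ as $\delta\to 0$. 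The error introduced in \eqref{eq:pieces_inside_dont_contribute} by this replacement is bounded, via Cauchy-Schwarz and the uniform a priori bound \eqref{eq:a_priori_estimate} on $\|e(u_\eps)\|_{\lp^\infty((0,1);\lp^2)}$, by $C\|\partial_t h - \partial_t h^\delta\|_{\lp^1((0,1);\wkp^{1,2})}$, which tends to $0$ uniformly in $\eps$. Sending first $\eps\to 0$ and then $\delta\to 0$ yields the claim. The main obstacle is reconciling the pointwise-in-time application of \Cref{cor:approx_euler_lagrange} with the merely $\lp^1$-integrable time regularity of $\partial_t h$; the density argument bypasses this by reducing to the smooth case and using the uniform control on the rescaled displacements.
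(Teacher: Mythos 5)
Your proof is correct and follows essentially the same route as the paper: test the approximate variational inequality (\Cref{cor:approx_euler_lagrange}) with the piecewise-rotated function $\phi_s$, bound the jump term by $\eps^{\beta-\gamma}$ via \ref{item:rotation_property_jump}, and pass from $\wkp^{1,1}((0,1);\wkp^{2,2})$ to more regular-in-time data by an approximation argument exploiting the uniform bound on $\|e(u_\eps)\|_{\lp^\infty(\lp^2)}$. The only cosmetic differences are that the paper reduces only to $h\in \cont^\infty([0,1];\wkp^{2,2}(\Omega';\R^2))$ and then invokes the planar Sobolev embedding $\wkp^{2,2}\hookrightarrow\wkp^{1,3}$ to control $\int_{\Omega'}\eps|\nabla\partial_t h|^3\,dx$, instead of also smoothing in space as you do, and the correct reference for the uniform $\lp^\infty(\lp^2)$ bound on $e(u_\eps)$ is \eqref{eqn:uTildeAPriori} together with \eqref{def:u_eps}, not \eqref{eq:a_priori_estimate}.
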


As will be seen in the proof, the only constraint on the test function ($\sum_{ P_j^\eps(s) \in \intpcs_\eps(s)} \chi_{P_j^\eps(s)} \hat R_j^\eps(s) \partial_t h(s) $ above) is  that it is piecewise regular with respect to the partition and $0$ on the boundary. 
{Formally, this result shows that for the limit Caccioppoli partition $\mathcal{P}$ with $P_j\in \intpcs$, defined as above, and a regular test function $\phi$, one has $ \int_{P_j}\C{e(u)}:{e(\phi)}\, dx  = 0$,  which    implies  that $u$ is  an infinitesimal rigid motion  on $P_j$, meaning  the material has fully relaxed  to an equilibrium configuration  on $P_j$.   }

\begin{proof}
	We know by   \eqref{eqn:bdryData} that $  h \in \wkp^{ 1, 1 }  ( (0 , 1) ;  \wkp^{ 2 , 2 } ( \Omega' ; \mathbb{ R }^{ 2 })  ) $. Since $ e ( u_{ \varepsilon } ) $ is uniformly $ \lp^{ 2 }$-bounded (see estimate (\ref{eqn:uTildeAPriori})  and \eqref{def:u_eps}), via an approximation   it  thus suffices  to prove the claim under the assumption $  h \in \cont^{ \infty }  ( [0,1] ;  \wkp^{ 2 , 2 } ( \Omega' ; \mathbb{ R }^{ 2 }) ) $ {(note  that  this is just an assumption on the regularity of $h$ explicitly appearing in \eqref{eq:pieces_inside_dont_contribute}, not the boundary data)}.
	
	{Let $  s  \in [0, 1 ] $.} We apply the approximate variational inequality   in  \Cref{cor:approx_euler_lagrange} to the function 
	\begin{equation*}
		\phi_\eps   (s) 
		= 
		\sum_{  P_j^\eps(s) \in \intpcs_\eps(s) } 
		\chi_{ P_{ j }^{ \varepsilon } ( s ) }
		\hat R_{ j }^{ \varepsilon } ( s ) 
		\partial_t h (s)
	\end{equation*}
	to get that the term on the left-hand side of equality (\ref{eq:pieces_inside_dont_contribute}) can be estimated by the limit of
	\begin{equation*}
	 C 	\int_{ 0 }^{ t }
		 \left( \max  \left\{ 1, 
		\int_{ \Omega' }
		\dfrac{ 1 }{ 2 }
		Q ( e ( \phi_\eps ) ) 
		\dd{ x } \right\} 
				\left(
		\kappa
		\hm^{ 1 } \left(J_{ \phi_\eps } 
		\setminus
		\Gamma_{ \varepsilon } ( t )
		\right)
		+
		\rho_{ \varepsilon }  (\phi_\eps) 
		\right)
		\right)^{ 1/2 }
		\dd{ s }.
	\end{equation*}
	Since $  h \in \cont^{ \infty }  ( [0,1] ;  \wkp^{ 2 , 2 } ( \Omega' ; \mathbb{ R }^{ 2 }) ) $, the term $ \int_{\Omega'} Q ( e ( \phi_\eps ) ) \dd{ x } $ stays uniformly bounded in time.
	We have $ J_{ \phi_\eps  (s)  } \subseteq \bigcup_{ j } \partial^{ \ast } P_{ j }^{ \varepsilon } ( s ) $ and thus by construction of the Caccioppoli partition for the rotations, see \ref{item:rotation_property_jump}, we have 
	\begin{equation*}
		\hm^{ 1 } \left(
		J_{ \phi_\eps   (s)  } \setminus 
		\Gamma_{ \varepsilon } ( s )
		\right)
		\lesssim 
		\varepsilon^{ \beta - \gamma}
	\end{equation*}
	uniformly in time.  Thus, by recalling the estimate for $ \rho_{ \varepsilon } $  in  (\ref{eq:estimate_rhoeps}), it suffices to show that
	\begin{equation}
		\label{eq:convergence_of_error_term}
		\int_{ 0 }^{ t }
		\Big( \delta_{ \varepsilon }\left( 1 + 
		\int_{ \Omega' } \abs{ \nabla \partial_{ t } h }^{ 2 } \dd{ x }
		+
		\int_{ \Omega' } \abs{ \nabla^{ 2 }\partial_t  h }^{ 2 } \dd{ x }
		\right)
		  +
	 \int_{ \Omega' } \eps \abs{ \nabla \partial_{ t } h }^{ 3 } \dd{ x }  \Big)^{ 1/2 }
		\dd{ s }
		 \to 0 
	\end{equation}
	as $ \varepsilon $ tends to zero.
	By the Sobolev embedding, we know that $ \nabla \partial_{ t } h \in \lp^{ \infty }((0,1);  \lp^{ 3 }  (\Omega';\R^{2\times 2})   ) $, where we again used $ h \in \cont^{ \infty }  ([0,1]; \wkp^{ 2, 2 }   (\Omega';\R^2)  ) $. Thus,  equation (\ref{eq:convergence_of_error_term}) holds true, which finishes the proof. 
\end{proof}
As the next step for the desired energy balance, we derive inequalities describing the limiting energy evolution.
This  lemma,  in formulation and proof, closely follows \cite[Lem.~3.7]{francfort_larsen_existence_and_convergence_for_quasi_static_evolution_in_brittle_fracture}.  Recall the {definitions in \eqref{totalenergy} and} \eqref{totalepsenergy}. 
\begin{lemma}
	\label{lemma:energy_balance_inequalities}
	Assume that the initial conditions are well-prepared in the sense that 
	\begin{equation}
		\label{eq:well_preparedness_initial_condition}
		\limsup_{ \varepsilon \to 0 }
		\energyNonlin ( y_{ 0 }^{ \varepsilon } )
		\leq
		\energyLinTot ( 0 ).
	\end{equation}
	Then, for any $ t \in I_{ \infty } $, we respectively have the lower and upper estimates
	\begin{align}
	\label{eq:energy_balance_lsc}
		\energyLinTot ( t )
		&   \le \liminf_{\eps \to 0}\energyNonlinTot(t) 	
		 \leq
		\energyLinTot ( 0 )
		+
		\liminf_{ \varepsilon \to 0 }
		\int_{ 0 }^{ t }
		\int_{  { \Omega '} }
		 \mathbb{ C } e( u_{ \varepsilon } ( s ) ) \colon { e( \partial_t h ( s ) ) }
		\dd{ x }
		\dd{ s }
		\shortintertext{and}
		\label{eq:energy_balance_usc}
		\energyLinTot ( t )
		& \geq
		\energyLinTot ( 0 )
		+
		\limsup_{ \varepsilon \to 0 }
		\sum_{ k = 0 }^{ N( \varepsilon ) - 1 }
		\int_{ t_{ k }^{ \varepsilon } }^{ t_{ k + 1 }^{ \varepsilon } }
		\int_{  {\Omega' } }
		 \mathbb{ C }  e( u( t_{ k + 1 }^{ \varepsilon } ) ) \colon { e ( \partial_t  h ( s ) ) } 
		\dd{ x }
		\dd{ s },
	\end{align}
	where $ N ( \varepsilon ) $ is chosen such that $ t = t_{ N ( \varepsilon ) }^{ \varepsilon } $. Furthermore, $ \energyLinTot $ has no negative jumps.
\end{lemma}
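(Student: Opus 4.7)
The plan is to address the two inequalities \eqref{eq:energy_balance_lsc} and \eqref{eq:energy_balance_usc} separately and then deduce the absence of negative jumps as a consequence. Throughout, I work at times $t \in I_\infty$ so that $u(t) = \hat u(t)$ and the tools of \Cref{sec:compactness}--\Cref{sec:approximateRelationsMinimality} apply.

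\emph{For \eqref{eq:energy_balance_lsc}}, the bound $\energyLinTot(t) \le \liminf_\eps \energyNonlinTot(t)$ is immediate from the lower semicontinuity estimates \eqref{eq:lsc_of_energies} and \eqref{eq:lsc_jump_sets_union_times}. The second bound builds on the refined a priori estimate \eqref{eq:a_priori_inequality_error_reduced} combined with well-preparedness \eqref{eq:well_preparedness_initial_condition}, reducing the task to
\begin{equation*}
\lim_{\eps \to 0} \bigg| \frac{1}{\eps}\int_0^t \int_{\{\nabla y_\eps \in B_r(\sporth(2))\}} \langle \diff W(\nabla y_\eps), \nabla \partial_t h \rangle \dd{x} \dd{s} - \int_0^t \int_{\Omega'} \mathbb{C} e(u_\eps) : e(\partial_t h) \dd{x} \dd{s} \bigg| = 0.
\end{equation*}
The contribution outside $\omega_\eps$ is absorbed by the bound $|\diff W(\nabla y_\eps)| \lesssim \sqrt{W(\nabla y_\eps)}$ (from \ref{eq:W_zero_on_sod}) together with the vanishing measure in \ref{item:properties_of_etaeps_volume}. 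On $P_j^\eps \cap \omega_\eps$, frame indifference gives $\diff W(\nabla y_\eps) = R_j^{\eps\, \top} \diff W(\nabla y_\eps^{\mathrm{rot}})$, and Taylor's formula at the identity (licensed by $\eps \eta_\eps \to 0$) yields $\frac{1}{\eps}\diff W(\nabla y_\eps^{\mathrm{rot}}) = \mathbb{C} e(u_\eps^{\mathrm{aux}}) + O(\eps \eta_\eps^2)$. Contracting with $\nabla \partial_t h$ and using $R^\top A : B = A : R B$ rewrites the integrand as $\mathbb{C} e(u_\eps^{\mathrm{aux}}) : e(R_j^\eps \partial_t h)$ up to errors of order $\eps \eta_\eps^3$. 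Since $e(u_\eps) = \chi_{\omega_\eps} e(u_\eps^{\mathrm{aux}})$ in the $\lp^2$-sense and $R_j^\eps = \mathrm{Id}$ on boundary-touching pieces, the discrepancy with $\mathbb{C} e(u_\eps) : e(\partial_t h)$ reduces after time integration to $\sum_{P_j^\eps \in \intpcs_\eps} \int \mathbb{C} e(u_\eps) : (e(R_j^\eps \partial_t h) - e(\partial_t h))$, which vanishes by two applications of \Cref{cor:approx_euler_lagrange_consequence} (once with $\hat R_j^\eps = R_j^\eps$ and once with $\hat R_j^\eps = \mathrm{Id}$).

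\emph{For \eqref{eq:energy_balance_usc}}, I invoke \Cref{lemma:u_minimzer_of_total_energy} at each $t_k^\eps \in I_\eps \subseteq I_\infty$ with the admissible competitor $v := u(t_{k+1}^\eps) - (h(t_{k+1}^\eps) - h(t_k^\eps))$, which has boundary value $h(t_k^\eps)$ on $\Omega' \setminus \overline{\Omega}$ and jump set $J_v = J_{u(t_{k+1}^\eps)}$. Expanding the quadratic $Q(e(v))$ and using irreversibility to bound $J_{u(t_{k+1}^\eps)} \setminus \Gamma(t_k^\eps) \subseteq \Gamma(t_{k+1}^\eps) \setminus \Gamma(t_k^\eps)$ yields
\begin{equation*}
\int_{t_k^\eps}^{t_{k+1}^\eps} \int_{\Omega'} \mathbb{C} e(u(t_{k+1}^\eps)) : e(\partial_t h(s)) \dd{x} \dd{s} \le \energyLinTot(t_{k+1}^\eps) - \energyLinTot(t_k^\eps) + c_k^\eps,
\end{equation*}
where $c_k^\eps := \int \tfrac{1}{2} Q(e(h(t_{k+1}^\eps) - h(t_k^\eps))) \dd{x}$ satisfies $\sum_k c_k^\eps \lesssim \Delta_\eps \to 0$ by Cauchy--Schwarz and $h \in \wkp^{1,1}((0,1); \wkp^{2,2})$. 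Summing in $k$ telescopes the right-hand side to $\energyLinTot(t) - \energyLinTot(0)$, and taking $\limsup_\eps$ gives \eqref{eq:energy_balance_usc}.

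To exclude negative jumps, I would repeat the argument for \eqref{eq:energy_balance_lsc} on arbitrary subintervals $[s,t]$ (using \Cref{rmk:improvedEpsEnergyBalance} at the pair $t_m^\eps \le t_n^\eps$) together with \eqref{eq:energy_balance_usc}, so that the uniform $\lp^\infty((0,1); \lp^2)$-bound on $e(u_\eps)$ and $h \in \wkp^{1,1}((0,1); \wkp^{2,2})$ (yielding absolute continuity in $t$ of the work integral), combined with the density of $I_\infty$ in $[0,1]$, imply left upper semicontinuity and right lower semicontinuity of $\energyLinTot$; together these exclude downward jumps. The main obstacle is the linearization step in \eqref{eq:energy_balance_lsc}: reconciling the frame-indifferent derivative $\diff W(\nabla y_\eps)$ with the linear stress $\mathbb{C} e(u_\eps)$ in the presence of the non-trivial rotations $R_j^\eps$ on the interior partition pieces requires the careful use of \Cref{cor:approx_euler_lagrange_consequence}, which itself rests on the almost-Euler--Lagrange \Cref{cor:approx_euler_lagrange}.
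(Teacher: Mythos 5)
Your treatment of \eqref{eq:energy_balance_lsc} and \eqref{eq:energy_balance_usc} essentially reproduces the paper's proof: the lower estimate builds on the a priori bound \eqref{eq:a_priori_inequality_error_reduced}, the Taylor expansion of $\diff W$ near the identity (using frame indifference $\diff W(\nabla y_\eps) = (R_j^\eps)^\top \diff W(\nabla y_\eps^{\mathrm{rot}})$), the $\lp^p$-bound of \Cref{lemma:lp_bound_lemma} for the cubic error, and the double application of \Cref{cor:approx_euler_lagrange_consequence} to swap the rotations $R_j^\eps$ back and forth on the interior pieces; the upper estimate uses the competitor $u(t_{k+1}^\eps) - (h(t_{k+1}^\eps) - h(t_k^\eps))$ in \Cref{lemma:u_minimzer_of_total_energy}, telescopes, and controls $\sum_k c_k^\eps$ via absolute continuity in time. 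Two small remarks. First, a notational slip: the Taylor expansion in the lower estimate takes place on $\mathcal{N}_\eps = \{\nabla y_\eps \in B_r(\sporth(2))\}$ (which by \eqref{item:rotation_property_closeness} is contained in $\{\eps|\nabla u_\eps^{\mathrm{aux}}|\le 2r\}$), not on $\omega_\eps$; the truncation $\omega_\eps$ enters only at the end when replacing $u_\eps^{\mathrm{aux}}$ by $u_\eps$. Second, $\sum_k c_k^\eps$ is bounded by $\omega(\Delta_\eps)\int_0^1\|\nabla\partial_t h\|_{\lp^2}$ for a modulus $\omega$, not $\lesssim\Delta_\eps$, since $h$ is only in $\wkp^{1,1}$ in time; the conclusion $\sum_k c_k^\eps \to 0$ is still correct.

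For the "no negative jumps" claim, your proposed route via repeating \eqref{eq:energy_balance_lsc} on subintervals and combining with \eqref{eq:energy_balance_usc} is more circuitous than necessary and risks ambiguity, since the $\liminf$ in $\eps$ in \eqref{eq:energy_balance_lsc} does not directly give a pointwise-in-time estimate for $\energyLinTot$. The paper derives the conclusion directly from the one-step competitor inequality already established in the proof of \eqref{eq:energy_balance_usc}: for $s,t\in I_\infty$ with $s\le t$, testing \eqref{eq:u_min_of_total_energy} at time $s$ with $v=u(t)+h(s)-h(t)$ gives $\energyLinTot(s)\le \energyLinTot(t) + r(s,t)$ where $r(s,t)\to 0$ as $s\to t$ by continuity of $h$ in $\wkp^{2,2}$. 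Taking $s\uparrow t$ gives left upper semicontinuity, taking $t\downarrow s$ gives right lower semicontinuity, and these two together rule out downward jumps. You do identify this pair of one-sided semicontinuities as the mechanism, so the essential idea is present, but you should obtain both directly from the minimality/competitor argument rather than invoking the $\eps$-level estimate \eqref{eq:energy_balance_lsc}.
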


Existence of well-prepared initial conditions follows from  \Cref{rem: well}.

\begin{proof}
	While the $ \limsup $-inequality is proven exactly as in \cite[Lem.~3.7]{francfort_larsen_existence_and_convergence_for_quasi_static_evolution_in_brittle_fracture}  (exploiting \Cref{lemma:u_minimzer_of_total_energy}),  we have to modify the proof for the $ \liminf $-inequality. In \cite[Lem.~3.7]{francfort_larsen_existence_and_convergence_for_quasi_static_evolution_in_brittle_fracture}, it is an immediate consequence of the computation performed for the a priori estimate.  Instead,  due to the rotational invariance of $W$, we have to work more: by  the frame indifference  \ref{eq:frami},  the derivative $ \diff W $ satisfies
	\begin{equation}
		\label{eq:rotation_of_nabla_W}
		{\diff W ( A ) = R^{ \top } \diff  W ( R A )}
	\end{equation}
	for any rotation $ R \in \sporth ( 2 ) $.  Thus, even though $ W( \nabla y_{\varepsilon } ) = W ( \nabla y_{\varepsilon }^{ \mathrm{rot} } ) $ holds, {we have}
	\begin{equation*}
		{\diff W ( \nabla y_{ \varepsilon } )
		\neq 
		\diff W ( \nabla y_{ \varepsilon }^{ \mathrm{rot} })}
	\end{equation*}
	in general.  For this reason, we need to  throw the rotations onto $ \nabla \partial_t h $ and argue that the pieces where the rotation is not the identity already have negligible contribution   due to  \Cref{cor:approx_euler_lagrange_consequence}. 
	
	\emph{Step 1 (Lower estimate).} Let us first prove the limit inferior estimate (\ref{eq:energy_balance_lsc}). As mentioned before, we proceed similarly to the a priori estimate. Let $ t \in I_{ \infty } $ and $ \varepsilon $ be sufficiently small such that $ t \in I_{ \varepsilon } $.
	From the a priori estimate (\ref{eq:a_priori_inequality_error_reduced}), we see that there exists some  vanishing sequence $ \sigma_{ \varepsilon} $    such that 
	\begin{align}
		\notag
		&
		\dfrac{ 1 }{ \varepsilon^{ 2 } }
		\int_{ \Omega'}
		W ( \nabla y_{ \varepsilon }(t) ) 
		\dd{ x }
		+
		\dfrac{ 1 }{ \varepsilon^{ 2 \beta } }
		\int_{ \Omega' }
		\abs{ \nabla^{ 2 } y_{ \varepsilon }(t) }^{ 2 }
		\dd{ x }
		+
		\kappa \hm^{ 1 } \left(
		\Gamma_{ \varepsilon } ( t )
		\right)
		-\sigma_{ \varepsilon }
		\\
		\leq{} &
		\energyNonlin ( y_{ 0 }^{ \varepsilon } )
		\label{eq:integral_over_time}
		+
		\dfrac{ 1 }{ \varepsilon }
		\int_{ 0 }^{  t  }
		\int_{ \lbrace \nabla y_{ \varepsilon } \in B_{ r } ( \sporth( 2 ) ) \rbrace }
		{\langle \diff W ( \nabla y_{ \varepsilon } ) ,\nabla  \partial_t h \rangle}
		\dd{ x }
		\dd{ s },	
	\end{align}
 	where $r$ is as in  item \ref{eq:W_growth_assumptions}. 
To handle the integral over time in (\ref{eq:integral_over_time}), we again apply Taylor's formula. For this, we  recall that   $ \diff W ( \mathrm{Id} ) = 0 $  and we note that  
	\begin{equation}\label{useaswell}
		{\mathcal{N}_\eps:= \{ \nabla y_{ \varepsilon } \in B_{ r } ( \sporth ( 2 ) ) \}
		\subseteq 
		\{ \varepsilon \abs{ \nabla u^{ \mathrm{aux} }_{ \varepsilon } } \leq 2 r \}}
	\end{equation}
	for $ \varepsilon $ sufficiently small by construction of the rotations (see \ref{item:rotation_property_closeness}), and the definition (\ref{eqn:tildeuEps}) of $ u^{ \mathrm{aux}}_{ \varepsilon } $ {(we suppress the dependence on $t$ for $\mathcal{N}_\eps$)}. Thus, by using the rotational  property (\ref{eq:rotation_of_nabla_W}) and  equations  \eqref{def:y_rot}--\eqref{eqn:tildeuEps},  on $ {\mathcal{N}_\eps} $ we get that 
	\begin{align}
		\notag \langle \diff W \left( \nabla y_{ \varepsilon } \right) , \nabla \partial_t h \rangle
		& =
		\left\langle \diff W \left( \nabla y_{ \varepsilon}^{ \mathrm{rot} }  \right), \sum\nolimits_{ j } \chi_{ P_{ j }^{ \varepsilon } } R_{ j }^{ \varepsilon } \nabla \partial_t h \right\rangle
		\\
		\notag & =
		\left\langle \diff W \left( \mathrm{Id} + \varepsilon \nabla u^{ \mathrm{aux}}_{ \varepsilon } \right), \sum\nolimits_{ j } \chi_{ P_{ j }^{ \varepsilon } } R_{ j }^{ \varepsilon } \nabla  \partial_t  h \right\rangle
		\\
		& \leq
		\varepsilon 
		\left( 
		\mathbb{ C }  e ( u^{ \mathrm{aux}}_{ \varepsilon }) \colon \sum\nolimits_{ j } \chi_{ P_{ j }^{ \varepsilon } } e \left( R_{ j }^{ \varepsilon } \partial_t h \right)
		\right)
		+
		C  \varepsilon^{ 2 }  \abs{  \nabla u^{ \mathrm{aux}}_{ \varepsilon } }^{ 2 } \abs{ \nabla \partial_t h }.\label{eqn:timeIntAdjust}
	\end{align}
	Since $ d = 2 $, the space $ \wkp^{ 2 , 2 } ( \Omega' ) $ embeds continuously into $ \wkp^{ 1 , p } ( \Omega' ) $ for every $ p \in [1 , \infty) $, so by the regularity of the boundary data \eqref{eqn:bdryData}, we have $ \nabla \partial_t  h \in \lp^{ 1 } ( {(0,1)}; \lp^{ p } ( \Omega' ;  \mathbb{ R }^{ 2 \times 2 }  ) ) $ for every $ p \in [1, \infty ) $. Moreover, we recall from  inequality  \ref{item:rotation_property_grad} that $ \norm{ \nabla u^{ \mathrm{aux} }_{ \varepsilon } }_{ \lp^{ 2 } } \lesssim \varepsilon^{ \gamma - 1 } $. {To control the last term of \eqref{eqn:timeIntAdjust}, we recall \eqref{useaswell} and then} apply \Cref{lemma:lp_bound_lemma} to find some $ {\alpha} > 0 $ and $ q \in [1 ,\infty) $ such that 
	\begin{equation}
		\varepsilon   
		\int_{ 0 }^{ t } 
		\int_{ { \mathcal{N}_\eps} }
		\abs{ \nabla  u^{ \mathrm{aux} }_{ \varepsilon } }^{ 2 } \abs{ \nabla \partial_t h }
		\dd{ x }
		\dd{ s }
		\lesssim
		\varepsilon^{ {\alpha} } 
		\int_{ 0 }^{ t }
		\norm{ \nabla\partial_t h }_{ \lp^{ q }( \Omega'  ) }
		\dd{ s },\label{eqn:errorvan}
	\end{equation}
	which vanishes as $ \varepsilon $ goes to zero.
	{Looking to the term involving the elastic tensor $\C$ within \eqref{eqn:timeIntAdjust}, for any $ p \in (2 , \infty ) $ with $ p' = p/(p-1) $, we estimate
	\begin{align*}
		&\left|\int_{ 0 }^{  t  }
		\int_{ \Omega' }
		( 1 - \chi_{ {\mathcal{N}_\eps} } ) 
		\mathbb{ C }  e(  u^{ \mathrm{aux} }_{ \varepsilon } ) \colon \sum\nolimits_{ j } \chi_{ P_{ j }^{ \varepsilon } } e \left( R_{ j }^{ \varepsilon } \partial_t h \right) 
		\dd{ x }
		\dd{ s }\right|
		\lesssim 
		\int_{ 0 }^{ t }
		\norm{ e ( u^{ \mathrm{aux}}_{ \varepsilon } ) }_{ \lp^{ p' } ( { \Omega'  \setminus \mathcal{N}_\eps} ) }
		\norm{ \nabla \partial_t  h }_{ \lp^{ p } (  \Omega'  ) } 
		\dd{ s }.
	\end{align*}
	As $\mathcal{L}^2(\Omega'\setminus \mathcal{N}_\eps)\lesssim \eps^2$ uniformly in time (see \eqref{eq:measure_of_non_nbhd_vanishes}) and $ e ( u^{ \mathrm{aux} }_{ \varepsilon } ) $ is uniformly in time $\lp^{p'}$-equiintegrable due to the $ \lp^{ 2 } $-bound in \eqref{eqn:uTildeAPriori}, the above term vanishes as $\eps\to 0$.  Combining this with \eqref{useaswell}--\eqref{eqn:errorvan} we get 
\begin{align}\label{5Y}
 \liminf_{\eps \to 0}	\dfrac{ 1 }{ \varepsilon }
		\int_{ 0 }^{  t  } \int_{ \mathcal{N}_\eps}
		{\langle \diff W ( \nabla y_{ \varepsilon } ) ,\nabla  \partial_t h \rangle}
		\dd{ x } \dd{ s }
		\le 	 \liminf_{\eps \to 0}
		\int_{ 0 }^{  t  } \int_{ \Omega'}
						\mathbb{ C }  e ( u^{ \mathrm{aux}}_{ \varepsilon }) \colon \sum\nolimits_{ j } \chi_{ P_{ j }^{ \varepsilon } } e \left( R_{ j }^{ \varepsilon } \partial_t h  
		\right)
		\dd{ x } \dd{ s }.		
\end{align}
 	Likewise, as $\mathcal{L}^2(\Omega'\setminus \omega_\eps)$ vanishes uniformly in time, we may replace $u^{\rm aux}_\eps$ by $u_\eps$ in the integral  on the right-hand side.  }
	
%
%
 Due to  the well-preparedness of the initial conditions in the sense of  inequality  (\ref{eq:well_preparedness_initial_condition}) and the lower semicontinuity of the total energies (see  inequalities  \eqref{eq:lsc_for_quadratic_term} and \eqref{eq:lsc_jump_sets_union_times}),  {we take the limit inferior as $\eps\to 0$ of     \eqref{eq:integral_over_time}   and use    \eqref{5Y} (with $u_\eps$ in place of $u^{\rm aux}_\eps$)    to recover}
	\begin{align*}
		\energyLinTot ( t )
		& \leq
		\liminf_{ \varepsilon \to 0 }\left[
		\dfrac{ 1 }{ \varepsilon^{ 2 } }
		\int_{ \Omega' }
		W ( \nabla y_{ \varepsilon }(t) ) 
		\dd{ x }
		+
		\dfrac{ 1 }{ \varepsilon^{ 2 \beta } }
		\int_{ \Omega' }
		\abs{ \nabla^{ 2 } y_{ \varepsilon }(t) }^{ 2 }
		\dd{ x }
		+
		\kappa \hm^{ 1 } \left(
		\Gamma_{ \varepsilon } ( t )
		\right)\right] \\
		&
		\leq
		\energyLinTot ( 0 )
		+
		\liminf_{ \varepsilon \to 0 }
		\int_{ 0 }^{ t }
		\int_{  \Omega'  }
		\mathbb{ C } e ( u_{ \varepsilon } ) \colon \sum\nolimits_{ j } \chi_{ P_{ j }^{ \varepsilon } } e \left( R_{ j }^{ \varepsilon } \partial_t h \right) 
		\dd{ x }
		\dd{ s }.
	\end{align*}
	 {In view} of \eqref{totalepsenergy},  to obtain the correct estimate (\ref{eq:energy_balance_lsc}), we must get rid of the rotations in the above estimate.
	First,  with the notation in \Cref{cor:approx_euler_lagrange_consequence},  for  partition elements that  intersect the boundary,  i.e.,  $ P_{ j }^{ \varepsilon } \notin \intpcs_\eps $, we have $ R_{ j }^{ \varepsilon } = \mathrm{Id} $,  see  equation  \eqref{def:y_rot} and the subsequent explanation.   For the remaining elements $ P_{ j }^{ \varepsilon } \in \intpcs_\eps $ that do not intersect the boundary, we apply \Cref{cor:approx_euler_lagrange_consequence} twice: once to remove the broken off pieces with $\hat R_j^\eps = R_j^\eps$ and subsequently with $\hat R_j^\eps = {\rm Id}$ to put them back in the inequality. This yields the claim.
	
	\emph{Step 2 (Upper estimate).}  As mentioned,  we can follow the proof of  \cite[Lem.~3.7]{francfort_larsen_existence_and_convergence_for_quasi_static_evolution_in_brittle_fracture}, up to  replacing the full gradient   with the symmetric gradient. We have written it down for the convenience of the reader in \Cref{sec:appendix_proofs}. 
\end{proof}

\begin{remark}\label{rmk:totalEnergyUC}
One can also show that for each $t',t\in [0,1]$ with $t'<t$ it holds that  
\begin{align}\label{rmk:totalEnergyUC-equ}
\energyNonlinTot(t) - \sigma_\eps \leq \energyNonlinTot(t') + \int_{t'}^t \int_{ \Omega' }
		 \mathbb{ C } e( u_{ \varepsilon } ( s ) ) \colon { e( \partial_t h ( s ) ) }
		\dd{ x }
		\dd{ s }
		\end{align}
		for $\sigma_\eps\to 0$ as $\eps \to 0$ (independently of $t'$ and $t$). Indeed, for $t',t \in I_\eps$ this follows by applying the linearization argument used above to the inequality found in \Cref{rmk:improvedEpsEnergyBalance}. 		For general $t,t' \in [0,1]$, we can choose $t_\eps,t_\eps' \in I_\eps$  with $|t-t_\eps|, |t'-t_\eps'|\le 	\Delta_{ \varepsilon }$ (see \eqref{eqn:deltaEps}) and $\energyNonlinTot(t) = \energyNonlinTot(t_\eps)$, $\energyNonlinTot(t') = \energyNonlinTot(t_\eps')$. Then, we use  inequality  \eqref{rmk:totalEnergyUC-equ} for $t_\eps$ and $t_\eps'$ and observe that, due to  the uniform bounds on the symmetric gradients  \eqref{eqn:bdryData} and \eqref{eqn:uTildeAPriori}, the integral $ \int_{t'}^t $ can be replaced $ \int_{t_\eps'}^{t_\eps} $ up to an error that can be absorbed in $\sigma_\eps$. 		
		 \end{remark}

\section{Proof of Theorem \ref{thm:main}}\label{sec:completeProof}
We complete the proof of the linearization result. Recall \ref{item:initial_condition_quasistatic_fracture_evolution}-\ref{item:energy_balance_def} of \Cref{def:linearQuasistatic}, which must be shown for $u$ and $ \Gamma $ from \Cref{def:displacement II}.
First, \ref{item:initial_condition_quasistatic_fracture_evolution} (initial condition) follows from the choice of $u_0$ and the well-preparedness of $(y^\eps_0)_\eps$ as stated in Theorem \ref{thm:main}. (For sequences of initial data provided by \cite[Thm.~2.7]{friedrich_griffith_energies_as_small_strain_limit_of_nonlinear_models_for_nomsimple_brittle_materials}, see Remark \ref{rem: well}(i), no modifications in \eqref{def:y_rot}  and \eqref{def:u_eps}  are needed, i.e.,  $u_\eps(0) = \bar{u}_\eps(0) =  (y_{ \varepsilon }^{ 0 } - \mathrm{id})/\varepsilon$ which converges to $u_0$ in measure on $\Omega'$.)
 By definition of $\Gamma(t)$, the crack will satisfy \ref{item:irreversibilty } (irreversibility). In Lemma \ref{lemma:cont_extension}, we prove that it satisfies \ref{item:displacement_and_boundary_conditions} (displacement and boundary conditions) and \ref{item:minimality} (minimality), along with the convergence $e(u_\eps)\to e(u)$.  Then,  in \Cref{lem:energyBalance}, we prove \ref{item:energy_balance_def} (energy balance).   Eventually, we will show energy convergence and $y_\eps \rightsquigarrow u $. 

\begin{lemma}
	\label{lemma:cont_extension}
	Let $ u $ and $\Gamma$ be as in Definition \ref{def:displacement II}.
	{Then, we} have $ e ( u ) \in \lp^{ \infty }  ( {(0 , 1)}  ; \lp^{ 2 } ( \Omega';  \R^{2 \times 2}_{\rm sym}   )  ) $  and  $u(t) = h(t)$ in $\Omega'\setminus \overline{\Omega}$ as well as  $ \hm^{ 1 } ( \Gamma ( t ) ) < \infty $ for all $ t \in [0,1] $. Furthermore, for any $ t \in  [0, 1 ]  $,  it holds that
	\begin{equation}
		\label{eq:minimality_u_whole_interval}
		\int_{ \Omega' }
		\dfrac{ 1 }{ 2 }
		Q \big( e ( u ( t ) ) \big)
		\dd{ x }
		\leq
		\int_{ \Omega' }
		\dfrac{ 1 }{ 2 }
		Q ( e ( v ) )
		\dd{ x }
		+
		\kappa 
		\hm^{ 1 } ( J_{ v } \setminus \Gamma ( t ) )
	\end{equation}
	among all $ v \in \gsbdtwo ( \Omega' ) $ such that $ v = h( t ) $ on $ \Omega' \setminus \overline{\Omega } $. Moreover,
	\begin{equation}
		\label{eq:jump_contained_in_gamma}
		J_{ u ( t ) } \subseteq \Gamma ( t ) 
		\text{ up to a set of }\hm^{ 1 }\text{-measure zero.}
	\end{equation}
	Finally, without taking subsequences in $\eps$, we have that for almost every $ t \in [0,1 ] $,  both $ e ( u_{ \varepsilon } ( t ) ) $ and $e  ( u_{ \varepsilon }^{ \mathrm{aux} } ( t )  ) $
 converge  to $ e ( u(  t) ) $ in $ \lp^{ 2 }  ( \Omega' ; \mathbb{ R }^{ 2 \times 2 }_{\rm sym} )$.
\end{lemma}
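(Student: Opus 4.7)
The plan is to argue in four stages: establish the a priori bounds, prove minimality and jump inclusion on the dense set $I_\infty$ where $u = \hat u$, extend both properties to arbitrary $t$ by approximating from $I_\infty$ from below, and finally bootstrap strong convergence of the symmetric gradients without subsequences by identifying $\hat u(t)$ and $u(t)$ on the level of $e(\cdot)$ for almost every $t$.

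\textbf{A priori bounds.} The $L^\infty$ bound on $e(u)$ follows from the uniform $L^2$ bound on $e(u_\eps(t))$ in \eqref{eqn:uTildeAPriori} together with the weak convergences in \ref{item:weak_converg_symmetric_grads} and the construction of $u(t)$ for $t \notin I_\infty$ by compactness (\Cref{thm:gsbd2_compactness}). The boundary condition $u(t)=h(t)$ on $\Omega' \setminus \overline{\Omega}$ is built into the construction, and the regularity \eqref{eqn:bdryData} passes it from $I_\infty$ to all times. Finally $\mathcal{H}^1(\Gamma(t)) < \infty$ is a direct consequence of \eqref{eq:lsc_jump_sets_union_times} together with the a priori estimate in \Cref{lem:aprioriEst}.

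\textbf{Minimality and jump inclusion.} For $t \in I_\infty$, one has $u(t) = \hat u(t)$ and $J_{u(t)} \subseteq \Gamma(t)$ by \eqref{eq:irreversible_crack}, so \Cref{lemma:u_minimzer_of_total_energy} immediately yields \eqref{eq:minimality_u_whole_interval} and \eqref{eq:jump_contained_in_gamma}. For $t \notin I_\infty$, pick $(t_p)_p \subseteq I_\infty$ with $t_p \uparrow t$ as in \Cref{def:displacement II}; since $t \notin I_\infty$ we have $\bigcup_p \Gamma(t_p) = \Gamma(t)$. Given $v$ admissible at time $t$, the shifted competitor $v_p := v + h(t_p) - h(t)$ is admissible at time $t_p$ with $J_{v_p} = J_v$. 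Applying the already-proven minimality at $t_p$,
\begin{equation*}
\int_{\Omega'} \tfrac12 Q(e(u(t_p))) \,dx \leq \int_{\Omega'} \tfrac12 Q(e(v_p)) \, dx + \kappa\, \mathcal{H}^1(J_v \setminus \Gamma(t_p)),
\end{equation*}
and passing $p \to \infty$ using the weak convergence $e(u(t_p)) \rightharpoonup e(u(t))$ (from the compactness construction of $u(t)$), strong $L^2$ convergence $e(v_p) \to e(v)$ via \eqref{eqn:bdryData}, and monotone convergence for $\mathcal{H}^1(J_v \setminus \Gamma(t_p))$, yields \eqref{eq:minimality_u_whole_interval}. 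For \eqref{eq:jump_contained_in_gamma} at such $t$, the lower semicontinuity \Cref{thm:gsbd2_compactness}\ref{item:lscsurface} applied along $u(t_p) \to u(t)$ gives, for every open Lipschitz $U \subseteq \Omega' \setminus \overline{\Gamma(t)}$,
\begin{equation*}
\mathcal{H}^1(J_{u(t)} \cap U) \leq \liminf_{p} \mathcal{H}^1(J_{u(t_p)} \cap U) \leq \liminf_p \mathcal{H}^1(\Gamma(t_p) \cap U) = 0,
\end{equation*}
because $\Gamma(t_p) \subseteq \Gamma(t)$. A covering argument based on the $\mathcal{H}^1$-rectifiability of the countable union $\Gamma(t)$ then upgrades this to $\mathcal{H}^1(J_{u(t)} \setminus \Gamma(t)) = 0$.

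\textbf{Strong convergence of symmetric gradients for a.e.\ $t$.} For $t \in I_\infty$, \Cref{cor:strong_l2_convergence_of_uepsaux} gives strong $L^2$ convergence $e(u_\eps(t)), e(u_\eps^{\mathrm{aux}}(t)) \to e(u(t))$ along the diagonal subsequence fixed in \Cref{def:displacement II}. For $t \notin I_\infty$, \Cref{cor:strong_l2_convergence_of_uepsaux} only supplies convergence along a $t$-dependent subsequence to $e(\hat u(t))$. To upgrade to full-sequence convergence, we show $e(\hat u(t)) = e(u(t))$ for a.e.\ $t$: indeed, testing the minimality of $u(t)$ (just proved) against $\hat u(t)$ and the minimality of $\hat u(t)$ (\Cref{lemma:u_minimzer_of_total_energy}) against $u(t)$, using $J_{u(t)} \subseteq \Gamma(t) \subseteq \Gamma(t) \cup J_{\hat u(t)}$, gives
\begin{equation*}
\int_{\Omega'} \tfrac12 Q(e(\hat u(t))) \, dx \leq \int_{\Omega'} \tfrac12 Q(e(u(t))) \, dx \leq \int_{\Omega'} \tfrac12 Q(e(\hat u(t))) \, dx + \kappa\, \mathcal{H}^1(J_{\hat u(t)} \setminus \Gamma(t)).
\end{equation*}
At every continuity point of the non-decreasing function $t \mapsto \mathcal{H}^1(\Gamma(t))$ (a co-countable set), an argument analogous to the one in the previous paragraph, applied now to $\hat u(t)$ and the approximation of $\Gamma(t)$ from $I_\infty^t$, shows that $\mathcal{H}^1(J_{\hat u(t)} \setminus \Gamma(t)) = 0$. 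The two energies then coincide; combined with $e(u_\eps(t)) \rightharpoonup e(u(t))$ in $L^2$ and the strict convexity of $Q$ (which identifies the minimizer in terms of its symmetric gradient uniquely), this forces $e(u_\eps(t)) \to e(u(t))$ strongly in $L^2$ along the full sequence. The statement for $e(u_\eps^{\mathrm{aux}}(t))$ follows since $\chi_{\Omega' \setminus \omega_\eps(t)} \to 0$ in measure by \ref{item:properties_of_etaeps_volume}.

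The delicate step is the last one: controlling $\mathcal{H}^1(J_{\hat u(t)} \setminus \Gamma(t))$ for a.e.\ $t$. This is subtle because $\hat u(t)$ is only defined through a $t$-dependent subsequence of $u_\eps(t)$, and a priori its jump set could differ from those of the nearby $u(t_p)$. The continuity argument via the monotonicity of $\mathcal{H}^1(\Gamma(\cdot))$ and selection of open sets avoiding $\Gamma(t)$ is what ultimately rules out extra cracks in $\hat u(t)$ and places us in a regime where the minimizer's symmetric gradient is uniquely determined.
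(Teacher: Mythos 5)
Your first two stages track the paper closely, and your key structural observation in the final stage (compare $u(t)$ and $\hat u(t)$ via their respective minimality statements, then invoke strict convexity of $Q$ and Urysohn to kill the $t$-dependent subsequence) is indeed the paper's strategy. However, the linchpin of that stage has a genuine gap, and there is a secondary problem in the jump-inclusion step.

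The central gap is the claim that $\hm^1(J_{\hat u(t)} \setminus \Gamma(t)) = 0$ at continuity points of $t \mapsto \hm^1(\Gamma(t))$, obtained by an ``argument analogous to the one in the previous paragraph.'' That previous argument works for $u(t)$ precisely because $u(t)$ is a limit of $u(t_p)$ with $t_p \in I_\infty$, so that $J_{u(t_p)} \subseteq \Gamma(t_p) \subseteq \Gamma(t)$ and lower semicontinuity transfers the inclusion. For $\hat u(t)$ there is no such chain: $\hat u(t)$ arises as the limit of $u_\eps(t)$ along a $t$-dependent subsequence, and the only control you have is that $J_{u_\eps(t)}$ sits (approximately) inside $\Gamma_\eps(t)$. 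The $\varepsilon \to 0$ limit of $\hm^1(\Gamma_\eps(t))$ is the Helly function $\lambda(t)$, which a priori can be strictly larger than $\hm^1(\Gamma(t))$ — lower semicontinuity gives $\hm^1(\Gamma(t)) \le \lambda(t)$, but equality is only recovered later, via the energy balance, after the present lemma has been used. Until then you cannot rule out the possibility that some of the crack mass carried by $J_{\hat u(t)}$ lands in $\lambda(t) - \hm^1(\Gamma(t)) > 0$ rather than inside $\Gamma(t)$. The paper sidesteps this by returning to the $\varepsilon$-level: it invokes the quantitative almost-minimality (Remark \ref{rmk:approximate_minimality_alter}) of $u_\eps(t_p)$ with the translated competitor $u_\eps(t) + h_\eps(t_p) - h_\eps(t)$, which yields the energy comparison $\int \tfrac12 Q(e(u_\eps(t_p))) - \rho_\eps \le \int \tfrac12 Q(e(u_\eps(t)+h_\eps(t_p)-h_\eps(t))) + \kappa(l_\eps(t)-l_\eps(t_p)) + \sigma_\eps$. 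Passing $\varepsilon \to 0$ (using \Cref{cor:strong_l2_convergence_of_uepsaux} on the right and lower semicontinuity on the left) and then $p \to \infty$ at continuity points of $\lambda$ — not of $\hm^1(\Gamma(\cdot))$ — yields the reverse inequality $\int \tfrac12 Q(e(u(t))) \le \int \tfrac12 Q(e(\hat u(t)))$ without any claim about $J_{\hat u(t)}$ relative to $\Gamma(t)$. This detour through the nonlinear level is unavoidable at this point in the argument.

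The secondary issue is in your proof of \eqref{eq:jump_contained_in_gamma}. You restrict to open Lipschitz $U \subseteq \Omega' \setminus \overline{\Gamma(t)}$ and then claim a covering argument ``upgrades'' this. But $\Gamma(t)$ is a countable union of rectifiable sets with $\hm^1(\Gamma(t))<\infty$, and such a set can have dense (even full) closure in $\Omega'$, so $\Omega' \setminus \overline{\Gamma(t)}$ can be empty and the covering is vacuous. The correct formulation uses density points: pick $x \in J_{u(t)} \setminus \Gamma(t)$ with one-dimensional density $1$ for $J_{u(t)}$ and density $0$ for $\Gamma(t)$, take $U = B_r(x)$ for small $r$ (which will intersect $\Gamma(t)$, but with measure $< r/2$), and obtain a contradiction with the lower semicontinuity of \Cref{thm:gsbd2_compactness}\ref{item:lscsurface} applied to $u(t_p) \to u(t)$, since $\hm^1(B_r(x)\cap J_{u(t_p)}) \le \hm^1(B_r(x)\cap \Gamma(t)) < r/2$ while $\hm^1(B_r(x)\cap J_{u(t)}) \ge r/2$. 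You should replace the open-cover argument with this density-point contradiction.
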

\begin{proof}
The proof is the same (up to obvious modifications) as the proof of \cite[Lem.~3.8]{francfort_larsen_existence_and_convergence_for_quasi_static_evolution_in_brittle_fracture}, except for the strong convergence. We therefore prove only the strong convergence and the measurability here. For  convenience of the reader, the rest of the proof can be found in \Cref{sec:appendix_proofs}.

 We want to show the strong convergence of the symmetric gradients by comparing the functions $ u( t ) $ and $ \hat{u}( t ) $ from  Remark \ref{def:displacement I} and Definition \ref{def:displacement II}.   The  minimizing property of $ \hat{u}( t ) $ from Lemma \ref{lemma:u_minimzer_of_total_energy} yields in particular that it minimizes
\begin{equation*}
	\int_{\Omega' }
	\dfrac{ 1 }{2 }
	Q ( e ( v ) ) 
	\dd{ x }
\end{equation*}
among all $ v \in \gsbdtwo( \Omega' ) $ with $ v = h ( t ) $ on $ \Omega' \setminus \overline{\Omega } $ and $ J_{ v } \subseteq \Gamma ( t ) \cup J_{ \hat{u} ( t ) } $. By $u(t) = h(t)$ in $\Omega'\setminus \overline{\Omega}$  and \eqref{eq:jump_contained_in_gamma} (as shown in in \Cref{sec:appendix_proofs}), the function $ u ( t ) $ is an admissible competitor, from which we deduce that
\begin{equation}
	\int_{ \Omega'}
	\dfrac{ 1 }{ 2 }
	Q \big( e ( \hat{u} ( t ) ) \big)
	\dd{ x }
	\leq
	\int_{ \Omega' }
	\dfrac{ 1 }{ 2 }
	Q \big( e ( u ( t ) ) \big)
	\dd{ x }.\label{eqn:toBeReversed}
\end{equation}
The main point now is to prove the reverse inequality  for a.e.\ $t \in [0,1]$.  In fact, once this is shown, we must have $ e ( u ( t ) ) = e ( \hat{u} ( t ) ) $ almost everywhere due to the strict convexity of $Q$  on $\R^{2\times 2}_{\rm sym}$ which implies that a minimizer over the convex subspace 
$${\big\{e(v) : v\in \gsbdtwo(\Omega'), \ v = h(t) \text{ in }\Omega'\setminus \overline{\Omega},\text{ and }J_v\subseteq \Gamma(t) \cup J_{\hat{u}(t)}  \big\} \subseteq \lp^{ 2 }(\Omega'; \R^{2 \times 2}_{\rm sym})}$$
must be unique. Therefore,  by  \Cref{cor:strong_l2_convergence_of_uepsaux},  $e(u_\eps(t))\to e(\hat u(t)) = e(u(t))$ in $\lp^{ 2 } (\Omega';\R^{2\times 2}_{\rm sym})$ up to a $t$-dependent subsequence. The analog convergence of $e(u^{\rm aux}_\eps(t))$ follows  from the same corollary. 
However, because the limit $e(u  (t)  )$ is independent of the subsequence $\eps$, Urysohn's property (if all subsequences converge to the same point, then the whole sequence converges to this point) shows that we do not need to take a subsequence in $\eps$ to conclude $e(u_\eps(t))\to e(u  (t)  )$ in $\lp^2(\Omega';\R^{2 \times 2}_{\rm sym})$. As we now have pointwise convergence in time, an application of Mazur's lemma says the pointwise and weak limits are the same. Thus,  using  the uniform boundedness \eqref{eqn:uTildeAPriori},  $e(u_\eps)\rightharpoonup e(u)$ in $\lp^p ({(0,1)};\lp^{ 2 } (\Omega; \R^{2 \times 2}_{\rm sym}))$ for any $1<p<\infty$, so in particular, $e(u)$ is space-time measurable, and we finally have $e(u)\in \lp^\infty ({(0,1)};\lp^2 (\Omega; \R^{2 \times 2}_{\rm sym}))$. 

We are left to prove the  reverse of inequality \eqref{eqn:toBeReversed}. To this end,  we consider
\begin{equation*}
	{l_{ \varepsilon } ( t ) 
	\coloneqq 
	\hm^{ 1 } ( {\Gamma}_{ \varepsilon } ( t ) ).}
\end{equation*}
The functions $ l_{ \varepsilon } $ are nondecreasing and uniformly bounded (see    \eqref{eq:a_priori_estimate}), thus by Helly's Theorem we find a non-decreasing function $ \lambda $ on $ [0,1] $ such that for a  subsequence of $\eps$ (not relabeled), we have $ l_{ \varepsilon } \to \lambda $ pointwise.
Let $ H $ be the at most countable set of discontinuity points of $ \lambda $. Let $ t \notin H $. If $t \in I_\infty$, \eqref{eqn:toBeReversed} follows directly as $u(t) = \hat{u}(t)$ by Remark \ref{def:displaceAtGoodTimes}. We thus assume $t \notin I_\infty$ and let $ (t_{ p })_p  \subseteq I_\infty   $ be as from Definition \ref{def:displacement II}. Using the approximate minimality of $ u_{ \varepsilon } ( t_{ p } ) $  given {by \Cref{rmk:approximate_minimality_alter}} with the admissible competitor $ u_{ \varepsilon } ( t ) + {h_\eps}( t_{ p } ) - {h_\eps}( t ) $ {along with  convergence  \eqref{eq:modifications_barely_increase_jump},} we get 
\begin{align}
	\int_{ \Omega' }
	\dfrac{ 1 }{ 2 }
	Q \big( e ( u_{\varepsilon } ( t_{ p } ) ) \big)
	\dd{ x }
	-
	\rho_{ \varepsilon }
	& \leq{} 
	\int_{ \Omega' }
	\dfrac{ 1 }{ 2 }
	Q \big( e( u_{ \varepsilon } ( t ) + {h_\eps}( t_{ p } ) - {h_\eps}( t ) ) \big)
	\dd{ x }
	+
	\kappa \hm^{ 1 } \left(
	\left( J_{ u_{ \varepsilon } ( t ) }   \cup  J_{\nabla  u_{ \varepsilon } ( t ) } \right) 
	\setminus {{\Gamma}_\eps} ( t_p ) 
	\right)
	\notag	\\
	\label{eq:min_equation_in_tp}
	& \leq{}
	\int_{ \Omega' }
	\dfrac{ 1 }{ 2 }
	Q \big( e( u_{ \varepsilon } ( t ) + {h_\eps}( t_{ p } ) - {h_\eps}( t ) ) \big)
	\dd{ x }
	+
	\kappa ( l_{ \varepsilon } ( t ) - l_{ \varepsilon } ( t_{ p } ) )  + \sigma_\eps, 
\end{align}
where  $\sigma_\eps \to 0 $ as $\eps \to 0$  and where by inequality (\ref{eq:estimate_rhoeps_alt}) we can estimate
\begin{equation}\nonumber
	\rho_{ \varepsilon }
	\lesssim 
	 \delta_{ \varepsilon }
	+
	\varepsilon^{ 2 \left( 1 - \beta \right)}
	\int_{ \Omega' }
	\abs{ \nabla^{ 2 } (u_{ \varepsilon } ( t ) + {h_\eps}( t_{ p } ) - {h_\eps}( t ) ) }^{ 2 } \dd{ x }
	 +
	\int_{ \Omega' }
	\varepsilon \abs{ \nabla ( u_{ \varepsilon } ( t ) + {h_\eps}( t_{ p } ) - {h_\eps}( t ) ) }^{ 3 }
	\dd{ x }. 
\end{equation}
The first summand vanishes since $ \delta_{ \varepsilon} \to 0 $. The second summand vanishes by {\eqref{eq:second_derivative_vanishes} in} \Cref{cor:convergence of energies} and because $h \in  \lp^{ \infty }( (0, 1 );  \wkp^{2, \infty }(\Omega';\R^{2}))$. 
 The third summand vanishes by the choice of the cutoff, see  \ref{item:properties_of_etaeps_slowness},   the definition of $ u_{ \varepsilon } $   in  (\ref{def:u_eps}),  and again  by the regularity of $h$.   

Sending $ \varepsilon $ to zero in inequality (\ref{eq:min_equation_in_tp}) yields by lower semicontinuity on the left-hand side and the strong convergence given by  \Cref{cor:strong_l2_convergence_of_uepsaux}  that
\begin{equation*}
	\int_{ \Omega' }
	\dfrac{ 1 }{ 2 }
	Q \big( e ( u ( t_{ p  } ) ) \big)
	\dd{ x }
	\leq
	\int_{\Omega'}
	\dfrac{ 1 }{ 2 }
	Q \big( e ( \hat{u} (t ) + h( t_{ p } ) - h ( t ) ) \big)
	\dd{ x }
	+
	\kappa ( \lambda ( t ) - \lambda ( t_{ p } ) ).
\end{equation*}
Sending $p \to \infty$ yields the desired inequality, again by lower semicontinuity and the choice of $ t $ as a continuity point of $ \lambda $. This completes the proof.
\end{proof}

\begin{remark}\label{rmk:L2StrongEverywhere}
The result of \Cref{lemma:cont_extension} may be strengthened in the sense that $e(u_\eps(t))$ and $e(u_\eps^{\rm aux}(t))$ converge to $ e(u(t)) = e(\hat{u}(t))$ in $\lp^2(\Omega';\R^{2\times 2}_{\rm sym})$ for \emph{all} $t\in [0,1]$. Indeed, the proof above shows that this convergence and the identification of $e(u)$ with $e(\hat{u})$ holds on  $ [0,1]\setminus H$, where $H$ is the set of discontinuity points for $\lambda$. Moreover, by \Cref{cor:strong_l2_convergence_of_uepsaux} this convergence also always holds on $ I_\infty$ as $u = \hat{u}$ on $I_\infty$, see Remark \ref{def:displaceAtGoodTimes}. Therefore, to obtain the claimed property, it suffices to show that $H \subseteq I_\infty$.    In the proof of \Cref{thm:main} below, we will see that $\lambda(t) = l(t) := \mathcal{H}^{1}(\Gamma(t))$ for a.e.\ $t\in [0,1]$. Assuming this for the moment, since $\lambda$ is monotone increasing and $l$ is continuous at all times $t \not \in I_\infty$ (see  equation  \eqref{eq:irreversible_crack}), this implies that $\lambda(t)  = l(t)$ for all $t\not \in I_\infty$, and  then  $H\subseteq I_\infty$. 
\end{remark}


The next lemma establishes the desired energy balance.
\begin{lemma}\label{lem:energyBalance}
	The function $ \energyLinTot $ is absolutely continuous on $ [0, 1 ] $ and satisfies the energy balance
	\begin{equation*}
		\energyLinTot ( t )
		=
		\energyLinTot ( 0 )
		+ \int_{ 0 }^{ t }
		\int_{ \Omega' }
		  \mathbb{ C } e ( u ( s ) )  \colon  e ( \partial_t h )
		\dd{ x }
		\dd s.
	\end{equation*}
\end{lemma}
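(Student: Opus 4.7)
The plan is to pass to the limit $\eps \to 0$ in both inequalities of \Cref{lemma:energy_balance_inequalities} to establish the energy balance on the dense set $I_\infty \subseteq [0,1]$, and then extend to all $t \in [0,1]$ using the absence of negative jumps of $\energyLinTot$ together with a lower semicontinuity argument. Absolute continuity is then immediate from the resulting integral representation, whose right-hand side is absolutely continuous in $t$.

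The upper bound at $t \in I_\infty$ follows by applying \eqref{eq:energy_balance_lsc} together with the dominated convergence theorem: by \Cref{lemma:cont_extension} and \Cref{rmk:L2StrongEverywhere}, $e(u_\eps(s))\to e(u(s))$ in $L^2(\Omega';\R^{2\times 2}_{\rm sym})$ for every $s\in[0,1]$, while \eqref{eqn:uTildeAPriori} and $\partial_t h\in L^1((0,1);W^{1,2}(\Omega';\R^2))$ supply the integrable dominating function $s\mapsto C\|e(\partial_t h(s))\|_{L^2}$. The lower bound follows from \eqref{eq:energy_balance_usc}, whose right-hand side is the right-endpoint Riemann sum $R_\eps(t) = \int_0^t\int_{\Omega'} \C e(u(\lceil s\rceil_\eps)) \colon e(\partial_t h(s))\,dx\,ds$ with $\lceil s\rceil_\eps := t_{k+1}^\eps$ for $s\in[t_k^\eps,t_{k+1}^\eps)$. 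The main technical obstacle is to show $R_\eps(t)\to \int_0^t\int_{\Omega'} \C e(u(s)) \colon e(\partial_t h(s))\,dx\,ds$; by dominated convergence using the uniform $L^\infty((0,1);L^2)$-bound on $e(u)$, this reduces to showing $e(u(\lceil s\rceil_\eps))\to e(u(s))$ in $L^2$ for almost every $s$.

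The latter follows from the strong $L^2$-continuity of $t\mapsto e(u(t))$ at all continuity points of the non-decreasing function $\lambda(t):=\hm^1(\Gamma(t))$, which form a set of full measure. For any continuity point $t$ and sequence $t_n\to t$, the minimality \eqref{eq:minimality_u_whole_interval} of $u(t_n)$ tested against the competitor $u(t)+h(t_n)-h(t)$ yields $\limsup_n\int Q(e(u(t_n)))\le \int Q(e(u(t)))$; here the comparison is admissible because $J_{u(t)}\subseteq \Gamma(t)$ and $\hm^1(\Gamma(t)\setminus\Gamma(t_n))\to 0$ by continuity of $\lambda$. The symmetric comparison of $u(t)$ against $u(t_n)+h(t)-h(t_n)$ supplies the reverse inequality, and identifying the weak $L^2$-limit of $e(u(t_n))$ as $e(u(t))$ via $\gsbdtwo$-compactness (\Cref{thm:gsbd2_compactness}) together with strict convexity of $Q$ on the affine space of admissible symmetric strains upgrades convergence of energies to strong $L^2$-convergence of strains.

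Finally, having proved the balance on $I_\infty$, I extend it to any $t\notin I_\infty$ by choosing $t_p^\pm \in I_\infty$ with $t_p^-\uparrow t$ and $t_p^+\downarrow t$. Since the right-hand side $\mathcal{R}(t):=\energyLinTot(0)+\int_0^t\int_{\Omega'}\C e(u(s))\colon e(\partial_t h(s))\,dx\,ds$ is continuous in $t$, one has $\energyLinTot(t_p^\pm)=\mathcal{R}(t_p^\pm)\to\mathcal{R}(t)$. Lower semicontinuity of $\energyLinTot$ along $t_p^-\uparrow t$, which follows from weak $L^2$-lower semicontinuity of the elastic energy and continuity of $\hm^1$ under the increasing union $\Gamma(t_p^-)\nearrow \Gamma(t)$, yields $\energyLinTot(t)\le \mathcal{R}(t)$; the absence of negative jumps of $\energyLinTot$ from \Cref{lemma:energy_balance_inequalities} gives $\energyLinTot(t)\ge \limsup_p\energyLinTot(t_p^+)=\mathcal{R}(t)$. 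Absolute continuity of $\energyLinTot$ is a direct consequence of the integral form of $\mathcal{R}$.
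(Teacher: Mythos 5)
Your overall strategy---pass to the limit in the two inequalities of \Cref{lemma:energy_balance_inequalities}, handle the right-endpoint Riemann sum via an a.e.-in-time $\lp^2$-continuity of $e(u(\cdot))$, and extend from $I_\infty$ to all of $[0,1]$---is precisely the Francfort--Larsen argument (\cite[Lem.~3.9]{francfort_larsen_existence_and_convergence_for_quasi_static_evolution_in_brittle_fracture}) that the paper defers to. Two points deserve scrutiny, the first being a genuine error.

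In the extension step you have the inequality directions confused. The ``no negative jumps'' property established in \Cref{lemma:energy_balance_inequalities} is the inequality $\energyLinTot(s)\le\energyLinTot(t)+\omega(\abs{s-t})$ for all $0\le s\le t\le 1$ (valid for all such pairs, not only $I_\infty$, thanks to \eqref{eq:minimality_u_whole_interval} together with $J_{u(t)}\subseteq\Gamma(t)$ and $\Gamma(s)\subseteq\Gamma(t)$). Applied with $s=t$ fixed and the variable endpoint $t_p^+\downarrow t$, this yields $\energyLinTot(t)\le\liminf_p\energyLinTot(t_p^+)=\mathcal{R}(t)$---the \emph{same} direction as your lsc bound, not the claimed $\energyLinTot(t)\ge\limsup_p\energyLinTot(t_p^+)$. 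The missing lower bound $\energyLinTot(t)\ge\mathcal{R}(t)$ comes instead from applying the same inequality from the left: with $s=t_p^-\uparrow t$ one gets $\energyLinTot(t_p^-)\le\energyLinTot(t)+o(1)$, hence $\mathcal{R}(t)=\lim_p\energyLinTot(t_p^-)\le\energyLinTot(t)$. So both directions follow from the one no-negative-jumps inequality, once for $s\uparrow t$ and once for $t\downarrow s$; your separate lsc argument is superfluous, and the roles you assign to $t_p^\pm$ are swapped.

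Secondly, the upgrade from energy convergence to strong $\lp^2$-convergence of strains at continuity points of $l(t):=\hm^1(\Gamma(t))$ via ``identify the weak limit $e(w)$ of $e(u(t_n))$ via compactness, then invoke strict convexity'' is incomplete as written: to conclude $e(w)=e(u(t))$ from strict convexity you need $w$ admissible, i.e.\ $J_w\subseteq\Gamma(t)$ up to an $\hm^1$-nullset, and this does not follow from \Cref{thm:gsbd2_compactness}\ref{item:lscsurface} alone; it requires a density-point argument of the type used to prove \eqref{eq:jump_contained_in_gamma} in the appendix. A cleaner route, which avoids weak limits entirely, is the parallelogram-law comparison: testing the minimality \eqref{eq:minimality_u_whole_interval} at $t_n$ with the midpoint competitor $\tfrac12\bigl(u(t_n)+u(t)+h(t_n)-h(t)\bigr)$ and using the identity $Q\bigl((a+b)/2\bigr)=\tfrac12Q(a)+\tfrac12Q(b)-\tfrac14Q(a-b)$ yields directly $\int_{\Omega'}Q\bigl(e(u(t_n))-e(u(t))-e(h(t_n)-h(t))\bigr)\dd{x}\to 0$, hence strong $\lp^2$-convergence, which is what the Riemann-sum step needs.
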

\begin{proof}
	Relying on \Cref{lemma:energy_balance_inequalities}  and  inequality  \eqref{eq:minimality_u_whole_interval} in \Cref{lemma:cont_extension}, the proof is exactly the same as in \cite[Lem.~3.9]{francfort_larsen_existence_and_convergence_for_quasi_static_evolution_in_brittle_fracture}.
\end{proof}

We close with the proof of \Cref{thm:main}.

\begin{proof}[Proof of \Cref{thm:main}]
By combining  the choice of $u_0$  have to discuss this, see earlier comment ,  \Cref{def:displacement II}, \Cref{lemma:cont_extension}, and \Cref{lem:energyBalance} we find that $(u(t),\Gamma(t))_{t \in [0,1]}$ satisfies \ref{item:initial_condition_quasistatic_fracture_evolution}--\ref{item:energy_balance_def} in Definition \ref{def:linearQuasistatic}. {It remains to prove convergence of the total energies and the rescaled deformations.}

{\emph{Step 1 (Convergence of the total energies).}
By \Cref{lemma:cont_extension}, $e(u_\eps)\weaklystar e(u)$ in $\lp^\infty((0,1) ; \lp^2(\Omega';\R^{2\times 2}_{\rm sym}))$  up to a subsequence,  so that using \Cref{lem:energyBalance}, we deduce from \eqref{eq:energy_balance_lsc} that
$\energyLinTot(t)  = \liminf_{\eps\to 0} \energyNonlinTot(t) $ for all $t\in I_\infty.$
Since this holds for any further subsequence in $\eps$, we recover 
\begin{equation}\label{eqn:totalOnGoodTimes}
\energyLinTot(t)  = \lim_{\eps\to 0} \energyNonlinTot(t) \quad \text{ for all }t\in I_\infty.
\end{equation} To obtain convergence at any time $t\in [0,1]$, take $t'\in I_\infty$ with $t'<t$ and use \Cref{rmk:totalEnergyUC} with  equation \eqref{eqn:totalOnGoodTimes} to find
$$\limsup_{\eps\to 0}\energyNonlinTot(t) \leq \energyLinTot(t') + \int_{t'}^t \int_{ \textcolor{teal}{\Omega'} }
		 \mathbb{ C } e( u ( s ) ) \colon { e( \partial_t h ( s ) ) }
		\dd{ x }
		\dd{ s }.$$
Taking $t'\uparrow t$ and using the continuity of $\energyLinTot$ (from \Cref{lem:energyBalance}), we have $\limsup_{\eps\to 0}\energyNonlinTot(t) \leq \energyLinTot(t)$. 
Analogously, we find $\liminf_{\eps\to 0}\energyNonlinTot(t) \geq \energyLinTot(t)$ by taking $t'>t$, thereby concluding convergence of the energies at every time.}
 Using the convergence of the energy along with \Cref{cor:convergence of energies} and the fact that $e(\hat{u}) = e(u)$   a.e.\ on $[0,1]$   (as seen in the proof of  \Cref{lemma:cont_extension}), for a.e.\ $t \in [0,1]$ we get that
\begin{align}\label{separate}
\lim_{ \varepsilon \to 0 }
		\dfrac{ 1 }{ \varepsilon^{ 2 } }
		\int_{ \Omega' }
		W  ( \nabla y_{ \varepsilon } ( t ) ) 
		\dd{ x }
		+
		\dfrac{ 1 }{ \varepsilon^{ 2 \beta } }
		\int_{ \Omega' }
		\abs{ \nabla^{ 2 } y_{ \varepsilon } ( t ) }^{ 2 }
		\dd{ x }
		=
		\int_{ \Omega' }
		\dfrac{ 1 }{ 2 }
		Q \big( e ( u ( t ) ) \big) 
		\dd{ x }, \quad \mathcal{H}^1(\Gamma_\eps(t)) \to  \mathcal{H}^1(\Gamma(t)).
\end{align}
In view of    \eqref{separate},   we can now apply  \Cref{rmk:L2StrongEverywhere}  (which still needed $ \lambda ( t ) =  l   (t  ) $ for a.e.\ $ t \in [0,1] $ for its conclusion)  and we   infer  that  $e(u(t))= e(\hat u(t))$ in $\lp^2(\Omega';\R^{2\times 2}_{\rm sym})$ for \emph{all}  $t\in [0,1]$.  Again using \Cref{cor:convergence of energies},   this  then implies that \eqref{separate} holds for all $t\in [0,1]$.

{\emph{Step 2 (Convergence of the rescaled deformations).}}
To show  $y_\eps \rightsquigarrow u $, i.e., to confirm  convergences  \eqref{baruconv0} and \eqref{baruconv00}, we will construct sequences of deformations, which are almost minimizers for the minimization problem \eqref{eq:discrete_min_problem}, and use these to identify the limit of $\bar u_\eps(t): = (y_\eps(t)-{\rm id})/\eps$ on $G(t)$.

Recalling the definition of $\intpcs_\eps$ in \Cref{cor:approx_euler_lagrange_consequence}, we define  $D^1_\eps(t) = \bigcup_{ P_j^\eps(t) \notin \intpcs_\eps(t)  } P_j^\eps(t)  $  and 
$$y^*_\eps(t) =  \chi_{D^1_\eps(t)} y_{ \varepsilon }(t) + \chi_{\Omega'\setminus D^1_\eps(t)} {\rm id}, \quad \quad u^*_\eps(t) = \dfrac{ 1 }{ \varepsilon } ( y^*_{ \varepsilon }(t) - \mathrm{id} ). $$
Recalling the definition of $\bar{u}_\eps$ and  the definition in \eqref{eqn:tildeuEps}, this shows $u_\eps^{\rm aux}(t) \chi_{D^1_\eps(t)}  = u^*_\eps(t)$,  and
\begin{align}\label{DDD}
u^*_\eps(t) = \bar{u}_\eps(t) \quad \text{ on } D^1_\eps(t). 
\end{align} 
  We apply \Cref{thm:gsbd2_compactness} {to} the sequence $(u^*_\eps(t))_\eps$ to find collections of sets $\mathcal{S}_\eps  =(S_j^\eps)_{j\ge 0 } $ contained in $\Omega$ and  a limit function $ u^*(t)   \in  \gsbdtwo ( \Omega' )$  for the modified sequence, see \Cref{thm:gsbd2_compactness}\ref{item:rescaled_un }.
Denoting by $D^2_\eps(t) : = \Omega' \setminus  \bigcup_{j \ge 0 }  S_j^\eps$, {we introduce} $D_\eps(t) := D^1_\eps(t) \cap D^2_\eps(t)$ and, for each $\eta \in \R^2$, define $u^*_{\eps,\eta}(t) := \chi_{D_\eps(t)} u^*_\eps(t) + \chi_{\Omega' \setminus D_\eps(t)}{\eta}$. 
By \Cref{lem:aprioriEst},  inequality \ref{item:rotation_property_jump}, \Cref{thm:gsbd2_compactness}\ref{item:caccioppoli_partitions_small_jump}, and compactness of sets of finite perimeters we find $D(t) \subseteq \Omega'$ such that $\chi_{D_\eps(t)} \to \chi_{D(t)}$ {in $\lp^1(\Omega')$}  for a $t$-dependent subsequence of $\eps$.  Since $ D_{ \varepsilon } ( t ) \supseteq \Omega' \setminus \overline{ \Omega } $ for all $ \varepsilon > 0 $,  we have $D(t) \supseteq  \Omega' \setminus \overline{\Omega}$. 
Moreover, by the already applied \Cref{thm:gsbd2_compactness}, we have, for a subsequence depending on $t$, that  
\begin{align}\label{strangecon}
u^*_{\eps,\eta}(t) \to {u^*_\eta}(t)  := {\chi_{D(t)}  {u^*}(t) + \chi_{\Omega' \setminus D(t)} \eta} \quad \text{in measure on $\Omega'$,}
\end{align}
where  $u^*(t)$ is the limit of the sequence $(u^*_\eps(t))_\eps$  found above. Note that ${u^*_\eta}(t) = h(t)$ on $\Omega' \setminus \overline{\Omega}$  and $ \hm^{ 1 } ( J_{ u_{ \varepsilon , \eta }^{ \ast} ( t ) } \setminus \Gamma_\eps ( t ) ) \to 0 $. 
Thus, with the same proof as for \Cref{lemma:u_minimzer_of_total_energy},  ${u^*_\eta}(t)$ is a minimizer of the functional
\begin{equation*}
	\int_{ \Omega' } \frac{ 1 }{ 2 }
	Q ( e ( v ) ) \dd{ x }
	+ 
	\kappa 
	\hm^{ 1 } \big( J_{ v } \setminus \big( \Gamma ( t ) \cup J_{ u^{ \ast }_{ \eta }( t ) }\big) \big)
\end{equation*}
among all $ v \in \gsbdtwo ( \Omega' ) $ with $ v = h( t ) $ on $ \Omega' \setminus \overline{ \Omega } $. 



  To identify the strain $e({u^*_\eta})$, we first repeat the lower semicontinuity argument in  inequalities  \eqref{eq:lsc_jump_sets_union_times}--\eqref{for the end}, by adding  the jump set of ${u^*_\eta}(t)$ to the union  on the left-hand side. 
  This gives $$	 \hm^{ 1 } 
	 \left(
	\Gamma(t) \cup J_{{u^*_\eta}(t)}   
 \right)
	 \le 
	\liminf_{ \varepsilon \to 0 }
	\hm^{ 1 } \left(
	\Gamma_{ \varepsilon } ( t )
	\right). $$
More precisely, if $t\notin I_\infty$, we perform this estimate with $\Gamma(\tau)$ in place of $\Gamma(t)$ on the left-hand side for any $\tau \in I_\infty$, $\tau \le t$, and then we pass to the limit $\tau  \uparrow t$ using the continuity of $t \mapsto \mathcal{H}^1(\Gamma(t))$ on $[0,1] \setminus I_\infty$. This along with \eqref{separate} (for every $t\in [0,1]$) shows  $J_{{u^*_\eta}(t)} \subseteq  \Gamma(t)$ up to an $\mathcal{H}^1$-negligble set. Then arguing as for inequality \eqref{eqn:toBeReversed} in \Cref{lemma:cont_extension}, since ${u^*_\eta}(t)$ and $u  (t)  $ solve the same minimization problem with the crack $\Gamma  (t)  $ fixed, we have that 
\begin{align}\label{samestrain}
 e({u^*_\eta}(t))  =  e(u(t)) \quad {\text{ for every }t\in [0,1]}.
\end{align} 
In view of the definition of ${u^*_\eta}(t)$, see \eqref{strangecon}, for a particular choice of $\eta\in\R^2$ (actually, for a.e.\ choice) we get $\partial^*  D(t)\subseteq  J_{{u^*_\eta}(t)} \subseteq \Gamma(t) $ up to {an $\mathcal{H}^1$-negligible set}. 
Then, by the definition of $G(t)$ preceding \eqref{baruconv0} and the fact that $D(t) \supseteq \Omega' \setminus \overline{\Omega}$, this shows that $G(t) \subseteq D(t)$ up to an $\mathcal{L}^2$-negligible set. 
Therefore,  $\mathcal{L}^2(G(t) \setminus D_\eps(t)) \to 0$ and thus, as $\bar{u}_\eps(t)   = u^*_{\eps}(t)  = u^*_{\eps,\eta}(t)$ on $D_\eps(t)$  by  equation  \eqref{DDD},   by \eqref{strangecon} we deduce
 \begin{align}\label{baruconv}
 \bar{u}_\eps(t) \to    {u^*}(t) \quad \text{ in measure on $G(t)$}. 
 \end{align} 
 In view of  equation  \eqref{samestrain},  \cite[Thm.~A.1]{chambolle_giacomini_ponsiglione_piecewise_rigidity} shows that $u(t) -  {u^*_0}(t)  $ is a piecewise rigid function {on a Caccioppoli partition $\mathcal{P}$ of $\Omega'$} with boundaries contained in $J_{u(t)} \cup  J_{{u^*_0}(t)}  \subseteq   \Gamma(t)$. Let $\mathcal{P}_D$ be the elements  of $\mathcal{P}$  whose intersection with $\Omega'\setminus \overline{\Omega}$ have positive measure.  Due to the boundary conditions, $u(t) - u^*_0(t) = 0$ on every $P\in \mathcal{P}_D$. Further, we must have that $G(t)\subset \bigcup_{P\in \mathcal{P}_D} P,$ otherwise $[G(t)\cap (\bigcup_{P\in \mathcal{P}_D} P)]^c$ contradicts the maximality of the broken off piece $B(t)$. Altogether, this shows $u(t) - {u^*_0}(t) = 0$ on $G(t)$. Using  convergence \eqref{baruconv}  and $u^*(t) = u^*_0(t)$ on $G(t)$  this argument uniquely identifies the limit of $\bar u_\eps(t)$ on $G(t)$ for any subsequence as $u(t)$, thereby implying the first part of \eqref{baruconv0}.


As  $e  ( u_{ \varepsilon }^{ \mathrm{aux} } ( t )  ) $
 converges  to $ e ( u(  t) )    $ in $ \lp^{ 2 }  ( \Omega' ; \mathbb{ R }^{ 2 \times 2 }_{\rm sym} )$ for  all  $t\in [0,1]$ by \Cref{lemma:cont_extension}  and Remark \ref{rmk:L2StrongEverywhere}, we particularly find      $e  ( u_{ \varepsilon }^{ \mathrm{aux} } ( t )  )  \to  e ( u(  t) )$ in measure on $G(t)$.    As $u^{\rm aux}_\eps(t) = \bar{u}_\eps(t)$ on $D_\eps(t)$  by  equation  \eqref{DDD} and   $\mathcal{L}^2(G(t) \setminus D_\eps(t)) \to 0$,    the second  part of \eqref{baruconv0} follows   (noting that    this reasoning holds for every subsequence).

Eventually, we come to the proof of  convergence  \eqref{baruconv00}.  For any $t \in [0,1]$, recalling $\partial^* G(t) \subseteq \Gamma(t) $ up to an $\mathcal{H}^1$-negligible set and using $\chi_{G(t)} u(t)$ as a competitor in  inequality  \eqref{eq:minimality_u_whole_interval}, we find that
$$e(u(t)) = 0 \quad  \text{ on } B(t). $$
Then, repeating the lower semicontinuity argument \eqref{eq:lsc_for_quadratic_term} on $G(t)$, we find
 \begin{align*}
 \int_{ \Omega' }
	\dfrac{ 1 }{ 2 }
	Q\big( e ( u(t) ) \big)
	\dd{ x } & = \int_{ G(t) }
	\dfrac{ 1 }{ 2 }
	Q\big( e ( u(t) ) \big)
	\dd{ x } \le \liminf_{ \varepsilon \to 0 }
		\dfrac{ 1 }{ \varepsilon^{ 2 } }
		\int_{ G(t)}
		W  ( \nabla y_{ \varepsilon } ( t ) ) 
		\dd{ x } \\ & =  \int_{ \Omega' }
	\dfrac{ 1 }{ 2 }
	Q\big( e ( u(t) ) \big)
	\dd{ x } - \limsup_{\eps \to 0}  \dfrac{ 1 }{ \varepsilon^{ 2 } }
		\int_{ B(t)}
		W  ( \nabla y_{ \varepsilon } ( t ) ) 
		\dd{ x },
  \end{align*}
 where the last identity follows from \Cref{cor:convergence of energies} and the fact that $e(\hat{u}(t)) = e(u(t))$ {for every} $t \in [0,1]$  (see Remark \ref{rmk:L2StrongEverywhere}).  This along with  \ref{eq:W_zero_on_sod} shows  convergence   \eqref{baruconv00},  which finishes the proof . 
\end{proof}

%

%

 \section*{Acknowledgements} 
 This research was funded by the Deutsche Forschungsgemeinschaft (DFG, German Research Foundation) - 377472739/GRK 2423/2-2023. M.F.\ is very grateful for this support.  
  K.S. and P.S. were supported by funding from the Deutsche Forschungsgemeinschaft (DFG, German Research Foundation) under Germany’s Excellence Strategy – EXC-2047/1 – 390685813, the DFG project 211504053 - SFB 1060.
 K.S. was also supported by funding from the NSF (USA) RTG grant DMS-2136198.

\appendix

\section{Bad sets, integral bounds, and measure theory}\label{sec:appendix}

\begin{lemma}[Bad set] 
	\label{lemma:characterization_of_bad_set}
	Let $ \Omega \subseteq \Omega' \subseteq \R^{ 2 } $ be bounded Lipschitz domains such that  also  $\Omega'\setminus \overline{\Omega}$ is  a Lipschitz  set, and suppose $ \Gamma \subseteq \Omega' \cap \overline{ \Omega } $ is a Borel set with $ \hm^{ 1 } ( \Gamma ) < \infty $. Then there exists a unique maximal measurable set $ B \subseteq \Omega $ (up to $ \lm^{ 2 } $-equivalence class and with respect to set inclusion) such that $ \partial^{ \ast } B \subseteq \Gamma $ (up to an $ \hm^{ 1 } $-nullset). If $ \Gamma $ is additionally closed, we have
	\begin{equation}
		\label{eq:charact_bad_set}
		B = \left\{ x \in \Omega \colon x \text{ is not path-connected to } \Omega' \setminus \overline{ \Omega } \text{ in } \Omega' \setminus \Gamma \right\}.
	\end{equation}
\end{lemma}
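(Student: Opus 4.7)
The plan is to prove the three claims — uniqueness, existence, and the explicit characterization for closed $\Gamma$ — by a direct method on the family $\mathcal{F} := \{B \subseteq \Omega \text{ measurable} : \mathcal{H}^1(\partial^* B \setminus \Gamma) = 0\}$, combined with a decomposition of $\Omega' \setminus \Gamma$ into connected components in the closed case.

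Uniqueness follows immediately from the observation that $\mathcal{F}$ is closed under finite unions, since $\partial^*(B_1 \cup B_2) \subseteq \partial^* B_1 \cup \partial^* B_2$. Any two maximal elements must then agree up to $\mathcal{L}^2$-null by passing to their union and noting that set inclusion combined with equality of Lebesgue measures yields equality modulo null.

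For existence, I would set $m := \sup_{B \in \mathcal{F}} \mathcal{L}^2(B) \le \mathcal{L}^2(\Omega)$, take $B_n \in \mathcal{F}$ with $\mathcal{L}^2(B_n) \to m$, and — replacing $B_n$ by $\bigcup_{k\le n} B_k$ (still in $\mathcal{F}$ by the union property) — assume $(B_n)$ increasing. The limit $B := \bigcup_n B_n$ then satisfies $\mathcal{L}^2(B) = m$, while the uniform perimeter bound $\mathcal{H}^1(\partial^* B_n) \le \mathcal{H}^1(\Gamma)$ combined with BV lower semicontinuity gives that $B$ has finite perimeter. The delicate step is to promote this to $\partial^* B \subseteq \Gamma$ up to $\mathcal{H}^1$-null. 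Since $|D\chi_{B_n}| \le \mathcal{H}^1 \mres \Gamma$ as measures and BV lower semicontinuity on open sets gives $|D\chi_B|(U) \le \liminf_n |D\chi_{B_n}|(U)$ for every open $U \subseteq \Omega'$, we obtain $|D\chi_B|(U) \le \mathcal{H}^1(\Gamma \cap U)$ for all open $U$. Both $|D\chi_B|$ and $\mathcal{H}^1 \mres \Gamma$ are finite Borel measures on $\R^2$, hence outer regular, so the open-set inequality extends to Borel sets; applying this to $A = \Omega' \setminus \Gamma$ yields
\[ \mathcal{H}^1(\partial^* B \setminus \Gamma) \;=\; |D\chi_B|(\Omega' \setminus \Gamma) \;\le\; \mathcal{H}^1\big(\Gamma \cap (\Omega' \setminus \Gamma)\big) \;=\; 0. \]

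For the characterization at closed $\Gamma$, I would decompose the open set $\Omega' \setminus \Gamma$ into its connected components $(C_i)_i$, which coincide with the path components since open sets in $\R^2$ are locally path-connected. Then $B_\mathrm{path}$ from the right-hand side of \eqref{eq:charact_bad_set} equals the union of those $C_i$ not meeting $\Omega' \setminus \overline{\Omega}$; it is open with topological boundary in $\Omega'$ contained in $\Gamma$, so lies in $\mathcal{F}$. For maximality, given $B \in \mathcal{F}$ and any component $C$ meeting $\Omega' \setminus \overline{\Omega}$, the vanishing $\mathcal{H}^1(\partial^* B \cap C) = 0$ together with connectedness of the open set $C$ forces $\chi_B$ to be $\mathcal{L}^2$-a.e.\ constant on $C$, and since $B \subseteq \Omega$ while $C \setminus \overline{\Omega} \neq \emptyset$ the constant must be zero. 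Summing over such components (using $\mathcal{L}^2(\Gamma) = 0$ in $\R^2$) yields $B \subseteq B_\mathrm{path}$ up to null. The main obstacle is the last step in the existence argument, namely upgrading the open-set comparison of BV total-variation measures to a Borel-set comparison; this relies crucially on the regularity of finite Borel measures on $\R^2$, as a naive weak-$*$ localization argument would only concentrate $|D\chi_B|$ on $\overline{\Gamma}$ rather than $\Gamma$ itself.
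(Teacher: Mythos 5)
Your proof is correct. The existence argument is essentially the same as the paper's in spirit — the paper extracts a subsequence converging in measure (by BV compactness) and then sends compact sets $K\nearrow\Gamma$ using inner regularity of $\hm^1\mres\Gamma$, while you replace the maximizing sequence by increasing unions (valid since $\mathcal{F}$ is closed under finite unions) and then approximate $\Omega'\setminus\Gamma$ from outside by opens using outer regularity; these are dual formulations of the same regularity step, and you are right that this upgrade from open to Borel sets is where the care is needed. Where you genuinely diverge is the characterization for closed $\Gamma$. The paper argues locally: to show $\hat B\in M$ it picks a point of $\partial^*\hat B\setminus\Gamma$ and shows a whole ball lies in $\Omega'\setminus\hat B$; for maximality it takes a density-$1$ point of $B'\setminus\hat B$, extracts a compact path $\gamma$ to $\Omega'\setminus\overline\Omega$, fattens it to a tube $B_\delta(\gamma)\subseteq\Omega'\setminus\Gamma$, and derives positive perimeter of $B'$ inside the tube. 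Your approach is global: decompose $\Omega'\setminus\Gamma$ into its (countably many, open) connected components, observe $B_{\mathrm{path}}$ is exactly the union of components disjoint from $\Omega'\setminus\overline\Omega$, and use the BV constancy theorem on each component meeting $\Omega'\setminus\overline\Omega$ to force $\chi_{B}=0$ there. Your route is cleaner and replaces the paper's somewhat delicate tube-and-density-point argument by a single application of the constancy theorem, at the cost of invoking the (standard) identification of connected and path components for open sets in $\R^2$. One minor point worth making explicit: you should note, as the paper's remark does, that $\hm^1(\partial^*B_{\mathrm{path}})\le\hm^1(\Gamma)<\infty$ forces $B_{\mathrm{path}}$ to be a set of finite perimeter by Federer's criterion, so that membership in $\mathcal{F}$ is meaningful and the constancy theorem applies.
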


\begin{remark}
	It is convenient for the proof if we denote by $ \partial^{ \ast} $ the \emph{essential boundary} of a set (see \cite[Def.~3.60]{ambrosio_fusco_pallara_functions_of_bv_and_free_discontinuity_problems}), which is defined without assuming the set to be of finite perimeter. Note however that if $ \hm^{ 1 } ( \partial^{ \ast } A ) < \infty $, then by \cite[4.5.11]{federer_gmt}, $ A $ is already a set of finite perimeter and the essential and reduced boundary agree up to a set of $ \hm^{ 1 } $-measure zero.
\end{remark}

\begin{proof}
	We first prove the existence of a maximal set by using the direct method.
	Let 
	\begin{equation*}
		M \coloneqq 
		\left\{ 
		B \subseteq \Omega \text{ measurable } 
		\colon 
		\hm^{ 1 } ( \partial^{ \ast } B \setminus \Gamma ) = 0 
		\right\}.
	\end{equation*}
	Note that the empty set is an element of $ M $ and by boundedness of $ \Omega $, the Lebesgue measure of sets in $ M $ is uniformly bounded. Moreover, the essential boundary of every $ B \in M $ is bounded with respect to $ \hm^{ 1 } $. 
	Thus, for a maximizing sequence $ ( B_{ n })_{ n } \subseteq M $  with respect to $ \lm^{ 2 } $, we find a  subsequence (not relabeled) and some measurable set $ B $ such that $ B_{ n } \to B $ in measure. 
	By this convergence it follows that $ \lm^{ 2 } ( B ) = \max_{ B' \in  M  } \lm^{ 2 } ( B' ) $. Using lower semicontinuity, we have for every compact set $ K \subseteq \Gamma $ that
	\begin{equation*}
		\hm^{ 1 } ( \partial^{ \ast } B \setminus K )
		\leq
		\liminf_{ n \to \infty }
		\hm^{ 1 } ( \partial^{ \ast } B_{ n } \setminus K )
		\leq 
		\hm^{ 1 } ( \Gamma \setminus K ).
	\end{equation*}
	Sending $ K \to \Gamma $ yields that $ \hm^{ 1 } ( \partial^{ \ast } B \setminus \Gamma ) = 0 $, and thus $ B \in M $. Moreover $ B $ is maximal: If $ B ' \in M $, then $ B \cup B' \in M $ since $ \partial^{ \ast } ( B \cup B' ) \subseteq \partial^{ \ast } B \cup \partial^{ \ast } B' $. Thus, by $ \lm^{ 2 } $-maximality of $ B $, we must already have $ \lm^{ 2 } ( B' \setminus B ) = 0 $. This also proves the uniqueness.
	
	Now let us assume that $ \Gamma $ is closed. We want to show equation (\ref{eq:charact_bad_set}). Denoting by  $ \hat{B}  $ the right-hand side of  (\ref{eq:charact_bad_set}), we aim at checking  $B =  \hat{B} $ with $B$ above. First, we show $  \hat{B} \in M$.  Assume by contradiction that there exists  $ x \in \partial^{ \ast }   \hat{B} \setminus \Gamma $. Then,  by the closedness of $ \Gamma  $   we find some $ \delta > 0 $ such that $ B_{ \delta } ( x ) \subseteq \Omega' \setminus \Gamma $. Moreover, by the definition of the essential boundary, we find  some $  y \in B_{ \delta } ( x ) \setminus \hat{B} $. 
	By definition of $  \\hat{B}  $ and because $ B_{ \delta } ( x ) \subseteq \Omega' \setminus \Gamma $, by connecting elements $ z \in B_{ \delta ( x ) } $ with $ y $ by a straight segment, this yields  that $ B_{ \delta } ( x ) $ is also path-connected to $ \Omega' \setminus \overline{ \Omega } $ in $ \Omega' \setminus \Gamma $. Thus $ B_{ \delta } ( x ) \subseteq \Omega' \setminus \hat{B} $, a contradiction to $ x \in \partial^{ \ast } \bar{B} $. We thus conclude that $ \partial^{ \ast }  \hat{B} \subseteq \Gamma $, i.e., $ \hat{B} \in M$. 

To conclude $B =  \hat{B} $, it suffices to show that  $ \hat{B} $ is maximal. 
Let $ B ' \in M $ and suppose by contradiction that $ x \in B' \setminus \hat{B} $ is a point of Lebesque density $ 1 $ of $ B' \setminus  \hat{B} $. 
Then, we find a continuous path $ \gamma $ between $ x $ and $ \Omega' \setminus \overline{ \Omega } $ in $ \Omega' \setminus \Gamma $. 
Since $ \gamma $ is compact and $ \Gamma $ closed, we find some $ \delta > 0 $ such that $ B_{ \delta } ( \gamma ) \subseteq \Omega' \setminus \Gamma $. Since we have chosen $ x $ as a point of density $ 1 $ of $ B' \setminus  \hat{B}  $, we have $ \lm^{ 2 } (  B_{ \delta } ( \gamma ) \cap B' ) > 0 $. Moreover, because $ \Omega' \setminus \overline{ \Omega } $ is open, $ \gamma $ ends in $ \Omega' \setminus \overline{ \Omega } $,  and $ B' \subseteq \Omega $, we have $ \lm^{ 2 } ( B_{ \delta } ( \gamma ) \setminus B' ) > 0 $. As a consequence, we must have $ \hm^{ 1 } ( \partial^{ \ast } B' \cap B_{  \delta } ( \gamma ) ) > 0 $. This is a contradiction to $ B' \in M $ because $ \Gamma \cap B_{  \delta } ( \gamma ) = \emptyset $.
\end{proof}

\begin{lemma}[$\lp^{ p } $-bounds]
	\label{lemma:lp_bound_lemma}
	Let $ \Omega \subseteq \mathbb{ R }^{ 2 } $  with $ \lm^{ 2 } ( \Omega) < \infty $, $ f_{ \varepsilon } \in \lp^{ 2 } ( \Omega; [0,\infty) ) $ be a sequence such that $ \norm{ f_{ \varepsilon } }_{ \lp^{ 2 } ( \Omega )} \lesssim \varepsilon^{ \gamma - 1 } $ for some $ \gamma \in ( 2/3, 1) $, and let $ g \in \lp^{ p } ( \Omega; [0,\infty) ) $ for every $ p \in [1, \infty ) $. Then we find an exponent $ q \in ( 1, \infty) $ and some $ {\alpha} > 0 $ such that
	\begin{equation*}
		\varepsilon \int_{ \{ \varepsilon f_{ \varepsilon } \lesssim 1 \} }
		f_{ \varepsilon }^{ 2 } g 
		\dd{ x }
		\lesssim 
		\varepsilon^{ {\alpha} } \norm{ g }_{ \lp^{ q }  (\Omega)   }.
	\end{equation*}
\end{lemma}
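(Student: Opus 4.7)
The plan is to combine H\"older's inequality with an interpolation bound between $L^2$ and $L^\infty$ on the sublevel set $\{\eps f_\eps \lesssim 1\}$, exploiting the pointwise bound $f_\eps \lesssim 1/\eps$ available there. The hypothesis $\gamma > 2/3$ (equivalently $2\gamma > 1$) is what will eventually give us room to choose a H\"older exponent strictly greater than one that still leaves a positive power of $\eps$ on the right-hand side.

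First, I will pick conjugate exponents $s, s' \in (1,\infty)$ with $1/s + 1/s' = 1$ and apply H\"older's inequality to obtain
\begin{equation*}
\int_{\{\eps f_\eps \lesssim 1\}} f_\eps^2 \, g \, \dd{x}
\le \|f_\eps\|_{\lp^{2s}(\{\eps f_\eps \lesssim 1\})}^{2} \|g\|_{\lp^{s'}(\Omega)}.
\end{equation*}
Second, I will interpolate the $\lp^{2s}$-norm using the pointwise bound $f_\eps \lesssim 1/\eps$ on $\{\eps f_\eps \lesssim 1\}$ together with the hypothesis $\|f_\eps\|_{\lp^2(\Omega)} \lesssim \eps^{\gamma-1}$:
\begin{equation*}
\|f_\eps\|_{\lp^{2s}(\{\eps f_\eps \lesssim 1\})}^{2s}
\le \|f_\eps\|_{\lp^{\infty}(\{\eps f_\eps \lesssim 1\})}^{2s-2}\, \|f_\eps\|_{\lp^2(\Omega)}^{2}
\lesssim \eps^{-(2s-2)}\, \eps^{2(\gamma-1)} = \eps^{2(\gamma - s)}.
\end{equation*}
Combining the two displays and multiplying by $\eps$ gives
\begin{equation*}
\eps \int_{\{\eps f_\eps \lesssim 1\}} f_\eps^2 \, g \, \dd{x}
\lesssim \eps^{1 + 2(\gamma-s)/s} \|g\|_{\lp^{s'}(\Omega)}
= \eps^{(2\gamma - s)/s} \|g\|_{\lp^{s'}(\Omega)}.
\end{equation*}

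The final step is to choose the exponent. Since $\gamma > 2/3$ we have $2\gamma > 4/3 > 1$, so I can pick any $s \in (1, 2\gamma)$ (for concreteness $s = 1 + \delta$ with $\delta \in (0, 2\gamma - 1)$). This makes $\alpha := (2\gamma - s)/s > 0$ and $q := s' = s/(s-1) \in (1, \infty)$, and the integrability of $g$ in every $\lp^p$, $p \in [1,\infty)$, makes $\|g\|_{\lp^q}$ meaningful. There is no real obstacle here beyond verifying that the interval $(1, 2\gamma)$ is nonempty, which is precisely the role of the assumption $\gamma > 2/3$; had $\gamma$ been allowed to equal $2/3$, the H\"older exponent would degenerate to $s = 1$ and the argument would lose its $\eps$-gain, so this threshold is sharp within the present proof strategy.
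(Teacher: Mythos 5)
Your proof is correct, and it is essentially the paper's argument in a different parametrization. The paper first peels off a factor $f_\eps^\zeta \lesssim \eps^{-\zeta}$ pointwise on the sublevel set and then applies H\"older with exponents $\bigl(2/(2-\zeta),\,q\bigr)$ to $f_\eps^{2-\zeta}\cdot g$; you instead apply H\"older first, landing $f_\eps^2$ in $L^s$, and then use the same pointwise bound to push the resulting $L^{2s}$-norm back down to the known $L^2$-norm. Under the substitution $s = 2/(2-\zeta)$ the two calculations are identical: your exponent $\alpha = (2\gamma - s)/s$ equals the paper's $\alpha = -1+2\gamma-\gamma\zeta$, and your $q = s'$ is the paper's $q$. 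Both hinge on the same point you correctly isolate, namely that $\gamma > 2/3$ makes the admissible parameter interval nonempty, and your remark that the bound degenerates at $\gamma = 2/3$ is accurate. No gap.
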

\begin{proof}
Let $ 0<{\zeta} < 2 $ and let $ q \in  ( 1, \infty) $ be such that $ 1/q + (2- {\zeta} )/ 2  = 1 $. By Hölder's inequality we have
\begin{align*}
	\varepsilon \int_{ \{ \varepsilon f_{ \varepsilon } \lesssim 1 \} }
	f_{ \varepsilon }^{ 2 } g
	\dd{ x }
	&\lesssim 
	\varepsilon^{ 1 - {\zeta} }
	\int_{ \Omega } 
	f_{ \varepsilon }^{ 2 - {\zeta} } g 
	\dd{ x }
	\\
	& \leq
	\varepsilon^{ 1 - {\zeta} } \norm{ f_{ \varepsilon } }_{ \lp^{ 2 }  (\Omega)  }^{ 2 - {\zeta} } \norm{ g }_{ \lp^{ q } (\Omega)  }
	\\
	& \lesssim
	\varepsilon^{ 1 - {\zeta} + ( \gamma - 1 )( 2 - {\zeta} ) }
	\norm{ g }_{ \lp^{ q } (\Omega)  }.
\end{align*}
Since 
\begin{equation*}
	{\alpha} \coloneqq 
	1- {\zeta} + ( \gamma - 1 ) ( 2 - {\zeta} )
	=
	-1 + 2 \gamma - \gamma {\zeta} 
\end{equation*}
and $ -1 + 2 \gamma > 0 $, we can choose $ {\zeta} > 0 $ sufficiently small such that $ {\alpha} > 0 $, which finishes the proof.
\end{proof}

\begin{lemma}[Measure theory]
	\label{lemma:separating_with_disjoint_open_sets}
	Let $ A_{ 1 }, A_{ 2 } \subseteq  \mathbb{ R }^{ 2 }  $ be Borel sets with $ \hm^{ 1 } ( A_{ 1 } ), \hm^{ 1 } ( A_{ 2 } ) < \infty $. Then, for all $   \epsilon  > 0 $, we find open Lipschitz sets $ U_{ 1 }, U_{ 2 } $ such that $ U_{ 1 } \cap U_{ 2 } = \emptyset $ and
	\begin{equation*}
		{ \hm^{ 1 } ( A_{ 1 } \cup A_{ 2 } )
		-   \epsilon 
		\leq
		 \hm^{ 1 } ( A_{ 1 } \cap U_{ 1 } )
		+
		 \hm^{ 1 } ( A_{ 2 } \cap U_{ 2 } ).}
	\end{equation*}
\end{lemma}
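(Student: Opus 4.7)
The plan is to decompose $A_1 \cup A_2$ into two disjoint Borel pieces, one contained in $A_1$ and the other in $A_2$, then approximate each from inside by a compact set, and finally enclose the two (now disjoint) compact sets in disjoint open sets with Lipschitz boundary. The total loss of $\hm^1$-measure will be controlled by $\epsilon$.

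Concretely, I would first set $C_1 := A_1$ and $C_2 := A_2 \setminus A_1$, so that $C_1, C_2$ are disjoint Borel sets with $C_1 \subseteq A_1$, $C_2 \subseteq A_2$, and
\begin{equation*}
\hm^1(C_1) + \hm^1(C_2) = \hm^1(A_1 \cup A_2).
\end{equation*}
Since $\hm^1(C_i) < \infty$, the restriction $\hm^1 \mres C_i$ is a finite Borel, and hence Radon, measure on $\R^2$, so inner regularity by compact sets yields $K_i \subseteq C_i$ compact with $\hm^1(C_i \setminus K_i) < \epsilon / 2$. Disjointness of the $C_i$ forces $K_1 \cap K_2 = \emptyset$, and compactness then gives $d := \dist(K_1, K_2) > 0$.

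The key step is to separate $K_1$ and $K_2$ by disjoint open Lipschitz sets. For this, I would pick a smooth function $\varphi \in \cont^{\infty}_c(\R^2;[0,1])$ with $\varphi \equiv 1$ on a neighborhood of $K_1$ and $\varphi \equiv 0$ on a neighborhood of $K_2$ (obtained, for instance, by convolving the indicator of the $d/4$-neighborhood of $K_1$ with a standard mollifier supported in $B_{d/8}(0)$). By Sard's theorem, a.e.\ $c \in (0,1)$ is a regular value of $\varphi$, and for any such $c$ the sets
\begin{equation*}
U_1 := \{\varphi > c\}, \qquad U_2 := \{\varphi < c\}
\end{equation*}
are disjoint open subsets of $\R^2$ whose boundary $\{\varphi = c\}$ is a smooth embedded one-dimensional manifold; in particular, both $U_1$ and $U_2$ are open Lipschitz sets, and $K_i \subseteq U_i$ by construction.

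Since $K_i \subseteq A_i \cap U_i$, the conclusion follows from
\begin{equation*}
\hm^1(A_1 \cap U_1) + \hm^1(A_2 \cap U_2) \geq \hm^1(K_1) + \hm^1(K_2) \geq \hm^1(C_1) + \hm^1(C_2) - \epsilon = \hm^1(A_1 \cup A_2) - \epsilon.
\end{equation*}
The only real obstacle is ensuring that the separating sets have Lipschitz boundary; the Sard-regular-value argument sidesteps this cleanly, though an alternative route using a sufficiently fine dyadic cube decomposition of a neighborhood of $K_1 \cup K_2$ (with cube side-length much smaller than $d$) would also produce open sets with piecewise linear, and thus Lipschitz, boundary.
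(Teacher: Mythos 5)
Your proof is correct and follows essentially the same route as the paper: decompose $A_1\cup A_2$ into the disjoint pieces $A_1$ and $A_2\setminus A_1$, approximate each from inside by a compact set using inner regularity of the Radon measure $\hm^1\mres(A_1\cup A_2)$, observe that the compact sets have positive distance, and then separate them by disjoint open Lipschitz sets. The only difference is that the paper simply asserts the existence of the separating Lipschitz sets once positive distance is in hand, whereas you supply an explicit construction (mollified cutoff plus a Sard regular level set); both are fine, and your version is marginally more self-contained.
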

\begin{proof}
	The measure $ \mu \coloneqq  \hm^{ 1 }  \llcorner_{A_{ 1 } \cup A_{ 2 } } $ is a Radon measure. Thus, by inner regularity, we find compact sets $ K_{ 1 } \subseteq A_{ 1 } $, $ K_{ 2 } \subseteq A_{ 2 } \setminus A_{ 1 } $ such that $ \mu ( A_{ 1 } \setminus K_{ 1 } ), \mu ( (A_{ 2 } \setminus A_{ 1 } ) \setminus K_{ 2 } ) \leq   \epsilon /2 $. Since $ K_{ 1 } \cap K_{ 2 } = \emptyset $ and both are compact, we have $ \mathrm{dist} ( K_{ 1 }, K_{ 2 } )  =:  \delta > 0 $. We therefore find open, disjoint Lipschitz sets $ U_{ 1 } $ and $ U_{ 2 } $ such that $ K_{ 1 } \subseteq U_{ 1 } $ and $ K_{ 2 } \subseteq U_{ 2 }$. Then,
	\begin{align*}
		\hm^{ 1 } ( A_{ 1 } \cup A_{ 2 } )
		& =
		\hm^{ 1 } ( A_{ 1 } ) + \hm^{ 1 } ( A_{ 2 } \setminus A_{ 1 } )
		\\
		& \leq \hm^{ 1 } ( A_{ 1 } \cap K_{ 1 } )
		+
		\hm^{ 1 } ( ( A_{ 2 } \setminus A_{ 1 } ) \cap K_{ 2 } )
		+
		  \epsilon  
		\\
		& \leq 
		\hm^{ 1 } ( A_{ 1 } \cap U_{ 1 } )
		+
		\hm^{ 1 } ( ( A_{ 2 } \setminus A_{ 1 } ) \cap U_{ 2 } )
		+
		  \epsilon  ,
	\end{align*}
	which finishes the proof.
\end{proof}

\section{Proofs}\label{sec:appendix_proofs}

 In this section we collect the remaining  proofs that have been omitted in the paper.

\begin{proof}[Proof of Theorem \ref{thm:density_with_boundary_values}]
We first observe that by \cite[Thm.~2.1, Eq.~(2.4)]{friedrich2018-piecewiseKorn} we find a sequence $\tilde{v}_k \in \gsbdtwo ( \Omega ) \cap \lp^2(\Omega;\R^2)$ such that $E_k := \lbrace x\in \Omega\colon \tilde{v}_k(x) \neq v(x)\rbrace$ satisfies $\mathcal{L}^2(E_k) + \mathcal{H}^1(\partial^*E_k) \to 0$ as $k \to \infty$. Therefore,  the sequence 
$$v_k :=  \begin{cases}  \tilde{v}_k \chi_{\Omega \setminus E_k} & \text{ on } \Omega, \\  h & \text{ on } \Omega' \setminus \overline{\Omega} \end{cases} $$
lies in $\gsbdtwo ( \Omega' ) \cap \lp^2(\Omega';\R^2)$ and satisfies
$$v_{k} \to v \text{ in measure on $\Omega'$}, \quad  e( v_{ k} ) \to e( v )  \text{ in $\lp^2(\Omega';\mathbb{R}^{2\times 2}_{\rm sym})$}, \quad \hm^{ 1} \big( J_{ v_{ k } }  \triangle J_{ v } \big) \to 0.  $$
By means of  a diagonal argument this shows that  it suffices to prove the statement for functions $v  \in \gsbdtwo ( \Omega' ) \cap \lp^2(\Omega';\R^2)$. (Here, we use that the convergence in measure is metrizable.) By another diagonal argument it suffices to find a sequence $(w_\delta)_\delta$ as in the statement satisfying items \ref{item:density_measure_conv}, \ref{item:density_l2_convergence_sym_grad} and item \ref{item:density_jump_set_diff_vanishes} is replaced by 
\begin{align}\label{statement1}
\limsup_{\delta \to 0} \hm^{ 1} \big( J_{ w_{ \delta } }  \triangle J_{ v } \big) \le \eta
\end{align}
for an arbitrary $\eta >0$. 

Let us from now on  assume that $v  \in \gsbdtwo ( \Omega' ) \cap \lp^2(\Omega';\R^2)$  with $v = h$ on $\Omega'\setminus \overline{\Omega}$  and fix $\eta>0$. Let $\partial_D \Omega := \partial \Omega \cap \Omega'$, and let $\partial_D^* \Omega \subseteq \partial_D \Omega$ be the set of differentiablity points of $\partial_D \Omega$, where the corresponding outer normal is denoted by $\nu(x)$ for $x \in \partial_D^* \Omega$. By Rademacher's theorem we have $\mathcal{H}^1(\partial_D \Omega \setminus \partial_D^* \Omega)= 0$.   We introduce a fine cover of $\partial^*_D \Omega \setminus J_v$ up to a set of negligible $\mathcal{H}^1$-measure: consider all closed squares $Q_r(x)  \subset \subset  \Omega'$ centered at $x \in \partial_D^* \Omega$   with radius $r$ and two sides parallel to $\nu(x)$ such that 
\begin{align}\label{dens1}
{\rm (i)} & \ \ \mathcal{H}^1\big(  Q_r(x) \cap J_v    \big) \le \eta r, \notag \\
{\rm (ii)} & \ \   \mathcal{H}^1\big(  Q_r(x) \cap \partial_D \Omega   \big) \ge 2 r.
\end{align} 
This indeed provides a fine cover as  $J_v$ has $\mathcal{H}^1$-density zero $\mathcal{H}^1$-a.e.\ in  $\partial_D \Omega^* \setminus J_v$. We now apply the Besicovitch covering theorem  with respect to $\mathcal{H}^1\llcorner_{\partial^*_D \Omega \setminus J_v}$ which induces a finite disjoint collection $(Q_{r_i}(x_i))_{i=1}^N$ with $(x_i)_{i=1}^N \subseteq \partial^*_D \Omega \setminus J_v$, abbreviated by $(Q_i)_{i=1}^N$ in the sequel, such that  \eqref{dens1} holds for each   $Q_i$  and 
\begin{align}\label{dens2} 
\mathcal{H}^1\left(    (\partial^*_D \Omega \setminus J_v) \setminus \bigcup_{i=1}^N  Q_i    \right) \le \eta . 
\end{align}
Following the proof of {\cite[Prop.~2.5]{gia05}}, we show that for each $Q_i$ there exists a sequence $(z_\delta^i)_\delta \subseteq \mathcal{W}(Q_i;\R^2)$ such that
\begin{align}\label{dens3} 
{\rm (i)} & \ \    \Vert z_{ \delta}^i  - v  \Vert_{\lp^2(Q_i)} \to 0 \text{ as $\delta\to 0$},\notag\\
{\rm (ii)} & \ \     \Vert e(z_{ \delta}^i)  \to e(v)  \Vert_{\lp^2(Q_i)} \to 0 \text{ as $\delta\to 0$}, \notag\\
{\rm (iii)} & \ \  \mathcal{H}^1(J_{z_\delta^i}) \lesssim  \eta r_i, \notag\\
{\rm (iv)} &  \ \      z_\delta^i = h \text{ on } Q_i \setminus \overline{\Omega}.
\end{align}
To   this end, we fix $Q_i$ and   drop the index $i$ for convenience, i.e., we write $Q$ as well as $x$ and $r$. We assume without restriction that $\nu(x) = e_2$ and $Q = (-1,1)^2$. We choose a Lipschitz function $f \colon (-1,1) \to \R$  such that
$$\Omega \cap Q = \big\{ y \in Q  \colon  y_2 < f(y_1)     \big\}, \quad    \partial_D \Omega \cap Q  = \big\{ y \in Q  \colon  y_2 = f(y_1)     \big\}. $$
Let $v_\delta(y) := v(y+\delta e_2)$ and observe that
\begin{align}\label{dens3.5} 
\text{  $v_\delta  \to v $ in $\lp^2(Q;\R^2)$ \quad and  \quad  $e(v_\delta)  \to e(v) $ in $\lp^2(Q;\R^{2\times 2}_{\rm sym})$     \quad  as $\delta \to 0$.}
\end{align}
 By \cite[Thm.~3.1]{iurlano_density} (see also {\cite[Thm.~1.1]{Cri18}} for control on higher Sobolev norms) we choose $(w_\delta)_\delta \in \mathcal{W}(Q;\R^2)$ such that 
\begin{align}\label{dens4} 
\Vert  v_\delta  - w_\delta \Vert_{\lp^2(Q)} + \Vert  e(v_\delta)  - e(w_\delta) \Vert_{\lp^2(Q)} \le \delta^2, \quad \quad    \mathcal{H}^1(J_{w_\delta} \cap Q) \lesssim r\eta,
\end{align}
 where we used that $\mathcal{H}^1(J_{v} \cap Q) \le r\eta$, see inequality  \eqref{dens1}(i), and the definition of $v_\delta$ which implies that $ \hm^{ 1 } ( J_{ v_{ \delta } } \cap Q ) \leq \hm^{ 1 } ( J_{ v } \cap Q ) $ due to the regularity of the boundary data. Let $\psi_\delta$ be a cut-off function with $\psi_\delta = 1$ on $\lbrace y_2 \ge f(y_1) - \delta/3\rbrace$,  $\psi_\delta = 0$ on $\lbrace y_2 \le f(y_1) - \delta/2\rbrace$, and $\Vert \nabla \psi_\delta \Vert_\infty \lesssim 1/\delta$. Define $z_\delta := \psi_\delta h + (1-\psi_\delta) w_\delta$. By construction and $h \in \wkp^{ 2 , \infty } ( \Omega'  ;  \mathbb{R}^2  )$  we have $z_\delta \in \mathcal{W}(Q;\R^2)$,  $\mathcal{H}^1(J_{z_\delta} \cap Q) \lesssim r\eta$, and that      $z_\delta = h$ on $Q \setminus \overline{\Omega}$. Thus,  point (iii) and (iv) of  \eqref{dens3} hold.   We estimate
\begin{align}\label{dens4-5}
 \Vert  z_\delta  - v \Vert_{\lp^2(Q)} \le  \Vert  (\psi_\delta h + (1-\psi_\delta) v)  - v \Vert_{\lp^2(Q)}  + \Vert  v_\delta  - w_\delta \Vert_{\lp^2(Q)} + \Vert  v_\delta  - v \Vert_{\lp^2(Q)}.
 \end{align}
This shows $z_\delta  \to v $ in $\lp^2(Q;\R^2)$ as $\delta \to 0$ by \eqref{dens3.5}--\eqref{dens4},   the fact that  $\mathcal{L}^2({\rm supp}\, \psi_\delta \cap \Omega)\to 0$, and $v = h$ on $\Omega' \setminus \overline{\Omega}$.   Thus, it remains to check \eqref{dens3}(ii). We note that  $e( z_\delta)  =    \psi_\delta e(h) + (1-\psi_\delta)   e(w_\delta) +   ( h - w_\delta ) \odot \nabla \psi_\delta   $. 
By an argument similar to \eqref{dens4-5} employing  \eqref{dens3.5}--\eqref{dens4}, we get  $\Vert   \psi_\delta e(h) + (1-\psi_\delta)   e(w_\delta) - e(v) \Vert_{\lp^2(Q)} \to 0$. On the other hand, setting $\Psi_\delta: =\lbrace 0 <  \psi_\delta < 1\rbrace$, by  the first inequality in  \eqref{dens4}  and the Lipschitz continuity of $h$, we get
\begin{align*}
\Vert (h - w_\delta) \odot \nabla \psi_\delta  \Vert_{\lp^2(Q)} &\lesssim \delta^{-1} \Vert w_\delta  - h \Vert_{\lp^2( \Psi_\delta )}  
\\
& \lesssim \delta^{-1} \Vert v_\delta  - h \Vert_{\lp^2( \Psi_\delta )} + \delta 
\\ &  = \delta^{-1} \Vert v(\cdot+\delta e_2)  - h \Vert_{\lp^2( \Psi_\delta )} + \delta 
\\
&\lesssim  \Vert \nabla h \Vert_\infty \mathcal{L}^2( \Psi_\delta)^{ 1/2 } + \delta.
\end{align*}
where we used that  $v(\cdot+\delta e_2) =  h(\cdot + \delta e_2)$ on $\Psi_\delta$. Since this vanishes as $\delta \to 0$,  \eqref{dens3}(ii) follows.

After having found the functions $z^i_\delta$ satisfying \eqref{dens3}, we are now ready to construct the sequence $(w_\delta)_\delta$. To this end, we choose slightly smaller squares $Q'_i  \subset \subset  Q_i$ for $i =1,\ldots, N$ such that 
\begin{align}\label{dens6} 
\mathcal{H}^1\left( \partial_D \Omega  \cap \bigcup_{i=1}^N (Q_i \setminus Q_i')   \right) \le \eta
\end{align} 
 and we choose cut-off functions $(\varphi_i)_{i=1}^N \subseteq C^\infty(\Omega)$  such that $\varphi_i = 1$ on $Q'_i \cap \Omega$ and $\varphi_i= 0$ on $\Omega \setminus Q_i$. Eventually, we let $\varphi_0 := 1 - \sum_{i=1}^N \varphi_i$. By \cite[Thm.~3.1]{iurlano_density} (see also \cite[Thm.~1.1]{Cri18} for control on higher Sobolev norms)  we find a sequence $(z^0_\delta)_\delta \subseteq \mathcal{W}(\Omega;\R^2)$ with 
\begin{align}\label{dens7} 
\Vert  z^0_\delta  - v \Vert_{\lp^2(\Omega)} + \Vert  e(z^0_\delta)  - e(v) \Vert_{\lp^2(\Omega)} +  \mathcal{H}^1\big( (J_{z^0_\delta} \triangle  J_v)   \cap \Omega  \big) \to 0 \quad \text{as $\delta \to 0$}.   
\end{align} 
We define $w_\delta \in \mathcal{W}(\Omega';\R^2)$ by
$$w_\delta  = \begin{cases}  \sum_{i=0}^N \varphi_i z_\delta^i & \text{ on } \Omega, \\  h & \text{ on } \Omega' \setminus \overline{\Omega}. \end{cases}$$
By construction and by  \eqref{dens3}(iv) we find $ J_{w_\delta}  \cap Q_i' = J_{z^i_\delta}  \cap Q_i'$ for all $i=1,\ldots, N$. {Furthermore, by adding a vanishingly small constant to $z^0_\delta$, we may ensure $J_{w_\delta}  \supseteq  \partial_D \Omega \setminus \bigcup_i Q_i$.} Therefore, we get 
\begin{align*}
\mathcal{H}^1\big(J_{w_\delta} \triangle J_v\big) \le  \mathcal{H}^1\big((J_{z_\delta^0} \triangle J_v)  \cap \Omega  \big) + \sum_{i=1}^N {\left(\mathcal{H}^1(J_{z_\delta^i} \cap Q_i)+\mathcal{H}^1(J_{v} \cap Q_i) \right)} + \mathcal{H}^1\left( (\partial_D \Omega   \setminus J_v) \setminus \bigcup\nolimits_i Q_i'\right). 
\end{align*}
Thus, combining  {\eqref{dens1}(i)}, \eqref{dens2}, \eqref{dens3}(iii),  \eqref{dens6}, and  \eqref{dens7}  we find
$$\limsup_{\delta \to 0} \mathcal{H}^1\big(J_{w_\delta} \triangle J_v\big) \lesssim \eta + \eta \sum_{i=1}^N  r_i \lesssim \eta, $$
where in the last step we also used \eqref{dens1}(ii) and the fact that $\mathcal{H}^1(\partial_D \Omega ) < \infty$. This shows \eqref{statement1}. Combining \eqref{dens3}(i)  and  \eqref{dens7}, we find \ref{item:density_measure_conv}.  Eventually, since $e(w_\delta) =  \sum_{i=0}^N  \varphi_i e(z_\delta^i) +  \nabla \varphi_i  \odot z_\delta^i$ on $\Omega$ and $e(v)$ can be written as  $e(v) =  \sum_{i=0}^N  \varphi_i e(v) +  \nabla \varphi_i  \odot v $,  by using   \eqref{dens3}(i), (ii)  and  \eqref{dens7} we conclude \ref{item:density_l2_convergence_sym_grad}.  
\end{proof}

\begin{proof}[Proof of the upper estimate in \Cref{lemma:energy_balance_inequalities}]
	We prove the upper semicontinuity estimate (\ref{eq:energy_balance_usc}). As we will show, this is a consequence of $ u $ being a minimizer with respect to its own jump set, see \Cref{lemma:u_minimzer_of_total_energy}. Let $ 0 \leq s \leq t  \text{ with }s,t \in I_{ \infty } $ and, for $\eps$ sufficiently small, suppose that $n=N(\eps)$ is such that $t = t_{N(\eps)}^\eps$. Then, the function $ u( t ) + h( s ) - h( t ) $ is an admissible competitor of the minimization problem (\ref{eq:u_min_of_total_energy}) at time $ s $. Thus,
	\begin{align*}
		& \int_{ {\Omega'} }
		\dfrac{ 1 }{ 2 }
		Q \big( e ( u( s ) ) \big) \dd{ x }
		\\
		\leq{} & 
		\int_{ {\Omega'} }
		\dfrac{ 1 }{ 2 } Q \big( e ( u( t ) ) \big)
		+
		\mathbb{ C } e ( u( t ) ) \colon { e ( h( s ) - h( t ) ) }
		+
		\dfrac{ 1 }{ 2 }
		Q \big( e ( h( s ) - h ( t ) ) \big)
		\dd{ x }
		+
		\kappa \hm^{ 1 } \left(
		J_{ u( t ) } 
		\setminus 
		\Gamma ( s )
		\right),
	\end{align*}
	which is equivalent to
	\begin{align*}
		\energyLinTot ( s ) \leq &\energyLinTot ( t )
		+
		\int_{ {\Omega'} }
		\mathbb{ C } e( u( t ) ) \colon { e ( h( s ) - h( t ) ) } 
		+
		\dfrac{1}{ 2 }
		Q \big( e ( h( s ) - h( t ) ) \big)
		\dd{ x }
		\\
		&+
		\kappa \left(
		\hm^{ 1 } \left( \Gamma ( s ) \right)
		+
		\hm^{ 1 } \left(
		J_{ u ( t ) } \setminus 
		\Gamma ( s )
		\right)
		-
		\hm^{ 1 } \left(
		\Gamma ( t )
		\right)
		\right).
	\end{align*}
	Note that the term in the brackets is nonpositive since $ \Gamma( s ) \subseteq \Gamma ( t) $  and $J_{u(t)} \subseteq \Gamma ( t) $. 
	Thus, since $ h $ is continuous in time with respect to  $ \norm{\cdot}_{ \wkp^{ 2 , 2} } $,   we find that $ \energyLinTot $ has no negative jumps as claimed in the  lemma.   Moreover, letting $ s= t_{N( \varepsilon ) -1}^\eps $ and iterating the above estimate (and re-using the variable~$s$), we get that for all $ \varepsilon> 0 $ sufficiently small
	\begin{align*}
		\energyLinTot( t )
		\geq
		\energyLinTot ( 0 )
		+
		\sum_{ k = 0 }^{ N( \varepsilon ) - 1 }
		\int_{ t_{ k }^{ \varepsilon } }^{ t_{ k + 1 }^{ \varepsilon } }
		\int_{ {\Omega'} }
		\mathbb{ C } e ( u( t_{k + 1 }^{ \varepsilon } ) ) \colon { e ( \partial_t h( s ) ) } 
		\dd{ x }
		\dd{ s }
		- \omega ( \Delta_{ \varepsilon } ) \int_{ 0 }^{ t } \norm{ \nabla \partial_t h }_{ \lp^{ 2 } }
		\dd{ s }
	\end{align*}
	for a function $ \omega $ which vanishes as $ \Delta_{\varepsilon} $ tends to zero as  in \eqref{eq:a_priori_estimate_iterated}.  Taking the limit superior gives the desired inequality (\ref{eq:energy_balance_usc}).
\end{proof}

\begin{proof}[Proof of Minimality and Jump-set Inclusion of \Cref{lemma:cont_extension}]
	The bound on $ \hm^{ 1 } ( \Gamma ( t ) ) $ is an immediate consequence of the lower semicontinuity \eqref{eq:lsc_jump_sets_union_times}. The fact that $u(t) = h(t)$ on $\Omega' \setminus \overline{\Omega}$ for all $t \in [0,1]$ follows from Definition \ref{def:displacement II}. 
	
	\textit{Minimality.} For $t \in I_\infty$ the minimality follows from Lemma \ref{lemma:u_minimzer_of_total_energy}.	Let us now prove the minimality (\ref{eq:minimality_u_whole_interval}) for $ t \in [0,1]\setminus I_{ \infty } $. We know by \Cref{lemma:u_minimzer_of_total_energy} that for all $t_p\in I_\infty$ (used to define $ u ( t ) $ in Definition~\ref{def:displacement II}) and all $ v \in \gsbdtwo (\Omega') $ with $ v = h( t_{p } ) $ on $ \Omega' \setminus \overline{\Omega } $ it holds that
	\begin{equation*}
		\int_{ \Omega' }
		\dfrac{ 1 }{ 2 }
		Q \big( e ( u ( t_{ p } ) ) \big)
		\dd{ x }
		\leq
		\int_{ \Omega' }
		\dfrac{ 1 }{ 2 }
		Q ( e ( v ) ) 
		\dd{ x }
		+
		\kappa 
		\hm^{ 1 } \left(
		J_{ v } \setminus \Gamma ( t_{ p } ) \right).
	\end{equation*}
	Take $ w \in \gsbdtwo (\Omega') $ with $ w = h ( t ) $ on $ \Omega' \setminus \overline{\Omega } $ and test the above inequality with $ w + h ( t_{ p } ) - h ( t ) $ to obtain
	\begin{equation*}
		\int_{ \Omega' }
		\dfrac{ 1 }{ 2 }
		Q \big( e ( u ( t_{ p } ) ) \big)
		\dd{ x }
		\leq
		\int_{ \Omega' }
		\dfrac{ 1 }{ 2 }
		Q \big( e ( w + h ( t_{ p } ) - h( t ) ) \big)
		\dd{ x }
		+
		\kappa 
		\hm^{ 1 } \left(
		J_{ w } \setminus \Gamma ( t_{ p } )
		\right).
	\end{equation*}
	By weak convergence, in the limit $p \to \infty$, the left-hand side can be bounded from below by the integral $ \int_{\Omega'} \tfrac12 Q ( e ( u ( t ) ) )\dd{x}$. The integral on the right-hand side converges to $ \int_{\Omega'}\tfrac12 Q ( e ( w ) ) \dd{x}$, and since $ t \notin I_{ \infty } $, we have by continuity from above of $ \hm^{ 1 } $ that the measure of the jump set converges to $ \hm^{ 1 } ( J_{ w } \setminus \Gamma ( t ) ) $.  This  proves the desired minimality.
	
	\textit{Jump-set inclusion.}	Next, we argue that the inclusion (\ref{eq:jump_contained_in_gamma}) holds. If this were not true, then we can find a point $ x\in J_{ u ( t ) }\setminus \Gamma( t ) $ that has $1$-dimensional density $ 1$  with respect to the set $ J_{ u ( t ) } $ and density $0$ with respect to $\Gamma( t )$ (see \cite[Thm.~2.83 and (2.41)]{ambrosio_fusco_pallara_functions_of_bv_and_free_discontinuity_problems}). Thus, for some small fixed $ r >0$, we have
	\begin{equation*}
		\hm^{ 1 } \left(
		B_{ r } ( x ) \cap J_{ u ( t ) } 
		\right) \geq  r /2
		\quad
		\text{and}
		\quad
		\hm^{ 1 } \left(
		B_{ r } ( x ) \cap \Gamma( t ) 
		\right)
		<
		r/2.
	\end{equation*}
	Since $ J_{ u ( t_{ p } ) } \subseteq \Gamma ( t ) $, we know that
	\begin{equation*}
		\hm^{ 1 } \left( B_{ r } ( x ) \cap J_{ u( t_{ p } ) } \right)
		<
		r /2.
	\end{equation*}
	Applying the lower semicontinuity of jump sets as $p\to \infty$,  see \Cref{thm:gsbd2_compactness}\ref{item:lscsurface} for  $U = B_{ r } ( x )$,  leads to a contradiction.
\end{proof}

\printbibliography






\end{document}